\documentclass[reqno,12pt,a4paper]{amsart}
\usepackage{lineno}
\usepackage{amsmath,amssymb,amsthm,graphicx,mathrsfs,url,bbm}
\usepackage{a4wide}
\usepackage{enumitem}
\usepackage{mathtools, stackrel}
\usepackage[usenames,dvipsnames]{xcolor}
\usepackage[colorlinks=true,linkcolor=Blue,citecolor=Green]{hyperref}
\hypersetup{pdfstartview=XYZ}
\usepackage{tikz-cd}
\usetikzlibrary{arrows.meta}
\usepackage[colorinlistoftodos]{todonotes}

\theoremstyle{plain}
\newtheorem{thm}{Theorem}
\newtheorem{lem}{Lemma}[section]
\newtheorem{prop}[lem]{Proposition}
\newtheorem{cor}[lem]{Corollary}

\theoremstyle{definition}
\newtheorem{definition}[lem]{Definition}
\newtheorem{ex}[lem]{Example}

\theoremstyle{remark}
\newtheorem{rem}{Remark}[section]

\numberwithin{equation}{section}

\newcommand{\C}{\mathbb{C}}

\newcommand{\N}{\mathbb{N}}

\newcommand{\x}{\times}

\newcommand*\Laplace{\mathop{}\!\mathbin\bigtriangleup}

\let\Im=\Imag

\let\Re=\Real

\def\Ddots{\mathinner{\mkern1mu\raise\p@
    \vbox{\kern7\p@\hbox{.}}\mkern2mu
    \raise4\p@\hbox{.}\mkern2mu\raise7\p@\hbox{.}\mkern1mu}}
\makeatother

\renewcommand{\d}{\,d}

\newcommand{\F}{{\mathcal{F}}}

\renewcommand{\epsilon}{\vararepsilon}

\newcommand{\bdm}{\begin{displaymath}}
  \newcommand{\edm}{\end{displaymath}}
\newcommand{\bq}{\begin{equation}}
  \newcommand{\eq}{\end{equation}}
\newcommand{\bqn}{\begin{equation*}}
  \newcommand{\eqn}{\end{equation*}}

\newcommand{\eop}[1]{\ensuremath{\vec{e}^{\,\mathrm{op}}}}

\title[Residues of weighted dynamical Ihara zeta functions]{Residues of weighted dynamical Ihara zeta functions on finite regular graphs}

\author[G. Palmirotta]{Guendalina Palmirotta}
\address{Institute of Mathematics und Institute for Photonic Quantum Systems (PhoQS), Universit\"at Paderborn, Warburgerstr. 100, 33098 Paderborn, Germany} \email{gpalmi@math.uni-paderborn.de}

\date{\today}
\begin{document}

\begin{abstract}
A weighted dynamical version $\mathbf{Z}_f$ of the classical Ihara zeta function is presented on connected finite regular graphs, expressed in terms of weighted periodic orbit data.
We establish its meromorphic continuation, with poles given by the resonances of the associated non-backtracking transfer operator acting on a suitable Banach space, and compute its residues.
At simple spectral parameters, the residue of $\mathbf{Z}_f$ is identified with the invariant Ruelle distribution on the graph phase space. 
Combining this result with a relation obtained by Arends–Palmirotta, we further derive Patterson–Sullivan and Wigner residue formulae. This provides a finite-graph analogue of residue formulae for weighted dynamical zeta functions for geodesic flow on rank one locally symmetric spaces, in the spirit of Schütte–Barkhofen–Weich.\\

\noindent \textsc{Keywords.} Weighted dynamical Ihara zeta function, trace formula, meromorphic continuation, weighted transfer operator, invariant Ruelle distributions, Patterson-Sullivan distributions, Wigner distributions, classical and dynamical resonances, finite regular graphs.

\end{abstract}

\maketitle

%\setcounter{tocdepth}{3}
%\tableofcontents

% -------------------------------%%--------------------------------------%
\section{Introduction} \label{sect:Intro}
Let $\mathfrak{G}_\Gamma=(\mathfrak{X}_\Gamma,\mathfrak{E}_\Gamma)$ be a connected finite $(q+1)$-regular graph with $q>1$, realised as the quotient of the $(q+1)$-regular tree ${\mathfrak{G}}$ by a discrete, cocompact, free action of a group $\Gamma$, and 
let $(\mathfrak{P}_\Gamma,\sigma)$ be the space of $\Gamma$-invariant non-backtracking bi-infinite paths, with the dynamics given by the shift map $\sigma$.
Such graphs admit a natural geometric and dynamical structure analogous to compact rank-one locally symmetric spaces.
Figure~\ref{fig:quotient_space} illustrates this basic geometric and dynamical correspondence.

\begin{figure}
  \centering

  % Continuation dots from a terminal vertex, in the local outward direction.
  \newcommand{\LeafDots}{%
    \foreach \x in {0.055,0.095,0.135}
      \fill[black] (\x,0) circle (0.1pt);
  }

  \begin{tikzpicture}[scale=2, line width=0.6pt]

    % ---------- Colors ----------
    \definecolor{myred}{HTML}{CC3311}
    \definecolor{myblue}{HTML}{0072B2}
    \definecolor{myorange}{HTML}{FFD700}
    \definecolor{mygreen}{HTML}{009E73}
    \definecolor{mygray}{HTML}{D9D9D9}

    % Colors encoded as:
    % 1 = red, 2 = blue, 3 = green, 4 = orange
    \newcommand{\ColorOf}[1]{%
      \ifcase#1\relax
        \or myred%
        \or myblue%
        \or mygreen%
        \or myorange%
      \fi
    }

    % Given the color of the current vertex (#1) and the color
    % of its parent (#2), set \childA and \childB equal to
    % the two remaining colors in {1,2,3,4}.
    \newcommand{\GetChildren}[2]{%
      \ifnum#1=1\relax
        \ifnum#2=2\relax
          \def\childA{3}\def\childB{4}%
        \fi
        \ifnum#2=3\relax
          \def\childA{2}\def\childB{4}%
        \fi
        \ifnum#2=4\relax
          \def\childA{2}\def\childB{3}%
        \fi
      \fi
      \ifnum#1=2\relax
        \ifnum#2=1\relax
          \def\childA{3}\def\childB{4}%
        \fi
        \ifnum#2=3\relax
          \def\childA{1}\def\childB{4}%
        \fi
        \ifnum#2=4\relax
          \def\childA{1}\def\childB{3}%
        \fi
      \fi
      \ifnum#1=3\relax
        \ifnum#2=1\relax
          \def\childA{2}\def\childB{4}%
        \fi
        \ifnum#2=2\relax
          \def\childA{1}\def\childB{4}%
        \fi
        \ifnum#2=4\relax
          \def\childA{1}\def\childB{2}%
        \fi
      \fi
      \ifnum#1=4\relax
        \ifnum#2=1\relax
          \def\childA{2}\def\childB{3}%
        \fi
        \ifnum#2=2\relax
          \def\childA{1}\def\childB{3}%
        \fi
        \ifnum#2=3\relax
          \def\childA{1}\def\childB{2}%
        \fi
      \fi
    }

    % ------------------------------------------------------------
    % Recursively draw and color a non-root subtree.
    %
    % #1 = number of levels still to draw
    % #2 = current edge length
    % #3 = color of the current vertex
    % #4 = color of the parent vertex
    % ------------------------------------------------------------
    \newcommand{\ColorSubtree}[4]{%
      \ifnum#1=0
        \LeafDots
      \else

        % Freeze all incoming arguments before recursive calls.
        \edef\thisdepth{#1}%
        \edef\thislength{#2}%
        \edef\thiscolor{#3}%
        \edef\parentcolor{#4}%

        % Data for the next level.
        \pgfmathtruncatemacro{\nextdepth}{\thisdepth-1}%
        \pgfmathsetmacro{\nextlength}{\thislength*0.66}%

        % Find the two colors different from both
        % the current color and the parent color.
        \GetChildren{\thiscolor}{\parentcolor}%
        \edef\firstchild{\childA}%
        \edef\secondchild{\childB}%

        % Freeze recursive calls before entering recursion.
        \edef\FirstRecursiveCall{%
          \noexpand\ColorSubtree
            {\nextdepth}%
            {\nextlength}%
            {\firstchild}%
            {\thiscolor}%
        }%

        \edef\SecondRecursiveCall{%
          \noexpand\ColorSubtree
            {\nextdepth}%
            {\nextlength}%
            {\secondchild}%
            {\thiscolor}%
        }%

        % First child
        \draw (0,0) -- ++(60:\thislength) coordinate (ca);
        \edef\firstcolor{\ColorOf{\firstchild}}%
        \fill[\firstcolor] (ca) circle (0.7pt);

        \begin{scope}[shift={(ca)},rotate=60]
          \FirstRecursiveCall
        \end{scope}

        % Second child
        \draw (0,0) -- ++(-60:\thislength) coordinate (cb);
        \edef\secondcolor{\ColorOf{\secondchild}}%
        \fill[\secondcolor] (cb) circle (0.7pt);

        \begin{scope}[shift={(cb)},rotate=-60]
          \SecondRecursiveCall
        \end{scope}

      \fi
    }

    % ------------------------------------------------------------
    % Trace a path, skipping the first two edges.
    %
    % #1 = first-edge length
    % #2,...,#7 = successive directions
    % #8 = trace color
    %
    % The first two edges are drawn invisibly as paths;
    % the third edge onward is highlighted.
    % ------------------------------------------------------------
    \newcommand{\TracePathSixSkipFirstTwo}[8]{%
      % First edge: traced
      \draw[#8, opacity=0.7, line width=5pt]
        (0,0) -- ++(#2:#1) coordinate (c1);

      \begin{scope}[shift={(c1)}, rotate=#2]

        % Second edge: traced
        \pgfmathsetmacro{\lTwo}{#1*0.66}
        \draw[#8, opacity=0.7, line width=5pt]
            (0,0) -- ++(#3:\lTwo) coordinate (c2);

        \begin{scope}[shift={(c2)}, rotate=#3]

          % Third edge: traced
          \pgfmathsetmacro{\lThree}{\lTwo*0.66}
          \draw[#8, opacity=0.7, line width=5pt]
            (0,0) -- ++(#4:\lThree) coordinate (c3);

          \begin{scope}[shift={(c3)}, rotate=#4]

            % Fourth edge: traced
            \pgfmathsetmacro{\lFour}{\lThree*0.66}
            \draw[#8, opacity=0.7, line width=5pt]
              (0,0) -- ++(#5:\lFour) coordinate (c4);
              
            \begin{scope}[shift={(c4)}, rotate=#5]

            % Fourth edge: traced
            \pgfmathsetmacro{\lFive}{\lThree*0.66}
            \draw[#8, opacity=0.7, line width=5pt]
              (0,0) -- ++(#5:\lFive) coordinate (c5);
            
          \end{scope}
          \end{scope}
        \end{scope}
      \end{scope}
    }

    % ============================================================
    % LEFT PANEL: fully colored universal cover
    % ============================================================
    \begin{scope}

      % Root: red, encoded by color 1.
      \node[
        circle,
        fill=myred,
        inner sep=1.4pt
      ] at (0,0) {};

      % Edge length at the second level.
      \pgfmathsetmacro{\levelone}{0.7*0.66}

      % ----------------------------------------------------------
      % Root neighbor 1: blue
      % ----------------------------------------------------------
      \draw (0,0) -- ++(90:0.7) coordinate (r1);
      \fill[myblue] (r1) circle (0.9pt);

      \begin{scope}[shift={(r1)},rotate=90]
        \ColorSubtree{4}{\levelone}{2}{1}
      \end{scope}

      % ----------------------------------------------------------
      % Root neighbor 2: green
      % ----------------------------------------------------------
      \draw (0,0) -- ++(210:0.7) coordinate (r2);
      \fill[mygreen] (r2) circle (0.9pt);

      \begin{scope}[shift={(r2)},rotate=210]
        \ColorSubtree{4}{\levelone}{3}{1}
      \end{scope}

      % ----------------------------------------------------------
      % Root neighbor 3: orange
      % ----------------------------------------------------------
      \draw (0,0) -- ++(-30:0.7) coordinate (r3);
      \fill[myorange] (r3) circle (0.9pt);

      \begin{scope}[shift={(r3)},rotate=-30]
        \ColorSubtree{4}{\levelone}{4}{1}
      \end{scope}

      % ----------------------------------------------------------
      % Orange geodesics.
      %
      % First two edges are omitted from the trace, so the
      % highlighted segment starts at the third edge.
      % ----------------------------------------------------------
      
      \TracePathSixSkipFirstTwo
        {0.7}{-30}{60}{60}{-60}{60}{-60}{mygray}

    \TracePathSixSkipFirstTwo
        {-0.7}{30}{60}{60}{-60}{60}{-60}{mygray}

      \node at (0,-1.5)
        {\small universal cover ${\mathfrak{G}}$ (tree)};

    \end{scope}

    % ============================================================
    % COVERING MAP ARROW
    % ============================================================
    \draw[
      -{Latex[length=3mm]},
      thick,
      xshift=2mm
    ]
      (1.5,0) --
      node[above]{\small $\pi$}
      (2.3,0);

    % ============================================================
    % RIGHT PANEL: finite quotient K_4
    % ============================================================
    \begin{scope}[xshift=3.4cm, yshift=0.3cm]

      % Vertices
      \node[
        circle,
        fill=myred,
        inner sep=1.6pt
      ] (w1) at (0.05,-0.3) {};

      \node[
        circle,
        fill=myorange,
        inner sep=1.6pt
      ] (w2) at (0.75,-0.7) {};

      \node[
        circle,
        fill=mygreen,
        inner sep=1.6pt
      ] (w3) at (-0.5,-0.7) {};

      \node[
        circle,
        fill=myblue,
        inner sep=1.6pt
      ] (w4) at (0.05,0.5) {};

      % Edges of K_4
      \draw (w1)--(w2)--(w3)--(w4)--(w1);
      \draw (w1)--(w3);
      \draw (w2)--(w4);

      % ----------------------------------------------------------
      % Grey projected paths
      % ----------------------------------------------------------
      \draw[
        mygray,
        opacity=0.7,
        line width=4pt
      ]
        (w1) -- (w2);
    
      \draw[
        mygray,
        opacity=0.7,
        line width=4pt
      ]
        (w1) -- (w3);

    \draw[
        mygray,
        opacity=0.7,
        line width=4pt
      ]
        (w2) -- (w4);

    \draw[
        mygray,
        opacity=0.7,
        line width=4pt
      ]
        (w1) -- (w4);

    \draw[
        mygray,
        opacity=0.7,
        line width=4pt
      ]
        (w2) -- (w3);

    \draw[
        mygray,
        opacity=0.7,
        line width=4pt
      ]
        (w3) -- (w4);
        
      % Label
      \node at (0,-1.8) {%
        \small finite quotient
        $\mathfrak{G}_\Gamma
        =\Gamma\backslash {\mathfrak{G}}=K_4$%
      };

    \end{scope}
    \end{tikzpicture}
    \caption{The complete graph $K_4$ is a finite $3$-regular graph, and its universal cover is the $3$-regular tree. 
    Closed non-backtracking cycles (shadowed in grey) on the finite quotient lift to non-backtracking infinite paths (analogous to geodesic segments)
    in the tree whose endpoints lie on the boundary. 
    The \emph{phase space} $\mathfrak{P}_\Gamma$ may therefore be viewed as the quotient, by the deck group $\Gamma$, of the space of bi-infinite geodesics in the universal covering tree. 
    This is the discrete analogue of the unit tangent bundle of a compact negatively curved manifold. For more details, we refer to Section~\ref{sect:qoutient_spaces}.}
  \label{fig:quotient_space}
\end{figure}

We introduce a weighted version\footnote{Note that the term \emph{weighted} Ihara zeta function is also used in the algebraic combinatorics literature (e.g., the weighted Ihara zeta function of Stark--Terras \cite{StarkTerras1996} or of Mizuno--Sato~\cite{MizunoSato04}) in a different sense.
There, one attaches weights to the edges (or to the adjacency matrix) of a finite graph and derives a determinant (Ihara--Bass type) expression. %, often motivated by quantum walks. 
Our weighted zeta function $\zeta(s,\beta)$, by contrast, is a dynamical (thermodynamic-formalism) construction in which $\beta$ deforms the periodic-orbit weights by a locally constant observable $f\in C^{\mathrm{lc}}(\mathfrak{P}_\Gamma)$, in analogy with weighted Ruelle zeta functions on hyperbolic manifolds.}
of the classical Ihara zeta function by incorporating a suitable observable into the periodic orbit weights.
More precisely, for a locally constant weight $f\in C^{\mathrm{lc}}(\mathfrak{P}_\Gamma)$ and a parameter $\beta\in\mathbb{C}$, we
consider the \emph{$\beta$-parametrized Ihara zeta function} given by
\begin{equation*} %\label{eq:beta-Ihara}
    \zeta(s,\beta) 
        \coloneqq 
        \prod_{[\mathbf{p}_0]\in\mathscr{P}_0}
        \left(1-q^{(\frac12+is)\ell_0} \mathbf{e}_{\beta,f}(\mathbf{p}_0)
        \right)^{-1}, \qquad s \in \C
\end{equation*}
where the product runs over all primitive closed non-backtracking finite paths $[\mathbf{p}_0]\in\mathscr{P}_0$ in $\mathfrak{G}_\Gamma$ (that is, primitive cycles on which one “goes around only once”) and $\ell_0$ denotes the length of the underlying primitive cycle. 
The exponent weight is defined by
$$\mathbf{e}_{\beta,f}(\mathbf{p}_0) \coloneqq
        \exp\left(-\beta \sum_{k=0}^{\ell_0-1} f\big(\sigma^k(\mathbf{p}_0)\big)\right),
$$
where the sum weights each primitive cycle by the values of $f$ along its orbit under $\sigma.$
When $\beta=0$, we recover the Ihara zeta function introduced in \cite{Ihara66}, which is the discrete, non-Archimedean counterpart of the \emph{Ruelle zeta function} \cite{Ruelle76} attached to the geodesic flow on a compact hyperbolic surface, or more generally on a compact rank-one locally symmetric space.

Its logarithmic derivative evaluated at $\beta=0$ gives
\begin{equation} \label{eq:intro_dyn_zeta}
    \mathbf{Z}_f(s)
        \coloneqq
        \left.\frac{\partial}{\partial\beta}\log\zeta(s,\beta)\right|_{\beta=0}
        =
        \sum_{[\mathbf{p}_0]\in\mathscr{P}_0}
        \frac{q^{(\frac12+is)\ell_0}}
             {1-q^{(\frac12+is)\ell_0}}
        \sum_{k=0}^{\ell_0-1} f(\sigma^k(\mathbf{p}_0)),
\end{equation}
which we call the \emph{weighted dynamical Ihara zeta function} associated with $f$.
Expanding the geometric series shows that $\mathbf{Z}_f$ is in fact a sum over \emph{all} (not necessarily primitive) closed orbits of $\sigma$, each weighted by its underlying primitive period.
This makes $\mathbf{Z}_f$ the discrete, non-Archimedean analogue of the \emph{dynamical weighted trace} of Schütte--Barkhofen--Weich~\cite{SchutteWeichBarkhofen23_weightedzeta}.
In the setting of compact rank-one locally symmetric spaces, the meromorphic continuation of that trace encodes Pollicott–Ruelle resonances as poles and phase-space distributions as residues.

The purpose of the present paper is to establish the analogous meromorphic and residue theory for our weighted dynamical Ihara zeta function $\mathbf{Z}_f$ in the graph setting. 

\subsection{Statement of the two main results} 

Consider the past and future non-backtracking one-sided infinite paths $\mathfrak{P}_\Gamma^\pm$, together with their respective shifts $\sigma_\pm$, such that $\mathfrak{P}_\Gamma \subset \mathfrak{P}_\Gamma^+ \times \mathfrak{P}_\Gamma^-$. We refer to Section~\ref{sect:shiftspaces} for the formal definitions of these shift spaces.

Following the approach of Bux--Hilgert--Weich \cite{BHW23} (see also the references therein), one can define on $\mathfrak{G}_\Gamma$ a discrete subset of the complex plane $\C$, namely the spectrum of the transfer operator $\mathcal{L}_{\Gamma,\pm}$ acting on a suitable Banach space $\F_\vartheta(\mathfrak{P}^\pm_\Gamma)$, for fixed $\vartheta\in (0,1)$. Its elements are often called Ruelle \emph{resonances} in the literature.
These resonances arise as the poles of the meromorphic continuation of the resolvent
$$
    \mathbf{R}(s)
        \coloneqq
        \bigl(\operatorname{Id}
        -
        q^{\frac12+is}\mathcal{L}_{\Gamma,\pm}\bigr)^{-1}: \F_\vartheta(\mathfrak{P}^\pm_\Gamma) \longrightarrow \F_\vartheta(\mathfrak{P}^\pm_\Gamma), \qquad \vartheta\in (0,1)
$$
and each such resonance $s_0$, with corresponding eigenvalue $\lambda(s_0)\coloneqq q^{-(\frac12+is_0)}$ of $\mathcal{L}_{\Gamma,\pm}$, is associated with a finite-rank residue operator $\Pi_{s_0}.$
Our first main result concerns the meromorphic continuation of \eqref{eq:intro_dyn_zeta}, together with its residues, and reads as follows.
\begin{thm}[Meromorphic continuation of the weighted dynamical Ihara zeta] \label{thm:mero_dynfct}
    Let $\mathfrak{G}_\Gamma$ be a connected, finite $(q+1)$-regular graph with $q>1$ and let $f\in C^{\mathrm{lc}}(\mathfrak{P}_{\Gamma})$.
        \begin{enumerate}
            \item[$\mathrm{(i)}$] The weighted dynamical Ihara zeta function %defined in \eqref{eq:intro_dyn_zeta} for $\mathfrak{G}_\Gamma$ 
            converges absolutely for $\mathrm{Im}(s)>\tfrac{3}{2}.$ 
            \item[$\mathrm{(ii)}$] For every fixed $\vartheta\in(0,1)$, the function $\mathbf{Z}_f(s)$ continues meromorphically to the half-plane 
            $$
            \mathscr{H}_{\vartheta,1}
            =
            \left\{
                s\in\mathbb C
                \;\middle|\;
                \operatorname{Im}(s)>
                \frac32+\frac{\log\vartheta}{\log q}
            \right\}.
            $$
            Its poles in this half-plane are contained in the set of resonances of the transfer operator 
            $\mathcal{L}_{\Gamma,\pm}$ acting on the Banach space
            $\F_\vartheta(\mathfrak{P}^\pm_\Gamma)$.
            \item[$\mathrm{(iii)}$] 
            If the resolvent 
            $\mathbf{R}(s)$ 
            on  $\F_\vartheta(\mathfrak{P}^\pm_{\Gamma})$
            has a pole of order $J(s_0)$ at $s_0$, then,
            for every $0 \leq k \leq J(s_0)$ we have
            \begin{equation} \label{eq:residue_R}
                \mathrm{Res}_{s=s_0} \big[\mathbf{Z}_f(s)(\lambda(s)-\lambda(s_0))^k \big] = \frac{i}{\log q} \mathrm{Tr}\big[(\mathcal{L}_{\Gamma, \pm}-\lambda(s_0))^{k} \Pi_{s_0}f\big].
            \end{equation}
        \end{enumerate}
\end{thm}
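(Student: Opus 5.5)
The plan is to rewrite $\mathbf{Z}_f$ as a sum of weighted traces $\mathrm{Tr}(\mathcal{L}^n_{\Gamma,\pm} f)$ over iterates, and then to resum these traces into a resolvent. First I expand the geometric series in \eqref{eq:intro_dyn_zeta}. This gives
\begin{equation*}
\mathbf{Z}_f(s)=\sum_{n\ge1} q^{(\frac12+is)n}\sum_{\mathbf{p}\in\mathrm{Fix}(\sigma^n)} f(\mathbf{p}),
\end{equation*}
where the inner sum runs over all periodic points of period $n$ of $\sigma$ on $\mathfrak{P}_\Gamma$. Indeed, each primitive class $[\mathbf{p}_0]$ of length $\ell_0$ contributes $\ell_0$ distinct periodic points, and the factor $\sum_{k} f(\sigma^k\mathbf{p}_0)$ accounts for exactly these points. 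The number of closed non-backtracking paths of length $n$ is $\mathrm{tr}(B^n)=O(q^n)$, where $B$ is the edge (Hashimoto) matrix, and $f$ is bounded. Hence $|q^{(\frac12+is)n}|=q^{n(\frac12-\mathrm{Im}\, s)}$ makes the series converge absolutely once $\mathrm{Im}(s)>\frac32$. This proves (i).

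For (ii), the key step is a trace formula (flat trace / Atiyah--Bott type) adapted to the shift. Since $f$ is locally constant, it depends on finitely many coordinates. Multiplication by $f$ therefore preserves $\F_\vartheta(\mathfrak{P}^\pm_\Gamma)$, after first pulling $f$ back to a function on one-sided paths, possibly after applying a power of the shift. On cylinder functions of depth $N$, the operator $\mathcal{L}_{\Gamma,\pm}$ acts as a finite matrix. I would establish
\begin{equation*}
\sum_{\mathbf{p}\in\mathrm{Fix}(\sigma^n)} f(\mathbf{p}) = \mathrm{Tr}\big(\mathcal{L}_{\Gamma,\pm}^n\, f\big)\cdot(\text{normalization }q^{n})
\end{equation*}
in the sense of Ruelle's trace formula for transfer operators of subshifts of finite type, with the normalization absorbed into $\lambda(s)=q^{-(\frac12+is)}$. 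The transfer operator has essential spectral radius at most $\vartheta$ times its spectral radius on $\F_\vartheta$; this is the quasi-compactness from \cite{BHW23}. Using this, I split $\mathcal{L}_{\Gamma,\pm}$ into its finite-rank spectral part, for eigenvalues of modulus greater than $\vartheta q^{-1}$ (in the normalization matching $\mathscr{H}_{\vartheta,1}$), and a remainder with small spectral radius. The remainder's contribution converges on $\mathscr{H}_{\vartheta,1}$. The finite-rank part resums to $\mathrm{Tr}\big(q^{\frac12+is}\mathcal{L}\,\mathbf{R}(s)\, f\big)$ restricted to the spectral subspace, which is meromorphic with poles only at resonances. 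Equivalently, I write $\frac{1}{\log q}\cdot i\,\partial_s$ of a Fredholm-type log-determinant, which explains the constant $\frac{i}{\log q}$, since $\partial_s \lambda(s)=-i\log q\,\lambda(s)$.

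For (iii), I would insert the Laurent expansion of the resolvent near $s_0$. In terms of $\lambda=\lambda(s)$, the expansion is
\begin{equation*}
(\lambda-\mathcal{L})^{-1}=\sum_{j=0}^{J-1}\frac{(\mathcal{L}-\lambda_0)^j\Pi_{s_0}}{(\lambda-\lambda_0)^{j+1}}+\text{holomorphic}.
\end{equation*}
The singular part of $\mathbf{Z}_f$ is $\sum_n \lambda^{-n}\mathrm{Tr}(\mathcal{L}^n\Pi_{s_0} f)=\mathrm{Tr}\big(\mathcal{L}(\lambda-\mathcal{L})^{-1}\Pi_{s_0}f\big)$. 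I then multiply by $(\lambda-\lambda_0)^k$ and convert $d\lambda=-i\log q\,\lambda\,ds$ to take the residue in $s$. The factor $\lambda$ cancels against the $\mathcal{L}$ in the numerator modulo nilpotent terms, and bookkeeping these terms gives exactly $\mathrm{Tr}[(\mathcal{L}-\lambda_0)^k\Pi_{s_0}f]$.

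The main obstacle is the trace identity in (ii). The operator $\mathcal{L}_{\Gamma,\pm}$ acts on one-sided paths, while $f$ lives on two-sided paths. One must make sense of "$\mathrm{Tr}(\cdot\, f)$", and periodic orbits must be matched with fixed points of the one-sided shift. I would first reduce to $f$ depending only on the future coordinates, using a cohomologous shift $f\circ\sigma^{m}$; this leaves the Birkhoff sums over closed orbits unchanged. I would then compute traces on the finite-dimensional spaces of depth-$N$ cylinder functions, where $\mathcal{L}$ is a $q$-weighted power of the Hashimoto matrix, and pass to the limit $N\to\infty$ using the spectral gap on $\F_\vartheta$.
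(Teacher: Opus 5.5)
Your skeleton matches the paper's proof. Expanding into $\sum_{n}q^{(\frac12+is)n}\sum_{\operatorname{Fix}(\sigma^n)}f$ with $|\operatorname{Fix}(\sigma^n)|=O(q^n)$ gives (i). The reduction $f\circ\sigma^m=f_+\circ\pi_+$ of Lemma~\ref{lem:one-sided-reduction}, together with matrix traces on cylinder spaces, gives the trace identity. Part (iii) is the Laurent expansion of the resolvent combined with $d\lambda=-i\log q\,\lambda\,ds$. The cleanest form of your ``bookkeeping'' there is $\lambda^{-1}\mathcal{L}(\lambda-\mathcal{L})^{-1}\Pi_{s_0}=(\lambda-\mathcal{L})^{-1}\Pi_{s_0}-\lambda^{-1}\Pi_{s_0}$. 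This is exactly the paper's cancellation of the Jacobian against the factor $\lambda$ in $\mathbf{R}(s)=\lambda(s)\mathscr{R}(\lambda(s))$.

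The difference is in (ii). The paper resums into $\operatorname{Tr}[\mathbf{R}(s)f_+]$ (Proposition~\ref{prop:weighted_trace}) and imports the continuation of $\mathbf{R}(s)$ from \cite{BHW23}. You instead split off the spectral projector $\Pi$ of the eigenvalues with $|\lambda|>\vartheta q$ and argue that the remainder series is holomorphic on $\mathscr{H}_{\vartheta,1}$. This gives you a polar part that is the honest trace of a finite-rank operator. Your resummation from $n\geq 1$, i.e.\ $q^{\frac12+is}\mathcal{L}\mathbf{R}(s)$, is also the exact one: $\operatorname{Tr}[\mathbf{R}(s)f_+]$, computed on $F_m$, exceeds $\mathbf{Z}_f(s)$ by the constant $n=0$ term $\operatorname{Tr}(M_{f_+}|_{F_m})$, which is harmless for poles and residues. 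However, the remainder step is not justified as written. $\mathcal{L}_{\Gamma,+}^n(\mathrm{Id}-\Pi)M_{f_+}$ is not trace class on $\F_\vartheta(\mathfrak{P}^+_\Gamma)$, so a bound on its spectral radius says nothing by itself about its trace.

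The missing point is to keep $m\geq$ depth of $f_+$ \emph{fixed}. Then $F_m(\mathfrak{P}^+_\Gamma)$ is invariant under $\mathcal{L}_{\Gamma,+}$ (Lemma~\ref{lem:invariant_F}) and under $M_{f_+}$. It is also invariant under $\Pi$: $\Pi$ is a contour integral of resolvents, and $(\lambda-\mathcal{L}_{\Gamma,+})^{-1}$ maps the finite-dimensional invariant space $F_m$ onto itself. The matrix of $\mathcal{L}_{\Gamma,+}|_{F_m}$ is (the transpose of) the transition matrix of the $m$-block presentation. Hence $\operatorname{Tr}(M_{f_+}\mathcal{L}_{\Gamma,+}^n|_{F_m})=\sum_{\mathbf{p}\in\operatorname{Fix}(\sigma^n)}f(\mathbf{p})$ holds for \emph{every} $n\geq 1$, not only for $n\leq m$. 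On a finite-dimensional space $|\operatorname{Tr}T|\leq \dim F_m\,\|T\|$, which turns $\|\mathcal{L}_{\Gamma,+}^n(\mathrm{Id}-\Pi)\|\leq C_\varepsilon(\vartheta q+\varepsilon)^n$ into the trace bound you need.

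Your proposed limit $N\to\infty$ is therefore unnecessary, and it would destroy this estimate, since $\dim F_N\sim q^N$. The same restriction to $F_m$ is what gives meaning to $\operatorname{Tr}[\mathbf{R}(s)f_+]$ in the paper's route as well, because neither $\mathbf{R}(s)$ nor $M_{f_+}$ is trace class. Once you use it, $\mathbf{Z}_f$ is even a rational function of $q^{\frac12+is}$.

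Two smaller corrections. First, with the paper's unnormalised $\mathcal{L}_{\Gamma,+}$ there is no factor $q^n$ in the trace identity, and the cut-off is $|\lambda|>\vartheta q$, not $\vartheta q^{-1}$. That cut-off is exactly $s\in\mathscr{H}_{\vartheta,1}$, and it is the normalisation your part (iii) already uses. Second, $\frac{i}{\log q}\partial_s$ of a log-determinant reproduces $\mathbf{Z}_f$ only for $f\equiv 1$. In general $\mathbf{Z}_f$ is the $\beta$-derivative of $\log d_f(s,\beta)$ at $\beta=0$ (Proposition~\ref{prop:dyn_zeta_weight_zeta}), and the constant $\frac{i}{\log q}$ comes solely from the change of variables in (iii).
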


Theorem~\ref{thm:mero_dynfct} may be seen as a discrete analogue of \cite[Thm.~1.2]{SchutteWeichBarkhofen23_weightedzeta}, which was established in the setting of open hyperbolic systems.

Moreover, the Riesz spectral projector $\Pi_{s_0}$ associated with $\mathbf{R}(s)$ can be identified with the Ruelle projector appearing in the definition of the invariant Ruelle distribution in \cite[Def.~4.3]{ArendsPalmirotta26}, see Proposition \ref{prop:Riesz_Ruelle_identification} for details.
Under this identification, when $J(s_0)=1$, Theorem~\ref{thm:mero_dynfct}~$\mathrm{(iii)}$ allows us to identify the trace appearing in the residue formula \eqref{eq:residue_R} with this invariant Ruelle distribution.
This identification is precisely what drives our second main result, which we describe next.

\subsubsection{Residue formula for phase-space distributions}
To formulate the second main result, let $\Laplace$ be the vertex Laplace operator on $\mathfrak{G}$.
For a spectral parameter $s_0\in \C$, write $\chi(s_0) \in \C\backslash\{\pm 1\}$ for the corresponding eigenvalue of $\Laplace$, and consider its associated eigenspace $\mathcal{E}_{\chi(s_0)}(\Laplace;\mathrm{Maps}(\mathfrak{X}, \mathbb{C}))$.

On $\mathfrak{G}$, the classical dynamics are described through the associated transfer operators, while the quantum dynamics is modelled by $\Laplace$.
Bux--Hilgert--Weich \cite{BHW23} and Arends--Frahm--Hilgert \cite{AFH23} established a quantum-classical correspondence between these two descriptions.

Relying on this quantum-classical correspondence, \cite[Thm.~4]{ArendsPalmirotta26} give a dynamical description of \emph{Patterson--Sullivan distribution} $\mathrm{PS}_{\phi, \phi} \in \mathcal{D}'(\mathfrak{P}_\Gamma)$ associated to $\Gamma$-invariant Laplace eigenfunctions $\phi \in \mathcal{E}_{\chi(s_0)}(\Laplace;\mathrm{Maps}(\mathfrak{X}, \mathbb{C}))^\Gamma$. %of the induced  Laplacian $\Laplace_\Gamma$ on the quotient graph $\mathfrak{G}_\Gamma.$
For a formal definition of these objects, we refer the reader to Subsection~\ref{sect:residue}.
Our second main result is stated as follows.

\begin{thm}[Patterson-Sullivan distributions as residues]
\label{thm:residue_PS}
    Let $\mathfrak{G}_\Gamma=(\mathfrak{X}_\Gamma, \mathfrak{E}_\Gamma)$ be a connected, finite $(q+1)$-regular graph with $q> 1$, and let $\mathfrak{P}_\Gamma$ be the $\Gamma$-invariant shift space of non-backtracking bi-infinite chains as in Definition~\ref{def:space_biinfinite_chains}.
    Consider a resonance $s_0 \in \mathbb{C}$ of multiplicity $\mathfrak{m} \in \mathbb{N}_0$ such that $q^{\frac{1}{2} + is_0} \notin \{\pm 1, \pm q\}$ and such that there is no Jordan block associated with $s_0$.
    
    Then 
    for any $\ell^2(\mathfrak{X}_\Gamma)$-orthonormal basis $\phi_1 , \ldots , \phi_{\mathfrak{m}}$ of 
    $\mathcal{E}_{\chi(s_0)}(\Laplace;$ $\mathrm{Maps}(\mathfrak{X}, \mathbb{C}))^\Gamma$, and for $f\in C^{\mathrm{lc}}(\mathfrak{P}_\Gamma)$, the following residue formula holds:
    \begin{equation} \label{eq:residue_PS}
        \mathrm{Res}_{s=s_0} \Big[\mathbf{Z}_{f}(s)\Big]
        = c_{q,s_0} \sum_{\ell = 1}^{\mathfrak{m}} \mathrm{P S}_{\phi_{\ell} , \phi_{\ell}}(f),
    \end{equation}
    where the constant $c_{q,s_0}\in \C$ is given explicitly by $c_{q,s_{0}} \coloneqq \frac{i}{\log q} \frac{q^{1 + 2 is_{0}} - 1}{q^{1 + 2 is_0} - q}$.
\end{thm}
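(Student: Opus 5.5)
The plan is to deduce Theorem~\ref{thm:residue_PS} from Theorem~\ref{thm:mero_dynfct}~(iii) with $k=0$, and then to translate the resulting trace into Patterson--Sullivan data using the results of Arends--Palmirotta~\cite{ArendsPalmirotta26}.

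First, the assumption that there is no Jordan block at $s_0$ means the resolvent $\mathbf{R}(s)$ has a simple pole, $J(s_0)=1$. Applying \eqref{eq:residue_R} with $k=0$ then gives
\[
\mathrm{Res}_{s=s_0}\mathbf{Z}_f(s)=\frac{i}{\log q}\,\mathrm{Tr}\big[\Pi_{s_0}f\big].
\]
Here $\Pi_{s_0}$ is the finite-rank Riesz projector onto the $\lambda(s_0)$-eigenspace of $\mathcal{L}_{\Gamma,\pm}$ on $\F_\vartheta(\mathfrak{P}^\pm_\Gamma)$, and $f$ acts by multiplication, after pulling back or pushing through the appropriate identification of $\mathfrak{P}_\Gamma$ inside $\mathfrak{P}^+_\Gamma\times\mathfrak{P}^-_\Gamma$.

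Second, I invoke Proposition~\ref{prop:Riesz_Ruelle_identification}, which identifies $\Pi_{s_0}$ with the Ruelle projector of \cite[Def.~4.3]{ArendsPalmirotta26}. With this identification, $\mathrm{Tr}[\Pi_{s_0}f]$ equals the invariant Ruelle distribution $\mathcal{T}_{s_0}(f)$. Concretely, I pick a basis $u_1,\dots,u_{\mathfrak m}$ of resonant states together with a dual basis of co-resonant states $u^*_1,\dots,u^*_{\mathfrak m}$. The trace is then $\sum_\ell \langle u^*_\ell, f u_\ell\rangle$, where the pairing is realised on $\mathfrak{P}_\Gamma$ as the product of the future state and the past co-state. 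This step should be routine bookkeeping, given that the identification is available.

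Third, I use the quantum--classical correspondence of \cite{BHW23,AFH23}, which applies because $q^{\frac12+is_0}\notin\{\pm1,\pm q\}$. It gives a bijection (Poisson transform) between resonant states and $\Gamma$-invariant Laplace eigenfunctions in $\mathcal{E}_{\chi(s_0)}(\Laplace;\mathrm{Maps}(\mathfrak{X},\C))^\Gamma$, so the multiplicity $\mathfrak m$ coincides. The same holds for co-resonant states, with boundary values taken on the opposite side.

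Choosing the $\phi_\ell$ to be $\ell^2(\mathfrak{X}_\Gamma)$-orthonormal, I then apply the relation \cite[Thm.~4]{ArendsPalmirotta26}, which expresses the invariant Ruelle distribution as a multiple of $\sum_\ell \mathrm{PS}_{\phi_\ell,\phi_\ell}$. The main obstacle is the normalisation. The dual-basis condition $\langle u^*_\ell,u_k\rangle=\delta_{\ell k}$ must be matched with $\ell^2$-orthonormality of the $\phi_\ell$, and this matching produces the factor $\frac{q^{1+2is_0}-1}{q^{1+2is_0}-q}$. This factor comes from the norm of the Poisson transform, i.e. from the pairing of boundary values versus the $\ell^2$ inner product of eigenfunctions. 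I would verify it by computing $\langle u^*,u\rangle$ for $u,u^*$ the boundary values of a single $\phi$, using the explicit Poisson kernel $q^{(\frac12+is_0)h}$ and a geometric-series sum over horocycle spheres.

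Multiplying by $\frac{i}{\log q}$ then yields $c_{q,s_0}$ and hence \eqref{eq:residue_PS}. Basis-independence of the right-hand side follows because the trace does not depend on the basis chosen.
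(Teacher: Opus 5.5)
Your overall route matches the paper's: Theorem~\ref{thm:mero_dynfct}~(iii) with $k=0$, then the Riesz--Ruelle identification, then the Ruelle--Patterson--Sullivan relation of Arends--Palmirotta. However, your second step is not routine bookkeeping, and as written it has a gap.

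A genuinely two-sided $f\in C^{\mathrm{lc}}(\mathfrak{P}_\Gamma)$ does not act by multiplication on $\F_\vartheta(\mathfrak{P}^+_\Gamma)$. The residue produced by Theorem~\ref{thm:mero_dynfct}~(iii) is really the one-sided trace $\operatorname{Tr}(\Pi_{s_0}M_{f_+})$. Here $f_+$ is the one-sided reduction from Lemma~\ref{lem:one-sided-reduction}, with $f\circ\sigma^m=f_+\circ\pi_+$; this is how $\mathbf{Z}_f=\mathbf{Z}_{f_+}=\operatorname{Tr}[\mathbf{R}(s)f_+]$ enters. Proposition~\ref{prop:Riesz_Ruelle_identification} evaluates the two-sided kernel only on test functions of the form $f_+\circ\pi_+$. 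So what you actually obtain is $\operatorname{Tr}(\Pi_{s_0}M_{f_+})=\langle k_{\Pi_{s_0}},f_+\circ\pi_+\rangle=\langle k_{\Pi_{s_0}},f\circ\sigma^m\rangle=\mathcal{T}_{s_0}(f\circ\sigma^m)$, not $\mathcal{T}_{s_0}(f)$. Likewise, your expression $\sum_\ell\langle u_\ell^*,f\,u_\ell\rangle$ only makes sense with $f$ replaced by $f_+$.

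The missing ingredient is the $\sigma$-invariance $\mathcal{T}_{s_0}(f\circ\sigma)=\mathcal{T}_{s_0}(f)$ of the invariant Ruelle distribution (Lemma~\ref{lem:Ruelledist_sigma_invariance}). The paper combines this with the one-sided reduction in Proposition~\ref{prop:trace_reduction} to get $\operatorname{Tr}(\Pi_{s_0}M_{f_+})=\mathcal{T}_{s_0}(f)$.

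The invariance holds because the kernel is a finite sum of tensors $u\otimes v$, where the future state satisfies $\mathcal{L}'_{\Gamma,+}u=\lambda(s_0)u$ and the past state has the same eigenvalue, $\mathcal{L}'_{\Gamma,-}v=\lambda(s_0)v$. On cylinder indicators, $\sigma$ moves one edge from the past chain to the future chain. This multiplies the future factor by $\lambda(s_0)^{-1}$ and the past factor by $\lambda(s_0)$, and the two cancel. With this added, your argument closes. Alternatively, you could first pass to $c_{q,s_0}\sum_\ell\mathrm{PS}_{\phi_\ell,\phi_\ell}(f\circ\sigma^m)$ and then use $\sigma$-invariance of the Patterson--Sullivan distributions. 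Either way, some such invariance statement is indispensable, and your proposal does not mention it.

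A smaller point concerns the last step. The relation expressing $\mathcal{T}_{s_0}$ as $\frac{q^{1+2is_0}-1}{q^{1+2is_0}-q}\sum_\ell\mathrm{PS}_{\phi_\ell,\phi_\ell}$ is \cite[Thm.~5]{ArendsPalmirotta26}; Thm.~4 there is the dynamical description of $\mathrm{PS}_{\phi,\phi}$. That relation already contains the normalisation constant, so the paper simply cites it. Your proposed Poisson-kernel computation of $\langle u^*,u\rangle$ against $\|\phi\|^2_{\ell^2}$ would re-derive that theorem rather than supply a missing step.
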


Thus the Patterson--Sullivan distributions introduced in \cite{ArendsPalmirotta26} are recovered from weighted periodic-orbit data, shedding new light on the spectral analysis of $\Laplace_\Gamma$ on $\mathfrak{G}_\Gamma$.

Moreover, since by \cite[Thm.~6]{ArendsPalmirotta26} these distributions are exactly identified with microlocal lifts, the \emph{Wigner distributions} $W_{\phi, \phi}$, we also derive a semiclassical residue formula for $W_{\phi, \phi}$ in Corollary~\ref{cor:Wigner}.

This new perspective addresses the problem stated in \cite[Problem 6.33]{Hilgert23}.
Furthermore, Theorem~\ref{thm:residue_PS} can be viewed as a discrete counterpart in the Archimedean setting. 
For compact hyperbolic surfaces of rank one, Anantharaman--Zelditch~\cite[Thm.~1.3]{AZ07} first showed, by representation-theoretic methods, that these residues coincide with {Patterson--Sullivan} distributions. 
Subsequently, Emonds, in his thesis~\cite{Emonds14}, extended the residue formula of~\cite{AZ07} to hyperbolic manifolds of arbitrary dimension, using a suitably restricted class of test functions. %relying on techniques from representation theory. %, under a corresponding restriction on the space of test functions.
%namely that $f$ be $K$-finite. 
%and working with a more restricted class of test functions.
By purely microlocal analysis methods from~\cite{DyatlovZworski19,
DyatlovGuillarmou16}, %better suited to the meromorphic continuation of (weighted) zeta functions, 
Sch\"utte--Barkhofen--Weich~\cite[Thm.~4.1]{SchutteWeichBarkhofen23_weightedzeta} generalised this residue
formula to invariant Ruelle distributions, for general open hyperbolic systems and arbitrary smooth weights. 
Moreover, using the relation established by Guillarmou--Hilgert--Weich~\cite{GHWb} for compact rank-one locally symmetric spaces, they recover the Patterson–Sullivan interpretation as a special case. 
This generalises both Anantharaman–Zelditch's and Emonds' results.
We also refer to~\cite{BarkhofenSchutteWeich22semiclassical, SchutteWeich23, DP24} for further extensions and numerical aspects.

In addition, in Section~\ref{sect:dyn_determinants}, we express \eqref{eq:intro_dyn_zeta} as a dynamical determinant, making $\mathbf{Z}_f$ applicable for numerics. Mirroring the algorithm of \cite{SchutteWeich23}, it would therefore be interesting to compute invariant Ruelle distributions on finite regular graphs numerically via weighted dynamical zeta functions.
We intend to pursue this numerical aspect in future work.

\subsection{Two key ingredients and proof strategy}
Inspired by the Archimedean setting \cite[Thm.~1.2 and Thm.~4.1]{SchutteWeichBarkhofen23_weightedzeta},
the proofs of our main theorems, in particular  Theorem~\ref{thm:mero_dynfct}, rely on two key ingredients.

\subsubsection{Weighted trace formula (Proposition~\ref{prop:weighted_trace})}

The first key is to derive a \emph{weighted trace formula}
\begin{equation} \label{eq:intro_trace}
    \mathbf{Z}_f(s) = \operatorname{Tr}[\mathbf{R}(s)f], \qquad f\in C^{\mathrm{lc}}(\mathfrak{P}_\Gamma).
\end{equation}
This is established precisely in Proposition~\ref{prop:weighted_trace}, whose proof rewrites \eqref{eq:intro_dyn_zeta} in terms of traces of
$\mathcal{L}_{\Gamma,+}$ on one-sided future path spaces, see Lemma~\ref{lem:trace_unweighted_multiplicative} for details.

The key point behind this reduction is that any two-sided, locally constant function $f\in C^{\mathrm{lc}}(\mathfrak{P}_\Gamma)$ depends, after applying a sufficiently large power of the shift, only on future paths: compactness of $\mathfrak{P}_\Gamma$ forces $f$ to depend on finitely many coordinates, so that shifting far enough into the future \emph{absorbs all} of its dependence on the past. 
Consequently, $f$ agrees with a one-sided potential $f_+\in C^{\mathrm{lc}}(\mathfrak{P}_\Gamma^+)$ composed with the canonical future projection $\pi_+:\mathfrak{P}_\Gamma\to\mathfrak{P}_\Gamma^+$, and the periodic-orbit sum generated by $f$ coincides with the one generated by $f_+$:
$$ \sum_{k=0}^{\ell_0-1}(f\circ\sigma^m)(\sigma^k(\mathbf{p}_0)) 
    = \sum_{k=0}^{\ell_0-1} f_+(\sigma_+^k(\pi_+(\mathbf{p}_0))), \qquad \forall [\mathbf{p}_0] \in \mathscr{P}_0,$$
for some large power $m\geq 1$. We refer to Lemma~\ref{lem:one-sided-reduction} for the precise statement.

This reduction to a one-sided potential is what allows us to work throughout with the transfer operator $\mathcal{L}_{\Gamma,+}$ on the one-sided space $\mathfrak{P}_\Gamma^+$, rather than having to introduce a resolvent on the bi-infinite shift space $\mathfrak{P}_\Gamma$.

Let us also mention that the discrete (Selberg) trace formula on regular graphs has been the subject of considerable interest over the years, starting with Ahumada~\cite{Ahumada87} up to the recent work of Gong--Li--Liu~\cite{GongLiLiu24}, to which we refer for a historical overview and further references.
Our weighted trace formula \eqref{eq:intro_trace} generalises these results to a general function $f\in C^{\mathrm{lc}}(\mathfrak{P}_\Gamma)$, which is not assumed to be radial or determined by a scalar spectral function, so that the periodic-orbit side retains the full dependence of $f$ on the periodic primitive path $\mathbf{p}_0$ and its shifts $\sigma^k(\mathbf{p}_0)$.

\subsubsection{Meromorphic continuation of the weighted resolvent (Theorem~\ref{thm:mero_resolvent})}
The second key ingredient concerns the meromorphic continuation of the weighted resolvent, generalising the setting of \cite{BHW23} to weighted transfer operators. 
It is worth noting that the proof of Theorem~\ref{thm:mero_dynfct} only requires the meromorphic continuation of the \emph{unweighted} resolvent, which is already established in \cite[Thm.~8.3]{BHW23}. 
Nevertheless, we develop the weighted continuation in full generality, since it %is not more difficult to establish and 
is needed to prove the holomorphic continuation of the dynamical determinant $d_f(s,\beta)= \zeta(s,\beta)^{-1}$  
in Proposition~\ref{prop:dyn_det_Fredholm}.

Concretely, for a weight $\omega \in C^{\mathrm{lc}}(\mathfrak{P}_\Gamma^\pm)$, we introduce the \emph{weighted transfer operator} $\mathcal{L}_{\Gamma,\pm, \omega}$ acting on a suitable Banach space 
$\F_\vartheta(\mathfrak{P}^\pm_\Gamma)$, %of observables on the space of non-backtracking one-sided future (respectively past) infinite paths, 
and consider $\mathbf{R}_{\omega}(s)\coloneqq
        \bigl(\operatorname{Id}
        -
        q^{\frac12+is}\mathcal{L}_{\Gamma,\pm, \omega}\bigr)^{-1}$
on $\F_\vartheta(\mathfrak{P}^\pm_\Gamma)$
the associated weighted resolvent. 
In Theorem~\ref{thm:mero_resolvent}, we show that $\mathbf{R}_{\omega}(s)$ admits a meromorphic continuation to the half-plane \eqref{eq:region}, which depends on $\omega$, thus extending the unweighted case, $\omega \equiv 1$, of \cite[Thm.~8.3]{BHW23} to a genuinely weighted setting. 
Let us also point out that the meromorphic continuation of the unweighted transfer-operator resolvent obtained in \cite[Thm.~8.3]{BHW23} arises in a setting that may be viewed as a dynamical analogue of the classical Ihara theory: in the finite regular case, the poles of the unweighted resolvent are encoded by the zeros of the Ihara–Bass determinant \cite{Hashimoto89}.

Still on the topic of transfer operators, we mention two further directions in which the
present framework could be extended. Hilgert--Kahl--Weich~\cite{HilgertKahlWeich26} have recently introduced and studied transfer operators on Euclidean buildings, a higher-rank generalisation of graphs, acting on a suitable Banach space; and
\cite{ArendsPetersonWeich26} have introduced a framework of geometrically finite graphs, the discrete analogues of geometrically finite hyperbolic surfaces (including funnels and cusps). 
In both settings, it would be interesting to establish a meromorphic continuation of the resolvent via the transfer-operator machinery, along the lines of Theorem~\ref{thm:mero_resolvent} below.
In the geometrically finite case, this would in particular allow one to define the \emph{resonances} of the associated transfer operator.

% -------------------------------%%--------------------------------------%
\subsection*{Structure of the paper}
The paper is organised as follows. 

In Section~\ref{sect:finite_graphs}, we begin by introducing
the geometry of regular graphs and the space of non-backtracking (bi-)infinite paths with their corresponding shift dynamics. 
We then present, in Section~\ref{sect:resonances}, the weighted transfer operator acting on some suitable Banach spaces and develop, analogously to \cite{BHW23}, in Subsection~\ref{sect:function_spaces_key_ineq}, the tools needed to show the meromorphic continuation of the weighted resolvent, which is introduced in Subsection~\ref{sect:weightedRuelletransfer}. 
The final part of the paper, Section~\ref{sect:dyn_zeta}, is devoted to our weighted dynamical zeta function.
After expressing it in terms of traces of transfer operators, we prove its meromorphic continuation (Theorem~\ref{thm:mero_dynfct}) in Subsection~\ref{sect:proof_merom}, and relate it to the dynamical determinants in Subsection~\ref{sect:dyn_determinants}. 
We then derive the associated residue formulae in Subsections~\ref{sect:residue}--\ref{sect:Wigner}; in particular, Subsection~\ref{sect:proof_Thm_residuePS} contains the proof of our main result, Theorem~\ref{thm:residue_PS}.

% -------------------------------%%--------------------------------------%
\subsection*{AI use statement}
ChatGPT-5.5 and Claude Sonnet 5 were used only to assist with proofreading and language improvements of the manuscript's exposition, and to refine the TikZ Figures~\ref{fig:quotient_space} and~\ref{fig:mero_weighted}.
All such assistance was reviewed and manually edited by the author.
%The author made all final decisions and wrote the manuscript entirely. 
%In particular, all mathematical content, theorems, and proofs are entirely the author's own work.

% -------------------------------%%--------------------------------------%
\subsection*{Acknowledgments}
I would like to thank Christian Arends for helpful discussions at the beginning of this project, my colleagues Daniel Kahl and Luca Wasmuth for valuable exchanges on transfer operators and shift spaces,
and Tobias Weich for many helpful comments. Last but not least, I am grateful to Joachim Hilgert for suggesting this interesting problem, which was mentioned in \cite[Problem 6.33]{Hilgert23}.\\
This work is funded by the Deutsche Forschungsgemeinschaft (DFG, German Research Foundation) via the grants SFB-TRR 358/1 2023 - 491392403 (CRC ``Integral Structures in Geometry and Representation Theory'') and WE 6173/1-1 (Emmy Noether group ``Microlocal Methods for Hyperbolic Dynamics'').

% -------------------------------%%--------------------------------------%
\section{Finite regular graphs and shift spaces}  \label{sect:finite_graphs}
In this section, we review the basic notions of finite regular graphs and shift spaces and fix the relevant notation.

% -------------------------------%%--------------------------------------%
\subsection{Regular graphs} \label{sect:regular_graphs}
Let $\mathfrak{G} \coloneqq(\mathfrak{X}, \mathfrak{E})$ be a connected \emph{graph} consisting of a vertex set $\mathfrak{X}$ and a set $\mathfrak{E} \subseteq \mathfrak{X}^2$ of directed edges, which is $(q + 1)$-regular for $q\geq 1$, i.e., every vertex has $q + 1$ neighbors.
Assume that $\mathfrak{G}$ is a \emph{simple graph}, meaning that it
\begin{itemize}
    \item has no loops, i.e., $\mathfrak{E} \cap \; \left\{(x,x) \; \middle\vert \;  x \in \mathfrak{X} \right\} = \emptyset $,
    \item has no multiple edges, and
    \item is symmetric under the switch of vertices, i.e., if $(x,y) \in \mathfrak{E}$, then $(y,x) \in \mathfrak{E}$.
\end{itemize}
For each directed edge $\vec{e} \coloneqq(a,b)$, with $a,b \in \mathfrak{X},$ we denote by $\iota(\vec{e}) \coloneqq a$ the \emph{initial} and $\tau(\vec{e}) \coloneqq b$ the \emph{terminal} vertex of $\vec{e}$. 
Its \emph{opposite} is denoted by $\eop{} \coloneqq (b,a)$.

Recall that a connected graph without cycles is called a \emph{tree}. 
Equivalently, a tree is a connected graph in which any two vertices are joined by a unique path.
Moreover, one can relate a connected $(q+1)$-regular graph to a \emph{$(q+1)$-regular tree} through its universal cover. 
More precisely, one first takes the universal cover of the underlying undirected graph, obtained by identifying every directed edge with its opposite, and then replaces each undirected edge of the cover by a pair of directed edges with opposite orientations. 
This endows the universal cover with an oriented edge structure.

% -------------------------------%%--------------------------------------%
\subsection{Shift spaces} \label{sect:shiftspaces}
Next, we introduce the set of all one-sided infinite chains of edges.
Since $\mathfrak{G}$ is finite, this space is compact.

\begin{definition}[Shift spaces, {\cite[Def.~1.7]{AFH23}}] \label{def:shiftspaces}
    The \emph{shift space} $\mathfrak{P}^+$ is the set of all chains of edges, that is, all infinite sequences $\mathbf{p}_{+}\coloneqq(\vec{e}_1, \vec{e}_2, \vec{e}_3, \dots)$ of directed edges
    such that consecutive edges are
    \emph{concatenated} $\tau(\vec{e}_i) = \iota(\vec{e}_{i+1})$, and satisfy the \emph{non-backtracking condition} $\tau(\vec{e}_{i + 1}) \neq \iota(\vec{e}_i)$,
    for all $i \geq 1$.
    On $\mathfrak{P}^+$, we define the \emph{shift operator} 
    \[
    \sigma_+\colon\mathfrak{P}^+\longrightarrow\mathfrak{P}^+,
    \qquad
    (\vec{e}_1,\vec{e}_2,\vec{e}_3,\ldots)
    \longmapsto
    (\vec{e}_2,\vec{e}_3,\ldots).
    \]
    
    By reversing the orientation, we obtain the \emph{opposite shift space} $\mathfrak{P}^-$, consisting of (left-) infinite, non-backtracking sequences of concatenated edges of the form $\mathbf{p}_{-}\coloneqq(\dots, \vec{e}_2, \vec{e}_1)$.
    Its corresponding shift operator is 
    \[
    \sigma_-\colon\mathfrak{P}^-\longrightarrow\mathfrak{P}^-,
    \qquad
    (\ldots,\vec{e}_3,\vec{e}_2,\vec{e}_1)
    \longmapsto
    (\ldots,\vec{e}_3,\vec{e}_2).
    \]
\end{definition}

On a tree, infinite non-backtracking chains can be interpreted as geodesic rays. 
Two such rays determine the same end if their tails coincide, possibly after a shift of their indices. 
In this sense, $\mathfrak{P}^+$ can be regarded as a graph analogue of a space of forward geodesic rays, and $\sigma_+$ as the corresponding forward-time dynamics.
Thus, the bi-infinite path space is the discrete
analogue of the unit sphere bundle of a Riemannian manifold equipped with the geodesic flow.

We equip $\mathfrak{P}^+$ with the topology induced by the ultrametric
\begin{equation} \label{eq:ultrametric}
    d_{\vartheta}(\mathbf{p}_+, \mathbf{p}'_{+}) \coloneqq \vartheta^{\kappa(\mathbf{p}_+, \mathbf{p}'_{+})},
\end{equation}
where $\vartheta\in(0,1)$ is fixed and, for distinct paths $\mathbf{p}_{+} = (\vec{e}_1, \vec{e}_2, \ldots), \mathbf{p}'_{+} = (\vec{b}_1, \vec{b}_2, \ldots) \in \mathfrak{P}^+$, we set
$\kappa(\mathbf{p}_+, \mathbf{p}'_{+}) \coloneqq \inf\{i \geq 1 : \vec{e}_i \neq \vec{b}_i\}$, see \cite[§5]{BHW23}. 
By convention, $d_\vartheta(\mathbf{p}_+,\mathbf{p}'_{+})=0$ if $\mathbf{p}_+=\mathbf{p}'_{+}$.
Thus, the distance between two paths decreases exponentially with the length of their common initial segment, and it is straightforward to check that $d_\vartheta$ satisfies the strong triangle inequality
$$
d_\vartheta(\mathbf{p}_+, \mathbf{r}_+) \le \max\{d_\vartheta(\mathbf{p}_+,\mathbf q_+), d_\vartheta(\mathbf q_+,\mathbf{r}_+)\}, \quad \mathbf{p}_+,\mathbf q_+,\mathbf{r}_+ \in \mathfrak{P}^+
$$
so that $d_\vartheta$ is indeed an ultrametric on $\mathfrak{P}^+$.

The space $\mathfrak{P}^-$ is equipped with the analogous ultrametric, defined by comparing edges
starting from the right end of the chain.

With this notation, we define the \emph{space of bi-infinite chains}.

\begin{definition}[Space of bi-infinite chains, {\cite[§~1.3]{AFH23Pairing}}]
\label{def:space_biinfinite_chains}
    The \emph{space of bi-infinite chains} is defined as the set of pairs consisting of a forward chain $\mathbf{p}_+ \in \mathfrak{P}^+$, representing the future, and a backward chain $\mathbf{p}_- \in \mathfrak{P}^-$, representing the past:
    \begin{equation} \label{eq:space_biinfinite_chains}
      \mathfrak{P} \coloneqq \left\{
        (\mathbf{p}_+, \mathbf{p}_-) \in \mathfrak{P}^+ \times \mathfrak{P}^-
        \mid
        \iota\!\left(\pi^{\mathfrak{E}}_+(\mathbf{p}_+)\right)
        =
        \tau\!\left(\pi^{\mathfrak{E}}_-(\mathbf{p}_-)\right), \pi^{\mathfrak{E}}_+(\mathbf{p}_+) \neq \!\left(\pi^{\mathfrak{E}}_-(\mathbf{p}_-)\right)^{\mathrm{op}}
        \right\} \subset \mathfrak{P}^+ \times \mathfrak{P}^-,
    \end{equation}
    where $\pi^{\mathfrak{E}}_\pm\colon \mathfrak{P}^\pm \rightarrow \mathfrak{E}$ are the projections onto the first oriented edge of a chain.

    For 
    $(\mathbf{p}_+, \mathbf{p}_-) \in \mathfrak{P}$ with 
    $\mathbf{p}_+ = (\vec{e}_1, \vec{e}_2, \ldots) \in \mathfrak{P}^+$ 
    and 
    $\mathbf{p}_- = (\ldots, \vec{b}_2, \vec{b}_1) \in \mathfrak{P}^-$, 
    the shift $\sigma$ on $\mathfrak{P}$ is defined by
    $$\sigma(\mathbf{p}_+, \mathbf{p}_-) \coloneqq  \bigl((\vec{e}_2, \vec{e}_3, \ldots),\ (\ldots, \vec{b}_2, \vec{b}_1, \vec{e}_1)\bigr),
    $$
    where
    $\sigma_+(\mathbf{p}_+) = (\vec{e}_2, \vec{e}_3, \ldots)$ drops the first edge of $\mathbf{p}_+$, shifting the basepoint from $\iota(\vec{e}_1)$ to $\tau(\vec{e}_1) = \iota(\vec{e}_2)$, and $(\ldots, \vec{b}_2, \vec{b}_1, \vec{e}_1)$ appends $\vec{e}_1$ to the right end of $\mathbf{p}_-$, recording $\vec{e}_1$ as the most recently traversed edge in the past.
\end{definition}
Thus, $\sigma$ moves the basepoint one step \emph{forward} along the future chain.

Note that the two conditions in \eqref{eq:space_biinfinite_chains} ensure that the two one-sided chains share a basepoint vertex and do not immediately backtrack into each other.\\
We endow $\mathfrak{P}$ with the corresponding subspace topology induced from the product topology on $\mathfrak{P}^+ \times \mathfrak{P}^-.$

\begin{rem} \label{rem:biinfinite_chains}
    \begin{itemize}
        \item[(i)] The description in \cite[§2.2.3]{ArendsPalmirotta26} for two forward chains $(\mathbf{p}_+,\mathbf{p}'_+)\in({\mathfrak{P}^{+}})^2$
        is equivalent to the description \eqref{eq:space_biinfinite_chains} of $\mathfrak{P}$. 
        Indeed,
        the second component $\mathbf{p}'_+ \in \mathfrak{P}^{+}$ encodes the past of a bi-infinite chain by running forward in the opposite direction from the basepoint, which is by definition $\mathbf{p}_- \in \mathfrak{P}^{-}$.
        \item[(ii)] The shift $\sigma$ cannot be defined componentwise as $(\sigma_+(\mathbf{p}_+), \sigma_-(\mathbf{p}_-))$ since it would violate the shared basepoint condition $\iota\!\left(\pi^{\mathfrak{E}}_+(\mathbf{p}_+)\right)
        =
        \tau\!\left(\pi^{\mathfrak{E}}_-(\mathbf{p}_-)\right)$.
        In fact, 
        the shift map $\sigma_+$ to $\mathbf{p}_+$ would drop the first edge $\vec{e}_1$ and move the basepoint of $\mathbf{p}_+$ forward from $\iota(\vec{e}_1)$ to $\iota(\vec{e}_2)=\tau(\vec{e}_1)$,
        whereas applying $\sigma_-$ to $\mathbf{p}_-$ would drop the last edge $\vec{b}_1$ and move the basepoints of $\mathbf{p}_-$ from $\iota(\vec{b}_1)$ to $\iota(\vec{b}_2)=\tau(\vec{b}_2)$. 
        In general, however, these vertices do not coincide 
        $\tau(\vec{e}_1) \neq \tau(\vec{b}_1)$, 
        since $\mathbf{p}_+$ and $\mathbf{p}_-$ start at the same vertex $\iota(\vec{e}_1) = \iota(\vec{b}_1)$ but go in different directions.
        \item[(iii)] The shift map
        $\sigma: \mathfrak{P} \longrightarrow \mathfrak{P}$ is well-defined and bijective.
        Indeed, two defining conditions of $\mathfrak{P}$ are preserved:
        \begin{itemize}
            \item the new chains share the basepoint because $\iota(\vec{e}_2) = \tau(\vec{e}_1)$, and the last edge of $(\ldots, \vec{b}_2, \vec{b}_1, \vec{e}_1)$ is $\vec{e}_1$ with $\tau(\vec{e}_1) = \iota(\vec{e}_2)$,
            \item there is no backtracking at the new basepoint because $\vec{e}_2\neq\vec{e}_1^{\,\mathrm{op}}$
            by the non-backtracking condition on $\mathbf{p}_+$.
        \end{itemize}
        Moreover, for $(\mathbf{p}_+, \mathbf{p}_-) \in \mathfrak{P}$ with $\mathbf{p}_+ = (\vec{e}_1, \vec{e}_2, \ldots) \in \mathfrak{P}^+$ and 
        $\mathbf{p}_- = (\ldots, \vec{b}_2, \vec{b}_1) \in \mathfrak{P}^-$, note that the inverse of $\sigma$ is
        $$\sigma^{-1}(\mathbf{p}_+, \mathbf{p}_-) = \bigl((\vec{b}_1,\vec{e}_1, \vec{e}_2, \ldots),\ (\ldots, \vec{b}_3, \vec{b}_2)\bigr).$$
        So $\sigma^{-1}$ moves the basepoint one step \emph{backward} along the future edge $\vec{b}_1$.
            \end{itemize}
\end{rem}

% -------------------------------%%--------------------------------------%
\subsection{Quotient spaces} \label{sect:qoutient_spaces}

Let $\widetilde{\mathfrak{G}} \coloneqq (\widetilde{\mathfrak{X}},\widetilde{\mathfrak{E}})$
be a $(q+1)$-regular tree, and let $\Gamma \leq G \coloneqq \mathrm{Aut}(\widetilde{\mathfrak{G}})$ be a discrete subgroup of automorphisms of $\widetilde{\mathfrak{G}}$, acting freely, without edge inversions, and cocompactly on $\widetilde{\mathfrak{G}}$. 
In this setting, passing to the quotient by $\Gamma$ produces \emph{finite $(q+1)$-regular graphs} $ \mathfrak{G}_\Gamma \coloneqq \Gamma\backslash\widetilde{\mathfrak{G}}$,
whose universal cover is $\widetilde{\mathfrak{G}}$. If needed, we additionally assume that $\mathfrak{G}_\Gamma$ is simple.
More precisely, the quotient graph
$\mathfrak{G}_\Gamma = (\mathfrak{X}_\Gamma, \mathfrak{E}_\Gamma)$
is defined 
\begin{itemize}
    \item vertex-wise 
    $\mathfrak{X}_\Gamma \coloneqq \Gamma\backslash\widetilde{\mathfrak{X}} =\{\Gamma x\mid x\in\widetilde{\mathfrak{X}}\}$ and
    \item edge-wise 
    $\mathfrak{E}_\Gamma \coloneqq \Gamma\backslash\widetilde{\mathfrak{E}} =\{\Gamma\vec{e}\mid\vec{e}\in\widetilde{\mathfrak{E}}\}.$
\end{itemize}
The quotient maps are
$$\pi_{\mathfrak{X}} \colon
\widetilde{\mathfrak{X}} \longrightarrow \mathfrak{X}_\Gamma, x \mapsto \Gamma x \quad \text{ and } \quad \pi_{\mathfrak{E}}\colon
\widetilde{\mathfrak{E}}\longrightarrow\mathfrak{E}_\Gamma, \vec{e} \mapsto \Gamma \vec{e}$$
and satisfy 
$\pi_{\mathfrak{X}} \circ \gamma = \pi_{\mathfrak{X}},$ for every $\gamma \in \Gamma.$
The incidence maps on the quotient
$
\iota_\Gamma(\Gamma\vec{e})
\coloneqq
\Gamma\iota(\vec{e})$ and
$\tau_\Gamma(\Gamma\vec{e})
\coloneqq
\Gamma\tau(\vec{e})
$
are well-defined and satisfy 
$\iota_\Gamma\circ\pi_{\mathfrak{E}}= \pi_{\mathfrak{X}}\circ\iota,$ and 
$\tau_\Gamma\circ\pi_{\mathfrak{E}} = \pi_{\mathfrak{X}}\circ\tau.$

Since the action is free, without inversions, and cocompact, the quotient
map
$$
\pi\colon
\widetilde{\mathfrak{G}}\longrightarrow\mathfrak{G}_\Gamma
$$
is a graph covering, and $\Gamma$ is its group of deck transformations, see Figure~\ref{fig:quotient_space}.
In particular, the covering map preserves the local structure at every
vertex, so $\mathfrak{G}_\Gamma$ is $(q+1)$-regular. Cocompactness implies
that $\mathfrak{G}_\Gamma$ has finitely many vertices and edges.

The tree $\widetilde{\mathfrak{G}}$ is called a \emph{homogeneous tree} because every vertex has the same degree $q+1$.
As a tree, it is $0$-hyperbolic in the sense of Gromov, meaning that all triangles are ``thin''. 
Notice that these statements concern the covering tree $\widetilde{\mathfrak{G}}$, not in general the quotient graph $\mathfrak{G}_\Gamma$.

% -------------------------------%%--------------------------------------%
\subsubsection{Quotient shift spaces} \label{sect:qoutient_shift_spaces}

The action of $\Gamma$ on $\widetilde{\mathfrak{G}}$ induces actions on
the associated shift spaces. 
Indeed, the action of $\Gamma$ on $\widetilde{\mathfrak{X}}$ induces an
action on $\widetilde{\mathfrak{E}}$ by
$\gamma(x,y)\coloneqq(\gamma x,\gamma y),$
and hence an action on $\widetilde{\mathfrak{P}}^+$ given by
$
\gamma\cdot(\vec{e}_1,\vec{e}_2,\ldots)
\coloneqq
(\gamma\vec{e}_1,\gamma\vec{e}_2,\ldots).
$
The action on $\widetilde{\mathfrak{P}}^-$ is defined analogously, and
$\Gamma$ acts diagonally on
$
\widetilde{\mathfrak{P}}
\subseteq
\widetilde{\mathfrak{P}}^+\times\widetilde{\mathfrak{P}}^-.$\\
We define the corresponding quotient spaces
by
$$
\mathfrak{P}^\pm_\Gamma
\coloneqq
\Gamma\backslash\widetilde{\mathfrak{P}}^\pm,
\qquad
\mathfrak{P}_\Gamma
\coloneqq
\Gamma\backslash\widetilde{\mathfrak{P}}.
$$
Specifically, the quotient map on edges  $\pi_{\mathfrak{E}}$ induces a map on forward chains
$$
\pi_{\mathfrak{E}}^{+}\colon \widetilde{\mathfrak{P}}^{+} \longrightarrow \mathfrak{P}^{+}_{\mathfrak{G}_\Gamma},
\qquad
\mathbf{p}_+ \mapsto (\pi_{\mathfrak{E}}(\vec{e}_1), \pi_{\mathfrak{E}}(\vec{e}_2), \dots),
$$
where $\mathfrak{P}^{+}_{\mathfrak{G}_\Gamma}$ denotes the space of one-sided infinite non-backtracking chains of edges in $\mathfrak{E}_\Gamma$.
Since $\Gamma$ acts freely on $\mathfrak{G}$, the projection
$\pi_{\mathfrak{E}}$ preserves the concatenation and non-backtracking condition, so the map $\pi_{\mathfrak{E}}^{+}$ is well-defined.
Note that every chain in $\mathfrak{P}^+_{\mathfrak{G}_\Gamma}$ admits a lift to
$\widetilde{\mathfrak{P}}^+$.

Moreover, two lifted chains $(\vec{e}_i)_{i\geq 1}$ and $(\vec{e}^{\;'}_i)_{i\geq 1}$ have the same image under $\pi_{\mathfrak{E}}^{+}$ if and only if they differ by the action of a single element of $\Gamma$, i.e., there exists a unique $\gamma \in \Gamma$ such that $\vec{e}_i^{\;'}=\gamma \vec{e}_i$ for all $i\geq 1.$
Consequently, $\pi_{\mathfrak{E}}^{+}$ descends to a bijection 
$\Gamma \backslash \widetilde{\mathfrak{P}}^+ \stackrel{\sim}{\longrightarrow} \mathfrak{P}^{+}_{\mathfrak{G}_\Gamma}.$
With respect to the quotient topology on the left and the shift-space
topology on the right, this bijection is a homeomorphism, as $\pi_{\mathfrak{E}}$ is a local homeomorphism.
Furthermore, the shift map $\sigma_+$ commutes with the projection $\pi_{\mathfrak{E}}^{+}$, i.e.,
$\pi_{\mathfrak{E}}^+\circ\widetilde{\sigma}_+ =\sigma_+\circ\pi_{\mathfrak{E}}^+,$
so the induced homeomorphism is shift-equivariant. 

The same argument applies to the backward $\mathfrak{P}^-$ and bi-infinite path spaces $\mathfrak{P}$. 
We therefore obtain
the natural shift-equivariant identifications
$\mathfrak{P}^\pm_\Gamma \cong \mathfrak{P}^\pm_{\mathfrak{G}_\Gamma}$ and $\mathfrak{P}_\Gamma \cong \mathfrak{P}_{\mathfrak{G}_\Gamma}$.\\

From now on, we will omit the tilde, except when it is needed to distinguish the universal cover from the underlying space.
% -------------------------------%%--------------------------------------%
\section{Resonances for finite graphs} \label{sect:resonances}

To investigate resonances for finite graphs, we introduce a weighted version of the Ruelle transfer operators from \cite[§3]{BHW23} and analyse their action on the associated shift spaces $\mathfrak{P}^\pm$ as well as on suitable Banach spaces of functions.
This functional-analytic framework allows us to define \emph{resonances} 
as the isolated eigenvalues of finite algebraic multiplicity lying outside
the essential spectrum of the transfer operator.

% -------------------------------%%--------------------------------------%
\subsection{Weighted transfer operators} \label{sect:weightedRuelletransfer}
Let $\mathrm{Maps}(\mathfrak{P}^\pm, \C) \coloneqq \C^{\mathfrak{P}^\pm}$ be the vector space of complex-valued functions $\varphi\colon \mathfrak{P}^\pm \rightarrow \C$ equipped with pointwise operations.

\begin{definition}[Weighted Ruelle transfer operator] \label{def:Ruelle_transfer_op}
    Let
    $\omega \colon \mathfrak{P}^+ \to \mathbb{C}$
    be a potential depending only on finitely many initial edges.
    On the space $\mathrm{Maps}(\mathfrak{P}^+ , \mathbb{C})$, we define the \emph{weighted Ruelle transfer operator}
    $\mathcal{L}_{+,\omega} \colon \mathrm{Maps}(\mathfrak{P}^+ , \mathbb{C}) \rightarrow \mathrm{Maps}(\mathfrak{P}^+ , \mathbb{C})$ with potential $\omega$ as
    \begin{equation*}
        \mathcal{L}_{+,\omega} \varphi(\mathbf{p}_{+}) 
        \coloneqq 
        \sum_{\substack{\vec{e}_1 \colon \tau(\vec{e}_1)= \iota(\vec{e}_2)\\
        \vec{e}_1\neq\vec{e}_2^{\,\mathrm{op}}}}
        \omega (\mathbf{p}'_{+}) \varphi(\mathbf{p}'_{+}) 
        = \sum_{\sigma_+(\mathbf{p}'_+)=\mathbf{p}_+} \omega(\mathbf{p}'_{+}) \varphi(\mathbf{p}'_{+}).
    \end{equation*}
    where $\sigma_+$ is the shift operator in Definition~\ref{def:shiftspaces}.
    %where $\mathbf{p}'_{+}=(\vec{e}_0 , \vec{e}_1 , \ldots) \in \mathfrak{P}^+ $.
    
    Reversing the orientations of the edge paths, we get the operator $\mathcal{L}_{-,\omega}$ on $\mathrm{Maps}(\mathfrak{P}^-, \mathbb{C})$ with the corresponding potential $\omega$ on $\mathfrak{P}^-$ and shift operator $\sigma_-.$
\end{definition}

\begin{rem}[Two-sided weighted transfer operator on $\mathfrak{P}$]
    The transfer operator on bi-infinite chains is defined by
    \begin{eqnarray*}
            \mathscr{L}_{\omega, \text{future}}: \mathrm{Maps}(\mathfrak{P} , \mathbb{C}) &\longrightarrow& \mathrm{Maps}(\mathfrak{P} , \mathbb{C}) \\
            \varphi &\longmapsto& \sum_{\sigma(\mathbf{p}')=\mathbf{p}} \omega(\mathbf{p}') \varphi(\mathbf{p}')=\omega(\sigma^{-1}(\mathbf{p})) \varphi(\sigma^{-1}(\mathbf{p})),
            \qquad \mathbf{p},\mathbf{p}' \in \mathfrak{P},
    \end{eqnarray*}
    where $\sigma$ is the shift operator in Definition~\ref{def:space_biinfinite_chains}.
    Since $\sigma$ is invertible, there is no sum over preimages. 
    Hence
    $\mathscr{L}_{1, \text{future}}=\sigma^{-1}_*.$
    Such transfer operators are commonly called (weighted) \emph{future shift pullback or composition operators} on $\mathrm{Maps}(\mathfrak{P} , \mathbb{C})$, because they induce from the \emph{future} positive one-sided space $\mathfrak{P}^+.$

    Similarly, we call weighted transfer operators coming from the \emph{past} negative one-sided space $\mathfrak{P}^-$
    $$\mathscr{L}_{\omega, \text{past}} \coloneqq \omega(\sigma(\mathbf{p})) \varphi(\sigma(\mathbf{p})),  \qquad \mathbf{p},\mathbf{p}' \in \mathfrak{P},$$
    the \emph{past shift pullback operators} on $\mathrm{Maps}(\mathfrak{P} , \mathbb{C})$ and thus $\mathscr{L}_{1, \text{past}}=\sigma_*.$
\end{rem}

Note that the weighted Ruelle transfer operator $\mathcal{L}_{\pm, \omega}$ can be viewed as
\begin{equation} \label{eq:decomposition}
    \mathcal{L}_{\pm, \omega}\varphi=\mathcal{L}_{\pm,1}(\omega \varphi),
\quad \varphi \in \mathrm{Maps}(\mathfrak{P}^\pm, \mathbb{C}),
\end{equation}
where $\mathcal{L}_{\pm,1} \eqqcolon \mathcal{L}_{\pm}$ is the classical (unweighted) Ruelle transfer operator in {\cite[§3]{BHW23} and \cite[§1.2]{AFH23}}.

To give the weighted transfer operators $\mathcal{L}_{\pm,\omega}$ a dual interpretation, analogous to that of the unweighted Ruelle transfer operator in \cite[§9]{BHW23}, we first introduce the \emph{space of locally constant functions} on $\mathfrak{P}^\pm$.
A function $\varphi \in \mathrm{Maps}(\mathfrak{P}^\pm, \mathbb{C})$ is said to be \emph{locally constant} if $\mathfrak{P}^\pm$ admits a cover by
districts on each of which $\varphi$ is constant.
Here a \emph{district} is the set of all paths sharing a fixed finite initial segment.
Since $\mathfrak{P}^\pm$ is compact, such a cover can be chosen to be finite.
We denote the resulting space by $C^{\mathrm{lc}}(\mathfrak{P}^\pm)$.

\begin{lem} \label{lem:invariant_lc_space}
    The weighted transfer operator $\mathcal{L}_{\pm, \omega}$ leaves the space $C^{\mathrm{lc}}(\mathfrak{P}^\pm)$ invariant.
\end{lem}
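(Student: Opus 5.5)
We use the factorisation \eqref{eq:decomposition}, $\mathcal{L}_{\pm,\omega}\varphi=\mathcal{L}_{\pm}(\omega\varphi)$, and prove two things. First, $C^{\mathrm{lc}}(\mathfrak{P}^\pm)$ is closed under pointwise multiplication, and $\omega$ belongs to it. Second, the unweighted operator $\mathcal{L}_\pm$ preserves $C^{\mathrm{lc}}(\mathfrak{P}^\pm)$. We treat only the $+$ case; the $-$ case follows by reversing orientations.

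\emph{Reformulation and products.} Call $\varphi\in\mathrm{Maps}(\mathfrak{P}^+,\C)$ \emph{$N$-local} if $\varphi(\mathbf{p}_+)$ depends only on the first $N$ edges $(\vec e_1,\dots,\vec e_N)$ of $\mathbf{p}_+$. We first check that $\varphi$ is locally constant if and only if it is $N$-local for some $N$. Given a finite cover of $\mathfrak{P}^+$ by districts on which $\varphi$ is constant, let $N$ be the maximal length of the defining initial segments. Any two paths that agree on their first $N$ edges lie in a common district of the cover, so $\varphi$ is $N$-local. The converse is immediate, because the districts of length $N$ form a finite cover. If $\varphi$ is $N$-local and $\omega$ is $M$-local, then $\omega\varphi$ is $\max(N,M)$-local. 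The potential $\omega$ depends only on finitely many initial edges by Definition~\ref{def:Ruelle_transfer_op}, so it is $M$-local for some $M$. Hence $\omega\varphi\in C^{\mathrm{lc}}(\mathfrak{P}^+)$.

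\emph{Unweighted operator.} Let $\psi$ be $N$-local. For $\mathbf{p}_+=(\vec e_2,\vec e_3,\dots)$, the preimages under $\sigma_+$ are the paths $\mathbf{p}'_+=(\vec e_1,\vec e_2,\dots)$, indexed by the edges $\vec e_1$ with $\tau(\vec e_1)=\iota(\vec e_2)$ and $\vec e_1\ne\vec e_2^{\,\mathrm{op}}$. This index set depends only on $\vec e_2$, and there are exactly $q$ such edges by regularity. Since $\psi$ is $N$-local, $\psi(\mathbf{p}'_+)$ depends only on $(\vec e_1,\dots,\vec e_N)$. Therefore
$$\mathcal{L}_+\psi(\mathbf{p}_+)=\sum_{\vec e_1}\psi(\vec e_1,\vec e_2,\dots)$$
is a finite sum whose index set and summands depend only on $\vec e_2,\dots,\vec e_N$. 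These are the first $N-1$ edges of $\mathbf{p}_+$, so $\mathcal{L}_+\psi$ is $\max(N-1,1)$-local. Applying this with $\psi=\omega\varphi$ proves that $\mathcal{L}_{+,\omega}\varphi\in C^{\mathrm{lc}}(\mathfrak{P}^+)$.

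The main point needing care is the index bookkeeping. In the paper's notation the argument $\mathbf{p}_+$ begins with $\vec e_2$, so the preimage carries one more edge than $\mathbf{p}_+$, and the locality depth drops by one rather than rising. One must also note that the summation range depends only on the first edge of $\mathbf{p}_+$, not on the rest of the path. No compactness argument is needed beyond the equivalence between local constancy and $N$-locality established above.
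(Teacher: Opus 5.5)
Your proof is correct and follows the paper's route. Like the paper, you factor $\mathcal{L}_{\pm,\omega}\varphi=\mathcal{L}_{\pm}(\omega\varphi)$, observe that multiplication by the locally constant $\omega$ preserves $C^{\mathrm{lc}}(\mathfrak{P}^\pm)$, and use that the unweighted operator preserves it; the only difference is that you verify this last invariance by hand (via $N$-locality dropping to $\max(N-1,1)$-locality), where the paper simply cites \cite{BHW23}.
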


\begin{proof}
    Since $\omega$ depends only on finitely many initial edges, it belongs to
    $C^{\mathrm{lc}}(\mathfrak{P}^{\pm})$. 
    Hence multiplication by $\omega$ preserves $C^{\mathrm{lc}}(\mathfrak{P}^{\pm})$, i.e., 
    $\omega\varphi\in C^{\mathrm{lc}}(\mathfrak{P}^\pm).$
    
    Moreover, by \cite[Eq.~(4) in §9]{BHW23}, the unweighted transfer operator $\mathcal{L}_{\pm,1}$ leaves this space invariant.
    Therefore, using the composition \eqref{eq:decomposition}, we obtain
    $
    \mathcal{L}_{\pm,\omega}\varphi
    =\mathcal{L}_{\pm,1}(\omega\varphi)\in C^{\mathrm{lc}}(\mathfrak{P}^\pm)$ and thus
    \[
    \mathcal{L}_{\pm,\omega}
     \bigl(C^{\mathrm{lc}}(\mathfrak{P}^{\pm})\bigr)
     \subseteq C^{\mathrm{lc}}(\mathfrak{P}^{\pm}),
    \]
    as claimed.
\end{proof}

Following the notation of \cite[§9]{BHW23}, we denote the algebraic dual of
$C^{\mathrm{lc}}(\mathfrak{P}^\pm)$ by $\mathcal{D}'(\mathfrak{P}^\pm) \coloneqq (C^{\mathrm{lc}}(\mathfrak{P}^\pm))'$ and the dual of the operator $\mathcal{L}_{\pm,\omega}$ by
    \begin{equation*}
      \mathcal{L} '_{\pm,\omega} \colon \mathcal{D}'(\mathfrak{P}^\pm) \rightarrow \mathcal{D}'(\mathfrak{P}^\pm),
    \end{equation*}
which is well-defined by Lemma~\ref{lem:invariant_lc_space}.

Let $M_f$ denote multiplication by a locally constant function $f\in C^{\mathrm{lc}}(\mathfrak{P}^\pm)$
\begin{equation} \label{eq:multipication_f}
    M_f \varphi = f\varphi \qquad \forall \varphi \in \mathrm{Maps}(\mathfrak{P}^\pm, \C).
\end{equation}
We have the following fundamental covariance identity.

\begin{lem}[Transfer operator covariance] \label{lem:L_covariance}
    For every $f\in C^{\mathrm{lc}}(\mathfrak{P}^\pm)$
    $$\mathcal{L}_{\pm,\omega} M_{f \circ \sigma_\pm} = M_f \mathcal{L}_{\pm,\omega}.$$
\end{lem}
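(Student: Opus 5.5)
We will verify the identity pointwise, using the explicit preimage-sum form of the weighted transfer operator from Definition~\ref{def:Ruelle_transfer_op}. The idea is that on every preimage of a point under $\sigma_\pm$, the function $f\circ\sigma_\pm$ takes the same value as $f$ at that point. We treat the $+$ case; the $-$ case is identical after reversing orientations and replacing $\sigma_+$ by $\sigma_-$.

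Fix $\varphi\in\mathrm{Maps}(\mathfrak{P}^+,\C)$ and $\mathbf{p}_+\in\mathfrak{P}^+$. By definition,
\[
\bigl(\mathcal{L}_{+,\omega}M_{f\circ\sigma_+}\varphi\bigr)(\mathbf{p}_+)
=\sum_{\sigma_+(\mathbf{p}'_+)=\mathbf{p}_+}\omega(\mathbf{p}'_+)\,f\bigl(\sigma_+(\mathbf{p}'_+)\bigr)\,\varphi(\mathbf{p}'_+).
\]
Each summand is indexed by a preimage $\mathbf{p}'_+$ with $\sigma_+(\mathbf{p}'_+)=\mathbf{p}_+$, so $f(\sigma_+(\mathbf{p}'_+))=f(\mathbf{p}_+)$ in every term. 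This factor does not depend on the summation variable, and the sum is finite (there are exactly $q$ admissible edges $\vec{e}_1$). We can therefore pull it out to obtain
\[
f(\mathbf{p}_+)\sum_{\sigma_+(\mathbf{p}'_+)=\mathbf{p}_+}\omega(\mathbf{p}'_+)\varphi(\mathbf{p}'_+)=\bigl(M_f\mathcal{L}_{+,\omega}\varphi\bigr)(\mathbf{p}_+).
\]
Since $\mathbf{p}_+$ and $\varphi$ were arbitrary, this gives the operator identity on $\mathrm{Maps}(\mathfrak{P}^\pm,\C)$.

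Restricting to $C^{\mathrm{lc}}(\mathfrak{P}^\pm)$ causes no difficulty. The function $f\circ\sigma_\pm$ is again locally constant, because it depends on one more initial edge than $f$. Lemma~\ref{lem:invariant_lc_space} then shows that both sides preserve $C^{\mathrm{lc}}(\mathfrak{P}^\pm)$, so the identity holds there as well, and by duality also for $\mathcal{L}'_{\pm,\omega}$ on $\mathcal{D}'(\mathfrak{P}^\pm)$.

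There is no real obstacle here. The only point requiring care is to use the preimage form of $\mathcal{L}_{\pm,\omega}$ rather than the edge-indexed form, so that the constancy of $f\circ\sigma_\pm$ on fibres of $\sigma_\pm$ is immediate.
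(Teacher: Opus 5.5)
Your proof is correct and is essentially the paper's argument: expand $\mathcal{L}_{\pm,\omega}M_{f\circ\sigma_\pm}\varphi$ as a sum over preimages, note that $f(\sigma_\pm(\mathbf{p}'_\pm))=f(\mathbf{p}_\pm)$ for every preimage $\mathbf{p}'_\pm$, and pull this constant out of the sum. Your middle expression is in fact cleaner than the paper's, which has a stray factor $f$ inside the sum at that step.
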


\begin{proof}
    By definition, for $\mathbf{p}_\pm \in \mathfrak{P}^\pm$ and $\varphi \in C^{\mathrm{lc}}(\mathfrak{P}^\pm)$, we obtain
    \begin{eqnarray*}
        (\mathcal{L}_{\pm,\omega} M_{f \circ \sigma_\pm} \varphi)(\mathbf{p}_\pm)
        = \sum_{\sigma_\pm(\mathbf{p}'_\pm)=\mathbf{p}_\pm}
        \omega(\mathbf{p}'_\pm)f(\sigma_\pm(\mathbf{p}'_\pm))\varphi(\mathbf{p}'_\pm)
        &=& f(\mathbf{p}_\pm) \sum_{\sigma_\pm(\mathbf{p}'_\pm)=\mathbf{p}_\pm}
        \omega(\mathbf{p}'_\pm)f\varphi(\mathbf{p}'_\pm) \\
        &=& (M_f\mathcal{L}_{\pm,\omega} \varphi)(\mathbf{p}_\pm). \qquad \qquad \qedhere
    \end{eqnarray*}
\end{proof}

% -------------------------------%%--------------------------------------%
\subsection{Function spaces and key inequality} \label{sect:function_spaces_key_ineq}
As mentioned in \cite{BHW23} and in the spirit of \cite[§1.3]{Baladi20}, to derive the intrinsic discrete spectrum, known as \emph{Ruelle resonances}, we need to consider the action of the weighted Ruelle transfer operator $\mathcal{L}_{\pm, \omega}$ on function spaces with certain regularity.
A standard choice is the scale of Banach spaces of Lipschitz continuous functions.

\begin{definition}[Lipschitz continuous functions {\cite[Def.~5.2]{BHW22}}] \label{def:F_vartheta}
    For some $0 < \vartheta < 1$, we define the \emph{space of Lipschitz continuous functions} on $\mathfrak{P}^\pm$ by
    $$
    \F_{\vartheta}(\mathfrak{P}^\pm) \coloneqq \left\{ f\colon \mathfrak{P}^\pm \rightarrow \C \;\middle|\; \exists C_f > 0 \;\text{s.t.}\; \forall \mathbf{p}_\pm, \mathbf{p}'_\pm \in \mathfrak{P}^\pm,\; |f(\mathbf{p}_\pm) - f(\mathbf{p}'_\pm)| \leq C_f\, d_{\vartheta}(\mathbf{p}_\pm, \mathbf{p}'_\pm) \right\},
    $$
    where $d_{\vartheta}$ on $\mathfrak{P}^\pm$ is the metric defined in \eqref{eq:ultrametric}.
    %$$
    Its norm is given by
    \begin{equation} \label{eq:norm_vartheta}
        \|f\|_{\vartheta} 
        \coloneqq |f|_{\vartheta} + \|f\|_\infty, 
        \qquad f \in \F_{\vartheta}(\mathfrak{P}^\pm),
    \end{equation}
    where $\|\cdot \|_\infty$ is the supremum norm and
    \begin{equation} \label{eq:Hölder_seminorm}
        |f|_{\vartheta} \coloneqq 
    \inf\{C_f \;|\; f \text{ is $C_f$-Lipschitz} \}
    =
    \sup_{\mathbf{p}_\pm \neq \mathbf{p}'_\pm} \frac{|f(\mathbf{p}_\pm) - f(\mathbf{p}'_\pm)|}{d_\vartheta(\mathbf{p}_\pm,\mathbf{p}'_\pm)}.
    \end{equation}
\end{definition}

By \cite[Prop.~5.4]{BHW23}, $(\F_{\vartheta}(\mathfrak{P}^\pm), \| \cdot \|_{\vartheta})$ is a Banach space for each $\vartheta \in (0,1).$
Furthermore, for $\vartheta \in (0,1)$, we have $C^{\mathrm{lc}}(\mathfrak{P}^\pm) \subseteq \F_{\vartheta}(\mathfrak{P}^\pm)$.
Consequently, its topological dual $\F'_{\vartheta}(\mathfrak{P}^\pm)$ is a subset of  $\mathcal{D}'(\mathfrak{P}^\pm)$ and is again a Banach space \cite[Rems.~5.3 and 5.6]{BHW22}.

\begin{rem}
    If $0<\vartheta<\vartheta'<1$, then
    $$\F_\vartheta(\mathfrak{P}^\pm) \subseteq \F_{\vartheta'}(\mathfrak{P}^\pm).$$
    Thus, as $\vartheta$ \emph{increases}, the space $\F_{\vartheta}(\mathfrak{P}^\pm)$ becomes larger and the corresponding regularity requirement becomes \emph{weaker}.
    While its dual $\F'_{\vartheta}(\mathfrak{P}^\pm)$ becomes \emph{smaller}, \cite[p.~29]{BHW23}.
    Consequently, the spaces $\F_{\vartheta}(\mathfrak{P}^\pm)$ can be seen as spaces with increasing regularity and the locally constant functions belong to $\F_\vartheta(\mathfrak{P}^\pm)$ for every $\vartheta\in(0,1)$, \cite[Rem.~5.3]{BHW22}.
\end{rem}

With this in mind, consider the \emph{space of bounded functions} $\mathcal{B}(\mathfrak{P}^\pm,\C) \subseteq \mathrm{Maps}(\mathfrak{P}^\pm, \C)$.
Equipped with the supremum norm $\| \cdot \|_\infty$, it turns into a Banach space.\\
Following \cite[§4 and §9]{BHW23}, the space $C^{\mathrm{lc}}(\mathfrak{P}^\pm)$ is filtered by the finite-dimensional subspaces 
$$
F_m(\mathfrak{P}^\pm) \coloneqq \left\{ \varphi \in \mathcal{B}(\mathfrak{P}^\pm,\C) \;\middle|\; \varphi \text{ depends only on the first $m$ edges}\right\}, \qquad m \in \N,
$$
of \emph{locally constant functions of size $m$},
according to the size of the initial segment on which a locally constant function depends.
In other words, $\varphi \in F_m(\mathfrak{P}^\pm)$ if and only if it is constant on every district of level $m$, namely on
every set of paths in $\mathfrak{P}^\pm$ whose first $m$ directed edges agree with a fixed admissible finite edge-sequence $(\vec{e}_1, \ldots, \vec{e}_m)$. 
Here \emph{admissibility} means that the sequence is non-backtracking and concatenated, as in the definition of $\mathfrak{P}^\pm$.
Clearly $F_m(\mathfrak{P}^\pm) \subseteq F_{m+1}(\mathfrak{P}^\pm)$ for every $m$, and by compactness of $\mathfrak{P}^\pm$ every locally constant function is constant on the districts of \emph{some} finite level, so that
\begin{equation} \label{eq:Clc_Fm}
    C^{\mathrm{lc}}(\mathfrak{P}^\pm) = F_\infty(\mathfrak{P}^\pm) \coloneqq \bigcup_{m=1}^\infty F_m(\mathfrak{P}^\pm),
\end{equation}
see \cite[Prop.~4.2~(i)]{AFH23}.
Notice that $F_1(\mathfrak{P}^\pm) \cong \mathrm{Maps}(\mathfrak{E},\C)$, \cite[Rem.~4.1.~(i)]{AFH23}.
The spaces $F_m(\mathfrak{P}^\pm)$ enjoy several important structural properties. 
Namely, for every $m\in \N$, the space $F_m(\mathfrak{P}^\pm)$ is a closed subspace of both, the Banach space $\mathcal{B}(\mathfrak{P}^\pm,\mathbb{C})$ \cite[Lem.~4.1]{BHW23} and, for every $\vartheta\in(0,1)$, of the Banach space $\F_{\vartheta}(\mathfrak{P}^\pm)$ \cite[Lem.~5.2]{BHW23}.
Consequently, we have the following inclusions between the function spaces
\begin{equation} \label{eq:inclusion}
    F_1(\mathfrak{P}^\pm) \subseteq 
    F_2(\mathfrak{P}^\pm) \subseteq \cdots \subseteq \F_\vartheta(\mathfrak{P}^\pm) \subseteq 
    \F_{\vartheta'}(\mathfrak{P}^\pm) \subseteq 
    \mathcal{B}(\mathfrak{P}^\pm,\mathbb{C}) \subseteq 
    \mathrm{Maps}(\mathfrak{P}^\pm,\mathbb{C})
\end{equation}
for $0<\vartheta<\vartheta'<1$.

Furthermore, by \cite[Cor.~5.5]{BHW23} and Lemma~\ref{lem:invariant_lc_space}, we have the following invariance.

\begin{lem} \label{lem:invariant_F}
    Suppose that $\omega\in F_r(\mathfrak{P}^\pm)$ for some $r\geq1$.
    Then, for every $m\geq1$
    $$
        \mathcal{L}_{\pm,\omega}\bigl(F_m(\mathfrak{P}^\pm)\bigr)
        \subseteq
        F_{\max\{m,r\}-1}(\mathfrak{P}^\pm),
    $$
    where $F_0$ is interpreted as the space of constant functions.
    In particular, $F_m(\mathfrak{P}^\pm)$ is
    $\mathcal{L}_{\pm,\omega}$-invariant whenever $m\geq r$.
\end{lem}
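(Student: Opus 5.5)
We argue directly from the definition of $\mathcal{L}_{\pm,\omega}$ in Definition~\ref{def:Ruelle_transfer_op}, working with $\mathcal{L}_{+,\omega}$ on $\mathfrak{P}^+$; the case of $\mathfrak{P}^-$ follows by reversing orientations. Fix $\varphi\in F_m(\mathfrak{P}^+)$ and $\mathbf{p}_+=(\vec{e}_1,\vec{e}_2,\ldots)$. By definition,
\[
\mathcal{L}_{+,\omega}\varphi(\mathbf{p}_+)=\sum_{\vec{e}_0}\omega(\vec{e}_0,\vec{e}_1,\ldots)\,\varphi(\vec{e}_0,\vec{e}_1,\ldots),
\]
where the sum runs over those $\vec{e}_0$ with $\tau(\vec{e}_0)=\iota(\vec{e}_1)$ and $\vec{e}_0\neq\vec{e}_1^{\,\mathrm{op}}$. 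We first check two facts about this sum.

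\emph{The index set.} It depends only on $\vec{e}_1$, through the vertex $\iota(\vec{e}_1)$ and the excluded edge $\vec{e}_1^{\,\mathrm{op}}$.

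\emph{The summands.} Put $n\coloneqq\max\{m,r\}$. Since $\varphi\in F_m\subseteq F_n$ and $\omega\in F_r\subseteq F_n$ (using the inclusions in \eqref{eq:inclusion}), the product $\omega\varphi$ lies in $F_n$. Hence each summand depends only on the first $n$ edges of the preimage, namely $(\vec{e}_0,\vec{e}_1,\ldots,\vec{e}_{n-1})$. After $\vec{e}_0$ is summed out, the summand depends only on $(\vec{e}_1,\ldots,\vec{e}_{n-1})$ of $\mathbf{p}_+$.

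Together these show that $\mathcal{L}_{+,\omega}\varphi$ depends only on the first $n-1$ edges, and so lies in $F_{n-1}$. Boundedness is immediate, because the sum is finite (at most $q$ terms) and $\omega$ and $\varphi$ are bounded, each taking finitely many values.

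The degenerate case $n=1$ needs separate treatment, and it only arises when $m=r=1$. There the summand depends on $\vec{e}_0$ alone, but the index set still depends on $\vec{e}_1$. In the unweighted case $\omega\equiv 1$, the constant function $1$ is mapped to the constant $q$, which agrees with the convention $F_0=$ constants used in \cite[Cor.~5.5]{BHW23}. For a general $\omega\in F_1$, however, the result $\sum_{\vec{e}_0}\omega(\vec{e}_0)\varphi(\vec{e}_0)$ genuinely depends on $\vec{e}_1$, because of the excluded edge. So the precise form of the claim in this corner case is where I expect the only real care to be needed. I would handle it by reading the bound as $F_{\max\{\max\{m,r\}-1,1\}}$ when $n=1$, and otherwise appealing to the convention inherited from \cite[Cor.~5.5]{BHW23}.

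The final assertion follows from the case $m\geq r$. There $n=m$, so $\mathcal{L}_{\pm,\omega}(F_m)\subseteq F_{m-1}\subseteq F_m$ whenever $m\geq 2$. In the remaining case $m=r=1$ the first paragraph gives invariance of $F_1$ directly, since the result depends only on $\vec{e}_1$.
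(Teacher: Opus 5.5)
Your argument for $n=\max\{m,r\}\ge 2$ is essentially the paper's. The paper writes $\mathcal{L}_{\pm,\omega}\varphi=\mathcal{L}_{\pm,1}(\omega\varphi)$ with $\omega\varphi\in F_n$, and then quotes that the unweighted operator lowers the level of local constancy by one. You verify that level-lowering directly from the definition: each summand depends on $\vec e_0,\vec e_1,\dots,\vec e_{n-1}$, and the index set depends only on $\vec e_1$. Your proof of the invariance assertion is correct in all cases, including $m=r=1$.

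Your worry about $n=1$ is justified, and it exposes an error in the statement itself, not a matter of convention. The paper's one-line proof has the same flaw. The inclusion $\mathcal{L}_{\pm,\omega}(F_1)\subseteq F_0$ fails already for $\omega\equiv 1$; your remark $\mathcal{L}_+1=q$ only concerns constants.

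For a counterexample, take $\varphi=\delta_{\vec e}\in F_1(\mathfrak{P}^+)$, the indicator of the paths whose first edge is $\vec e$. Then $\mathcal{L}_{+}\varphi(\mathbf{p}_+)=1$ if the first edge $\vec e_1$ of $\mathbf{p}_+$ satisfies $\iota(\vec e_1)=\tau(\vec e)$ and $\vec e_1\neq\vec e^{\,\mathrm{op}}$, and $\mathcal{L}_{+}\varphi(\mathbf{p}_+)=0$ otherwise. So it vanishes on paths starting with $\vec e^{\,\mathrm{op}}$ but equals $1$ on paths starting with any other edge out of $\tau(\vec e)$, and is therefore not constant. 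The space $F_1$ is mapped into itself, not into the constants, so the paper's step ``the unweighted operator lowers the level by one'' breaks exactly from $F_1$ to $F_0$.

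No convention taken from \cite{BHW23} can rescue the literal claim. You should drop that hedge and state plainly that the lemma holds in the corrected form
\[
\mathcal{L}_{\pm,\omega}\bigl(F_m(\mathfrak{P}^\pm)\bigr)\subseteq F_{\max\{m,r,2\}-1}(\mathfrak{P}^\pm),
\]
which is exactly what your argument proves. This does not affect the rest of the paper, which only uses invariance of $F_m$ for $m\ge r$ (in practice $m$ large), and that part is true.
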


\begin{proof}
    If $\varphi\in F_m(\mathfrak{P}^\pm)$ and $\omega\in F_r(\mathfrak
    P^\pm)$, then
    $
    \omega\varphi\in F_{\max\{m,r\}}(\mathfrak{P}^\pm).
    $
    The unweighted transfer operator lowers the level of local constancy by
    one. Therefore, by \eqref{eq:decomposition},
    \[
    \mathcal{L}_{\pm,\omega}\varphi
    = \mathcal{L}_{\pm,1}(\omega\varphi) \in F_{\max\{m,r\}-1}(\mathfrak{P}^\pm).
    \]
    If $m\geq r$, then
    $
    F_{\max\{m,r\}-1}(\mathfrak{P}^\pm)=F_{m-1}(\mathfrak{P}^\pm)\subseteq F_m(\mathfrak{P}^\pm),
    $
    which proves the invariance.
\end{proof}

% -------------------------------%%--------------------------------------%
\subsubsection{Key inequality and approximation by projection} \label{sect:key_ineq}
Next, we will study the action of the weighted transfer operator on these different function spaces and thus prove a suitable \emph{key inequality} as for the unweighted case \cite[Cor.~5.8, 6.2 and 7.2]{BHW23}.

We begin by showing that $\mathcal{L}_{\pm, \omega}$ is a bounded operator with operator norm not exceeding $q \|\omega\|_\infty.$

\begin{prop} \label{prop:L_bound}
    Let $\mathfrak{G}$ be a finite $(q+1)$-regular graph and
    $\omega \in C^{\mathrm{lc}}(\mathfrak{P}^\pm)$. %, i.e.,
    %$\|\omega\|_\infty < \infty$, 
    Then
    $$
    \|\mathcal{L}_{\pm,\omega} \varphi\|_{\infty}
    \leq
    q \|\omega\|_\infty \|\varphi\|_\infty, \qquad \varphi \in \mathcal{B}(\mathfrak{P}^\pm, \mathbb{C}).
    $$
    Hence $\mathcal{L}_{\pm,\omega}$ leaves $(\mathcal{B}(\mathfrak{P}^\pm, \mathbb{C}), \|\cdot\|_\infty)$ invariant
     and $\|\mathcal{L}_{\pm,\omega}\|_{\mathrm{op}, \infty} \leq q \|\omega\|_\infty.$
\end{prop}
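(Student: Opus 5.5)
The bound follows by estimating the defining sum of $\mathcal{L}_{\pm,\omega}$ pointwise; the only combinatorial input is counting the preimages of a path under the shift. We treat $\mathcal{L}_{+,\omega}$. The case $\mathcal{L}_{-,\omega}$ is identical after reversing orientations.

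\emph{Counting preimages.} Fix $\mathbf{p}_+=(\vec{e}_2,\vec{e}_3,\ldots)\in\mathfrak{P}^+$. Its preimages $\mathbf{p}'_+$ under $\sigma_+$ are the chains $(\vec{e}_1,\vec{e}_2,\ldots)$ with $\tau(\vec{e}_1)=\iota(\vec{e}_2)$ and $\vec{e}_1\neq\vec{e}_2^{\,\mathrm{op}}$. Such an edge $\vec{e}_1$ is determined by its initial vertex, which must be a neighbour of $\iota(\vec{e}_2)$. The vertex $\iota(\vec{e}_2)$ has exactly $q+1$ neighbours, since the graph is simple and $(q+1)$-regular. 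The non-backtracking condition excludes exactly one of them, namely $\tau(\vec{e}_2)$. Hence $\#\sigma_+^{-1}(\mathbf{p}_+)=q$. In the quotient setting the same count holds, because the covering map is a local isomorphism at every vertex.

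\emph{Pointwise estimate.} For bounded $\varphi$, the triangle inequality gives
\[
|\mathcal{L}_{+,\omega}\varphi(\mathbf{p}_+)|\le\sum_{\sigma_+(\mathbf{p}'_+)=\mathbf{p}_+}|\omega(\mathbf{p}'_+)|\,|\varphi(\mathbf{p}'_+)|\le q\,\|\omega\|_\infty\|\varphi\|_\infty .
\]
Here $\|\omega\|_\infty<\infty$ because $\omega$ is locally constant on the compact space $\mathfrak{P}^+$, so it takes only finitely many values. Taking the supremum over $\mathbf{p}_+$ shows that $\mathcal{L}_{+,\omega}\varphi\in\mathcal{B}(\mathfrak{P}^+,\mathbb{C})$ and $\|\mathcal{L}_{+,\omega}\varphi\|_\infty\le q\|\omega\|_\infty\|\varphi\|_\infty$. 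This gives both the invariance of $\mathcal{B}(\mathfrak{P}^+,\mathbb{C})$ and the stated bound on the operator norm.

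There is no real obstacle. The only point needing care is the exact preimage count $q$, which relies on simplicity of the graph, i.e.\ no loops or multiple edges, so that edges into $\iota(\vec{e}_2)$ correspond bijectively to its neighbours.
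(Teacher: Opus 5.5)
Your proof is correct and follows the paper's argument: a pointwise triangle-inequality bound using that each path has exactly $q$ admissible one-edge predecessors, followed by taking the supremum over $\mathbf{p}_\pm$. Your explicit derivation of the preimage count (and the remark that it survives passage to the quotient) just spells out a step the paper asserts without detail.
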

\begin{proof}
    For every $\mathbf{p}_\pm \in \mathfrak{P}^\pm$ and $\varphi \in \mathcal{B}(\mathfrak{P}^\pm, \mathbb{C})$, we have
    \begin{eqnarray*}
        |\mathcal{L}_{\pm,\omega}\varphi(\mathbf{p}_\pm)|
        \leq \sum_{\sigma_\pm(\mathbf{p}'_\pm)=\mathbf{p}_\pm}  |\omega(\mathbf{p}'_{\pm}) \varphi(\mathbf{p}'_{\pm})|
        \leq \sum_{\sigma_\pm(\mathbf{p}'_\pm)=\mathbf{p}_\pm} \|\omega\|_\infty \|\varphi\|_\infty
        = q\|\omega\|_\infty \|\varphi\|_\infty,
    \end{eqnarray*}
    because $\mathbf{p}_\pm$ has exactly $q$ admissible non-backtracking
    one-edge predecessors. Taking the supremum over
    $\mathbf{p}_\pm\in\mathfrak{P}^\pm$ proves the assertion.
\end{proof}

In particular, by \eqref{eq:inclusion}, Proposition~\ref{prop:L_bound}
applies also to $\F_\vartheta(\mathfrak{P}^\pm)$ and thus
$\mathcal{L}_{\pm,\omega}$ is a bounded operator on $(\F_\vartheta(\mathfrak{P}^\pm), \|\cdot\|_\infty)$.

After this, for $n\geq 1$, define
\begin{equation} \label{eq:w_m_product}
     \omega_n(\mathbf{p}_\pm)
    \coloneqq
    \prod_{k=0}^{n-1}
    \omega\bigl(\sigma_\pm^k(\mathbf{p}_\pm)\bigr), \quad \forall \mathbf{p}_\pm \in \mathfrak{P}^\pm,
\end{equation}
which is the \emph{Birkhoff product} of $\omega$ along the length-$n$ orbit segment $\mathbf{p}_\pm, \sigma(\mathbf{p}_\pm), \ldots, \sigma^{n-1}_\pm(\mathbf{p}_\pm)$.
By induction on $n$, we obtain the $n$-th power of $\mathcal{L}_{\pm, \omega}$:
\begin{equation} \label{eq:iterated_transfer_op}
    \mathcal{L}_{\pm,\omega}^n\varphi(\mathbf{p}_\pm)
    =
    \sum_{\sigma_\pm^n(\mathbf{p}'_\pm)=\mathbf{p}_\pm}
    \omega_n(\mathbf{p}'_\pm)\varphi(\mathbf{p}'_\pm).
\end{equation}

Recall the norm $\|\cdot\|_{\vartheta}$ of $\F_\vartheta(\mathfrak{P}^\pm)$ defined in \eqref{eq:norm_vartheta} and replace its Lipschitz seminorm by the level-1 Lipschitz seminorm $|\cdot|_\vartheta^1$ of \cite[§5]{BHW23}:
\[
\|\varphi\|_\vartheta^1
\coloneqq
|\varphi|_\vartheta^1+\|\varphi\|_\infty.
\]
With this in mind, we prove the following key inequality, which is the main ingredient to describe the spectral theory for the weighted transfer operator.

\begin{prop}[Key inequality] \label{prop:key_ineq}
    Let $\mathfrak{G}$ be a finite $(q+1)$-regular graph, $\vartheta \in (0,1)$, and $\omega \in C^{\mathrm{lc}}(\mathfrak{P}^\pm)$ be a potential.
    Then, for every $\varphi \in \F_\vartheta(\mathfrak{P}^\pm)$, we have
    \begin{equation} \label{eq:key_ineq}
        \|\mathcal{L}_{\pm, \omega} \varphi \|^1_{\vartheta} \leq 
        q\vartheta\|\omega\|_\infty|\varphi|^1_\vartheta
        +
        q\bigl(
        \|\omega\|_\infty+\vartheta|\omega|^1_\vartheta
        \bigr)\|\varphi\|_\infty.
    \end{equation}
    Moreover, for every $n\geq1$,
    \begin{equation} \label{eq:key_ineq_m}
        \|\mathcal{L}_{\pm,\omega}^n\varphi\|_\vartheta^1
        \leq{}
        (q\vartheta\|\omega\|_\infty)^n
        |\varphi|_\vartheta^1
        +
        q^n\|\omega\|_\infty^{n-1}
        \left(
        \|\omega\|_\infty
        +
        \frac{\vartheta}{1-\vartheta}
        |\omega|_\vartheta^1
        \right)
        \|\varphi\|_\infty.
    \end{equation} 
    In particular, $\mathcal{L}^n_{\pm, \omega}$ is a bounded  operator on $(\F_\vartheta(\mathfrak{P}^\pm), \|\cdot\|_{\vartheta}^1)$ for every $n\geq 1$.
\end{prop}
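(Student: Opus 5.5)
The plan is to prove \eqref{eq:key_ineq_m} directly from the iterated formula \eqref{eq:iterated_transfer_op}; the case $n=1$ then gives \eqref{eq:key_ineq}. Recall that the level-1 seminorm $|\varphi|^1_\vartheta$ of \cite[§5]{BHW23} is the supremum of $|\varphi(\mathbf p_\pm)-\varphi(\mathbf p'_\pm)|/d_\vartheta(\mathbf p_\pm,\mathbf p'_\pm)$ over distinct pairs with the same first edge, i.e.\ $\kappa(\mathbf p_\pm,\mathbf p'_\pm)\ge 2$. Since $\omega$ is locally constant, it lies in $\F_\vartheta(\mathfrak{P}^\pm)$, so $|\omega|^1_\vartheta<\infty$. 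The sup-norm part is immediate: iterating Proposition~\ref{prop:L_bound} gives $\|\mathcal L^n_{\pm,\omega}\varphi\|_\infty\le q^n\|\omega\|_\infty^n\|\varphi\|_\infty$. This accounts for the term $q^n\|\omega\|_\infty^{n}\|\varphi\|_\infty$ on the right-hand side.

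\textbf{Seminorm estimate.} Fix $\mathbf p_\pm\ne\mathbf p'_\pm$ with the same first edge, and set $k=\kappa(\mathbf p_\pm,\mathbf p'_\pm)\ge2$.
\begin{itemize}
\item \emph{Pairing of preimages.} Since the two paths share their first edge, they have the same admissible length-$n$ prefixes $w$; there are exactly $q^n$ of them. The $\sigma_\pm^n$-preimages therefore pair up as $(w\mathbf p_\pm, w\mathbf p'_\pm)$. In each pair the paths share their first edge and satisfy $\kappa(\sigma^j_\pm(w\mathbf p_\pm),\sigma^j_\pm(w\mathbf p'_\pm))=k+n-j$.
\item \emph{Splitting.} For each pair, write
\[
\omega_n\varphi(w\mathbf p_\pm)-\omega_n\varphi(w\mathbf p'_\pm)=\omega_n(w\mathbf p_\pm)\bigl(\varphi(w\mathbf p_\pm)-\varphi(w\mathbf p'_\pm)\bigr)+\varphi(w\mathbf p'_\pm)\bigl(\omega_n(w\mathbf p_\pm)-\omega_n(w\mathbf p'_\pm)\bigr).
\]
\item \emph{First term.} It is bounded by $\|\omega\|_\infty^n|\varphi|^1_\vartheta\vartheta^{k+n}$.
\item \emph{Second term.} Telescope the Birkhoff product \eqref{eq:w_m_product} one factor at a time. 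This gives
\[
|\omega_n(w\mathbf p_\pm)-\omega_n(w\mathbf p'_\pm)|\le\|\omega\|_\infty^{n-1}|\omega|^1_\vartheta\sum_{j=0}^{n-1}\vartheta^{k+n-j}\le\|\omega\|_\infty^{n-1}|\omega|^1_\vartheta\,\vartheta^k\frac{\vartheta}{1-\vartheta}.
\]
\item \emph{Conclusion.} Sum over the $q^n$ prefixes and divide by $\vartheta^k$. The seminorm of $\mathcal L^n_{\pm,\omega}\varphi$ is then at most $(q\vartheta\|\omega\|_\infty)^n|\varphi|^1_\vartheta+q^n\|\omega\|_\infty^{n-1}\frac{\vartheta}{1-\vartheta}|\omega|^1_\vartheta\|\varphi\|_\infty$.
\end{itemize}
Adding the sup-norm bound gives \eqref{eq:key_ineq_m}.

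\textbf{The case $n=1$.} Here the telescoping sum has the single term $j=0$, with exponent $k+1$. So the factor $\frac{\vartheta}{1-\vartheta}$ is replaced by $\vartheta$, which is exactly \eqref{eq:key_ineq}.

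\textbf{Boundedness.} I would compare $\|\cdot\|^1_\vartheta$ with $\|\cdot\|_\vartheta$. Trivially $|\varphi|^1_\vartheta\le|\varphi|_\vartheta$. Conversely, pairs with different first edges have $d_\vartheta=\vartheta$, so $|\varphi|_\vartheta\le\max\{|\varphi|^1_\vartheta,2\|\varphi\|_\infty/\vartheta\}$. Hence the two norms are equivalent, and \eqref{eq:key_ineq_m} yields boundedness of $\mathcal L^n_{\pm,\omega}$ on $\F_\vartheta(\mathfrak P^\pm)$.

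\textbf{Main obstacle.} The delicate point is the combinatorics of the pairing: checking that the length-$n$ prefixes match one-to-one for both paths, which uses the shared first edge and the non-backtracking rule. One must also verify that each shifted pair stays within the level-1 regime, so that $|\omega|^1_\vartheta$ rather than the full seminorm applies.
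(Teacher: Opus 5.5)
Your proof is correct, and it takes a different route from the paper.

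\textbf{The paper's route.} It never looks at the $n$-fold preimages. It takes the unweighted Lasota--Yorke bound $\|\mathcal{L}_{\pm,1}\psi\|^1_\vartheta\le q(\vartheta|\psi|^1_\vartheta+\|\psi\|_\infty)$ from \cite[Lem.~5.7, Cor.~5.8]{BHW23} as a black box. It applies this to $\psi=\omega\varphi$, using $\mathcal{L}_{\pm,\omega}=\mathcal{L}_{\pm,1}M_\omega$. It then adds the Leibniz-type estimate $|\omega\varphi|^1_\vartheta\le\|\omega\|_\infty|\varphi|^1_\vartheta+|\omega|^1_\vartheta\|\varphi\|_\infty$, which yields \eqref{eq:key_ineq}. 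The $n$-fold bound comes from iterating the one-step seminorm estimate through the recurrence $b_n\le q\vartheta\|\omega\|_\infty b_{n-1}+q\vartheta|\omega|^1_\vartheta a_{n-1}$, together with $a_{n-1}\le(q\|\omega\|_\infty)^{n-1}\|\varphi\|_\infty$.

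\textbf{Your route.} You work directly from \eqref{eq:iterated_transfer_op}. Pairing the preimages of $\mathbf{p}_\pm$ and $\mathbf{p}'_\pm$ under a common prefix $w$ is immediate, since admissibility of $w\mathbf{p}_\pm$ depends only on the seam with the shared first edge. Telescoping the Birkhoff product $\omega_n$ then makes the distortion of the weight along the orbit explicit. The geometric series $\sum_j\vartheta^{k+n-j}$ appears in a single step rather than being assembled by induction. It is the same series the paper's recurrence produces, so the constants agree exactly.

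\textbf{What each buys.} Your argument is self-contained: it reproves the unweighted inequality along the way. It also delivers the seminorm and sup-norm parts as separate bounds from the outset. The paper's induction needs that split too: it invokes the seminorm half of the one-step bound, which is not literally what \eqref{eq:key_ineq} displays. The paper's argument is more modular. Its only new input beyond \cite{BHW23} is the product rule, so the same scheme transfers verbatim to any operator $\mathcal{L}M_\omega$ for which an unweighted Lasota--Yorke inequality is already known.

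Finally, your remark that $\|\cdot\|^1_\vartheta$ and $\|\cdot\|_\vartheta$ are equivalent supplies a check the paper leaves implicit: that $\mathcal{L}^n_{\pm,\omega}$ actually maps $\F_\vartheta(\mathfrak{P}^\pm)$ into itself.
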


\begin{proof}
    By \cite[Lem.~5.7 and Cor.~5.8]{BHW23}, the unweighted transfer
    operator satisfies
    $$
    \|\mathcal{L}_{\pm, 1} \psi\|^1_\vartheta 
    = |\mathcal{L}_{\pm, 1} \psi|^1_\vartheta + \|\mathcal{L}_{\pm, 1} \psi\|_\infty
    \leq q(\vartheta|\psi|^1_\vartheta + \|\psi\|_\infty),
    \quad  \text{ for every } \psi \in \F_\vartheta(\mathfrak{P}^\pm).
    $$
    Applying this inequality to $\psi = \omega \varphi \in \F_\vartheta(\mathfrak{P}^\pm)$ and using \eqref{eq:decomposition}, we get 
    $$\|\mathcal{L}_{\pm, \omega} \varphi\|^1_\vartheta 
    \leq q(\vartheta|\omega \varphi|^1_\vartheta + \|\omega \varphi\|_\infty).
    $$
    Since $\omega$ is locally constant, it belongs to $\F_\vartheta(\mathfrak{P}^\pm)$, hence $|\omega|^1_\vartheta < \infty$.
    Therefore, the supremum norm satisfies
    \begin{equation} \label{eq:estimate_supnorm}
        \|\omega \varphi\|_\infty \leq \|\omega \|_\infty \|\varphi \|_\infty
    \end{equation}
    and for the $\vartheta$-Hölder seminorm \eqref{eq:Hölder_seminorm},
    we have
    \begin{eqnarray*}
        |(\omega \varphi)(\mathbf{p}_\pm)-(\omega \varphi)(\mathbf{p}'_\pm)|
        &=&|\omega(\mathbf{p}_\pm)\varphi(\mathbf{p}_\pm)-\omega(\mathbf{p}'_\pm)\varphi(\mathbf{p}'_\pm)| \\
        &\leq& |\omega(\mathbf{p}_\pm)|\;|\varphi(\mathbf{p}_\pm)-\varphi(\mathbf{p}'_\pm)|+|\varphi(\mathbf{p}'_\pm)|\;|\omega(\mathbf{p}_\pm)-\omega(\mathbf{p}'_\pm)|.
    \end{eqnarray*}
    Dividing by $d_\vartheta(\mathbf{p},\mathbf{p}')$ and taking the supremum over $\mathbf{p}_\pm \neq \mathbf{p}'_\pm \in \mathfrak{P}^\pm$ gives
    \begin{equation} \label{eq:estimate_norm_vartheta}
        |\omega \varphi |^1_\vartheta \leq \|\omega\|_\infty |\varphi|^1_\vartheta + |\omega|^1_\vartheta \|\varphi\|_\infty.
    \end{equation}
    Combining these estimates \eqref{eq:estimate_supnorm} and \eqref{eq:estimate_norm_vartheta} proves the first assertion \eqref{eq:key_ineq}.

    For the second assertion \eqref{eq:key_ineq_m}, this follows from \eqref{eq:key_ineq} by induction on $n \geq 1$ with $\mathcal{L}^n_{\pm, \omega}\varphi = \mathcal{L}_{\pm, \omega}(\mathcal{L}^{n-1}_{\pm, \omega}\varphi)$  with convention $\mathcal{L}^0_{\pm, \omega}= \mathrm{Id}$.
    More precisely, for $n \geq 0$, set 
    \begin{equation} \label{eq:setting_Lm}
        \|\mathcal{L}^n_{\pm, \omega} \varphi \|^1_\vartheta = \|\mathcal{L}^n_{\pm, \omega} \varphi \|_\infty+ |\mathcal{L}^n_{\pm, \omega} \varphi |^1_\vartheta \coloneqq a_n + b_n,
    \end{equation}
    with convention $a_0=\|\varphi\|_\infty$ and $b_0=|\varphi|^1_\vartheta.$\\
    For the bound $a_n$, by applying 
    \eqref{eq:estimate_supnorm} $n$ times gives 
    \begin{equation}\label{eq:am_bound}
    a_n
    \leq (q\|\omega\|_\infty)^n \|\varphi\|_\infty.
    \end{equation}
    Regarding the bound $b_n$, apply \eqref{eq:key_ineq}, in particular
    $|\mathcal{L}_{\pm,\omega}\varphi|_\vartheta^1
    \leq
    q\vartheta\|\omega\|_\infty|\varphi|_\vartheta^1
    +
    q\vartheta|\omega|_\vartheta^1\|\varphi\|_\infty$
    to $\mathcal{L}^{n-1}_{\pm,\omega}\varphi$, this gives
    $$
    b_n 
    \leq q \vartheta \|\omega\|_\infty b_{n-1} + q \vartheta |\omega|^1_\vartheta a_{n-1} 
    \stackrel{\eqref{eq:am_bound}}{\leq}
    q \vartheta \|\omega\|_\infty b_{n-1} + q^n \|\omega\|_\infty^{n-1} (\vartheta |\omega|^1_\vartheta)^n \|\varphi\|_\infty.
    $$
    Iterating this recurrence, we find
    \begin{eqnarray} \label{eq:bm_bound}
        b_n
    &\leq{}&
    (q\vartheta\|\omega\|_\infty)^n|\varphi|_\vartheta^1
    +
    q\vartheta|\omega|_\vartheta^1
    \sum_{j=0}^{n-1}
    (q\vartheta\|\omega\|_\infty)^{n-1-j}
    (q\|\omega\|_\infty)^j
    \|\varphi\|_\infty \nonumber \\
    &=& 
    (q\vartheta\|\omega\|_\infty)^n|\varphi|_\vartheta^1
    +
    (q\|\omega\|_\infty)^{n-1} q\vartheta|\omega|_\vartheta^1
    \frac{1-\vartheta^n}{1-\vartheta}
    \|\varphi\|_\infty.
    \end{eqnarray}
    Since $\vartheta\in (0,1)$, the factor $\frac{1-\vartheta^n}{1-\vartheta}$ stays bounded and increases to $\frac{1}{1-\vartheta}$ as $n \rightarrow \infty.$
    Finally, adding \eqref{eq:am_bound} and \eqref{eq:bm_bound}
    in \eqref{eq:setting_Lm} proves \eqref{eq:key_ineq_m}.
\end{proof}

\begin{definition}[Projection operator, {\cite[§7]{BHW23}}] \label{def:projection_Pi_m}
    For every admissible finite edge path $\mathbf{p}_m=(\vec{e}_1,\ldots,\vec{e}_m)$ (also know in \cite[§7]{BHW23} as postal code) of length $m \geq 1$, choose a preferred infinite continuation 
    $$\mathbf{p}_m^\bullet
    = (\vec{e}_1,\ldots,\vec{e}_m,\vec{e}_{m+1}^{\,\bullet},\vec{e}_{m+2}^{\,\bullet}, \ldots) \in\mathfrak{P}^\pm.$$
    For $\mathbf{p}_\pm\in\mathfrak{P}^\pm$, let
    $[\mathbf{p}_\pm]_m$ denote its initial segment of length $m$.
    The \emph{projection operator}
    \begin{equation*}\label{eq:projection_Pi_m}
        \Pi_m: \F_\vartheta(\mathfrak{P}^\pm) \rightarrow F_m(\mathfrak{P}^\pm)
    \end{equation*}
    is defined by $(\Pi_m\varphi)(\mathbf{p}_\pm) \coloneqq \varphi\bigl([\mathbf{p}_\pm]_m^\bullet\bigr).$
\end{definition}
This means that $\Pi_m \varphi$ is constant on all level-$m$ districts and
depends only on the first $m$ edges, hence $\Pi_m \varphi \in F_m(\mathfrak{P}^\pm)$ and $\Pi_m\varphi=\varphi.$
Moreover,
$$\Pi^2_m=\Pi_m, \quad \mathrm{Ran}(\Pi_m)=F_m(\mathfrak{P}^\pm), \quad \Pi_m|_{F_m(\mathfrak{P}^\pm)}=\mathrm{Id}_{F_m(\mathfrak{P}^\pm)}.$$
Since $F_m(\mathfrak{P}^\pm)$ is finite-dimensional, $\Pi_m$ is a finite-rank projection.

\begin{lem}[Approximation by projection and its inequalities,  {\cite[Obs.~7.1.]{BHW23}}] \label{lem:Pim_ineq}
    For every $\varphi\in\F_\vartheta(\mathfrak{P}^\pm)$, we have
    $$\|\varphi-\Pi_m\varphi\|_\infty \leq \vartheta^m|\varphi|_\vartheta^1.$$
    On the other hand,
    $\|\Pi_m\varphi\|_\vartheta^1 \leq \|\varphi\|_\vartheta^1.$
\end{lem}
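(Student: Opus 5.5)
The two inequalities will be handled separately. Both rest on one observation: $\Pi_m\varphi$ is obtained by evaluating $\varphi$ at the chosen continuation $[\mathbf{p}_\pm]_m^\bullet$, and this point agrees with $\mathbf{p}_\pm$ in its first $m$ edges.

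\emph{First inequality.} Fix $\mathbf{p}_\pm\in\mathfrak{P}^\pm$ and write $\mathbf{p}'_\pm\coloneqq[\mathbf{p}_\pm]_m^\bullet$. By Definition~\ref{def:projection_Pi_m}, $\mathbf{p}_\pm$ and $\mathbf{p}'_\pm$ share their first $m$ edges. So either they coincide, or $\kappa(\mathbf{p}_\pm,\mathbf{p}'_\pm)\geq m+1$. In the first case $(\varphi-\Pi_m\varphi)(\mathbf{p}_\pm)=0$. In the second case,
\[
|\varphi(\mathbf{p}_\pm)-(\Pi_m\varphi)(\mathbf{p}_\pm)| = |\varphi(\mathbf{p}_\pm)-\varphi(\mathbf{p}'_\pm)| \leq |\varphi|^1_\vartheta\, \vartheta^{\kappa-1}\leq \vartheta^m|\varphi|^1_\vartheta .
\]
For this step I will use the defining property of the level-$1$ seminorm of \cite[§5]{BHW23}, which I recall as the seminorm adapted to paths agreeing on the first edge, normalised by $\vartheta^{\kappa-1}$. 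If instead one uses the plain seminorm $|\cdot|_\vartheta$, the same argument gives the bound $\vartheta^{m+1}|\varphi|_\vartheta\le\vartheta^m|\varphi|_\vartheta$. The step that needs care is exactly this normalisation: I will check that for paths agreeing on $m$ edges the relevant quotient is bounded by $\vartheta^m$ times the level-$1$ seminorm. Taking the supremum over $\mathbf{p}_\pm$ then gives the claim.

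\emph{Second inequality.} The sup-norm part is immediate. Every value of $\Pi_m\varphi$ is a value of $\varphi$, so $\|\Pi_m\varphi\|_\infty\le\|\varphi\|_\infty$.

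For the seminorm part, take $\mathbf{p}_\pm\neq\mathbf{q}_\pm$ with $\kappa=\kappa(\mathbf{p}_\pm,\mathbf{q}_\pm)$, and split into two cases.
\begin{itemize}
\item If $\kappa>m$, the two paths have the same first $m$ edges. Then $\Pi_m\varphi(\mathbf{p}_\pm)=\Pi_m\varphi(\mathbf{q}_\pm)$, and the difference quotient vanishes.
\item If $\kappa\le m$, then $[\mathbf{p}_\pm]_m^\bullet$ and $[\mathbf{q}_\pm]_m^\bullet$ agree exactly on the first $\kappa-1$ edges and differ at edge $\kappa$. Their distance therefore equals $d_\vartheta(\mathbf{p}_\pm,\mathbf{q}_\pm)$. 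This gives
\[
|\Pi_m\varphi(\mathbf{p}_\pm)-\Pi_m\varphi(\mathbf{q}_\pm)|=|\varphi([\mathbf{p}_\pm]_m^\bullet)-\varphi([\mathbf{q}_\pm]_m^\bullet)|\le |\varphi|^1_\vartheta\, d_\vartheta(\mathbf{p}_\pm,\mathbf{q}_\pm).
\]
\end{itemize}
The only extra care with the level-$1$ seminorm is that pairs differing in the first edge are compared in the same way for both functions. This holds because the first edges of $\mathbf{p}_\pm$ and $[\mathbf{p}_\pm]_m^\bullet$ coincide whenever $m\ge1$.

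Hence $|\Pi_m\varphi|^1_\vartheta\le|\varphi|^1_\vartheta$. Adding the sup-norm bound gives $\|\Pi_m\varphi\|^1_\vartheta\le\|\varphi\|^1_\vartheta$.
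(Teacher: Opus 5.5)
Your proof is correct and follows the same route as the paper: you evaluate $\varphi$ at the chosen continuation $[\mathbf{p}_\pm]_m^\bullet$, which shares its first $m$ edges with $\mathbf{p}_\pm$, and you observe that passing to continuations preserves the common initial segment whenever $\kappa\le m$. Your explicit case split into $\kappa>m$ and $\kappa\le m$ is more careful than the paper's one-line remark, and it shows that the paper's assumption of ``compatible'' preferred continuations is not needed for the seminorm bound.
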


\begin{proof}
    For the first inequality, we take $\mathbf{p}_\pm, [\mathbf{p}_\pm]_m^\bullet \in \mathfrak{P}^\pm$ such that $(\Pi_m\varphi)(\mathbf{p}_\pm) = \varphi\bigl([\mathbf{p}_\pm]_m^\bullet\bigr)$. Both agree on their first $m$ edges. Thus $d_\vartheta\bigl(\mathbf{p}_\pm,[\mathbf{p}_\pm]_m^\bullet\bigr)\leq\vartheta^m,$
    and consequently
    $$
    \|(\varphi-\Pi_m\varphi)(\mathbf{p}_\pm)\|_\infty
    =
    \|\varphi(\mathbf{p}_\pm)-\varphi\bigl([\mathbf{p}_\pm]_m^\bullet\bigr)\|_\infty
    \leq d_\vartheta\bigl(\mathbf{p}_\pm,[\mathbf{p}_\pm]_m^\bullet\bigr) |\varphi|^1_\vartheta
    \leq
    \vartheta^m|\varphi|_\vartheta^1.
    $$
    
    For the second inequality, the supremum-norm estimate follows immediately from the definition:
    $$\|(\Pi_m\varphi)(\mathbf{p}_\pm)\|_\infty
    = \| \varphi\bigl([\mathbf{p}_\pm]_m^\bullet\bigr) \|_\infty 
    \leq \|\varphi\|_\infty.$$
    For compatible preferred continuations, passing from
    $\mathbf{p}_\pm$ to $[\mathbf{p}_\pm]_m^\bullet$ does not decrease the
    length of the common initial segment of two paths. Hence
    $
    |\Pi_m\varphi|_\vartheta^1
    \leq
    |\varphi|_\vartheta^1.
    $
\end{proof}

We derive the following inequalities, mirroring those of \cite[Cor.~7.2]{BHW23}. %, for the weighted transfer operator.

\begin{prop}[Finite-rank approximation inequalities] \label{prop:Km_ineq}
    For $\omega \in C^{\mathrm{lc}}(\mathfrak{P}^\pm)$ and $\varphi \in \F_\vartheta(\mathfrak{P}^\pm)$, we have for every  $m \geq 1$
    \begin{itemize}
        \item[(i)] 
        $\|\mathcal{L}^m_{\pm,\omega}\varphi - \mathcal{L}^m_{\pm,\omega}\Pi_m \varphi \|_\infty
        %\leq (q\vartheta  \|\omega\|_\infty)^m |\varphi|^1_\vartheta 
        \leq (q\vartheta  \|\omega\|_\infty)^m \|\varphi\|^1_\vartheta$,
        \item[(ii)] 
        $\|\mathcal{L}^m_{\pm,\omega}\varphi - \mathcal{L}^m_{\pm,\omega}\Pi_m \varphi \|^1_\vartheta
        \leq C_{\vartheta, \omega}(q\vartheta  \|\omega\|_\infty)^m \|\varphi\|^1_\vartheta,$\\%\left(3+\vartheta\|\omega\|_\infty^{-1}|\omega|^1_\vartheta\frac{1}{1-\vartheta}\right)$ for $\|\omega\|_\infty>0.$
        where $ C_{\vartheta, \omega}>0$ is a constant depending on $\vartheta\in (0,1)$ and $\omega$ (but not $m$).
    \end{itemize}
\end{prop}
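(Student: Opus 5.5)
Write $\psi\coloneqq\varphi-\Pi_m\varphi$; both estimates are statements about $\mathcal{L}^m_{\pm,\omega}\psi$. The plan is to combine the pointwise formula \eqref{eq:iterated_transfer_op} for the iterate, the smallness of $\psi$ from Lemma~\ref{lem:Pim_ineq}, and the iterated key inequality \eqref{eq:key_ineq_m} of Proposition~\ref{prop:key_ineq}.

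\emph{Step 1 (part (i)).} For $\mathbf{p}_\pm\in\mathfrak{P}^\pm$, \eqref{eq:iterated_transfer_op} gives
\[
\mathcal{L}^m_{\pm,\omega}\psi(\mathbf{p}_\pm)=\sum_{\sigma_\pm^m(\mathbf{p}'_\pm)=\mathbf{p}_\pm}\omega_m(\mathbf{p}'_\pm)\,\psi(\mathbf{p}'_\pm).
\]
There are exactly $q^m$ non-backtracking preimages under $\sigma^m_\pm$, since each step has $q$ choices as in Proposition~\ref{prop:L_bound}. We also have $|\omega_m|\le\|\omega\|_\infty^m$ by \eqref{eq:w_m_product}. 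Lemma~\ref{lem:Pim_ineq} gives $\|\psi\|_\infty\le\vartheta^m|\varphi|^1_\vartheta\le\vartheta^m\|\varphi\|^1_\vartheta$. Multiplying these bounds yields (i) directly, with constant $1$.

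\emph{Step 2 (part (ii)).} Apply \eqref{eq:key_ineq_m} with $n=m$ to $\psi$:
\[
\|\mathcal{L}^m_{\pm,\omega}\psi\|^1_\vartheta\le (q\vartheta\|\omega\|_\infty)^m|\psi|^1_\vartheta+q^m\|\omega\|_\infty^{m-1}\Bigl(\|\omega\|_\infty+\tfrac{\vartheta}{1-\vartheta}|\omega|^1_\vartheta\Bigr)\|\psi\|_\infty .
\]
For the first term, the triangle inequality and Lemma~\ref{lem:Pim_ineq} give $|\psi|^1_\vartheta\le|\varphi|^1_\vartheta+|\Pi_m\varphi|^1_\vartheta\le 2\|\varphi\|^1_\vartheta$. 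For the second term, insert $\|\psi\|_\infty\le\vartheta^m\|\varphi\|^1_\vartheta$, which turns $q^m\|\omega\|_\infty^{m-1}$ into $(q\vartheta\|\omega\|_\infty)^m\|\omega\|_\infty^{-1}$. Adding the two terms gives (ii) with
\[
C_{\vartheta,\omega}=3+\frac{\vartheta}{1-\vartheta}\,\|\omega\|_\infty^{-1}|\omega|^1_\vartheta,
\]
which is independent of $m$, as required.

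\emph{Main obstacle.} The degenerate case $\|\omega\|_\infty=0$ (the factor $\|\omega\|_\infty^{-1}$ is then undefined). It is trivial: $\omega\equiv0$ forces $\mathcal{L}_{\pm,\omega}=0$, so both sides of (ii) vanish and any constant works. The only step needing real care is the claim $|\Pi_m\varphi|^1_\vartheta\le|\varphi|^1_\vartheta$. We take it from Lemma~\ref{lem:Pim_ineq}, which assumes compatible preferred continuations. If one wants to avoid that assumption, a cruder but still $m$-independent bound on $|\Pi_m\varphi|^1_\vartheta$ by a multiple of $\|\varphi\|^1_\vartheta$ suffices; it only enlarges $C_{\vartheta,\omega}$.
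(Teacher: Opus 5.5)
Your proposal is correct and is essentially the paper's proof. For (i), the paper iterates Proposition~\ref{prop:L_bound} $m$ times, which is equivalent to your count of $q^m$ preimages with $|\omega_m|\le\|\omega\|_\infty^m$, and combines it with Lemma~\ref{lem:Pim_ineq}. For (ii), it applies \eqref{eq:key_ineq_m} with $n=m$ to $\varphi-\Pi_m\varphi$ and gets exactly your constant $C_{\vartheta,\omega}=3+\|\omega\|_\infty^{-1}\frac{\vartheta}{1-\vartheta}|\omega|^1_\vartheta$, tacitly assuming $\|\omega\|_\infty>0$ (a case you treat separately).

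Your worry about compatible continuations is unnecessary, and $|\Pi_m\varphi|^1_\vartheta\le|\varphi|^1_\vartheta$ holds for any choice of continuations. If two paths share exactly $k<m$ initial edges, their continuations $[\mathbf{p}_\pm]_m^\bullet$ also share exactly $k$ edges; if they share at least $m$ edges, the values of $\Pi_m\varphi$ coincide.
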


\begin{proof}
    \begin{itemize}
        \item[$(i)$] Using Proposition~\ref{prop:L_bound} and Lemma~\ref{lem:Pim_ineq}, we obtain
        \begin{eqnarray*}
            \|\mathcal{L}_{\pm,\omega}\varphi - \mathcal{L}_{\pm,\omega}\Pi_m \varphi \|_\infty 
            = \|\mathcal{L}_{\pm,\omega}(\varphi - \Pi_m \varphi) \|_\infty
            &\leq& q\|\omega\|_\infty \|\varphi - \Pi_m \varphi\|_\infty \\
            &\leq& q\|\omega\|_\infty \vartheta^m |\varphi|^1_\vartheta \\
            &\leq& \vartheta^m q\|\omega\|_\infty \|\varphi\|^1_\vartheta.
        \end{eqnarray*}
        For the generalisation, it is the same argument as above, except that Proposition~\ref{prop:L_bound} is applied repeatedly.
        \item[$(ii)$] Assume that $\|\omega\|_\infty >0$.
        Using the key inequality \eqref{eq:key_ineq_m} in Proposition~\ref{prop:key_ineq}, and Lemma~\ref{lem:Pim_ineq}, together with $|\varphi - \Pi_m \varphi|^1_\vartheta \leq |\varphi|_\vartheta^1+|\Pi_m \varphi|^1_\vartheta \leq 2|\varphi|_\vartheta^1 \leq 2\|\varphi\|_\vartheta^1$, we get
        \begin{eqnarray*}
            \|\mathcal{L}_{\pm,\omega}\varphi - \mathcal{L}_{\pm,\omega}\Pi_m \varphi \|_\vartheta^1
            &=& \|\mathcal{L}_{\pm,\omega}(\varphi - \Pi_m \varphi) \|_\vartheta^1\\
            &\leq& (q\vartheta\|\omega\|_\infty)^m
            |\varphi - \Pi_m \varphi|_\vartheta^1 \\
            &&+
            q^m\|\omega\|_\infty^{m-1}
            \left(
            \|\omega\|_\infty
            +
            \frac{\vartheta}{1-\vartheta}
            |\omega|_\vartheta^1
            \right)
            \|\varphi - \Pi_m \varphi\|_\infty \\
            &\leq& 2(q\vartheta\|\omega\|_\infty)^m
            \|\varphi\|_\vartheta^1 \\
            &&+
            (q\|\omega\|_\infty\vartheta)^m\|\omega\|_\infty^{-1}
            \left(
            \|\omega\|_\infty
            +
            \frac{\vartheta}{1-\vartheta}
            |\omega|_\vartheta^1
            \right)
            \|\varphi\|_\vartheta^1 \\
            &= & C_{\vartheta, \omega} (q\|\omega\|_\infty\vartheta)^m \|\varphi\|_\vartheta^1,
        \end{eqnarray*}
        where, in the last line, we set
        $C_{\vartheta, \omega} \coloneqq \big(3
            +
            \|\omega\|_\infty^{-1}
            \frac{\vartheta}{1-\vartheta}
            |\omega|_\vartheta^1
            \big) >0.$ \qedhere
    \end{itemize}
\end{proof}

\begin{lem} \label{lem:Pim_commuting}
    Let $m \geq 1$ and suppose that $\omega\in F_{m+1}(\mathfrak{P}^\pm)$.
    Assume that the preferred continuations defining the projections are chosen compatibly with the transfer operator. 
    Then, for every $n\geq 1$, we have
    $$\Pi_m\mathcal{L}_{\pm,\omega}^{n}=\mathcal{L}_{\pm,\omega}^{n}\Pi_{n+m}.
    $$
\end{lem}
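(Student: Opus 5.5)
The plan is to verify the identity pointwise by a direct computation with the iterated formula \eqref{eq:iterated_transfer_op}. I will make the compatibility assumption precise in the following form. If $\mathbf{p}'_\pm=(\vec{e}_1,\ldots,\vec{e}_n,\mathbf{p}_\pm)$ is an $n$-step preimage of $\mathbf{p}_\pm$, then
\[
[\mathbf{p}'_\pm]^\bullet_{n+m}=(\vec{e}_1,\ldots,\vec{e}_n,[\mathbf{p}_\pm]_m^\bullet),
\]
that is, the preferred continuation of a postal code of length $n+m$ is obtained by prepending edges to the preferred continuation of its last $m$ edges. (For the past space the orientation is reversed, and the argument is identical.) I will first check that such a choice exists: choose the continuation of a postal code so that it depends only on its last edge. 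This is consistent with the choice fixed in \cite[§7]{BHW23}.

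Fix $\varphi\in\F_\vartheta(\mathfrak{P}^\pm)$ and $\mathbf{p}_\pm\in\mathfrak{P}^\pm$. The left-hand side is
\[
(\Pi_m\mathcal{L}^n_{\pm,\omega}\varphi)(\mathbf{p}_\pm)=\sum_{\sigma^n_\pm(\mathbf{r})=[\mathbf{p}_\pm]_m^\bullet}\omega_n(\mathbf{r})\varphi(\mathbf{r}).
\]
The right-hand side is
\[
(\mathcal{L}^n_{\pm,\omega}\Pi_{n+m}\varphi)(\mathbf{p}_\pm)=\sum_{\sigma^n_\pm(\mathbf{p}')=\mathbf{p}_\pm}\omega_n(\mathbf{p}')\,\varphi\bigl([\mathbf{p}']^\bullet_{n+m}\bigr).
\]

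The first step is to match the index sets. An $n$-step preimage is determined by an admissible word $(\vec{e}_1,\ldots,\vec{e}_n)$. Its admissibility as a prefix of a path only involves concatenation with, and non-backtracking against, the first edge of the path. Since $m\ge1$, $\mathbf{p}_\pm$ and $[\mathbf{p}_\pm]_m^\bullet$ share their first edge. Hence $(\vec{e}_1,\ldots,\vec{e}_n,\mathbf{p}_\pm)\mapsto(\vec{e}_1,\ldots,\vec{e}_n,[\mathbf{p}_\pm]^\bullet_m)$ is a bijection between the two preimage sets. By the compatibility assumption, the $\varphi$-factors then agree term by term.

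The second step is to match the weights. I will show that $\omega_n(\vec{e}_1,\ldots,\vec{e}_n,\mathbf{p}_\pm)=\omega_n(\vec{e}_1,\ldots,\vec{e}_n,[\mathbf{p}_\pm]_m^\bullet)$. By \eqref{eq:w_m_product}, $\omega_n$ is the product of the terms $\omega(\vec{e}_{k+1},\ldots,\vec{e}_n,\mathbf{p}_\pm)$ for $0\le k\le n-1$. Each such path carries $n-k\ge1$ prepended edges before $\mathbf{p}_\pm$. Since $\omega\in F_{m+1}(\mathfrak{P}^\pm)$, each factor depends only on the first $m+1$ edges of its argument. These consist of at least one $\vec{e}_j$ followed by at most $m$ initial edges of $\mathbf{p}_\pm$, and those initial edges coincide with the corresponding edges of $[\mathbf{p}_\pm]_m^\bullet$. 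Thus the factors agree, and summing over the bijection proves the identity for every $n\ge1$. Alternatively, one can argue by induction on $n$ from the case $n=1$, using $\omega\in F_{m+1}\subseteq F_{m+k+1}$.

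The only delicate point I expect is the first one: making the compatibility hypothesis precise so that $[\mathbf{p}']^\bullet_{n+m}$ really equals the prepended continuation. Everything else is bookkeeping on which edges $\omega$ sees. This is exactly where the hypothesis $\omega\in F_{m+1}$, rather than merely $\omega\in C^{\mathrm{lc}}(\mathfrak{P}^\pm)$, is used.
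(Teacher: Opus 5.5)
Your proof is correct, and it reaches the identity by a different route from the paper.

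\textbf{The paper's route.} The paper never unrolls $\mathcal{L}^n_{\pm,\omega}$. It proceeds in three steps.
\begin{enumerate}
\item It writes $\mathcal{L}_{\pm,\omega}=\mathcal{L}_{\pm,1}M_\omega$ and imports the unweighted identity $\Pi_m\mathcal{L}_{\pm}=\mathcal{L}_{\pm}\Pi_{m+1}$ from \cite[Obs.~9.8]{BHW23}. This citation is where the ``compatibility'' hypothesis actually enters.
\item It observes that $\Pi_{m+1}M_\omega=M_\omega\Pi_{m+1}$, because $\omega\in F_{m+1}(\mathfrak{P}^\pm)$.
\item It telescopes $\Pi_{m+k}\mathcal{L}_{\pm,\omega}=\mathcal{L}_{\pm,\omega}\Pi_{m+k+1}$ over $k=0,\dots,n-1$, using $F_{m+1}\subseteq F_{m+k+1}$. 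This is essentially the induction you mention as an alternative.
\end{enumerate}

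\textbf{Your route.} You work directly with the $n$-step formula \eqref{eq:iterated_transfer_op}. Your pieces correspond to the paper's as follows:
\begin{itemize}
\item Your bijection of preimage sets, combined with the prepending property of the preferred continuations, is exactly the content of the cited unweighted identity.
\item Your check that each factor of the Birkhoff product sees at most $m$ edges of $\mathbf{p}_\pm$ is the $n$-step version of $\Pi_{m+1}M_\omega=M_\omega\Pi_{m+1}$.
\end{itemize}

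\textbf{What each buys.} Your version is self-contained. You state precisely what compatibility must mean and exhibit a choice that satisfies it at every level, namely a continuation depending only on the last edge. The paper leaves this to the citation. The paper's version is shorter and modular: it reduces the weighted statement to the unweighted one plus a single commutation relation, which isolates exactly where the hypothesis $\omega\in F_{m+1}(\mathfrak{P}^\pm)$ is used.
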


\begin{proof}
    By the compatibility of the projections with the unweighted transfer operator \cite[Obs.~9.8]{BHW23}, we have 
    $\Pi_m\mathcal{L}_{\pm}=\mathcal{L}_{\pm}\Pi_{m+1}.$ 
    Moreover, multiplication by $\omega$ commutes with $\Pi_{m+1}$.
    More precisely, for every $\varphi \in \F_\vartheta(\mathfrak{P}^\pm)$ and $\mathbf{p}_\pm\in\mathfrak{P}^\pm$
    \begin{align*}
    \Pi_{m+1}(\omega\varphi)(\mathbf{p}_\pm)
    =
    \omega\bigl([\mathbf{p}_\pm]_{m+1}^\bullet\bigr)
    \varphi\bigl([\mathbf{p}_\pm]_{m+1}^\bullet\bigr)
    =
    \omega(\mathbf{p}_\pm)
    (\Pi_{m+1}\varphi)(\mathbf{p}_\pm),
    \end{align*}
    because $\omega$ depends only on the first $m+1$ edges.
    Thus $\Pi_{m+1}M_\omega=M_\omega \Pi_{m+1}$, with $M_\omega \varphi \coloneqq \omega \varphi.$
    Therefore, using \eqref{eq:decomposition}, we obtain
    \begin{align*}
    \Pi_m\mathcal{L}_{\pm,\omega}\varphi
    =
    \Pi_m\mathcal{L}_{\pm,1}(\omega \varphi)
    =
    \mathcal{L}_{\pm,1}\Pi_{m+1}(\omega \varphi)
    =
    \mathcal{L}_{\pm,1}M_\omega\Pi_{m+1}\varphi
    =
    \mathcal{L}_{\pm,\omega}\Pi_{m+1}\varphi.
    \end{align*}
    Since this holds for every $\varphi$, it follows that $\Pi_m\mathcal{L}_{\pm,\omega}= \mathcal{L}_{\pm,\omega}\Pi_{m+1}.$
    Iterating this identity gives
    \begin{align*}
    \Pi_m\mathcal{L}_{\pm,\omega}^{n}
    =
    \mathcal{L}_{\pm,\omega}\Pi_{m+1}
       \mathcal{L}_{\pm,\omega}^{n-1}
    =
    \mathcal{L}_{\pm,\omega}^{2}\Pi_{m+2}
       \mathcal{L}_{\pm,\omega}^{n-2}
    =\cdots=
    \mathcal{L}_{\pm,\omega}^{n}\Pi_{n+m},
    \end{align*}
    as claimed.
\end{proof}

% -------------------------------%%--------------------------------------%
\subsection{Towards the meromorphic continuation of the weighted resolvent} \label{sect:discrete_spectrum}
With the Banach space framework and the key inequality in place,
we are now in a position to study the spectral behaviour of %the weighted transfer operator
$\mathcal{L}_{\pm,\omega}$ on $\F_{\vartheta}(\mathfrak{P}^\pm)$.

% -------------------------------%%--------------------------------------%

% -------------------------------%%--------------------------------------%
\subsubsection{Spectral and essential spectral radii bounds} \label{sect:bounds_spectral_essential_radii}

We begin by reviewing some standard notions from spectral analysis.

\begin{definition}[Basic notions from spectral theory] %[Fredholm operator and spectrum]
    Let $T$ be a bounded operator on a Banach space $X$.
    \begin{itemize}
        \item[(a)] $T$ is a \emph{Fredholm operator of index zero} if $$\mathrm{Im}(T) \subset X \text{ is closed and  } \mathrm{dim(ker(}T))=\mathrm{dim(coker(}T)),$$
        where $\operatorname{coker} T \coloneqq X/\operatorname{Im}(T).$
        \item[(b)] The \emph{spectrum} of $T$ is defined as
        $\sigma(T) \coloneqq \{\lambda \in \C \;|\; \lambda \mathrm{Id} - T \text{ is not invertible}\}$
        and its \emph{spectral radius} is 
        $r_{\mathrm{spec}}(T) \coloneqq \sup\{\lambda \in \sigma(T) \;|\; |\lambda|\}.$
        \item[(c)] The \emph{essential spectrum} of $T$ is defined as
        $$\sigma_{\mathrm{ess}}(T) \coloneqq \{\lambda \in \C \;|\; \lambda \mathrm{Id} - T \text{ is not Fredholm of index zero}\}$$
        and its \emph{essential spectral radius} is $r_{\mathrm{ess}}(T) \coloneqq \sup\{\lambda \in \sigma_{\mathrm{ess}}(T) \;|\; |\lambda|\}.$
    \end{itemize}
\end{definition}

Recall that by Gelfand's formula, the spectral radius of a bounded operator $T$ on a Banach space $X$ can be characterised as
\begin{equation} \label{eq:Gelfand}
    r_{\mathrm{spec}}(T)= \lim_{n\rightarrow \infty} \|T^n\|^{1/n}
\end{equation}
and by Nussbaum's formula, the essential
spectral radius as
\begin{equation} \label{eq:Nussbaum}
    r_{\mathrm{ess}}(T)=\lim_{n\to\infty}\inf_{K \mathrm{compact}}\|T^n-K\|^{1/n}.
\end{equation}

With these spectral-theoretic preliminaries in hand, let us establish bounds for both the spectral and essential spectral radii of the weighted transfer operator.

\begin{lem}[Spectral radius bound] \label{lem:spec_radius_bound}
    Let $\mathfrak{G}$ be a finite $(q+1)$-regular graph with $q>1$ and $0\neq\omega \in C^{\mathrm{lc}}(\mathfrak{P}^\pm)$. 
    Then, the spectral radius of $\mathcal{L}_{\pm,\omega}$ restricted to $F_1(\mathfrak{P}^\pm)$ satisfies
    \begin{equation*}
        r_{\mathrm{spec}}\bigl(\mathcal{L}_{\pm,\omega}|_{F_1}\bigr) \;\le\; q\|\omega\|_\infty.
    \end{equation*}
\end{lem}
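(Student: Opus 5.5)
The plan is to bound every eigenvalue directly, since $F_1(\mathfrak{P}^\pm)\cong\mathrm{Maps}(\mathfrak{E},\C)$ is finite-dimensional. On such a space the spectrum of an operator consists only of eigenvalues, and its spectral radius does not depend on which norm is chosen.

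First I will make sure the restriction makes sense. By Lemma~\ref{lem:invariant_F}, $F_m(\mathfrak{P}^\pm)$ is $\mathcal{L}_{\pm,\omega}$-invariant whenever $\omega\in F_r(\mathfrak{P}^\pm)$ with $m\geq r$. So I read the statement with $\omega\in F_1(\mathfrak{P}^\pm)$, which makes $F_1$ invariant. If $\omega$ only lies in some $F_r$ with $r>1$, I will run the identical argument on $F_r$ instead. The argument never uses the level, so this costs nothing.

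Let $\lambda$ be an eigenvalue of $\mathcal{L}_{\pm,\omega}|_{F_1}$ with eigenvector $0\neq\varphi\in F_1(\mathfrak{P}^\pm)$. Since $\varphi$ is bounded, Proposition~\ref{prop:L_bound} gives
\[
|\lambda|\,\|\varphi\|_\infty=\|\mathcal{L}_{\pm,\omega}\varphi\|_\infty\le q\|\omega\|_\infty\|\varphi\|_\infty .
\]
Dividing by $\|\varphi\|_\infty>0$ yields $|\lambda|\le q\|\omega\|_\infty$. Taking the supremum over the finitely many eigenvalues then proves the claim.

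Alternatively, I can use Gelfand's formula \eqref{eq:Gelfand} with respect to $\|\cdot\|_\infty$ on $F_1$. Iterating Proposition~\ref{prop:L_bound} gives $\|\mathcal{L}^n_{\pm,\omega}|_{F_1}\|_{\mathrm{op},\infty}\le (q\|\omega\|_\infty)^n$. Taking $n$-th roots and letting $n\to\infty$ gives the same bound. Norm-independence on the finite-dimensional $F_1$ ensures this agrees with the spectral radius for any other norm, for instance $\|\cdot\|_\vartheta^1$.

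The only delicate point is the invariance of $F_1$ under $\mathcal{L}_{\pm,\omega}$ discussed above. Once invariance is settled, the estimate is an immediate consequence of the sup-norm bound.
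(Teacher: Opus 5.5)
Your proof is correct, but it takes a more elementary route than the paper. The paper works in the norm $\|\cdot\|_\vartheta^1$. For $\varphi\in F_1(\mathfrak{P}^\pm)$ one has $|\varphi|_\vartheta^1=0$, so the iterated key inequality \eqref{eq:key_ineq_m} becomes $\|\mathcal{L}_{\pm,\omega}^n\varphi\|_\vartheta^1\le (q\|\omega\|_\infty)^n\bigl(1+\|\omega\|_\infty^{-1}\tfrac{\vartheta}{1-\vartheta}|\omega|_\vartheta^1\bigr)\|\varphi\|_\infty$. The bracket does not depend on $n$, so taking $n$-th roots and applying Gelfand's formula \eqref{eq:Gelfand}, together with norm equivalence on $F_1$, gives the bound.

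You instead use only the one-step sup-norm estimate of Proposition~\ref{prop:L_bound}. Combined with the fact that on a finite-dimensional space the spectrum consists of eigenvalues, this needs no limit and no Lipschitz-seminorm bookkeeping. Your alternative Gelfand argument is just the paper's argument carried out in the simpler norm $\|\cdot\|_\infty$. The paper's version has the merit of using the same norm and inequality that control the essential spectral radius in Lemma~\ref{lem:ess_spec_radius_bound}, but on $F_1$ it gives nothing beyond your bound.

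Your explicit treatment of invariance is an improvement. The paper's proof tacitly needs $\mathcal{L}_{\pm,\omega}^n\varphi\in F_1$ in order to apply Gelfand's formula on $F_1$, and never checks this. Invariance already holds when $\omega\in F_2(\mathfrak{P}^\pm)$, since then $\omega\varphi\in F_2$ and $\mathcal{L}_{\pm}$ maps $F_2$ into $F_1$. So your fallback to $F_r$ is only needed for memory $r\ge 3$. In that case $F_1$ need not be invariant, and $\mathcal{L}_{\pm,\omega}|_{F_1}$ is not an endomorphism as the statement presupposes.
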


\begin{proof}
    For $\varphi \in F_1(\mathfrak{P}^\pm)$ one has $|\varphi|^1_\vartheta=0$, so the key inequality
    \eqref{eq:key_ineq_m} reduces to
    $$
        \|\mathcal{L}_{\pm,\omega}^n\varphi\|^1_\vartheta
        \;\le\;
        (q\|\omega\|_\infty)^{n}
        \left(1+\|\omega\|_\infty^{-1}\frac{\vartheta}{1-\vartheta}|\omega|^1_\vartheta \right)\|\varphi\|_\infty
        %\leq C_{\vartheta, \omega} (q\|\omega\|_\infty)^{m} 
        \quad \text{ with } \|\omega\|_\infty >0
    $$
    and since $\vartheta\in(0,1)$ the bracketed factor stays bounded as $n\to\infty$. 
    Taking $m$-th roots and using the equivalence of norms on the finite-dimensional space
    $F_1(\mathfrak{P}^\pm)$, Gelfand's formula \eqref{eq:Gelfand} yields the claimed bound. 
\end{proof}

\begin{lem}[Essential spectral radius bound] \label{lem:ess_spec_radius_bound}
    Let $\mathfrak{G}$ be a finite $(q+1)$-regular graph with $q>1$ and $0 \neq \omega \in C^{\mathrm{lc}}(\mathfrak{P}^\pm)$. 
    The essential spectral radius of the  $\mathcal{L}_{\pm,\omega}$ satisfies
    \begin{equation*}
        r_{\mathrm{ess}}(\mathcal{L}_{\pm,\omega}) \leq q \vartheta \|\omega\|_\infty < q\|\omega\|_\infty, \quad \vartheta\in (0,1).
    \end{equation*}
\end{lem}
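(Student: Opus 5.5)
The plan is to apply Nussbaum's formula \eqref{eq:Nussbaum} with the finite-rank, hence compact, operators $K_n\coloneqq\mathcal{L}^n_{\pm,\omega}\Pi_n$. Since $\Pi_n$ is a projection onto the finite-dimensional space $F_n(\mathfrak{P}^\pm)$ (Definition~\ref{def:projection_Pi_m}), each $K_n$ has finite rank. It is bounded on $(\F_\vartheta(\mathfrak{P}^\pm),\|\cdot\|^1_\vartheta)$ because $\Pi_n$ is a contraction in $\|\cdot\|^1_\vartheta$ by Lemma~\ref{lem:Pim_ineq}, and $\mathcal{L}^n_{\pm,\omega}$ is bounded by Proposition~\ref{prop:key_ineq}. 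Hence $K_n$ is an admissible competitor in the infimum of \eqref{eq:Nussbaum}.

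The main estimate is Proposition~\ref{prop:Km_ineq}~(ii) with $m=n$:
\[
\|\mathcal{L}^n_{\pm,\omega}-K_n\|_{\mathrm{op}}\le C_{\vartheta,\omega}\,(q\vartheta\|\omega\|_\infty)^n,
\]
where the operator norm is taken with respect to $\|\cdot\|^1_\vartheta$ and $C_{\vartheta,\omega}$ does not depend on $n$. Taking $n$-th roots gives $\inf_K\|\mathcal{L}^n_{\pm,\omega}-K\|^{1/n}\le C_{\vartheta,\omega}^{1/n}\,q\vartheta\|\omega\|_\infty$. Letting $n\to\infty$ yields $r_{\mathrm{ess}}(\mathcal{L}_{\pm,\omega})\le q\vartheta\|\omega\|_\infty$. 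The strict inequality $q\vartheta\|\omega\|_\infty<q\|\omega\|_\infty$ follows from $\vartheta<1$ and $\omega\neq0$, which gives $\|\omega\|_\infty>0$.

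The main obstacle is the choice of norm: the essential spectral radius must be computed for the Banach space $\F_\vartheta(\mathfrak{P}^\pm)$ with its norm $\|\cdot\|_\vartheta$, while the estimates above are stated for $\|\cdot\|^1_\vartheta$. I would resolve this by invoking the equivalence of $\|\cdot\|^1_\vartheta$ and $\|\cdot\|_\vartheta$ from \cite[§5]{BHW23}. On a finite graph the level-1 seminorm only restricts the supremum to pairs with a common first edge, and pairs with different first edges are controlled by $2\|\varphi\|_\infty/\vartheta$. Equivalent norms change operator norms by a fixed multiplicative constant, and that constant disappears under $n$-th roots, so Nussbaum's limit is the same for both norms. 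A secondary point is that Proposition~\ref{prop:Km_ineq}~(ii) is stated for $\mathcal{L}^m_{\pm,\omega}$ with the projection $\Pi_m$ of the same index. I will use it in exactly that form, combining \eqref{eq:key_ineq_m} with Lemma~\ref{lem:Pim_ineq}, so no compatibility of the preferred continuations (as in Lemma~\ref{lem:Pim_commuting}) is needed.
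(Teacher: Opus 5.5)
Your proposal is correct and follows the paper's proof: the paper also uses the finite-rank operators $K_m=\mathcal{L}^m_{\pm,\omega}\Pi_m$ as compact competitors, bounds $\mathcal{L}^m_{\pm,\omega}-K_m$ by $C_{\vartheta,\omega}(q\vartheta\|\omega\|_\infty)^m$ via Proposition~\ref{prop:Km_ineq}~(ii), and concludes with Nussbaum's formula \eqref{eq:Nussbaum}. Your remark on the equivalence of $\|\cdot\|^1_\vartheta$ and $\|\cdot\|_\vartheta$ supplies a detail the paper leaves implicit, since the paper works only in $\|\cdot\|^1_\vartheta$.
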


\begin{proof}
   Let $\Pi_m$ be the canonical level-$m$ projection as in Definition~\ref{def:projection_Pi_m} and set 
   $K_m \coloneqq \mathcal{L}_{\pm, \omega}^m \Pi_m.$
   Since $\mathfrak{G}$ is finite, $F_m(\mathfrak{P}^\pm)$ is finite-dimensional, so $K_m$ is a finite-rank operator on $\F_\vartheta(\mathfrak{P}^\pm)$, in particular compact for every $m \geq 1.$

   By Proposition~\ref{prop:Km_ineq}~(ii), for every $\varphi \in \F_\vartheta(\mathfrak{P}^\pm)$ and $m\geq 1$, we have
   $$\|\mathcal{L}_{\pm,\omega}^m \varphi - K_m\varphi\|_\vartheta^1 
   \leq C_{\vartheta, \omega} (q\vartheta \|\omega\|_\infty)^m \|\varphi\|^1_\vartheta, 
   $$
   where $C_{\vartheta, \omega}= (3+ \|\omega\|_\infty^{-1}|\omega|^1_\vartheta \frac{\vartheta}{1-\vartheta})>0$ is a finite constant independent of $m$.
   Consequently, letting $m \rightarrow \infty$ gives $C_{\vartheta, \omega}^{1/m} \rightarrow 1$ and thus by
   Nussbaum's formula \eqref{eq:Nussbaum} we get the claimed bound.
\end{proof}

% -------------------------------%%--------------------------------------%
\subsubsection{Weighted resolvent on finite graphs} 

Extending \cite[Prop.~8.1 and Thm.~8.3]{BHW23}, for any potential $\omega$, one expects analogous spectral localization 
results.

\begin{thm}[Meromorphic structure of the weighted resolvent]
\label{thm:mero_resolvent}
    Let $\mathfrak{G}$ be a finite $(q+1)$-regular graph with $q> 1$ and 
    fix the parameter $\vartheta \in (0,1)$. For $\omega \in C^{\mathrm{lc}}(\mathfrak{P}^\pm)$, consider $\mathcal{L}_{\pm,\omega}$ on $\F_{\vartheta}(\mathfrak{P}^\pm)$.

    \begin{itemize}
        \item If $\omega \neq 0$, then,
    the resolvent 
    \begin{equation} \label{eq:resolvent}
        \mathbf{R}_\omega(s)\coloneqq (\mathrm{Id}-q^{\frac{1}{2}+is}\mathcal{L}_{\pm,\omega})^{-1}\colon \F_{\vartheta}(\mathfrak{P}^\pm) \longrightarrow \F_{\vartheta}(\mathfrak{P}^\pm)
    \end{equation}
    admits a meromorphic continuation, as a family of bounded operators on
    $\F_\vartheta(\mathfrak{P}^\pm)$, to the half-plane
    \begin{equation} \label{eq:region}
    \mathscr{H}_{\vartheta, \omega} \coloneqq \left\{ s \in \C \;\Big|\; \mathrm{Im}(s) > \frac{3}{2} + \frac{\log(\vartheta \|\omega\|_\infty)}{\log q}\right\}
    \end{equation}
    and its poles have finite-rank principal parts.
    \item If $\omega=0$, then $\mathcal{L}_{\pm,\omega}=0$ and $\mathbf{R}_\omega(s)=\mathrm{Id}$ for every $s\in\C$.
    \end{itemize}
\end{thm}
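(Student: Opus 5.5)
The plan is to combine Lemma~\ref{lem:ess_spec_radius_bound} with the standard analytic Fredholm theory for operator families $\mathrm{Id}-zT$. Set $z\coloneqq q^{\frac12+is}$, so that $|z|=q^{\frac12-\mathrm{Im}(s)}$. Then $s\in\mathscr{H}_{\vartheta,\omega}$ is equivalent to
\[
|z|\,q\vartheta\|\omega\|_\infty<1 .
\]
Indeed, $\log|z|+\log(q\vartheta\|\omega\|_\infty)<0$ rewrites, after dividing by $\log q>0$, as $\mathrm{Im}(s)>\tfrac32+\log(\vartheta\|\omega\|_\infty)/\log q$. The map $s\mapsto z$ is holomorphic and maps $\mathscr{H}_{\vartheta,\omega}$ into the disc $D\coloneqq\{|z|<(q\vartheta\|\omega\|_\infty)^{-1}\}$. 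It therefore suffices to show that $z\mapsto(\mathrm{Id}-z\mathcal{L}_{\pm,\omega})^{-1}$ is meromorphic on $D$ with finite-rank principal parts, and then pull this back. The case $\omega=0$ is trivial and is dispatched first.

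For the main step, I would fix $z_0\in D$ and write $\mathcal{L}\coloneqq\mathcal{L}_{\pm,\omega}$. Choose $m$ large so that, by Proposition~\ref{prop:Km_ineq}~(ii),
\[
\|\mathcal{L}^m-K_m\|\le C_{\vartheta,\omega}(q\vartheta\|\omega\|_\infty)^m,
\]
where $K_m=\mathcal{L}^m\Pi_m$ has finite rank. Here the norm is the operator norm for $\|\cdot\|^1_\vartheta$, which I would check is equivalent to $\|\cdot\|_\vartheta$ on $\F_\vartheta$ (as in \cite[§5]{BHW23}). Then $|z|^m\|\mathcal{L}^m-K_m\|<1$ uniformly on a neighbourhood of $z_0$. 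The key is the factorisation
\[
\mathrm{Id}-z^m\mathcal{L}^m=(\mathrm{Id}-z\mathcal{L})\,P(z),\qquad P(z)=\sum_{j=0}^{m-1}z^j\mathcal{L}^j,
\]
and $P(z)$ commutes with $\mathcal{L}$. Next write $\mathrm{Id}-z^m\mathcal{L}^m=A(z)-z^mK_m$ with $A(z)\coloneqq\mathrm{Id}-z^m(\mathcal{L}^m-K_m)$, which is invertible by a Neumann series and holomorphic in $z$. Thus $\mathrm{Id}-z^m\mathcal{L}^m=A(z)\bigl(\mathrm{Id}-A(z)^{-1}z^mK_m\bigr)$ is a holomorphic family of the form identity minus finite rank. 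By the analytic Fredholm theorem, its inverse is meromorphic near $z_0$ with finite-rank principal parts, since at $z=0$ it is the identity, so it is not everywhere non-invertible on the connected set $D$. Finally,
\[
(\mathrm{Id}-z\mathcal{L})^{-1}=P(z)\,(\mathrm{Id}-z^m\mathcal{L}^m)^{-1}.
\]
This holds wherever the right-hand side exists: injectivity and surjectivity of $\mathrm{Id}-z\mathcal{L}$ both follow from the commuting factorisation. Hence $(\mathrm{Id}-z\mathcal{L})^{-1}$ is meromorphic with finite-rank principal parts, because $P(z)$ is holomorphic.

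Uniqueness of the continuation and agreement with the honest resolvent for small $|z|$ (where the Neumann series $\sum z^n\mathcal{L}^n$ converges by Proposition~\ref{prop:key_ineq}) come from the identity theorem, and the local continuations glue over the connected disc $D$. Pulling back via $s\mapsto z$ gives the claim on $\mathscr{H}_{\vartheta,\omega}$.

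The main obstacle I expect is the bookkeeping of norms. Proposition~\ref{prop:Km_ineq}~(ii) is stated in $\|\cdot\|^1_\vartheta$, so I must verify its equivalence with $\|\cdot\|_\vartheta$; the finite-diameter ultrametric should give this with constants depending only on $\vartheta$. I also need the uniformity in $m$ of $C_{\vartheta,\omega}$, which that proposition already provides.
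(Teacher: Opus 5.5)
Your proof is correct, but it takes a different route from the paper's.

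\textbf{The paper's route.} It first proves $r_{\mathrm{ess}}(\mathcal{L}_{\pm,\omega})\le\rho_0=q\vartheta\|\omega\|_\infty$ (Lemma~\ref{lem:ess_spec_radius_bound}), by feeding Proposition~\ref{prop:Km_ineq}~(ii) into Nussbaum's formula. It then appeals to the general spectral theory outside the essential spectrum, citing the proof of \cite[Prop.~8.1]{BHW23}: there the spectrum consists of isolated eigenvalues of finite algebraic multiplicity, with finite-rank Riesz projectors. Finally it translates $|q^{\frac12+is}|<\rho_0^{-1}$ into $\mathscr{H}_{\vartheta,\omega}$.

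\textbf{Your route.} You use the same key estimate but bypass the essential spectrum entirely. You apply the analytic Fredholm theorem to $\mathrm{Id}-z^m\mathcal{L}^m=A(z)\bigl(\mathrm{Id}-A(z)^{-1}z^mK_m\bigr)$. You then recover $(\mathrm{Id}-z\mathcal{L})^{-1}=P(z)(\mathrm{Id}-z^m\mathcal{L}^m)^{-1}$ from the commuting polynomial factorisation.

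\textbf{What each buys.} Your argument is self-contained: it needs neither Nussbaum's formula nor Fredholm-index theory, and it yields the finite-rank principal parts directly. It also never uses the paper's claim that $\sigma(\mathcal{L}_{\pm,\omega})\cap\{|\lambda|>\rho_0\}$ is finite. That claim is only true on $\{|\lambda|\ge\rho_0+\varepsilon\}$, since eigenvalues may accumulate at the circle $|\lambda|=\rho_0$. The paper's route, in exchange, delivers the spectral picture immediately (resonances as isolated eigenvalues with Riesz projectors). That picture is what Definition~\ref{def:resonance} and Lemma~\ref{lem:resolvent_decomp_hol_finiteranks} rely on. You can recover it from your result: a finite-rank principal part forces the Riesz projector $\frac{1}{2\pi i}\oint(\lambda-\mathcal{L})^{-1}\,d\lambda$ around each pole to have finite rank.

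\textbf{Two points to tighten.}
\begin{enumerate}
\item Run the analytic Fredholm theorem on the disc $\{|z|<\|\mathcal{L}^m-K_m\|^{-1/m}\}$, rather than on an unspecified neighbourhood of $z_0$. This disc is exactly where the Neumann series for $A(z)^{-1}$ converges. For large $m$ it contains $z_0$, and it always contains $0$, so your non-degeneracy check at $z=0$ genuinely applies to the same holomorphic family on a connected domain.
\item The norm equivalence you flag is immediate if $|\cdot|^1_\vartheta$ is the Lipschitz seminorm over pairs sharing the first edge. Paths with different first edges are at distance exactly $\vartheta$, so $|\varphi|^1_\vartheta\le|\varphi|_\vartheta\le\max\{|\varphi|^1_\vartheta,\,2\vartheta^{-1}\|\varphi\|_\infty\}$.
\end{enumerate}
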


Notice that decreasing $\vartheta\|\omega\|_\infty$ (or, more specifically, shrinking $\vartheta$ with $\|\omega\|_\infty$ held fixed) moves the boundary of $\mathscr{H}_{\vartheta, \omega}$ downward and therefore enlarges the domain of meromorphicity, see Figure~\ref{fig:mero_weighted} (right panel).

\begin{figure}[h!]
\centering
\begin{minipage}{0.46\textwidth}
\centering
\begin{tikzpicture}[scale=1.0]
    \def\Rout{2.0}
    \def\r{0.9}
    \def\rsmall{0.5}
    % shade the exterior region up to a bounding box, with a hole
    \begin{scope}
        \clip (-\Rout,-\Rout) rectangle (\Rout,\Rout);
        \fill[gray!15] (-\Rout,-\Rout) rectangle (\Rout,\Rout);
        \fill[white] (0,0) circle (\r);
    \end{scope}
    \draw[thick,blue!60!black] (0,0) circle (\r);
    %\draw[dashed,blue!60!black] (0,0) circle (1.3);
    \draw[dashed,blue!60!black] (0,0) circle (\rsmall-0.2);
    \draw[->] (-\Rout,0)--(\Rout,0) node[right] {$\Re(\lambda)$};
    \draw[->] (0,-\Rout)--(0,\Rout) node[above] {$\Im(\lambda)$};

    \fill (0,0) circle (0.6pt);
    \node[below left] at (0.1,0.1) {$0$};
    \node[blue!60!black] at (1.55,1.0)
    {\small $|\lambda|=\vartheta q\|\omega\|_\infty$};
    \node at (-1.5,1.7)
    {\small $\mathscr{D}_{\vartheta,\omega}$};
    \draw[<->,very thick, red] (0.9,0)--(0.3,0);
    \node[align=left, red] at (2.4,-0.8)
    {\footnotesize smaller $\vartheta\|\omega\|_\infty,$\\ \footnotesize shrunk disk};
\end{tikzpicture}
\end{minipage}
\hfill
\begin{minipage}{0.5\textwidth}
\centering
\begin{tikzpicture}[scale=0.7]
    \def\th{1.5}
    \fill[gray!15] (-4,\th) rectangle (4,4.2);
    \draw[->] (-4.2,0)--(4.2,0) node[right] {$\Re(s)$};
    \draw[->] (0,-1.0)--(0,4.4) node[above] {$\Im(s)$};
    \draw[dashed,blue!60!black,thick]
    (-4,\th)--(4,\th);
    \node[blue!60!black,right] at (1,\th+0.7)
    {\small $\Im s=\dfrac32+\dfrac{\log(\vartheta\|\omega\|_\infty)}{\log q}$};
    \node at (-2.2,3.4)
    {$\mathscr{H}_{\vartheta,\omega}$};
    \draw[<->,very thick, red]
    (2.8,\th+0.04)--(2.8,-0.4);
    \draw[dashed,blue!60!black,thick]
    (-4,-0.4)--(4,-0.4);
    \node[align=left, red] at (-0.65,0.6)
    {\footnotesize smaller $\vartheta\|\omega\|_\infty$, larger domain};
\end{tikzpicture}
\end{minipage}
    \caption{Meromorphicity domains for
    $\mathbf{R}_\omega(s)=(\mathrm{Id}-q^{\frac12+is}\mathcal{L}_{\pm,\omega})^{-1} \in \C$.
    Writing $\lambda=(q^{\frac12+is})^{-1}$, the resolvent is
    meromorphic in the exterior domain $\mathscr{D}_{\vartheta,\omega}=
    \{\lambda \in \C \;|\; |\lambda|>\vartheta q\|\omega\|_\infty\}$ (left panel), which
    corresponds to the half-plane $\mathscr{H}_{\vartheta,\omega}$ (right
    panel). Decreasing $\vartheta$ (or $\|\omega\|_\infty$) shrinks the
    excluded disk resp. lowers the threshold line, enlarging both domains of meromorphicity.
    When $\omega=0$, the resolvent is entire, i.e., the domain is all of $\C$.}
    \label{fig:mero_weighted}
\end{figure}

\begin{rem}[Translation to the variable $\lambda$] \label{rem:disk_domain}
    In the framework of \cite{BHW23}, set $\lambda \coloneqq \lambda(s)= (q^{\frac12+is})^{-1} \in \C$. The half-plane $\mathscr{H}_{\vartheta,\omega}$ \eqref{eq:region} corresponds under this change of variable to the exterior disk-domain (see Figure~\ref{fig:mero_weighted}, left panel)
    $$
    \mathscr{D}_{\vartheta,\omega} \coloneqq \bigl\{ \lambda \in \C \;\big|\; |\lambda| > \vartheta q \|\omega\|_{\infty} \bigr\},
    $$
    and $\mathbf{R}_\omega(s)$, viewed as a function of $\lambda$, is meromorphic on $\mathscr{D}_{\vartheta,\omega}$.
    In other words, $\lambda \in \mathscr{D}_{\vartheta,\omega}$ if and only if $s\in \mathscr{H}_{\vartheta,\omega}.$
\end{rem}

\begin{proof}[Proof of Theorem~\ref{thm:mero_resolvent}]
The case $\omega = 0$ is immediate. 

Assume $\omega\neq 0$, so $\|\omega\|_\infty>0$.
Since $\omega\in C^{\mathrm{lc}}(\mathfrak{P}^\pm)$, it belongs to $\F_\vartheta(\mathfrak{P}^\pm)$ for every $\vartheta\in(0,1)$.
By Proposition~\ref{prop:key_ineq}, $\mathcal{L}_{\pm,\omega}$ is a bounded operator on $\F_\vartheta(\mathfrak{P}^\pm)$ and
by Lemma~\ref{lem:ess_spec_radius_bound}, $r_{\mathrm{ess}}(\mathcal{L}_{\pm,\omega})\leq \rho_0\coloneqq q\vartheta\|\omega\|_\infty$ for fixed parameter $\vartheta\in (0,1).$

Since $\mathcal{L}_{\pm,\omega}$ is bounded, $\sigma(\mathcal{L}_{\pm,\omega})$ is a compact subset of $\C$.
So $$\sigma(\mathcal{L}_{\pm,\omega})\cap\{\lambda(s)=q^{-(\frac12+is)} \in \C :|\lambda(s)|>\rho_0\}$$ is not merely discrete but \emph{finite}, consisting of eigenvalues of finite algebraic multiplicity. 
Note that $q^{\frac12+is}$ is entire and nowhere vanishing.
The proof of \cite[Prop.~8.1]{BHW23} then shows that
$
\mathbf{R}_\omega(s) = \bigl(\operatorname{Id}-q^{\frac12+is}\mathcal{L}_{\pm,\omega}\bigr)^{-1}
$
extends meromorphically in $s$ to the disk $\{ s\in\C : |q^{\frac12+is}| < \rho_0^{-1}\}$, with finitely many poles, each of finite-rank principal part.

Using $|q^{\frac12+is}|=q^{\frac12-\operatorname{Im}(s)}$, the condition becomes
$$
q^{\frac12-\operatorname{Im}(s)} < \frac{1}{q\vartheta\|\omega\|_\infty}
\iff
\operatorname{Im}(s) > \frac12+\frac{\log(q\vartheta\|\omega\|_\infty)}{\log q}
= \frac32+\frac{\log(\vartheta\|\omega\|_\infty)}{\log q},
$$
i.e., precisely $s\in\mathscr H_{\vartheta,\omega}$. Hence $\mathbf{R}_\omega(s)$ extends meromorphically to $\mathscr H_{\vartheta,\omega}$ with finite-rank principal parts at its poles.
\end{proof}

\begin{rem}[Unweighted case and parametrization] \label{rem:parametrization}
\leavevmode
If $\omega \equiv 1$ is the constant multiplicative weight, then Theorem~\ref{thm:mero_resolvent} is precisely \cite[Prop.~8.1 and Thm.~8.3]{BHW23}. In particular, the resolvent
$\mathbf{R}(s) \coloneqq \mathbf{R}_{1}(s)$ extends meromorphically as a family of bounded operators on $\F_\vartheta(\mathfrak{P}^\pm)$ to 
$\mathscr{H}_{\vartheta, 1}$.
    \begin{itemize}
        \item[(i)] Note that our resolvent convention used here differs from that of \cite{BHW23}, where their resolvent is defined as 
        $\mathbf{R}^{\mathrm{BHW}}(\lambda) \coloneqq (\mathcal{L}_\pm - \lambda)^{-1}$
        and is meromorphic on $|\lambda|> \vartheta q.$
        Passing from their parameter $\lambda \in \C\backslash\{0\}$ to ours via the inversion $\lambda = z^{-1}$, the two resolvents are related by
        $$
        \mathbf{R}(z) 
        = (\mathrm{Id}-z\mathcal{L}_\pm)^{-1} 
        = -z^{-1}(\mathcal{L}_\pm-z^{-1}\mathrm{Id})^{-1}
        = -z^{-1}\mathbf{R}^{\mathrm{BHW}}(z^{-1}).
        $$
        At $z=0$, the family is defined directly by $\mathbf{R}(0)=\mathrm{Id}.$
        We then adopt our parametrization $z \coloneqq q^{\frac{1}{2}+is} \in \C$. In this setting, the meromorphic disk-domain $|z|>(\vartheta q)^{-1}$
        translates into the $s$-plane $\mathscr{H}_{\vartheta,1}$ \eqref{eq:region}, see Figure~\ref{fig:mero_weighted}.
        
        \item[(ii)] Let $\mathfrak{G}$ be a finite $(q+1)$-regular graph, i.e., $|\mathfrak{G}| < \infty$, and
        $$\ell^2(\mathfrak{E}) \coloneqq 
        \Big\{f \colon \mathfrak{E} \rightarrow \C \;\Big|\; \sum_{\vec{e} \in \mathfrak{E}} |f(\vec{e})|^2 < \infty \Big\} \subset \mathrm{Maps}(\mathfrak{E},\C).$$
        Since $\mathfrak{G}$ is finite, by \cite[Rem. 4.1(i)]{AFH23}
        $$F_1(\mathfrak{P}^\pm) \cong \mathrm{Maps}(\mathfrak{E},\C) \coloneqq \C^{\mathfrak{E}}=\ell^2(\mathfrak{E})$$ 
        is finite-dimensional, so all norms on it are equivalent, and
        the spectral radius $r_{\mathrm{spec}}(\mathcal{L}_\pm|_{F_1(\mathfrak{P}^\pm)}) = q$ is independent of the ambient norm, \cite[Thm.~8.3]{BHW23}.
        Hence for every
        $\lambda \in \C$ with $|\lambda| > q$, the unweighted resolvents
        $$
            \mathbf{R}_\vartheta(\lambda) \coloneqq (\mathcal{L}_\pm-\lambda)^{-1}\big|_{\F_\vartheta(\mathfrak{P}^\pm)}
            \qquad\text{and}\qquad
            \mathbf{R}_{\ell^2}(\lambda) \coloneqq (\mathcal{L}_\pm-\lambda)^{-1}\big|_{\ell^2(\mathfrak{E})}
        $$
        coincide as operators on $F_1(\mathfrak{P}^\pm)$: both are given there by the
        same absolutely convergent Neumann series
        $$
            (\mathcal{L}_\pm - \lambda)^{-1}
            = \sum^\infty_{n=0}\lambda^{-n}\mathcal{L}_\pm^n,
        $$
        which converges to the unique algebraic inverse of the finite matrix
        $(\mathcal{L}_\pm|_{F_1(\mathfrak{P}^\pm)} - \lambda\,\mathrm{Id})$ regardless of which equivalent norm on the finite-dimensional space $F_1(\mathfrak{P}^\pm)$ is used to test convergence.
        Thus $\mathbf{R}_\vartheta(\lambda)$ and $\mathbf{R}_{\ell^2}(\lambda)$ agree on $F_1(\mathfrak{P}^\pm)$ for all $|\lambda|>q$, by \cite[Prop.~7.3]{BHW23}.
        They differ only upon meromorphic continuation into $\vartheta q < |\lambda| \le q$, a region accessible
        to $\mathbf{R}_\vartheta$ but not to $\mathbf{R}_{\ell^2}$.
    \end{itemize}
\end{rem}

In view of Remark~\ref{rem:parametrization}~(ii), we can generalize it for any potential $\omega.$

\begin{cor}[Meromorphic structure of the weighted $\ell^2$-resolvent]
\label{cor:mero_l2_resolvent_weighted}
Let $\mathfrak{G}$ be a finite $(q+1)$-regular graph with $q>1$ and $\omega \in C^{\mathrm{lc}}(\mathfrak{P}^\pm)$. Then the resolvent
$$
    \mathbf{R}_{\ell^2,\omega}(s) \coloneqq \bigl(\mathrm{Id} - q^{\frac12+is}\,\mathcal{L}_{\pm,\omega}\bigr)^{-1}
    \colon \ell^2(\mathfrak{E}) \longrightarrow \ell^2(\mathfrak{E})
$$
extends meromorphically as a family of bounded operators to \emph{all} of $s\in\C$.
\begin{itemize}
    \item If $\omega\neq 0$, its poles have finite-rank principal parts and are located exactly at the eigenvalues of $\mathcal{L}_{\pm,\omega}|_{\ell^2(\mathfrak{E})}$, in particular in
$$
    \left\{\, s\in\C \;\middle|\; q^{\frac12+is} = \mu^{-1} \text{ for some } \mu\in\sigma\bigl(\mathcal{L}_{\pm,\omega}|_{\ell^2(\mathfrak{E})}\bigr) \,\right\}.
$$
    \item If $\omega=0$, then $\mathcal{L}_{\pm,\omega}=0$ and $\mathbf{R}_{\ell^2,\omega}(s)=\mathrm{Id}$ for every $s\in\C$, in particular it is holomorphic (pole-free) on all of $\C$.
\end{itemize}
\end{cor}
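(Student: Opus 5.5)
The plan is to reduce everything to linear algebra on a finite-dimensional invariant subspace, so that no spectral-radius or essential-spectrum estimates are needed. First I will check that $\mathcal{L}_{\pm,\omega}$ actually acts on $\ell^2(\mathfrak{E})\cong F_1(\mathfrak{P}^\pm)$ (Remark~\ref{rem:parametrization}~(ii)). By Lemma~\ref{lem:invariant_F}, $F_m(\mathfrak{P}^\pm)$ is $\mathcal{L}_{\pm,\omega}$-invariant whenever $\omega\in F_r(\mathfrak{P}^\pm)$ with $m\ge r$.

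This invariance is the one point needing care. For $\omega\in F_1(\mathfrak{P}^\pm)$ the operator leaves $F_1(\mathfrak{P}^\pm)=\ell^2(\mathfrak{E})$ invariant. For a general locally constant $\omega\in F_r(\mathfrak{P}^\pm)$ with $r>1$, I will run the same argument on $F_r(\mathfrak{P}^\pm)$, the space of functions of the first $r$ edges. This is again finite-dimensional because $\mathfrak{G}$ is finite, and nothing below uses more than that. I expect this to be the only real subtlety: making precise on which finite-dimensional space $V$ the operator $A\coloneqq \mathcal{L}_{\pm,\omega}|_V$ acts.

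With $A$ a matrix on a finite-dimensional $V$, set $z=q^{\frac12+is}$, which is entire and nowhere vanishing in $s$. By Cramer's rule,
\[
(\mathrm{Id}-zA)^{-1}=\frac{\operatorname{adj}(\mathrm{Id}-zA)}{\det(\mathrm{Id}-zA)},\qquad \det(\mathrm{Id}-zA)=\prod_{\mu\in\sigma(A)}(1-z\mu)^{m_\mu},
\]
where $m_\mu$ is the algebraic multiplicity of $\mu$. The numerator is a matrix of polynomials in $z$, hence entire in $s$, and the denominator is an entire function of $s$ that is not identically zero, since it equals $1$ at $z=0$. Therefore $\mathbf{R}_{\ell^2,\omega}(s)$ is meromorphic on all of $\C$. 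Its poles lie where $1-z\mu=0$ for some $\mu\neq0$, i.e.\ $q^{\frac12+is}=\mu^{-1}$; zero eigenvalues never contribute. Every principal part is trivially of finite rank, since $V$ is finite-dimensional.

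To see that the poles are exactly there, I will use the Jordan decomposition of $A$. On the generalized eigenspace of $\mu\neq0$ one has $\mathrm{Id}-zA=(1-z\mu)\mathrm{Id}-zN$ with $N$ nilpotent, and the inverse is
\[
\sum_{j\ge0}\frac{z^jN^j}{(1-z\mu)^{j+1}},
\]
a finite sum. Its leading term $(1-z\mu)^{-1}\mathrm{Id}$ cannot cancel, so the inverse genuinely blows up as $z\to\mu^{-1}$. This gives a pole at each such $s$.

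Finally, I will note consistency with Theorem~\ref{thm:mero_resolvent}. For $|z|\,\|A\|<1$, i.e.\ $\operatorname{Im}(s)$ large, the Neumann series $\sum_n z^nA^n$ converges, and it agrees with the $\F_\vartheta$-resolvent restricted to $V$, exactly as in Remark~\ref{rem:parametrization}~(ii). The global continuation is then the rational expression above. If $\omega=0$, then $A=0$ and $(\mathrm{Id}-zA)^{-1}=\mathrm{Id}$ for all $s$, which is holomorphic on all of $\C$.
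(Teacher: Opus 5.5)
Your proof is correct and follows the same route as the paper, which likewise just notes that the space is finite-dimensional, so the resolvent is a rational function of $z=q^{\frac12+is}$ with poles exactly where $z$ is the reciprocal of a nonzero eigenvalue; your Cramer's-rule and Jordan-block details make that one-line argument explicit (most cleanly, on an eigenvector $v$ with $\mathcal{L}_{\pm,\omega}v=\mu v$ the resolvent acts by $(1-z\mu)^{-1}$). Your invariance caveat is a genuine point the paper passes over, and it can be sharpened: by the inclusion $\mathcal{L}_{\pm,\omega}(F_1(\mathfrak{P}^\pm))\subseteq F_{\max\{1,r\}-1}(\mathfrak{P}^\pm)$ in Lemma~\ref{lem:invariant_F}, the space $\ell^2(\mathfrak{E})=F_1(\mathfrak{P}^\pm)$ is invariant exactly when $\omega\in F_2(\mathfrak{P}^\pm)$, so for $r\le 2$ you should stay on $\ell^2(\mathfrak{E})$ itself (passing to $F_2$ could add poles), while for $r\ge 3$ the $\ell^2(\mathfrak{E})$-resolvent is not even defined and your $F_r$-version is a natural substitute rather than a proof of the statement as written.
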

\begin{proof}
Since $\mathfrak{G}$ is finite, the edge set $\mathfrak{E}$ is finite, so
$$\ell^2(\mathfrak{E}) = F_1(\mathfrak{P}^\pm) \cong \mathrm{Maps}(\mathfrak{E}, \C)$$
is already finite-dimensional. 
Thus
$\mathbf{R}_{\ell^2,\omega}(s)$
is a rational function of the entire function $s\mapsto q^{\frac12+is}$ and hence meromorphic on all of $\C$, with poles exactly where $q^{\frac12+is}$ hits the reciprocal of an eigenvalue of $\mathcal{L}_{\pm,\omega}|_{\ell^2(\mathfrak{E})}$. 
They are finite-rank residues, since $\ell^2(\mathfrak{E})$ is itself finite-dimensional. 
If $\omega = 0$, then $\mathcal{L}_{\pm,\omega}|_{\ell^2(\mathfrak{E})}=0$.
So $\mathbf{R}_{\ell^2,\omega}(s)=\mathrm{Id}$ and thus there are no poles.
\end{proof}

\begin{rem}[Coincidence with $\mathbf{R}_\omega(s)$ for $\mathrm{Im}(s)>\tfrac32+\tfrac{\log\|\omega\|_\infty}{\log q}$]
\label{rem:resolvent_coincidence_weighted}
Unlike in the unweighted case \cite[Thm.~8.3]{BHW23} (see also Remark~\ref{rem:parametrization}), we do not know the exact spectral radius
$r_{\mathrm{spec}}(\mathcal{L}_{\pm,\omega}|_{F_1})$ for a general weight
$\omega \in C^{\mathrm{lc}}(\mathfrak{P}^\pm)$. We do, however, have the rigorous bound by Lemma~\ref{lem:spec_radius_bound}.
Note that equality holds, e.g. when $\omega$ is constant, by homogeneity of the spectrum, but not in general.

The condition $|\lambda| > q\|\omega\|_\infty$, i.e., $|z|^{-1}> q\|\omega\|_\infty$ for
$z=q^{\frac12+is}$, $s\in\C$ (see Remark~\ref{rem:parametrization}), translates via
$$
    |z| = q^{\frac12-\mathrm{Im}(s)} < (q\|\omega\|_\infty)^{-1}
    \quad\Longleftrightarrow\quad
    \mathrm{Im}(s) > \tfrac32+\frac{\log\|\omega\|_\infty}{\log q},
$$
which is exactly the boundary case $\vartheta=1$ of $\mathscr{H}_{\vartheta,\omega}$ \eqref{eq:region} in
Theorem~\ref{thm:mero_resolvent}. On this region, the bound above guarantees that the
Neumann series for $\mathbf{R}_\omega(s)$ converges absolutely with respect to every norm on
the finite-dimensional space $F_1(\mathfrak{P}^\pm)$, so
$$
    \mathbf{R}_\omega(s)\big|_{F_1(\mathfrak{P}^\pm)} = \mathbf{R}_{\ell^2,\omega}(s),
    \qquad \mathrm{Im}(s) > \tfrac32+\frac{\log\|\omega\|_\infty}{\log q}.
$$
Corollary~\ref{cor:mero_l2_resolvent_weighted} shows that the meromorphic continuation of
$\mathbf{R}_{\ell^2,\omega}(s)$ is a triviality once $\mathfrak{G}$ is finite: it is nothing but
the statement that a finite matrix pencil has a rational resolvent.

The genuine content of Theorem~\ref{thm:mero_resolvent} lies instead in extending the meromorphic domain of $\mathbf{R}_\omega(s)$ as an operator on the \emph{infinite-dimensional}
space $\F_\vartheta(\mathfrak{P}^\pm)$ (built from the path space $\mathfrak{P}^\pm$, not
merely from the finite edge set $\mathfrak{E}$) down to
$\mathrm{Im}(s) > \tfrac32+\tfrac{\log(\vartheta\|\omega\|_\infty)}{\log q}$, still with finite-rank principal parts. 
Whereas $\mathbf{R}_{\ell^2,\omega}(s)$, though meromorphic on all of $\C$ by Corollary~\ref{cor:mero_l2_resolvent_weighted}, no longer represents the same operator once one leaves $F_1(\mathfrak{P}^\pm)$: the identification above holds only on
$F_1(\mathfrak{P}^\pm)$ and only for $\mathrm{Im}(s)>\tfrac32+\tfrac{\log\|\omega\|_\infty}{\log q}$.
\end{rem}

By Theorem~\ref{thm:mero_resolvent} and Remark~\ref{rem:disk_domain}, the spectrum of $\mathcal{L}_{\pm,\omega}$ in $\mathscr{D}_{\vartheta,\omega}$ is \emph{discrete} and consists of isolated eigenvalues of finite algebraic multiplicity. 
In the unweighted case, this discrete spectrum is called the \emph{spectrum of Ruelle resonances} of $\mathcal{L}_\pm$ on $\F_\vartheta(\mathfrak{P}^\pm)$, see \cite[Rem.~8.2 and Thm.~8.3]{BHW23}. This motivates the following weighted analogue.

\begin{definition}[Weighted Ruelle resonances] \label{def:resonance}
Assume $\omega\neq 0$ and fix $\vartheta\in(0,1)$.
The \emph{$\omega$-weighted Ruelle resonances} of
$\mathcal{L}_{\pm,\omega}$ on
$\F_\vartheta(\mathfrak{P}^\pm)$ are the spectral values of
$
 \sigma(\mathcal{L}_{\pm,\omega})
 \cap\mathscr{D}_{\vartheta,\omega}.
$
They form a discrete set of isolated eigenvalues of finite algebraic
multiplicity.\\
Writing $\lambda(s_0)=q^{-(\frac12+is_0)}\in \C$, %(see Remark~\ref{rem:disk_domain}), 
a point $\lambda(s_0) \in \mathscr{D}_{\vartheta,\omega}$ is an $\omega$-weighted Ruelle resonance if and only if every $s_0\in\mathscr{H}_{\vartheta,\omega}$ %, with $\lambda(s_0)$, 
is a pole of the meromorphic continuation of $\mathbf{R}_\omega(s)$ from Theorem~\ref{thm:mero_resolvent}. 
Its multiplicity is denoted by $ m_{\pm,\omega}(s_0)$.
\end{definition}

Note that the order of the pole of $\mathbf{R}_\omega(s)$ at $s_0$ equals the size of the largest Jordan block of $\mathcal{L}_{\pm,\omega}$ associated with
the eigenvalue $\lambda(s_0)$.\\

For $\lambda\in\C$ and a linear subspace $U\subseteq\mathrm{Maps}(\mathfrak{P}^\pm,\C)$, let
$$ \mathcal{E}_\lambda(\mathcal{L}_{\pm, \omega};U) \coloneqq \left\{ \varphi\in U: \mathcal{L}_{\pm, \omega}\varphi=\lambda\varphi \right\} $$ 
denote the $\lambda$-eigenspace of the transfer operator $\mathcal{L}_{\pm, \omega}$.

\begin{rem}[Unweighted case: (co-)resonant states] \label{rem:unweighted_res_states}
    In the unweighted case $\omega\equiv1$, the %automatic 
    regularity result that, whenever $\lambda\in\sigma(\mathcal{L}_\pm)$ satisfies $|\lambda|>\vartheta q$, we have
    \begin{equation*} \label{eq:eigenspaces_isom}
        \mathcal{E}_\lambda\bigl(\mathcal{L}_\pm;\F_\vartheta(\mathfrak{P}^\pm)\bigr)
        =\mathcal{E}_\lambda(\mathcal{L}_{\pm}; {F}_1(\mathfrak{P}^{\pm})) 
        =
        \mathcal{E}_\lambda(\mathcal{L}_{\pm}; C^{\mathrm{lc}}(\mathfrak{P}^{\pm})) 
        \cong
        \mathcal{E}_\lambda(\mathcal{L}_{\pm}'; \mathcal{D}'(\mathfrak{P}^{\pm})),
    \end{equation*}
    where the second equality follows from \cite[Prop.~4.2]{AFH23} and the isomorphism from \cite[Cor.~9.4]{BHW23}.
    In particular, every eigenfunction associated with a Ruelle resonance outside the essential spectral radius depends only on the first edge. \\
    This makes it possible to naturally define resonances via $\mathcal{L}_{\pm}'$ acting on distributions, \cite[Def.~1.3]{AFH23Pairing}.\\
    Adaption our parametrization with $\lambda(s_0)=q^{-(\frac12+is_0)} \in \C$, the \emph{(co-)resonant states} are the eigendistributions of $\mathcal{L}_+'$ (resp. $\mathcal{L}_-'$), i.e., non-zero elements of the eigenspaces
    $$
    \mathcal{E}_{s_0}(\mathcal{L}_{\pm}'; \mathcal{D}'(\mathfrak{P}^{\pm})) 
    \coloneqq 
    \left\{ u \in \mathcal{D}'(\mathfrak{P}^{\pm}) \;\middle|\; 
    \mathcal{L}_{\pm}' u = \lambda(s_0) u \right\}.
    $$
    If $\mathcal{E}_{s_0}(\mathcal{L}_{\pm}'; \mathcal{D}'(\mathfrak{P}^{\pm})) \neq \{0\}$, then $\lambda(s_0)$ is %called 
    a {resonance}.

    Moreover, by \cite[Prop.~3.1]{AFH23Pairing} the (co-)resonant states of $\mathfrak{G}_\Gamma$ correspond to $\Gamma$-invariant (co-)resonant states of $\mathfrak{G}$ via the canonical projection
    \begin{equation} \label{eq:pi_shift_proj}
        \pi_{\pm} \colon \mathfrak{P}^\pm \to \mathfrak{P}^\pm_\Gamma.
    \end{equation}
    and we have
     $$
    \pi_{\pm}^{*} \colon \mathcal{E}_{s_0}(\mathcal{L}_{\Gamma, \pm}';\mathcal{D}'( \mathfrak{P}^{\pm}_\Gamma)) \; \stackrel{\sim}{\longrightarrow} \; \mathcal{E}_{s_0}(\mathcal{L}_{\pm}';\mathcal{D}'(\mathfrak{P}^{\pm}))^{\Gamma},
  $$
    where $\mathcal{L}'_{\Gamma , \pm}$ denotes transfer operator on $\mathfrak{P}^{\pm}_\Gamma.$
\end{rem}

Before describing the structure of our weighted resolvent, let us first discuss the structure of the \emph{usual} resolvent 
$\mathscr{R}(\lambda) \coloneqq (\lambda\operatorname{Id}-T)^{-1}$
of an operator $T$ on a %\textcolor{blue}{finite-dimensional} 
Banach space near an eigenvalue $\lambda_0$.
Then near $\lambda_0$, the %finite meromorphic 
resolvent $\mathscr{R}(\lambda)$ admits a finite-rank Laurent expansion
\begin{equation} \label{eq:Laurent_series_Rlambda}
        \mathscr{R}(\lambda)
        =
        \mathscr{R}_H(\lambda)
        +
        \sum_{j=1}^{J(\lambda_0)}
        \frac{A_j}{(\lambda-\lambda_0)^j},
\end{equation}
where $\mathscr{R}_H(\lambda)$ is holomorphic family of operators near $\lambda_0$, and
\begin{equation} \label{eq:Aj}
        A_j
        \coloneqq \left(T -\lambda_0\operatorname{Id} \right)^{j-1} \Pi_{\lambda_0}.
\end{equation}
are finite-rank operators with $\Pi_{\lambda_0}$ the corresponding Riesz spectral projection onto the $\lambda_0$-generalized eigenspaces.
The number $J(\lambda_0)$ is the size of the largest Jordan block of $T$ at $\lambda_0$, equivalently the 
the order of the pole of $\mathscr{R}_\omega(\lambda)$ at $\lambda=\lambda_0$.\\
Moreover, the restriction of
$(T-\lambda_0\operatorname{Id})$ to the generalized
eigenspace $\operatorname{Ran}(\Pi_{\lambda_0})$ is nilpotent of order $J(\lambda_0)$. In particular,
since $\Pi_{\lambda_0}$ commutes with $T$ and thus with $(T-\lambda_0\operatorname{Id})$, we have
$$(T-\lambda_0\operatorname{Id})^{J(\lambda_0)}\Pi_{\lambda_0}
=0
= ((T-\lambda_0\operatorname{Id})\Pi_{\lambda_0})^{J(\lambda_0)}.$$

Coming back to our weighted resolvent  $\mathbf{R}_\omega(s)$ on the Banach space $\F_\vartheta(\mathfrak{P}^\pm)$.
By Theorem~\ref{thm:mero_resolvent}, $\mathbf{R}_\omega(s)$
admits a meromorphic continuation to $\mathscr{H}_{\vartheta,\omega}$ \eqref{eq:region}.

By setting $\lambda(s) = q^{-(\frac12+is)}$ for $s\in \C$, the resolvent $\mathbf{R}_\omega(s)$ can be rewritten as
\begin{eqnarray*}
    \mathbf{R}_\omega(s) 
    = (\operatorname{Id}-q^{\frac12+is}\mathcal{L}_{\pm,\omega})^{-1}
    %&=&-\lambda(s)(\mathcal{L}_{\pm,\omega}-\lambda(s)\operatorname{Id})^{-1} \\
    =\lambda(s)(\lambda(s)\operatorname{Id}-\mathcal{L}_{\pm,\omega})^{-1}
    \eqqcolon \lambda(s) \mathscr{R}_\omega(\lambda(s)).
\end{eqnarray*}
This extra factor $\lambda(s)$ changes the Laurent coefficients compared with the usual resolvent $\mathscr{R}_\omega(\lambda(s))$.
Moreover, since
\begin{equation} \label{eq:lambda_prime}
    \lambda'(s)=-i\log(q)\,\lambda(s), \qquad \lambda(s)\neq 0 \ \text{ for all } s\in\C,
\end{equation}
the map $s\mapsto\lambda(s)$ is everywhere a local biholomorphism near $s_0$.
We use this to transfer the Laurent expansion of $\mathscr{R}_\omega(\lambda(s))$ in the variable $\lambda(s)$ into one in the variable $s$ directly.

\begin{lem}[Laurent expansion of the weighted resolvent] \label{lem:resolvent_decomp_hol_finiteranks}
    Fix $\vartheta\in(0,1)$.
    Let $\mathbf{R}_\omega(s)$ be the weighted resolvent on
    $\F_\vartheta(\mathfrak{P}^\pm)$.   
    For each pole $s_0 \in \mathscr{H}_{\vartheta,\omega}$, let $\lambda(s_0)$ be an isolated 
    $\omega$-weighted Ruelle resonance of
    $\mathcal{L}_{\pm,\omega}$ of finite algebraic multiplicity, and let $J(s_0)$ be the size of the largest Jordan block of
    $\mathcal{L}_{\pm,\omega}$ at $\lambda(s_0)$.
    Denote by $\Pi_{s_0}$ the corresponding Riesz spectral projector.

    Then,
    near $s_0$, the resolvent $\mathbf{R}_\omega(s)$ admits a finite-rank Laurent expansion of the form
    $$
        \mathbf{R}_\omega(s)
        =
        \mathbf{R}_H(s)
        +
        \sum_{j=1}^{J(s_0)} \frac{B_j}{(\lambda(s)-\lambda(s_0))^j},
    $$
    for some family $\mathbf{R}_H(s)$ holomorphic in $s$ near $s_0$ and finite-rank operators
    $$B_j= \mathcal{L}_{\pm,\omega}
    \left(\mathcal{L}_{\pm,\omega}-\lambda(s_0)\operatorname{Id}\right)^{j-1}\Pi_{s_0}.$$
    In particular, $s_0$ is a pole of $\mathbf{R}_\omega(s)$ of the same order $J(s_0)$ as the pole of $\mathscr{R}_\omega(\lambda(s))$ at $\lambda(s_0)$.
\end{lem}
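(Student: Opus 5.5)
We derive the expansion from the standard Laurent expansion \eqref{eq:Laurent_series_Rlambda} of the usual resolvent, applied to $T=\mathcal{L}_{\pm,\omega}$ on $\F_\vartheta(\mathfrak{P}^\pm)$ at $\lambda_0=\lambda(s_0)$. By Theorem~\ref{thm:mero_resolvent} and Lemma~\ref{lem:ess_spec_radius_bound}, $\lambda(s_0)\in\mathscr{D}_{\vartheta,\omega}$ is an isolated eigenvalue of finite algebraic multiplicity. Hence near $\lambda_0$
$$\mathscr{R}_\omega(\lambda)=\mathscr{R}_H(\lambda)+\sum_{j=1}^{J}\frac{A_j}{(\lambda-\lambda_0)^j},\qquad A_j=(\mathcal{L}_{\pm,\omega}-\lambda_0)^{j-1}\Pi_{s_0},$$
with $J=J(s_0)$ and $\mathscr{R}_H$ holomorphic. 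Here we take the sign conventions appropriate to $(\lambda-T)^{-1}$: on $\mathrm{Ran}\,\Pi_{s_0}$ we have $(\lambda-T)^{-1}=\sum_{j\ge1}(T-\lambda_0)^{j-1}(\lambda-\lambda_0)^{-j}$ by the nilpotent Neumann series, so the signs of $A_j$ are as written.

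Next we multiply by the scalar $\lambda(s)$. Since $\mathbf{R}_\omega(s)=\lambda(s)\mathscr{R}_\omega(\lambda(s))$, we write $\lambda=\lambda_0+(\lambda-\lambda_0)$, which gives
$$\frac{\lambda A_j}{(\lambda-\lambda_0)^j}=\frac{\lambda_0A_j}{(\lambda-\lambda_0)^j}+\frac{A_j}{(\lambda-\lambda_0)^{j-1}}.$$
The principal part therefore has coefficients $C_j=\lambda_0A_j+A_{j+1}$, with $A_{J+1}=0$ by nilpotency, and the $j=1$ shift $A_1(\lambda-\lambda_0)^0=\Pi_{s_0}$ is absorbed into the holomorphic part. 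Using $A_{j+1}=(T-\lambda_0)A_j$, we get $C_j=\lambda_0A_j+(T-\lambda_0)A_j=TA_j=\mathcal{L}_{\pm,\omega}(\mathcal{L}_{\pm,\omega}-\lambda_0)^{j-1}\Pi_{s_0}=B_j$. The remainder $\lambda\mathscr{R}_H(\lambda)+\Pi_{s_0}$ is holomorphic in $\lambda$. Because $s\mapsto\lambda(s)$ is entire and locally biholomorphic by \eqref{eq:lambda_prime}, composing with $\lambda(s)$ gives a family $\mathbf{R}_H(s)$ holomorphic in $s$ near $s_0$. Each $B_j$ has finite rank because $\Pi_{s_0}$ does.

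For the pole order, the leading coefficient is $B_J=\lambda_0A_J+A_{J+1}=\lambda_0A_J$. This is nonzero because $\lambda_0\neq0$ and $A_J\neq0$ by the definition of $J$ as the largest Jordan block. So the pole in $\lambda$ has order exactly $J$. Since $\lambda(s)-\lambda(s_0)=\lambda'(s_0)(s-s_0)(1+O(s-s_0))$ with $\lambda'(s_0)\neq0$, the pole in $s$ at $s_0$ also has order $J$.

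The only delicate step is this order bookkeeping. The leading coefficient must not vanish, which uses $\lambda_0\neq0$, i.e. that $q^{-(\frac12+is)}$ never vanishes. One also has to check the identification of the Riesz projector $\Pi_{s_0}$ in the $s$-variable with the one in $\lambda$; this holds because the projector is defined purely spectrally at $\lambda_0$. Everything else is routine algebra.
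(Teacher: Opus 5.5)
Your proposal is correct and follows the paper's route: take the Laurent expansion of $(\lambda-\mathcal{L}_{\pm,\omega})^{-1}$ at $\lambda(s_0)$, multiply by $\lambda=\lambda(s_0)+(\lambda-\lambda(s_0))$ to get $B_j=\lambda(s_0)A_j+A_{j+1}$, and transfer to $s$ via the local biholomorphism $s\mapsto\lambda(s)$. You are in fact slightly more careful than the paper on two points: the paper's choice $\mathbf{R}_H(s)=\lambda(s)\mathscr{R}_H(\lambda(s))$ drops the constant term $A_1=\Pi_{s_0}$ that you correctly absorb into the holomorphic part, and the paper never checks $B_{J(s_0)}=\lambda(s_0)A_{J(s_0)}\neq 0$, which the exact pole order needs.
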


\begin{proof}
    Using the Laurent expansion \eqref{eq:Laurent_series_Rlambda} of $\mathscr{R}_\omega(\lambda(s))$ near $\lambda=\lambda(s_0)$, we get
    $$
    \mathbf{R}_\omega(s)
            =
            \lambda(s)\mathscr{R}_H(\lambda(s))
            +
            \lambda(s)\sum_{j=1}^{J(s_0)}
            \frac{A_j}{(\lambda(s)-\lambda(s_0))^j}.
    $$
    Since $\lambda(s)=\lambda(s_0)+(\lambda(s)-\lambda(s_0))$, the principal part can be rewritten as
    $$
    \lambda(s)\sum_{j=1}^{J(s_0)}\frac{A_j}{(\lambda(s)-\lambda(s_0))^j}
    =\sum_{j=1}^{J(s_0)}\frac{B_j}{(\lambda(s)-\lambda(s_0))^j},
    $$
    where $B_j=\lambda(s_0)A_j+A_{j+1} \stackrel{\eqref{eq:Aj}}{=}\mathcal{L}_{\pm,\omega}
    \left(\mathcal{L}_{\pm,\omega}-\lambda(s_0)\operatorname{Id}\right)^{j-1}\Pi_{s_0}$ with convention $A_{J(s_0)+1}=~0.$
    
    Setting $\mathbf{R}_H(s)\coloneqq\lambda(s)\mathscr{R}_H(\lambda(s))$, it remains to check that $\mathbf{R}_H(s)$ is holomorphic in $s$ near $s_0$.
    Since $\mathscr{R}_H$ is holomorphic near $\lambda(s_0)$ and, by \eqref{eq:lambda_prime}, $s\mapsto\lambda(s)$ is a local biholomorphism near $s_0$, the composition $\lambda(s)\mapsto\mathscr{R}_H(\lambda(s))$, and hence $\mathbf{R}_H(s)$, is holomorphic in $s$ near $s_0$.

    Finally, as $\lambda(s)-\lambda(s_0)$ has a simple zero at $s=s_0$ by \eqref{eq:lambda_prime}, each summand $\frac{B_j}{(\lambda(s)-\lambda(s_0))^j}$ has a pole of exact order $j$ at $s=s_0$ whenever $B_j\neq0$, so the principal part above has a pole of order exactly $J(s_0)$ at $s=s_0$.
\end{proof}

\begin{rem} \label{rem:Riesz}
    The Riesz spectral projector associated with
    $\lambda(s_0)$ is given by
    \[
        \Pi_{s_0}
        =\frac{1}{2\pi i}
        \int_{\mathscr{C}} \mathscr{R}(\lambda(s))\,\mathrm{d}\lambda,
    \]
    where $\mathscr{C}$ is a sufficiently small positively oriented circle around
    $\lambda(s_0)$ containing no other spectral point of
    $\mathcal{L}_{\pm,\omega}$. 
    Its range is given by
    $$
    \operatorname{Ran}(\Pi_{s_0})
    = \ker\!\left(\mathcal{L}_{\pm,\omega}-\lambda(s_0)\right)^{J(s_0)}
    \subset \F_\vartheta(\mathfrak{P}^\pm),
    $$
    which is precisely the finite-dimensional generalised eigenspace associated with the eigenvalue $\lambda(s_0)$.
    
    Moreover, assume that $\omega\equiv1$ and $J(s_0)=1$, i.e., the resonance is simple. Then by the regularity of resonant states in Remark~\ref{rem:unweighted_res_states} and using \cite[Cor.~9.4]{BHW23}, one has
    \begin{equation} \label{eq:rangPis0_Es0}
        \operatorname{Ran}(\Pi_{s_0})
    = \mathcal{E}_{s_0}
    (\mathcal{L}_\pm;C^{\mathrm{lc}}(\mathfrak{P}^\pm))
    \cong
    \mathcal{E}_{s_0}
    (\mathcal{L}_\pm';\mathcal{D}'(\mathfrak{P}^\pm)).
    \end{equation}
    Thus a simple resonance correspond to a genuine eigenvalue of $\mathcal{L}_\pm$ (respectively $\mathcal{L}'_\pm$) %with eigenvalue $\lambda(s_0)$ 
    and the corresponding (co-)resonant states are genuine eigenfunctions (respectively eigendistributions).
\end{rem}

The Riesz spectral projector can be represented as a finite sum of rank-one operators as follows.

\begin{lem}[Finite-rank tensor decomposition of a Riesz projector]
\label{lem:Pi_tensor_decomp}
    %Fix $m$ large enough and
    Fix $\vartheta\in (0,1)$.
    Let $s_0\in \mathscr{H}_{\vartheta,\omega}$ be a resonance of order $J(s_0)=1$.
    Assume that $\Pi_{s_0}$ is a genuine spectral projector
    of finite rank $r\coloneqq\dim(\operatorname{Ran}(\Pi_{s_0}))<\infty$.
    
    Then there exist a basis $(u_\ell)_{\ell=1}^{r}$ of $\operatorname{Ran}(\Pi_{s_0})\subset \F_\vartheta(\mathfrak{P}^\pm)$ and a canonical dual family $(u_\ell^*)_{\ell=1}^{r} \subset \F'_\vartheta(\mathfrak{P}^\pm)$ uniquely characterized by $\langle u_\ell^*,u_n\rangle=\delta_{\ell n}$ and $u_\ell^*|_{\ker(\Pi_{s_0})}=0$ such that
    $$
    \Pi_{s_0} = \sum_{\ell=1}^{r} u_\ell \otimes u_\ell^*.
    $$
\end{lem}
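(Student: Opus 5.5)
The argument is pure linear algebra for a bounded idempotent of finite rank on the Banach space $\F_\vartheta(\mathfrak{P}^\pm)$. By Remark~\ref{rem:Riesz}, $\Pi_{s_0}$ is the Riesz projector associated with the isolated eigenvalue $\lambda(s_0)$. It is therefore a bounded operator with $\Pi_{s_0}^2=\Pi_{s_0}$, and it gives a topological direct sum
$$\F_\vartheta(\mathfrak{P}^\pm)=\operatorname{Ran}(\Pi_{s_0})\oplus\ker(\Pi_{s_0}),$$
with $\operatorname{Ran}(\Pi_{s_0})$ of finite dimension $r$. The assumption $J(s_0)=1$ is not really needed for the decomposition itself. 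It only identifies $\operatorname{Ran}(\Pi_{s_0})$ with the genuine eigenspace $\ker(\mathcal{L}_{\pm,\omega}-\lambda(s_0))$, so the $u_\ell$ may be taken to be eigenfunctions.

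First, fix any basis $(u_\ell)_{\ell=1}^r$ of $\operatorname{Ran}(\Pi_{s_0})$. On this finite-dimensional space, let $(\xi_\ell)$ be the algebraic dual basis, so $\xi_\ell(u_n)=\delta_{\ell n}$; each $\xi_\ell$ is automatically continuous there. Then define
$$u_\ell^*\coloneqq \xi_\ell\circ\Pi_{s_0}\colon \F_\vartheta(\mathfrak{P}^\pm)\to\C.$$
This is a composition of a bounded operator with a continuous functional, so $u_\ell^*\in\F'_\vartheta(\mathfrak{P}^\pm)$. Since $\Pi_{s_0}u_n=u_n$, we get $\langle u_\ell^*,u_n\rangle=\delta_{\ell n}$, and clearly $u_\ell^*$ vanishes on $\ker(\Pi_{s_0})$.

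Second, check the identity $\Pi_{s_0}=\sum_\ell u_\ell\otimes u_\ell^*$, where $(u\otimes u^*)\varphi\coloneqq\langle u^*,\varphi\rangle u$. Decompose $\varphi=\Pi_{s_0}\varphi+(\varphi-\Pi_{s_0}\varphi)$. Expanding $\Pi_{s_0}\varphi=\sum_\ell c_\ell u_\ell$ in the basis gives $c_\ell=\xi_\ell(\Pi_{s_0}\varphi)=\langle u_\ell^*,\varphi\rangle$. The second summand lies in $\ker(\Pi_{s_0})$ and is annihilated by every $u_\ell^*$. Hence both sides agree on every $\varphi$.

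Third, prove uniqueness. Let $(v_\ell^*)$ be another family with the two properties. Then $v_\ell^*-u_\ell^*$ vanishes on $\ker(\Pi_{s_0})$ and on each $u_n$, hence on $\operatorname{Ran}(\Pi_{s_0})$, and so on the whole direct sum.

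The only point needing care is that $\operatorname{Ran}(\Pi_{s_0})$ and $\ker(\Pi_{s_0})$ are closed complements. This follows from boundedness of the Riesz integral in Remark~\ref{rem:Riesz}, which is well defined because $\lambda(s_0)$ is isolated and lies outside the essential spectral radius bound of Lemma~\ref{lem:ess_spec_radius_bound}. I expect no real obstacle beyond stating this.
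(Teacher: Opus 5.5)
Your proposal is correct and follows essentially the same route as the paper: fix a basis of $\operatorname{Ran}(\Pi_{s_0})$, take $u_\ell^*$ to be the coordinate functionals composed with the bounded idempotent $\Pi_{s_0}$, and read off biorthogonality, vanishing on $\ker(\Pi_{s_0})$, and uniqueness from the topological splitting $\operatorname{Ran}(\Pi_{s_0})\oplus\ker(\Pi_{s_0})$. Your remark that $J(s_0)=1$ plays no role in the decomposition itself is accurate; the paper's argument does not use it either.
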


\begin{proof}
    Since $\Pi_{s_0}$ is a bounded idempotent (i.e., $\Pi_{s_0}^2=\Pi_{s_0}$) of finite rank $r$, the Banach space $\F_\vartheta(\mathfrak{P}^\pm)$ decomposes as a topological direct sum 
    $$\F_\vartheta(\mathfrak{P}^\pm) = \operatorname{Ran}(\Pi_{s_0}) \oplus \ker(\Pi_{s_0}).$$ 
    Choose any basis $(u_\ell)_{\ell=1}^r$ of the $r$-dimensional vector space $\operatorname{Ran}(\Pi_{s_0})$. 
    Since every $\Pi_{s_0}(\varphi)$ for
     $\varphi\in \F_\vartheta(\mathfrak{P}^\pm)$ belongs to $\operatorname{Ran}(\Pi_{s_0})$, there are unique coefficients $c_\ell(\varphi)\in\C$, %, i.e., coordinates with respect to the basis $(u_\ell)$, 
     such that
     \begin{equation} \label{eq:Pis_phi}
         \Pi_{s_0}(\varphi) = \sum_{\ell=1}^r c_\ell(\varphi)\, u_\ell.
     \end{equation}
    Each $c_\ell \colon \F_\vartheta(\mathfrak{P}^\pm) \to \C$ is a bounded linear functional on $\F_\vartheta(\mathfrak{P}^\pm)$.
    Indeed, let $e_\ell^*: \operatorname{Ran}(\Pi_{s_0}) \to \C$ be the corresponding coordinate functionals. 
    By construction $c_\ell = e_\ell^* \circ \Pi_{s_0}.$
    Since $\operatorname{Ran}(\Pi_{s_0})$ is a finite-dimensional space and $\Pi_{s_0}$ is bounded, every linear functional is continuous, hence bounded.
    Thus $e_\ell^* \in \operatorname{Ran}(\Pi_{s_0})^*$ and their composition is bounded, i.e., $c_\ell \in \F'_\vartheta(\mathfrak{P}^\pm)$.
    
    Setting $u_\ell^* \coloneqq c_\ell \in \F'_\vartheta(\mathfrak{P}^\pm)$ in \eqref{eq:Pis_phi}, this gives
    $$\Pi_{s_0}(\varphi)
    =\sum_{\ell=1}^ru_\ell^*(\varphi)u_\ell
    =\sum_{\ell=1}^r \langle u^*_\ell, \varphi \rangle u_\ell
    =\sum_{\ell=1}^r (u_\ell\otimes u_\ell^*) (\varphi).$$
    So the finite-rank projector is expressed as a sum of $r$ rank-one operators.
    
    It remains to show that these functionals are biorthogonal to the basis. \\
    Idempotency $\Pi_{s_0}^2=\Pi_{s_0}$ applied to $u_n \in \operatorname{Ran}(\Pi_{s_0})$, gives $\Pi_{s_0}u_n=u_n.$\\
    On the other hand, using the above decomposition, we get
    $\Pi_{s_0}u_n= \sum_{\ell=1}^r \langle u^*_\ell, u_n \rangle u_\ell.$
    Thus, since $\Pi_{s_0}u_n=u_n$ and $(u_\ell)^r_{\ell=1}$ is a basis, we have 
    $$u_n=\sum_{\ell=1}^r \langle u^*_\ell, u_n \rangle u_\ell = \sum_{\ell=1}^r \delta_{\ell n} u_\ell.$$
    Hence using uniqueness of coordinates in the basis $(u_\ell)^r_{\ell=1}$ gives $\langle u_\ell^*,u_n\rangle=\delta_{\ell n}$, i.e., $u_\ell^*$ are biorthogonal to $u_n$.
    
    Finally, the dual basis
    $(e_\ell^*)^r_{\ell=1}$ is unique in $\operatorname{Ran}(\Pi_{s_0})^*$ and consequently, the functionals $u^*_\ell=e_\ell^* \circ \Pi_{s_0}$
    are the unique extensions of this dual basis to $\F'_\vartheta(\mathfrak{P}^\pm)$ that vanish on $\ker(\Pi_{s_0}),$ i.e., $u^*_\ell|_{\ker(\Pi_{s_0})}=0.$
\end{proof}

Moreover, for a simple resonance, by combining Lemma~\ref{lem:Pi_tensor_decomp} and Remark~\ref{rem:Riesz}, yields that
$
\Pi_{s_0} $
is the rank-one projection onto the eigenspace of $\mathcal{L}_\pm$ associated with $\lambda(s_0)$.\\

Note that since $\mathcal{L}_{\pm,\omega}$ commutes with the $\Gamma$-action on $\mathfrak{P}^\pm$, it induces a well-defined bounded operator $\mathcal{L}_{\Gamma, \pm, \omega}$ on the quotient space $\F_\vartheta(\mathfrak{P}^\pm_\Gamma)$ via the canonical projection $\pi_{\pm}$ given in \eqref{eq:pi_shift_proj}.
The same holds for the resolvent family $\mathbf{R}_\omega(s)$, which therefore descends to a holomorphic (resp. meromorphic) family of operators on $\F_\vartheta(\mathfrak{P}^\pm_\Gamma)$. In particular, all results on meromorphic continuation and properties,
obtained above remain valid in the finite graph setting.

% -------------------------------%%--------------------------------------%
\section{Weighted dynamical Ihara zeta functions} \label{sect:dyn_zeta}
In this section, we define the weighted dynamical Ihara zeta functions on finite $(q+1)$-regular graph $\mathfrak{G}_\Gamma$, establish their meromorphic continuation, and show that their zeros correspond to the resonance parameters in Definition~\ref{def:resonance}. 
Moreover in Subsection~\ref{sect:proof_Thm_residuePS}, we prove that the residues of these weighted dynamical zeta functions recover the phase-space distributions.

% -------------------------------%--------------------------------------%
\subsection{Periodic and primitive closed paths} \label{sect:periodic_paths}
Recall from Definition~\ref{def:projection_Pi_m} that the projection operator $\Pi_m$ reduces functions on $\mathfrak{P}^\pm$ to functions on admissible finite edge paths $[\mathbf{p}]^\bullet_m \in \mathfrak{P}^\pm$ of length $m$. 
Passing now to the quotient setting $\mathfrak{P}^\pm_\Gamma$, we identify these admissible finite edge paths with the finite paths $\mathcal{P}_m$ on the finite graph $\mathfrak{G}_\Gamma$ defined below. On these, we introduce the notions of closed, periodic, and primitive paths, together with the equivalence relation identifying closed paths that differ only in their choice of starting edge; see, e.g., \cite{StarkTerras1996, Terras10}.

\begin{definition}[Finite closed, periodic, and primitive paths] \label{def:finite_paths}
    Let $\mathfrak{G}_\Gamma=(\mathfrak{X}_\Gamma, \mathfrak{E}_\Gamma)$ be a finite $(q+1)$-regular graph with $q>1.$
    
    For $n\geq 1$, let $\mathcal{P}_m \subset\mathfrak{G}_\Gamma$ denote the set of \emph{finite paths} of length $m$. 
    Its elements are one-sided finite sequences of directed edges
    $\mathbf{p}_m=(\vec{e}_1, \vec{e}_2, \dots, \vec{e}_m) \in \mathfrak{E}_\Gamma^{(m)},$
    such that consecutive edges are concatenated and non-backtracking (also called \emph{reduced} in the literature; see, e.g., \cite{Terras10}).
    \begin{itemize}
        \item[(a)] We say that a finite path is \emph{closed} if it returns to its starting vertex, i.e., $\tau(\vec{e}_m)=\iota(\vec{e}_{1})$ and if it is in addition non-backtracking at the seam, i.e., $\tau(\vec{e}_1) \neq \iota(\vec{e}_m)$ for $m \geq 3.$
        We denote its set by 
        $$
        \mathcal{P}_m^{\mathrm{cl}} \coloneqq \{\mathbf{p}_m \in \mathcal{P}_m \;|\; \mathbf{p}_m \text{ is closed}\} \subset \mathcal{P}_m.
        $$
        \item[(b)] Two closed finite paths are \emph{equivalent} if they differ only by the choice of starting edge.
        \item[(c)] A closed finite path is called \emph{primitive} if it is not a repetition (power) of a shorter closed finite path.
        \item[(d)] A path is \emph{periodic} if it is obtained by infinitely repeating a finite closed path.
    \end{itemize}
\end{definition}

Note that by the non-backtracking condition, closed finite paths automatically have no tails. 
In the literature, a closed finite path without a tail is also referred to as a \emph{regular path}.
Moreover, in the setting of simple $(q+1)$-regular graphs with no loops and no multiple edges, there is \emph{no} non-backtracking closed finite path of length $m=1$ or $2$.

\begin{rem} \label{rem:basis_delta_p}
    By \cite[Rem.~4.1~(i)]{AFH23}, we know that
    $F_1(\mathfrak{P}^\pm_\Gamma) \cong \mathrm{Maps}(\mathfrak{E}_\Gamma, \C)$. This canonical linear isomorphism can be generalised for every $m\geq 1$:
    $$F_m(\mathfrak{P}^\pm_\Gamma) \cong \mathrm{Maps}(\mathcal{P}_m, \C)= \C^{\mathcal{P}_m}.$$  
    In particular, $\mathrm{dim}(F_m(\mathfrak{P}^\pm_\Gamma))=|\mathcal{P}_m|< \infty$ equals the number of districts of level $m$.
    Hence, $F_m(\mathfrak{P}^\pm_\Gamma)$ is identified with the space of functions on the finite set $\mathcal{P}_m$, with basis $\delta_{\mathbf{p}_m}$, for $ \mathbf{p}_m\in \mathcal{P}_m$, 
    given by the district indicator functions.
\end{rem}

\begin{lem} \label{lem:nb_Pm}
    The number of non-backtracking paths of length $m$ in a on a $(q+1)$-regular graph $\mathfrak{G}_\Gamma$ is
    \begin{equation} \label{eq:NT}
        |\mathcal{P}_m| = |\mathfrak{X}_\Gamma| (q+1)q^{m-1}, \quad \text{ for } m \geq 1.
    \end{equation}
\end{lem}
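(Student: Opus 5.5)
We count non-backtracking paths $\mathbf{p}_m=(\vec{e}_1,\dots,\vec{e}_m)\in\mathcal{P}_m$ by choosing the edges one at a time and inducting on $m$. The claim is that each $\mathbf{p}_{m}\in\mathcal{P}_{m}$ has exactly $q$ admissible one-edge extensions in $\mathcal{P}_{m+1}$. Together with the base count $|\mathcal{P}_1|=|\mathfrak{E}_\Gamma|$, this gives the formula.

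\emph{Base case $m=1$.} Here $\mathcal{P}_1=\mathfrak{E}_\Gamma$ is the set of directed edges. Since $\mathfrak{G}_\Gamma$ is $(q+1)$-regular, every vertex $x\in\mathfrak{X}_\Gamma$ is the initial vertex of exactly $q+1$ directed edges. Summing over initial vertices gives $|\mathfrak{E}_\Gamma|=|\mathfrak{X}_\Gamma|(q+1)$, which is \eqref{eq:NT} for $m=1$. If one prefers not to assume $\mathfrak{G}_\Gamma$ simple, use instead the covering map $\pi$ of Subsection~\ref{sect:qoutient_spaces}. It is a local isomorphism, so the directed edges leaving $\Gamma x$ are in bijection with the $q+1$ edges leaving $x$ in the tree, and the count is unchanged.

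\emph{Inductive step.} Fix $\mathbf{p}_m\in\mathcal{P}_m$ with last edge $\vec{e}_m$, and set $v=\tau(\vec{e}_m)$. An extension $\vec{e}_{m+1}$ must satisfy two conditions:
\begin{itemize}
    \item concatenation: $\iota(\vec{e}_{m+1})=v$, which leaves $q+1$ candidates;
    \item non-backtracking: $\vec{e}_{m+1}\neq\vec{e}_m^{\,\mathrm{op}}$, which excludes exactly one of these candidates.
\end{itemize}
Hence each $\mathbf{p}_m$ has exactly $q$ extensions. The map $\mathcal{P}_{m+1}\to\mathcal{P}_m$ that forgets the last edge is therefore surjective with fibres of size $q$, so $|\mathcal{P}_{m+1}|=q|\mathcal{P}_m|$. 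Induction then yields $|\mathcal{P}_m|=|\mathfrak{X}_\Gamma|(q+1)q^{m-1}$.

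The only point needing care is that the reversed edge $\vec{e}_m^{\,\mathrm{op}}$ is indeed one of the $q+1$ edges leaving $v$, and that it is the only edge excluded. This holds because the graph has no loops or multiple edges, or, without simplicity, because the covering $\pi$ identifies the local structure with that of the tree, where it is clear. This is also exactly the count of $q$ predecessors already used in Proposition~\ref{prop:L_bound}. There is no real obstacle here; the argument is elementary.
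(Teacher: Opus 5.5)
Your proposal is correct and is essentially the paper's argument: the paper likewise counts $q+1$ choices for the first edge and $q$ choices at each later step, then multiplies by $|\mathfrak{X}_\Gamma|$. Your induction on one-edge extensions is the same product count, just organised step by step. Your remark that the excluded edge is exactly $\vec{e}_m^{\,\mathrm{op}}$, justified via simplicity or the covering, makes explicit a point the paper leaves implicit.
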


\begin{proof}
    Fix a starting vertex. For a non-backtracking path of length $m=1$, one simply chooses one of its neighbours, so there are $q+1$ possibilities.
    
    For $m\geq 2$, there are $q+1$ choices for the first step, and at each subsequent step there are $q$ choices (since immediate backtracking is forbidden). Hence the number of such paths of length $m$ starting from a fixed vertex is $(q+1)q^{m-1}$. 
    
    Since we can start anywhere in $\mathfrak{X}_\Gamma$, multiplying by the number of starting vertices $|\mathfrak{X}_\Gamma|$
    gives the total number of non-backtracking paths of length $m$.
\end{proof}

With this notation, we define (primitive) cycles in $\mathfrak{P}^\pm_\Gamma$.

\begin{definition}[(Primitive) cycles] \label{def:cycles}
    For $n \geq 3$, let $\mathcal{P}^{\mathrm{cl}}_n$ be the set of closed non-backtracking finite paths.
    \begin{itemize}
        \item[(i)] We define the set of all
    \emph{cycles} in $\mathfrak{P}^\pm_\Gamma$ by
    \[
        \mathscr{P} \coloneqq \bigcup_{n\geq 3} \mathcal{P}^{\mathrm{cl}}_n/\!\sim,
    \]
    i.e., the equivalence classes of closed non-backtracking finite paths under cyclic shifts. 
    For $\mathbf{p},\mathbf{q}\in\mathcal{P}^{\mathrm{cl}}_n$, the equivalence relation $\sim$ induced by cyclic shifts is given by
    \[
    \mathbf{p}\sim\mathbf{q} \quad\Longleftrightarrow\quad
    \mathbf{q}=\sigma_c^k(\mathbf{p})
    \quad\text{for some }k\in\{0,\ldots,n-1\},
    \]
    where
    $
    \sigma_c:\mathcal{P}^{\mathrm{cl}}_n
    \longrightarrow
    \mathcal{P}^{\mathrm{cl}}_n,$
    $\sigma_c(\vec{e}_1,\ldots,\vec{e}_n)
    =
    (\vec{e}_2,\ldots,\vec{e}_n,\vec{e}_1)
    $
    denotes the one-step cyclic shift, and for  $0\leq k \leq n-1$, $\sigma_c^k$ gives the $k$-fold cyclic shift with convention $\sigma_c^0=\mathrm{Id}.$\\
    Thus, for $\mathbf{p}=(\vec{e}_1, \vec{e}_2, \dots, \vec{e}_n) \in \mathcal{P}^{\mathrm{cl}}_n,$ its corresponding cycle is the equivalence class
    \begin{eqnarray*}
        [\mathbf{p}] 
        &\coloneqq& 
        \{(\vec{e}_1, \dots, \vec{e}_n), (\vec{e}_2, \dots, \vec{e}_n, \vec{e}_1), \dots, (\vec{e}_n, \vec{e}_1, \dots, \vec{e}_{n-1})\} \\
        %=\{\mathbf{p},\sigma_c(\mathbf{p}), \dots, \sigma_c^{n-1}(\mathbf{p})\},
        &=&\{\sigma_c^k(\mathbf{p}) : 0\leq k\leq n-1\} \in \mathscr{P}.
    \end{eqnarray*}
    \item[(ii)] We denote by $\mathscr{P}_0\subset\mathscr{P}$ the set of all \emph{primitive cycles}, i.e., cycles represented by primitive closed non-backtracking finite paths.
    \item[(iii)] Every $[\mathbf{p}] \in \mathscr{P}$ of length $n=\ell(\mathbf{p})$ is a power 
    \begin{equation*} \label{eq:power_primitive}
        [\mathbf{p}]=[\mathbf{p}_0]^{n/\ell_0} \qquad n/\ell_0 \geq 2
    \end{equation*}
    of a unique primitive cycle $[\mathbf{p}_0] \in \mathscr{P}_0$ of length $\ell_0 \coloneqq \ell(\mathbf{p}_0)$ dividing $n.$
    \end{itemize}
\end{definition}

Note that $\mathscr{P}$ contains no elements of length 1 or 2 in the simple-graph setting.

\begin{ex}[Simple graph, $q=2$] \label{exmp:finite_paths}
    Consider the simple $3$-regular graph $\mathfrak{G}_\Gamma = K_4$ on four vertices $\mathfrak{X}_\Gamma = \{x_1,x_2,x_3,x_4\}$, with a directed edge $\vec{e}_{ij}=(x_i,x_j)$ for every pair $i\neq j$, see Figure~\ref{fig:quotient_space}.
    Its universal cover $\mathfrak{G}$ is the $3$-regular tree. Then, for 
    \begin{itemize}
      \item $n=3$: the path $\mathbf{p}_3 = (\vec{e}_{12}, \vec{e}_{23}, \vec{e}_{31})$ is non-backtracking 
      and closed, since $\tau(\vec{e}_{31}) = x_1 = \iota(\vec{e}_{13})$. 
      As no shorter closed non-backtracking path exists in a simple graph, $\mathbf{p}_3$ is automatically primitive: $[\mathbf{p}_3]=\{\sigma_c^k(\mathbf{p}_3) \colon 0 \leq k \leq 2\} \in \mathscr{P}_0$.
      
      \item $n=4$: the path $\mathbf{p}_4 = (\vec{e}_{12}, \vec{e}_{23}, \vec{e}_{34}, \vec{e}_{41})$ is again non-backtracking and closed. Since no closed non-backtracking path of length $1$ or $2$ exists, $\mathbf{p}_4$ cannot be a proper power of a shorter closed path, so $[\mathbf{p}_4]=\{\sigma_c^k(\mathbf{p}_4) \colon 0 \leq k \leq 3\} \in \mathscr{P}_0$ as well.
   
      \item  $n=6$: traversing the triangle $\mathbf{p}_3$ twice gives
      \[
      \mathbf{p}_6 = (\vec{e}_{12}, \vec{e}_{23}, \vec{e}_{31},\vec{e}_{12}, \vec{e}_{23}, \vec{e}_{31})=\mathbf{p}_3^{\,2},
      \]
      a closed and non-backtracking path, which is literally the repetition of the shorter closed path $\mathbf{p}_3$.
      We have $[\mathbf{p}_6]=\{\sigma_c^k(\mathbf{p}_6) \colon 0 \leq k \leq 5\}\in \mathscr{P}.$ %\setminus\mathscr{P}_0
      Since simple graphs admit no closed non-backtracking paths of length 1 or 2, the shortest primitive cycle has length 3, and the \emph{shortest possible non-primitive cycle} has length $2\cdot 3 = 6$ due $[\mathbf{p}_6]=[\mathbf{p}^2_3]$.
    \end{itemize}

    Following this, consider the closed finite path $\mathbf{p}_3 = (\vec{e}_{12}, \vec{e}_{23}, \vec{e}_{31}) \in \mathcal{P}_3^{\mathrm{cl}}$ representing the primitive cycle $[\mathbf{p}_3] \in \mathscr{P}_0$.
    Repeating $\mathbf{p}_3$ indefinitely produces a period path
    $$\widehat{\mathbf{p}}_{+,3} = (\vec{e}_{12}, \vec{e}_{23}, \vec{e}_{31}, \ldots) \in \mathfrak{P}_\Gamma^+, \qquad \widehat{\mathbf{p}}_{-,3} = (\ldots, \vec{e}_{12}, \vec{e}_{23}, \vec{e}_{31}) \in \mathfrak{P}_\Gamma^-,$$ 
    which is an infinite non-backtracking chain.
    By construction, these chains have period 3 under the corresponding shift $\sigma_\pm$ on $\mathfrak{P}_\Gamma^\pm$ as in Definition~\ref{def:shiftspaces}, namely, $\sigma_\pm^3(\widehat{\mathbf{p}}_{\pm,3})=\widehat{\mathbf{p}}_{\pm,3}.$
    Since $\mathbf{p}_3$ is primitive, this period is \emph{minimal}.
    Moreover, the cyclic shifts of $\mathbf{p}_3$ correspond precisely to the shifts of its unrolling $[\mathbf{p}_3]$:
    $$\widehat{\sigma^k_c(\mathbf{p}_3)} =\sigma_\pm^k(\widehat{\mathbf{p}}_{\pm,3}), 
    \qquad 0 \leq k \leq 2.$$
    Thus, the cycle $[\mathbf{p}_3]$ corresponds to the periodic $\sigma_\pm$-orbit
    $\{\widehat{\mathbf{p}}_{\pm,3}, \sigma_\pm(\widehat{\mathbf{p}}_{\pm,3}), \sigma_\pm^2(\widehat{\mathbf{p}}_{\pm,3})\}$ in $\mathfrak{P}_\Gamma^\pm$.
    In this way, we naturally identified $\mathscr{P}_0$ with the set 
    of periodic $\sigma_\pm$-orbits in $\mathfrak{P}_\Gamma^\pm$ having minimal period.

    Lifting $\mathbf{p}_{6}=\mathbf{p}_{3}^{\,2}$ does not produce a new periodic path in $\mathfrak{P}_\Gamma^\pm$, since $\mathbf{p}_{6}$
    traverses the same cycle twice. 
    In fact,
    $
    \widehat{\mathbf{p}}_{6}=\widehat{\mathbf{p}}_{3}.
    $
    Consequently, the orbit generated by $\mathbf{p}_{6}$ coincides with
    that generated by $\mathbf{p}_{3}$:
    \[
    \begin{aligned}
    \bigl\{
    \widehat{\mathbf{p}}_{\pm,6},
    \sigma_\pm(\widehat{\mathbf{p}}_{\pm,6}),
    \dots,
    \sigma_\pm^5(\widehat{\mathbf{p}}_{\pm,6})
    \bigr\}
    &=
    \bigl\{
    \widehat{\mathbf{p}}_{\pm,3},
    \sigma_\pm(\widehat{\mathbf{p}}_{\pm,3}),
    \sigma_\pm^2(\widehat{\mathbf{p}}_{\pm,3})
    \bigr\}.
    \end{aligned}
    \]
    Thus, although $\mathbf{p}_{6}$ has length $6$, its unrolling $\widehat{\mathbf{p}}_{\pm,6}$ has minimal period $3$ under $\sigma_\pm$.
    The six steps of $\mathbf{p}_{6}$ simply traverse the same periodic orbit twice. 
\end{ex}

Example~\ref{exmp:finite_paths}  shows that $\mathscr{P}_0$ corresponds to the periodic $\sigma_\pm$-orbits of minimal period in $\mathfrak{P}_\Gamma^\pm$, whereas $\mathscr{P}$ consists of their nontrivial powers, whose finite representatives traverse the same periodic orbits, associated with their underlying primitive cycle,  multiple times.

Let us generalise this observation in the following lemma.

\begin{lem}[Primitive cycles as periodic $\sigma_\pm$-orbits]
\label{lem:cycles_periodic_orbits}
Let $\mathfrak{G}_\Gamma$ be a finite $(q+1)$-regular graph with $q>1$.
Then, for $n\geq 3$, the map
\begin{equation} \label{eq:closed_paths_fix}
    \mathcal{P}_n^{\mathrm{cl}}
    \stackrel{\sim}{\longrightarrow}
    \operatorname{Fix}(\sigma_\pm^n)\coloneqq \{\mathbf{p}_\pm \in \mathfrak{P}_\Gamma^\pm \;|\; \sigma_\pm^n(\mathbf{p}_\pm)=\mathbf{p}_\pm\}.
\end{equation}
is a well-defined bijection and is compatible with the cyclic shift $\sigma_c$ on $\mathcal{P}_n^{\mathrm{cl}}$ and the shift $\sigma_\pm$ on $\mathfrak{P}_\Gamma^\pm.$

Moreover, passing to primitive cyclic-equivalence classes induces a bijection
$$
    \mathscr{P}_0 
     \quad \stackrel{\sim}{\longrightarrow} \quad \mathscr{O}_{\mathrm{min}}(\mathfrak{P}^\pm_\Gamma) \coloneqq  
     \{\text{periodic $\sigma_\pm$-orbits of minimal period in } \mathfrak{P}^\pm_\Gamma\}.
$$
\end{lem}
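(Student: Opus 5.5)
The plan is to write down the map explicitly, check that it is well defined, and then produce an inverse. For the $+$ case, given $\mathbf{p}=(\vec{e}_1,\ldots,\vec{e}_n)\in\mathcal{P}_n^{\mathrm{cl}}$, I define its unrolling
$$
\widehat{\mathbf{p}}_+ \coloneqq (\vec{e}_1,\ldots,\vec{e}_n,\vec{e}_1,\ldots,\vec{e}_n,\ldots),
$$
and $\widehat{\mathbf{p}}_-$ in the same way, extended to the left.

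\textbf{Well-definedness.} I must show $\widehat{\mathbf{p}}_+\in\mathfrak{P}^+_\Gamma$. Concatenation and non-backtracking hold inside each block because $\mathbf{p}\in\mathcal{P}_n$. At the seams, $\tau(\vec{e}_n)=\iota(\vec{e}_1)$ holds because $\mathbf{p}$ is closed. The seam non-backtracking condition needed is $\vec{e}_1\neq\vec{e}_n^{\,\mathrm{op}}$, and this is exactly the extra seam condition in Definition~\ref{def:finite_paths}(a). I will read that condition as $\vec{e}_1\neq\vec{e}_n^{\,\mathrm{op}}$, i.e.\ $\tau(\vec{e}_1)\neq\iota(\vec{e}_n)$; this is the only point where the definition has to be interpreted. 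Finally, $\sigma_+^n(\widehat{\mathbf{p}}_+)=\widehat{\mathbf{p}}_+$ is immediate from the construction.

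\textbf{Bijectivity.} Injectivity holds because $\mathbf{p}$ is recovered as the first $n$ edges $[\widehat{\mathbf{p}}_+]_n$. For surjectivity, take $\mathbf{p}_+=(\vec{e}_1,\vec{e}_2,\ldots)\in\operatorname{Fix}(\sigma_+^n)$. Then $\vec{e}_{k+n}=\vec{e}_k$ for all $k$, so $\mathbf{p}_+$ is the unrolling of $(\vec{e}_1,\ldots,\vec{e}_n)$. This word is closed, since $\tau(\vec{e}_n)=\iota(\vec{e}_{n+1})=\iota(\vec{e}_1)$. It also satisfies the seam non-backtracking condition, since $\vec{e}_{n+1}=\vec{e}_1\neq\vec{e}_n^{\,\mathrm{op}}$ by non-backtracking of $\mathbf{p}_+$.

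\textbf{Equivariance.} I will check $\widehat{\sigma_c(\mathbf{p})}=\sigma_\pm(\widehat{\mathbf{p}})$ directly on sequences, exactly as in Example~\ref{exmp:finite_paths}. For the $-$ side the orientation is mirrored, and I expect the natural compatibility to be with $\sigma_c^{-1}$ rather than $\sigma_c$. Either way, the map sends cyclic-shift orbits onto $\sigma_\pm$-orbits, which is all the second part needs.

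\textbf{Primitive cycles and minimal-period orbits.} Equivariance shows that the class $[\mathbf{p}]$ maps onto the $\sigma_\pm$-orbit of $\widehat{\mathbf{p}}$. Now let $d$ be the minimal period of $\widehat{\mathbf{p}}$, so that $\sigma^d\widehat{\mathbf{p}}=\widehat{\mathbf{p}}$ and $d$ divides $n$. By the first part applied with $d$ in place of $n$, $\widehat{\mathbf{p}}$ is the unrolling of $\mathbf{p}'=[\widehat{\mathbf{p}}]_d\in\mathcal{P}_d^{\mathrm{cl}}$, and $\mathbf{p}=\mathbf{p}'^{\,n/d}$. Consequently $\mathbf{p}$ is primitive if and only if $d=n$.

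It follows that primitive cycles of length $n$ correspond exactly to the orbits of exact minimal period $n$, and each such orbit has $n$ points, matching the $n$ elements of the class. Taking the union over $n\ge3$ gives the bijection $\mathscr{P}_0\to\mathscr{O}_{\min}(\mathfrak{P}^\pm_\Gamma)$. Surjectivity onto every periodic orbit uses that no period is $1$ or $2$ for a simple graph: a fixed point of $\sigma^1$ or $\sigma^2$ would give a closed non-backtracking path of length $1$ or $2$, which does not exist.

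The main obstacle is bookkeeping rather than substance. The points needing care are the seam condition in Definition~\ref{def:finite_paths}(a), the mirrored direction of the shift on the $\mathfrak{P}^-$ side, and the small-$n$ exclusions. The rest is direct verification.
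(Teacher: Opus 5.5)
Your proposal is correct and follows essentially the same route as the paper. Both unroll a closed path using the seam condition, recover it as the first $n$ edges of an $n$-periodic chain, intertwine $\sigma_c$ with $\sigma_\pm$, and characterize primitivity by the minimal period of the unrolling. Two of your remarks are in fact more careful than the paper's write-up: on the $\mathfrak{P}^-_\Gamma$ side the natural intertwining is with $\sigma_c^{-1}$, and periods $1$ and $2$ must be excluded to reach every periodic orbit. The paper instead asserts $\widehat{\sigma_c^k(\mathbf{p}_n)}=\sigma_\pm^k(\widehat{\mathbf{p}}_{\pm,n})$ for both signs, and it passes through a map $\mathscr{P}\to\mathscr{O}(\mathfrak{P}^\pm_\Gamma)$ that is not injective across different lengths $n$.
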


\begin{proof}
    For $\mathbf{p}_n = (\vec{e}_1,\dots,\vec{e}_n) \in \mathcal{P}_n^{\mathrm{cl}}$ of length $n \geq 3$, define its \emph{unrolling}
    \[
    \widehat{\mathbf{p}}_{-,n}
    \coloneqq
    \bigl(\dots,\vec{e}_1,\dots,\vec{e}_n\bigr) \in \mathfrak{P}^-_\Gamma, 
    \qquad 
    \widehat{\mathbf{p}}_{+,n}
    \coloneqq
    \bigl(\vec{e}_1,\dots,\vec{e}_n,\dots\bigr) \in \mathfrak{P}^+_\Gamma,
    \]
    by repeating $\mathbf{p}_n$ in corresponding $\pm$-directions.
    This uniquely periodic extension is again non-backtracking everywhere precisely because of the seam condition in Definition~\ref{def:finite_paths}.
    Hence the unrolling $\widehat{\mathbf{p}}_{\pm,n}$ is a well-defined element of $\mathfrak{P}^\pm_\Gamma.$\\
    By construction, the edge sequence of $\widehat{\mathbf{p}}_{+,n}$ is $n$-periodic, i.e., $\vec{e}_{i+n}=\vec{e}_i,$ for all $i \geq 1$ (analogously for $\widehat{\mathbf{p}}_{-,n}$ by reversing). 
    Since $\sigma_\pm$ shifts the basepoint by $n$ edges, it reproduces the same infinite chain.
    Hence
    $\sigma^n_\pm(\widehat{\mathbf{p}}_{\pm,n}) = \widehat{\mathbf{p}}_{\pm,n}
    $.
    It follows that $\widehat{\mathbf{p}}_{\pm,n}\in \operatorname{Fix}(\sigma_\pm^n).$

    Conversely,
    let $\mathbf{p}_+ \in \operatorname{Fix}(\sigma_+^n)$, which by definition of $\sigma_+$ means $\vec{e}_{i+n}=\vec{e}_i,$ for all $i \geq 1$.
    Such a path is uniquely determined by its first $n$ edges $\mathbf{p}_n \coloneqq (\vec{e}_1,\dots,\vec{e}_n)$.
    Since $\mathbf{p}_+\in \mathfrak{P}_\Gamma^+$ is everywhere non-backtracking, so $\mathbf{p}_n$ (as sub-word) and periodicity forces $\tau(\vec{e}_n)=\iota(\vec{e}_{n+1})=\iota(\vec{e}_1)$ (closure) and $\tau(\vec{e}_1)=\tau(\vec{e}_{n+1})\neq \iota(\vec{e}_n)$ (non-backtracking at index $n+1$, i.e., the seam condition). 
    Hence $\mathbf{p}_n \in \mathcal{P}_n^{\mathrm{cl}}$.
    The argument for $\sigma_-$ on $\mathfrak{P}_\Gamma^-$ is identical after reversing all edge orientations.
    
    Note that, by shifting the marked starting edge of $\mathbf{p}_n$ by one, $\sigma_c(\mathbf{p}_n)=(\vec{e}_2, \ldots, \vec{e}_n, \vec{e}_1)$, corresponds exactly to shifting its unrolling $\widehat{\mathbf{p}}_{\pm,n}$ by one step. Hence for $0 \leq k \leq n-1,$ we have
    $$
    \widehat{\sigma^k_c(\mathbf{p}_n)}
    =\sigma^k_\pm(\widehat{\mathbf{p}}_{\pm,n}).
    $$
    Thus, the bijective map \eqref{eq:closed_paths_fix} intertwines the cyclic shift $\sigma_c$ on $\mathcal{P}_n^{\mathrm{cl}}$ with the shift $\sigma_\pm$ on $\operatorname{Fix}(\sigma_\pm^n)$.\\

    Now for the passage to orbits, let $\mathbf{q}_n=(\vec{e}_{k+1}, \ldots, \vec{e}_n, \vec{e}_1, \ldots, \vec{e}_k) \in  \mathcal{P}_n^{\mathrm{cl}}$ be the the rotation of $\mathbf{p}_n \in  \mathcal{P}_n^{\mathrm{cl}}$ by $k$ edges, both of length $n \geq 3$.\\
    These two paths represent the same cycle $[\mathbf{p}]=\{\sigma^k_c(\mathbf{p}): 0 \leq k \leq n-1\} \in \mathscr{P}$ (i.e., differ by a cyclic relabelling of the starting edge) if and only if their periodic extensions satisfy  $\widehat{\mathbf{q}}_{\pm,n}=\sigma_\pm^k(\widehat{\mathbf{p}}_{\pm,n})$ for some $k$, i.e., $\widehat{\mathbf{q}}_{\pm,n}$ belongs to the same $\sigma_\pm$-orbit as $\widehat{\mathbf{p}}_{\pm,n}.$
    By the compatibility just shown, this follows directly and thus
    $\widehat{[\mathbf{p}]} = \{\sigma^k_\pm(\widehat{\mathbf{p}}_{\pm}): 0 \leq k \leq n-1\}$ is exactly the $\sigma_\pm$-orbit of $\widehat{\mathbf{p}}_{\pm}$ inside $\operatorname{Fix}(\sigma_\pm^n)$ for $n$ equal to (a multiple of) its period.
    Since the map \eqref{eq:closed_paths_fix} is a bijection intertwining $\sigma_c$ with $\sigma_\pm$, %and letting $n$ range over $\N_{\geq 3}$
    it descends to a bijection between $\sigma_c$-orbits and $\sigma_\pm$-orbits, i.e., giving the bijection
    \begin{equation} \label{eq:cycle_orbits_bijection}
        [\mathbf{p}] \in \mathscr{P} 
         \quad \stackrel{\sim}{\longrightarrow} \quad \widehat{[\mathbf{p}]} \in \mathscr{O}(\mathfrak{P}^\pm_\Gamma) \coloneqq  
         \{\text{periodic $\sigma_\pm$-orbits on } \mathfrak{P}^\pm_\Gamma\}.
    \end{equation}
    By the power decomposition $[\mathbf{p}]=[\mathbf{p}_0]^{n/\ell_0}$ with $[\mathbf{p}_0] \in \mathscr{P}_0$ of length $\ell_0=\ell([\mathbf{p}_0])$ dividing $n=\ell([\mathbf{p}])$, the orbit $\widehat{[\mathbf{p}]}$ has period exactly $\ell_0$.\\
    Indeed, since $\mathbf{p}$ is literally $\mathbf{p}_0$ repeated $n/\ell_0$ times, we have $\widehat{\mathbf{p}}_\pm = \widehat{\mathbf{p}}_{\pm,0}$ and $\widehat{\mathbf{p}}_{\pm,0}$ has exact minimal period $\ell_0$ under $\sigma_\pm.$ 
    Note that a shorter period would produce a closed path of length $<\ell_0$ of which $\mathbf{p}_0$ is a power, contradicting primitivity.
    Hence $[\mathbf{p}] \in \mathscr{P}_0$ (i.e., $\ell_0=n$ and $\mathbf{p}$ is itself a primitive) if and only if $\widehat{[\mathbf{p}]} \in \mathscr{O}_{\mathrm{min}}(\mathfrak{P}^\pm_\Gamma)$.
    So the bijection \eqref{eq:cycle_orbits_bijection} restricts to the second claimed bijection.
\end{proof}

Note that Lemma~\ref{lem:cycles_periodic_orbits} naturally extends to the space of bi-infinite chains $(\mathfrak{P}_\Gamma,\sigma)$ with its corresponding shift operator as defined in Definition~\ref{def:space_biinfinite_chains}.

%-------------------------------%%--------------------------------------%
\subsection{Trace via periodic paths} \label{sect:trace_formula}
Consider the weighted transfer operator $\mathcal{L}_{\Gamma,\pm,\omega}$ associated with $\omega \in \mathcal{F}_\vartheta(\mathfrak{P}^\pm_\Gamma)$ introduced in Definition~\ref{def:Ruelle_transfer_op}.
Choose $m$ large enough such that $\omega \in F_m(\mathfrak{P}^\pm_\Gamma)$, i.e., $\omega$ is constant on every district of level $m\in \N.$
We call the minimal such $m$ the \emph{memory} of $\omega$.

By Remark~\ref{rem:basis_delta_p}, for $\mathbf{p}_m\in \mathcal{P}_m$, let $\delta_{\mathbf{p}_m} \in F_m(\mathfrak{P}^\pm_\Gamma)$ denote the indicator function of the district determined by $\mathbf{p}_m$, i.e., for $\mathbf{q}_\pm \in \mathfrak{P}^\pm_\Gamma$
$$
\delta_{\mathbf{p}_m}(\mathbf{q}_\pm) = \mathbbm{1}_{\mathcal{C}(\mathbf{p}_m)}(\mathbf{q}_\pm)
=
\begin{cases}
    1, & \text{ if the first $m$ edges of $\mathbf{q}_\pm$ equal $\mathbf{p}_m$,}\\
    0,& \text{ otherwise,}
\end{cases}
$$
where $\mathcal{C}(\mathbf{p}_m) \coloneqq \{ \mathbf{q}_\pm \in \mathfrak{P}_\Gamma^\pm \;|\; \text{the first $m$ edges of $\mathbf{q}_\pm$ equal $\mathbf{p}_m$}\}.$

Since $\omega \in F_m(\mathfrak{P}^\pm_\Gamma)$, by Lemma~\ref{lem:invariant_F}, $F_m(\mathfrak{P}^\pm_\Gamma)$ is invariant under $\mathcal{L}_{\Gamma,\pm,\omega}$.
Hence, the restriction $\mathcal{L}_{\omega, F_m} \coloneqq \mathcal{L}_{\Gamma,\pm,\omega}\big|_{F_m(\mathfrak{P}^\pm_\Gamma)}$ on $F_m(\mathfrak{P}^\pm_\Gamma)$ 
%of $\mathcal{L}_{\Gamma,\pm,\omega}$ to $F_m(\mathfrak{P}^\pm_\Gamma)$ 
is a finite-dimensional operator.
Its matrix in the basis of delta functions $(\delta_{\mathbf{p}_m})_{\mathbf{p}_m\in  \mathcal{P}_m}$ is given by
$$
(\mathcal{L}_{\omega, F_m})_{\mathbf{p}_m,\mathbf{p}'_m}= (\mathcal{L}_{\omega, F_m} \delta_{\mathbf{p}'_m})(\mathbf{q}_\pm)= \sum_{\sigma_\pm(\mathbf{r}_\pm)= \mathbf{q}_\pm} \omega(\mathbf{r}_\pm) \delta_{\mathbf{p}'_m}(\mathbf{r}_\pm), \qquad \mathbf{q}_\pm \in \mathcal{C}(\mathbf{p}_m),
$$
where the indicator $\delta_{\mathbf{p}'_m}$ is nonzero precisely when $\mathbf{r}_\pm \in \mathcal{C}(\mathbf{p}'_m)$.
Thus there is a nonzero contribution exactly when $\mathbf{p}'_m$ can be followed by $\mathbf{p}_m$ in one admissible shift step and $\omega\in F_m(\mathfrak{P}^\pm_\Gamma)$ is constant in $\mathcal{C}(\mathbf{p}'_m)$. Write $\omega_{\mathbf{p}'_m} \coloneqq \omega(\mathbf{r}_\pm)$. 
Then
\begin{align} \label{eq:matrix_L_Fm}
\bigl(\mathcal{L}_{\omega, F_m}\bigr)_{\mathbf{p}_m,\mathbf{p}'_m} 
=
\begin{cases}
\omega_{\mathbf{p}'_m},
&
\text{if $\mathbf{p}'_m,\mathbf{p}_m$ are joined by an admissible chain},\\
0,
&\text{otherwise}
\end{cases}
\end{align}
for $\mathbf{p}'_m,\mathbf{p}_m \in \mathcal{P}_m$.
Here, an admissible (one-step) chain means that $\mathbf{p}_m$ is obtained from $\mathbf{p}'_m$ by dropping its first edge and appending one new edge at the end of the sequence.
Note that the matrix of $\mathcal{L}_{\omega, F_m}$ is well-defined since $\mathcal{L}_{\omega, F_m} \delta_{\mathbf{p}'_m}\in F_m(\mathfrak{P}^\pm_\Gamma)$ and therefore is constant on $\mathcal{C}(\mathbf{p}_m).$

\begin{prop}[Trace of $\mathcal{L}_{\omega, F_m}^n$] \label{prop:trace_Lnw}
    For $3\leq n \leq m$, consider the iterated weighted transfer operator $\mathcal{L}_{\omega, F_m}^n$ given in \eqref{eq:iterated_transfer_op} associated with $\omega \in F_m(\mathfrak{P}^\pm_\Gamma)$.

    Its trace is
        \begin{equation} \label{eq:trace_Ln}
        \operatorname{Tr}\bigl(\mathcal{L}_{\omega, F_m}^n\bigr)
        =
        \sum_{\mathbf{p}_\pm \in \operatorname{Fix}(\sigma_\pm^n)} \omega_n(\mathbf{p}_\pm)
        = \sum_{\substack{[\mathbf{p}_0] \in \mathscr{P}_0 \\ \ell_0 \,\mid\, n}} \ell_0 \; \omega_{\ell_0}(\mathbf{p}_0)^{n/\ell_0},
    \end{equation}
    where $\ell_0$ denotes the length of the underlying primitive cycle $[\mathbf{p}_0] \in \mathscr{P}_0$, $\mathbf{p}_0$ denotes any fixed representative of $[\mathbf{p}_0] \in \mathscr{P}_0,$ and the Birkhoff product $\omega_n$ of $\omega$ is defined in \eqref{eq:w_m_product}.
\end{prop}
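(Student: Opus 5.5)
We compute the trace of $\mathcal{L}^n_{\omega,F_m}$ directly in the delta basis $(\delta_{\mathbf{p}_m})_{\mathbf{p}_m\in\mathcal{P}_m}$ of Remark~\ref{rem:basis_delta_p}:
$$\operatorname{Tr}(\mathcal{L}^n_{\omega,F_m})=\sum_{\mathbf{p}_m\in\mathcal{P}_m}(\mathcal{L}^n_{\omega,F_m}\delta_{\mathbf{p}_m})(\mathbf{q}_\pm),$$
where $\mathbf{q}_\pm$ is any point of the district $\mathcal{C}(\mathbf{p}_m)$. This is well defined because $\mathcal{L}^n_{\omega,F_m}\delta_{\mathbf{p}_m}\in F_m$ by Lemma~\ref{lem:invariant_F}. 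By the iterated formula \eqref{eq:iterated_transfer_op}, the diagonal entry equals
$$\sum_{\sigma_\pm^n(\mathbf{r}_\pm)=\mathbf{q}_\pm}\omega_n(\mathbf{r}_\pm)\,\delta_{\mathbf{p}_m}(\mathbf{r}_\pm).$$
A preimage $\mathbf{r}_\pm$ is the word $\mathbf{q}_\pm$ with $n$ admissible edges prepended. The condition $\delta_{\mathbf{p}_m}(\mathbf{r}_\pm)=1$ says that the first $m$ edges of $\mathbf{r}_\pm$ coincide with $\mathbf{p}_m$, which is also the length-$m$ prefix of $\mathbf{q}_\pm$. Since $n\le m$, this forces the edge sequence to satisfy $e_{i+n}=e_i$ for $i\le m$. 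In other words, $\mathbf{p}_m$ is itself $n$-periodic, the $n$ prepended edges are $(e_1,\dots,e_n)$, and $(e_1,\dots,e_n)$ is a closed path with the seam condition. The seam condition holds because $(e_1,\dots,e_n,e_{n+1}=e_1)$ is a subword of the admissible word $\mathbf{p}_m$, using $n<m$; the boundary case $n=m$ needs its own check, which I would do separately.

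I would then choose the representative $\mathbf{q}_\pm$ freely. Since $\omega_n(\mathbf{r}_\pm)=\prod_{k<n}\omega(\sigma^k\mathbf{r}_\pm)$ and each $\sigma^k\mathbf{r}_\pm$ with $k<n$ has its first $m$ edges determined by $\mathbf{p}_m$ and $\mathbf{q}_\pm$, it is convenient to take $\mathbf{q}_\pm=\widehat{\mathbf{p}}_{\pm,n}$, the unrolling of the closed path. Then $\mathbf{r}_\pm=\widehat{\mathbf{p}}_{\pm,n}\in\operatorname{Fix}(\sigma^n_\pm)$. To confirm that $\omega_n(\mathbf{r}_\pm)$ does not depend on this choice, I would use that $\omega\in F_m$ depends only on the first $m$ edges of $\sigma^k\mathbf{r}_\pm$, and check that these edges lie in the periodic part. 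This check is the delicate point: for $k\ge1$ the first $m$ edges of $\sigma^k\mathbf{r}_\pm$ reach beyond the prefix fixed by $\mathbf{p}_m$ once $k+m>n+m$, which never happens since $k<n$. So these edges are always determined by $(e_1,\dots,e_n)$ followed by $\mathbf{p}_m$, and periodicity of $\mathbf{p}_m$ makes them agree with those of the unrolling.

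The diagonal contributions are therefore indexed by the $n$-periodic $\mathbf{p}_m$. By Lemma~\ref{lem:cycles_periodic_orbits} these are in bijection with $\mathcal{P}^{\mathrm{cl}}_n\cong\operatorname{Fix}(\sigma^n_\pm)$, and each contributes $\omega_n(\mathbf{p}_\pm)$. This gives the first equality.

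For the second equality, I group $\operatorname{Fix}(\sigma^n_\pm)$ into $\sigma_\pm$-orbits and use Lemma~\ref{lem:cycles_periodic_orbits} again. Each such orbit is the orbit of a primitive cycle $[\mathbf{p}_0]$ whose minimal period $\ell_0$ divides $n$, and the orbit has exactly $\ell_0$ elements. Because $\omega_{\ell_0}\circ\sigma_\pm$ equals $\omega_{\ell_0}$ on period-$\ell_0$ points, and the multiplicative cocycle identity gives $\omega_n=\omega_{\ell_0}^{n/\ell_0}$ there, every orbit element contributes the same value $\omega_{\ell_0}(\mathbf{p}_0)^{n/\ell_0}$. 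Summing over the $\ell_0$ elements of each orbit yields the factor $\ell_0$.

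I expect the main obstacle to be the careful bookkeeping in the first step: showing that the constraint $n\le m$ makes the diagonal entries pick out exactly the $n$-periodic words, including at the seam and in the boundary case $n=m$.
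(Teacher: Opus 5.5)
Your proposal is correct and follows essentially the paper's route. You identify the nonzero diagonal entries in the $\delta$-basis with $n$-periodic $m$-blocks, biject these with $\mathcal{P}_n^{\mathrm{cl}}\cong\operatorname{Fix}(\sigma_\pm^n)$, and then group by primitive orbits. The only difference is that you read the diagonal entries directly off \eqref{eq:iterated_transfer_op} instead of multiplying one-step matrices.

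The case $n=m$ that you deferred is immediate. Admissibility of the preimage $\mathbf{r}_\pm$ at the junction between the $n$ prepended edges and $\mathbf{q}_\pm$ forces $(\vec{e}_n,\vec{e}_1)$ to be admissible. This gives closure and the seam condition uniformly for every $n\le m$, so the subword argument is not needed.
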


\begin{proof}
    Fix the memory $m$ of $\omega$ sufficiently large and let $3 \leq n \leq m$.

    \medskip
    \noindent\textit{Step 1: The matrix $\mathcal{L}_{\omega,F_m}^n$ and its diagonal entries.}
    By definition of the matrix $(\mathcal{L}_{\omega, F_m}\bigr)_{\mathbf{p},\mathbf{p}'}$ in \eqref{eq:matrix_L_Fm}, its $n$-th power has entries
    $$
        (\mathcal{L}^n_{\omega, F_m}\bigr)_{\mathbf{p},\mathbf{p}'}
        =
        \sum_{\mathbf{p}_1,\dots,\mathbf{p}_{n-1} \in \mathcal{P}_m} (\mathcal{L}_{\omega, F_m})_{\mathbf{p},\mathbf{p}_{n-1}} \cdots (\mathcal{L}_{\omega, F_m})_{\mathbf{p}_{1},\mathbf{p}'}, \qquad \mathbf{p},\mathbf{p}' \in \mathcal{P}_m.
    $$
    Each factor $(\mathcal{L}_{\omega, F_m})_{\mathbf{p}_{i+1},\mathbf{p}_{i}} \neq 0$ if $\mathbf{p}_i,\mathbf{p}_{i+1} \in \mathcal{P}_m$, $i=0, \ldots, n -1$ (with convention $\mathbf{p}_0 \coloneqq \mathbf{p}'$ and $\mathbf{p}_{n-1} \coloneqq \mathbf{p}$) are consecutive finite length-$m$ paths of a single admissible chain.
    Hence, the $n$-fold matrix product collapses to
    \[
    (\mathcal{L}_{\omega,F_m}^n)_{\mathbf{p},\mathbf{p}'}
    =
    \begin{cases}
    \omega_{n,\mathbf{p}'}, & \text{if $\mathbf{p}',\mathbf{p} \in \mathcal{P}_m$ are joined by an admissible $n$-step chain in $\mathcal{P}_m$},\\
    0, & \text{otherwise},
    \end{cases}
    \]
    where an admissible $n$-step chain $\mathbf{p}' \to \mathbf{p}_1 \to \cdots \to \mathbf{p}_{n-1}\to \mathbf{p}$ glues into a single non-backtracking edge sequence $\vec{e}_1,\dots,\vec{e}_{n+m-1}$, and
    $$
    \omega_{n,\mathbf{p}'} \coloneqq \omega_{\mathbf{p}_{n-1}}\,\omega_{\mathbf{p}_{n-2}}\cdots\omega_{\mathbf{p}_1}\,\omega_{\mathbf{p}'}
    $$
    is well-defined independently of any continuation of this finite path. Indeed, for any $\mathbf{r}_\pm \in \mathcal{C}(\mathbf{p}')$ whose first $n+m-1$ edges realize this glued sequence, $\omega$ having memory $m$ means every factor $\omega(\sigma_\pm^k(\mathbf{r}_\pm))$, $k=0,\dots,n-1$, in \eqref{eq:w_m_product} depends only on edges $\vec{e}_{k+1},\dots,\vec{e}_{k+m}$, all of which lie among $\vec{e}_1,\dots,\vec{e}_{n+m-1}$.
    Hence $\omega_n(\mathbf{r}_\pm) = \omega_{n,\mathbf{p}'}$ for every $\mathbf{r}_\pm$ in the district $\mathcal{C}(\mathbf{p}')$ compatible with the chain, matching \eqref{eq:w_m_product}.

    \medskip
    \noindent\textit{Step 2: Diagonal entries correspond to closed paths.}
    As $F_m(\mathfrak{P}_\Gamma^\pm)$ is finite-dimensional with basis $(\delta_{\mathbf{p}})_{\mathbf{p}\in\mathcal{P}_m}$, the trace of $\mathcal{L}_{\omega,F_m}^n$ is the sum of its diagonal entries over the finite index set $\mathcal{P}_m$:
    \[
    \operatorname{Tr}\bigl(\mathcal{L}_{\omega,F_m}^n\bigr) = \sum_{\mathbf{p}\in\mathcal{P}_m} (\mathcal{L}_{\omega,F_m}^n)_{\mathbf{p},\mathbf{p}}.
    \]
    By Step 1, $(\mathcal{L}_{\omega,F_m}^n)_{\mathbf{p},\mathbf{p}}\neq 0$ precisely when $\mathbf{p}\in\mathcal{P}_m$ admits an admissible $n$-step chain from $\mathbf{p}$ back to itself.
    More explicitly, for a diagonal entry starting with $\mathbf{p} = (\vec{e}_1,\dots,\vec{e}_m) \in \mathcal{P}_m$, this requires that after $n$ shifts we return to the same $m$-block, that means $\mathbf{p}$ is $n$-periodic, i.e., $\vec{e}_{n+i} = \vec{e}_i$ for all $1 \leq i \le m-n$.
    Hence the entire $m$-block is determined by its first $n$ edges $\mathbf{p}_n \coloneqq (\vec{e}_1,\dots,\vec{e}_n)$, and those first $n$ edges form a closed $n$-path, i.e., $\mathbf{p}_n \in \mathcal{P}_n^{\mathrm{cl}}$.

    Conversely, every $\mathbf{p}_n\in\mathcal{P}_n^{\mathrm{cl}}$ determines a unique such $\mathbf{p}\in\mathcal{P}_m$ by $n$-periodic continuation up to length $m \geq n$. %, namely the unique element of $\mathcal{C}(\mathbf{p}_n)\cap\mathcal{P}_m$ compatible with this periodicity. 
    This gives a bijection
    \[
    \bigl\{\mathbf{p}\in\mathcal{P}_m : (\mathcal{L}_{\omega,F_m}^n)_{\mathbf{p},\mathbf{p}}\neq 0\bigr\}
    \;\xrightarrow{\ \sim\ }\;
    \mathcal{P}_n^{\mathrm{cl}}, \qquad \mathbf{p} \longmapsto \mathbf{p}_n,
    \]
    under which $(\mathcal{L}_{\omega,F_m}^n)_{\mathbf{p},\mathbf{p}} =\omega_{n,\mathbf{p}_n}$, since $\omega\in F_m(\mathfrak{P}_\Gamma^\pm)$ depends only on the first $m\ge n$ edges and the Birkhoff product along the self-returning chain coincides with $\omega_n$ evaluated on the periodic extension of $\mathbf{p}_n$. Equivalently, on any $\mathbf{r}_\pm \in \mathcal{C}(\mathbf{p}_n) \subset \mathfrak{P}_\Gamma^\pm$ that is itself $n$-periodic.
    Thus
    \[
    \operatorname{Tr}\bigl(\mathcal{L}_{\omega,F_m}^n\bigr) 
    = \sum_{\mathbf{p}_n \in \mathcal{P}_n^{\mathrm{cl}}} \omega_{n,\mathbf{p}_n}.
    \]

    \medskip
    \noindent\textit{Step 3: Passage to $\operatorname{Fix}(\sigma_\pm^n)$.}
    By Lemma~\ref{lem:cycles_periodic_orbits}, the periodic-extension map is a bijection $\mathcal{P}_n^{\mathrm{cl}} \xrightarrow{\sim} \operatorname{Fix}(\sigma_\pm^n)$, sending $\mathbf{p}_n$ to its unique periodic extension $\widehat{\mathbf{p}}_{\pm,n} \in \operatorname{Fix}(\sigma_\pm^n),$ %\in \mathcal{C}(\mathbf{p}_n) \subset \mathfrak{P}_\Gamma^\pm$, 
    under which $\omega_{n,\mathbf{p}_n}$ corresponds to $\omega_n(\widehat{\mathbf{p}}_{\pm,n})$. Hence
    \[
    \operatorname{Tr}\bigl(\mathcal{L}_{\omega,F_m}^n\bigr) = \sum_{\mathbf{p}_n\in\mathcal{P}_n^{\mathrm{cl}}}\omega_{n,\mathbf{p}_n} = \sum_{\widehat{\mathbf{p}}_{\pm,n}\in\operatorname{Fix}(\sigma_\pm^n)}\omega_n(\widehat{\mathbf{p}}_{\pm,n}),
    \]
    which proves the first claimed equality in \eqref{eq:trace_Ln}.

    \medskip
    \noindent\textit{Step 4: Reorganisation via primitive cycles.}
    Since equivalence of closed finite paths, i.e., 
    cyclic relabeling of the starting edge, leaves $\omega_n$ unchanged, so $\omega_n$ is constant on classes $[\mathbf{p}_n]\in \mathscr{P}$ of length $n \geq 3$.
    
    By the power decomposition, there exists a unique primitive cycle $[\mathbf{p}_0]\in\mathscr{P}_0$ of length $\ell_0=\ell([\mathbf{p}_0])$ dividing $n$
    such that
    $[\mathbf{p}_n]=[\mathbf{p}_0]^{n/\ell_0}$.
    Each such $[\mathbf{p}_0]$ contributes exactly $\ell_0$ distinct cyclic representatives in $\mathcal{P}_n^{\mathrm{cl}}$ and all of them carry the same potential weight.\\
    Indeed, by Step 3, let $\widehat{\mathbf{p}}_{\pm,n}\in \operatorname{Fix}(\sigma_\pm^n)$
    be the unique periodic extension corresponding to $\mathbf{p}_n\in \mathcal{P}_n^{\mathrm{cl}}.$ 
    Since $\mathbf{p}_n$ is obtained by repeating $\mathbf{p}_0$ exactly $n/ \ell_0$ times, their periodic unrollings coincide $\widehat{\mathbf{p}}_{\pm,0}
    =\widehat{\mathbf{p}}_{\pm,n}\in \operatorname{Fix}(\sigma_\pm^n)$ and $\widehat{\mathbf{p}}_{\pm,0}$ has exact minimal period $\ell_0$ under $\sigma_\pm.$
    Therefore
    \begin{eqnarray*}
         \omega_{n,\mathbf{p}_n} 
        = \omega_{n}(\widehat{\mathbf{p}}_{\pm,n})
        = \Biggr(\prod_{k=0}^{\ell_0-1} \omega(\sigma_\pm^k(\widehat{\mathbf{p}}_{\pm,0}))\Biggl)^{n/\ell_0}
        &=&
        \omega_{\ell_0}(\widehat{\mathbf{p}}_{\pm,0})^{n/\ell_0}\\
        %\eqqcolon\omega_{\ell_0}([\mathbf{p}_0])^{n/\ell_0}.
        &=& \omega_{\ell_0}(\mathbf{p}_0)^{n/\ell_0} \quad \forall [\mathbf{p}_0] \in \mathscr{P}_0, \ell_0\mid n.
    \end{eqnarray*}
    This is well-defined on the primitive cycle $[\mathbf{p}_0]$, since changing the starting point along the cycle cyclically permutes the factors in the product.\\
    Consequently, the entire primitive cycle contributes $\ell_0 \cdot \omega_{\ell_0}(\mathbf{p}_0)^{n/\ell_0}.$
    Summing over all primitive cycles whose lengths divide $n$ and using Step 3 yields the second equality in \eqref{eq:trace_Ln}.
\end{proof}

\begin{rem}
    Lemma~\ref{lem:ess_spec_radius_bound} shows that the essential spectral radius of $\mathcal{L}_{\Gamma, \pm, \omega}$ on $\F_{\vartheta}(\mathfrak{P}^\pm_\Gamma)$ is
    $$r_{\mathrm{ess}}(\mathcal{L}_{\Gamma, \pm, \omega}) \leq \vartheta q \|\omega|\|_\infty, \quad \text{ for } \|\omega\|_\infty >0,
    $$
    which is strictly positive for any fixed $\vartheta \in (0,1)$.
    A trace-class (let alone nuclear) operator has essential spectral radius 0.
    So $\mathcal{L}_{\Gamma, \pm, \omega}$ is not compact on $\F_{\vartheta}(\mathfrak{P}^\pm_\Gamma)$.
    However, the trace $\mathrm{Tr}[\mathcal{L}^n_{\Gamma, \pm, \omega}]$ in Proposition~\ref{prop:trace_Lnw} is defined as a flat (or algebraic) trace via the finite-dimensional matrix on $F_m(\mathfrak{P}^\pm_\Gamma)$.
    This needs no operator-trace-class hypothesis on $\F_{\vartheta}(\mathfrak{P}^\pm_\Gamma)$ at all.
\end{rem}

\begin{cor}[Trace of $\mathcal{L}_{\omega,F_m}^n$ against $f\in C^{\mathrm{lc}}(\mathfrak{P}_\Gamma^\pm)$]
\label{cor:trace_Lnw_f}
    Choose $m$ large enough so that
    $\omega\in F_m(\mathfrak{P}_\Gamma^\pm)$ have memory $m$, and $f\in C^{\mathrm{lc}}(\mathfrak{P}_\Gamma^\pm)$ also have depth at most $m$.
    For $3\le n\le m$
    \[
    \operatorname{Tr}\bigl(\mathcal{L}_{\omega,F_m}^n\, f\bigr)
    \coloneqq
    \operatorname{Tr}\bigl(M_f\,\mathcal{L}_{\omega,F_m}^n\bigr)
    =
    \sum_{\mathbf{p}_n\in\mathcal{P}_n^{\mathrm{cl}}} \omega_n(\mathbf{p}_n)\, f(\mathbf{p}_n)
    =
    \sum_{\substack{[\mathbf{p}_0]\in\mathscr{P_0}\\ \ell_0\mid n}} \omega_{\ell_0}(\mathbf{p}_0)^{n/\ell_0} \sum_{k=0}^{\ell_0-1} f\bigl(\sigma_\pm^k(\widehat{\mathbf{p}}_0)\bigr),
    \]
    where $M_f$ denotes the diagonal matrix representing pointwise multiplication by $f$ on $F_m(\mathfrak{P}_\Gamma^\pm)$ and
    $f(\mathbf{p}_n)$ denotes $f$ evaluated at the depth-$m$ truncation of the periodic extension of $\mathbf{p}_n \in \mathcal{P}_n^{\mathrm{cl}}$.
    Here $\mathbf{p}_0$ denotes any fixed representative of $[\mathbf{p}_0] \in \mathscr{P}_0$ and $\widehat{\mathbf{p}}_0 \in \operatorname{Fix}(\sigma_\pm^n) \subset \mathfrak{P}_\Gamma^\pm$ its periodization (see Proposition~\ref{prop:trace_Lnw}).
\end{cor}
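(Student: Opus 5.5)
The plan is to repeat the proof of Proposition~\ref{prop:trace_Lnw} with the diagonal matrix $M_f$ inserted, since $M_f$ only reweights diagonal entries. Write $\mathcal{C}(\mathbf{p})$ for the level-$m$ district of $\mathbf{p}\in\mathcal{P}_m$. Because $f$ has depth at most $m$, the function $f$ lies in $F_m(\mathfrak{P}_\Gamma^\pm)$ and is constant on each $\mathcal{C}(\mathbf{p})$, with value $f_{\mathbf{p}}$ say. Hence $M_f$ preserves $F_m(\mathfrak{P}_\Gamma^\pm)$ and is diagonal in the basis $(\delta_{\mathbf{p}})_{\mathbf{p}\in\mathcal{P}_m}$ of Remark~\ref{rem:basis_delta_p}. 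This gives
\[
(M_f\mathcal{L}^n_{\omega,F_m})_{\mathbf{p},\mathbf{p}} = f_{\mathbf{p}}\,(\mathcal{L}^n_{\omega,F_m})_{\mathbf{p},\mathbf{p}},
\qquad
\operatorname{Tr}(M_f\mathcal{L}^n_{\omega,F_m})=\sum_{\mathbf{p}\in\mathcal{P}_m} f_{\mathbf{p}}\,(\mathcal{L}^n_{\omega,F_m})_{\mathbf{p},\mathbf{p}} .
\]
The definition $\operatorname{Tr}(\mathcal{L}^n f):=\operatorname{Tr}(M_f\mathcal{L}^n)$ is only notation. By cyclicity of the finite-dimensional trace, $\operatorname{Tr}(M_f\mathcal{L}^n)=\operatorname{Tr}(\mathcal{L}^nM_f)$, so the order of the factors does not matter.

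Next I will reuse Steps~1--3 of the proof of Proposition~\ref{prop:trace_Lnw}. The nonzero diagonal entries are indexed bijectively by $\mathcal{P}^{\mathrm{cl}}_n$: a block $\mathbf{p}$ contributes exactly when it is the $n$-periodic continuation, up to length $m$, of some $\mathbf{p}_n\in\mathcal{P}^{\mathrm{cl}}_n$, and then the entry equals $\omega_n(\widehat{\mathbf{p}}_{\pm,n})$. In that case $\mathbf{p}$ is the depth-$m$ truncation of the periodic extension $\widehat{\mathbf{p}}_{\pm,n}$. So $f_{\mathbf{p}}=f(\widehat{\mathbf{p}}_{\pm,n})$, which is what the statement denotes $f(\mathbf{p}_n)$. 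Summing over $\mathcal{P}^{\mathrm{cl}}_n$ gives the first equality.

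For the second equality I will group $\mathcal{P}^{\mathrm{cl}}_n$ into cyclic classes as in Step~4. Each class has the form $[\mathbf{p}_0]^{n/\ell_0}$ with $[\mathbf{p}_0]\in\mathscr{P}_0$ and $\ell_0\mid n$, and it has exactly $\ell_0$ representatives $\sigma_c^k(\mathbf{p}_n)$, $0\le k\le \ell_0-1$. By Lemma~\ref{lem:cycles_periodic_orbits} their periodic extensions are $\sigma_\pm^k(\widehat{\mathbf{p}}_0)$. The weight $\omega_n$ is invariant under cyclic shifts and equals $\omega_{\ell_0}(\mathbf{p}_0)^{n/\ell_0}$ on the whole class, so it factors out. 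What remains is $\sum_{k=0}^{\ell_0-1} f(\sigma_\pm^k(\widehat{\mathbf{p}}_0))$.

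The only delicate point is bookkeeping. I must check that the $\ell_0$ representatives are distinct and that they biject with the orbit points $\sigma^k_\pm(\widehat{\mathbf{p}}_0)$. This holds because $\widehat{\mathbf{p}}_0$ has minimal period $\ell_0$, by the primitivity argument in Lemma~\ref{lem:cycles_periodic_orbits}. With that, each orbit point is counted once and the $f$-values form the Birkhoff sum along the primitive orbit, not $n/\ell_0$ copies of it.
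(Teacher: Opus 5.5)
Your proposal is correct and follows essentially the same route as the paper: insert the diagonal matrix $M_f$ in the basis $(\delta_{\mathbf{p}_m})_{\mathbf{p}_m\in\mathcal{P}_m}$, reuse Step~2 of Proposition~\ref{prop:trace_Lnw} to index the contributing diagonal entries by $\mathcal{P}_n^{\mathrm{cl}}$, and regroup by primitive cycles as in Step~4. Your explicit check that the $\ell_0$ cyclic representatives are distinct and correspond bijectively to the orbit points $\sigma_\pm^k(\widehat{\mathbf{p}}_0)$, using that $\widehat{\mathbf{p}}_0$ has minimal period $\ell_0$, makes rigorous a count that the paper only asserts.
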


\begin{proof}
    Since $f\in C^{\mathrm{lc}}(\mathfrak{P}_\Gamma^\pm)$ also have depth at most $m$, i.e., $f\in F_m(\mathfrak{P}_\Gamma^\pm)$, $f$ acts on $F_m(\mathfrak{P}_\Gamma^\pm)$ by pointwise multiplication:
    $$
    M_f: F_m(\mathfrak{P}_\Gamma^\pm) \longrightarrow F_m(\mathfrak{P}_\Gamma^\pm), \qquad (M_f\varphi)(\mathbf{p}_m)=f(\mathbf{p}_m)\varphi(\mathbf{p}_m), \quad \mathbf{p}_m \in \mathcal{P}_m.
    $$
    Hence both $M_f$ and $\mathcal{L}_{\omega,F_m}$ are endomorphisms of the same finite-dimensional space
    $F_m(\mathfrak{P}_\Gamma^\pm)$ and therefore
    $M_f\mathcal{L}_{\omega,F_m}^n$ is an endomorphism whose trace is well defined.
    
    In the basis $(\delta_{\mathbf{p}_m})_{\mathbf{p}_m\in \mathcal{P}_m},$ $(M_f)_{\mathbf{p}_m,\mathbf{p}'_m}=f(\mathbf{p}_m)\delta_{\mathbf{p}_m,\mathbf{p}'_m}.$
    By definition, 
    $$\operatorname{Tr}(M_f\mathcal{L}_{\omega,F_m}^n) 
    = \sum_{\mathbf{p}_m\in\mathcal{P}_m} (M_f\mathcal{L}_{\omega,F_m}^n)_{\mathbf{p}_m,\mathbf{p}_m}
    = \sum_{\mathbf{p}_m\in\mathcal{P}_m} f(\mathbf{p}_m)\,(\mathcal{L}_{\omega,F_m}^n)_{\mathbf{p}_m,\mathbf{p}_m}.$$
    By Step~2 of the proof of Proposition~\ref{prop:trace_Lnw}, $(\mathcal{L}_{\omega,F_m}^n)_{\mathbf{p}_m,\mathbf{p}_m}\neq 0$ 
    precisely for those $\mathbf{p}_m\in\mathcal{P}_m$ that are the depth-$m$ periodic continuation of a (unique) closed finite path $\mathbf{p}_n\in\mathcal{P}_n^{\mathrm{cl}}$, with value $\omega_{n,\mathbf{p}_n}$.\\
    Since $f$ has depth at most $m$, $f(\mathbf{p}_m)$ depends only on this same periodic continuation, so we may write $f(\mathbf{p}_n)\coloneqq f(\mathbf{p}_m)$ unambiguously.
    Hence
    \[
    \operatorname{Tr}(M_f\mathcal{L}_{\omega,F_m}^n) 
    = \sum_{\mathbf{p}_n\in\mathcal{P}_n^{\mathrm{cl}}} \omega_{n,\mathbf{p}_n}\,f(\mathbf{p}_n).
    \]
    
    The second equality follows exactly as in Step~4 of the proof of Proposition~\ref{prop:trace_Lnw}: 
    every $[\mathbf{p}_n]\in\mathscr{P}$ of length $n$ decomposes uniquely as $[\mathbf{p}_n]=[\mathbf{p}_0]^{n/\ell_0}$ for a primitive $[\mathbf{p}_0]\in\mathscr{P_0}$ of length $\ell_0 \mid n$, contributing exactly $\ell_0$ cyclic representatives to $\mathcal{P}_n^{\mathrm{cl}}$.\\
    Each carrying the same weight $\omega_n=\omega_{\ell_0}(\mathbf{p}_0)^{n/\ell_0}$ but (possibly) different values of $f$.
    Namely $f$ evaluated at the $\ell_0$ points $\sigma_\pm^k(\widehat{\mathbf{p}}_0)$, $k=0,\dots,\ell_0-1$, on the orbit of the periodization $\widehat{\mathbf{p}}_0 \in \operatorname{Fix}(\sigma_\pm^n)$.
\end{proof}

Consider now the space of bi-infinite chains $\mathfrak{P}_\Gamma \subset \mathfrak{P}^+_\Gamma \times \mathfrak{P}^-_\Gamma$ with its shift operator $\sigma$ from Definition~\ref{def:space_biinfinite_chains}.
Recall the canonical future projection
$\pi_{+}: \mathfrak{P}_\Gamma \rightarrow \mathfrak{P}_\Gamma^+$ given by $\pi_\pm(\mathbf{p})=(\vec{e}_1, \vec{e}_2, \ldots)$
from \eqref{eq:pi_shift_proj}.
It commutes with the shift maps:
\begin{equation} \label{eq:pi+_sigma+}
    \pi_+ \circ \sigma = \sigma_+ \circ \pi_+.
\end{equation}

After applying a sufficiently large iterate of the shift $\sigma$, the (two-sided) function $f \in C^{\mathrm{lc}}(\mathfrak{P}_\Gamma)$ factors through the canonical future projection. More precisely, we have the following.

\begin{lem}[One-sided reduction of a two-sided function] \label{lem:one-sided-reduction}
    Let $f\in C^{\mathrm{lc}}(\mathfrak{P}_\Gamma).$
    Then there exist $m\geq 1$ and a function $f_+\in C^{\mathrm{lc}}(\mathfrak{P}_\Gamma^+)$ such that
    $$f \circ \sigma^m = f_+ \circ \pi_+.$$
\end{lem}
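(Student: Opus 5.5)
We first show that $f$ depends on only finitely many coordinates on each side of the basepoint. The sets
\[
\mathcal{C}(\vec{b}_N,\dots,\vec{b}_1;\vec{e}_1,\dots,\vec{e}_N)\coloneqq\{(\mathbf{p}_+,\mathbf{p}_-)\in\mathfrak{P}_\Gamma : \text{the last $N$ past edges and first $N$ future edges are as prescribed}\}
\]
form a basis of clopen sets for the subspace topology that $\mathfrak{P}_\Gamma$ inherits from the product topology on $\mathfrak{P}^+_\Gamma\times\mathfrak{P}^-_\Gamma$. Since $f$ is locally constant, every point has such a two-sided cylinder on which $f$ is constant. The space $\mathfrak{P}_\Gamma$ is a closed subset of the compact space $\mathfrak{P}^+_\Gamma\times\mathfrak{P}^-_\Gamma$, hence compact, so finitely many of these cylinders cover it. Taking $N$ to be the maximum of their depths, and noting that every depth-$N$ cylinder lies inside one of them, we conclude: $f(\mathbf{p}_+,\mathbf{p}_-)$ depends only on the window $(\vec{b}_N,\dots,\vec{b}_1,\vec{e}_1,\dots,\vec{e}_N)$.

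Next we set $m\coloneqq N$ and compute $\sigma^m$ explicitly from Definition~\ref{def:space_biinfinite_chains}. By induction,
\[
\sigma^m(\mathbf{p}_+,\mathbf{p}_-)=\bigl((\vec{e}_{m+1},\vec{e}_{m+2},\dots),\ (\dots,\vec{b}_1,\vec{e}_1,\dots,\vec{e}_m)\bigr).
\]
In this image, the last $N=m$ past edges are $\vec{e}_1,\dots,\vec{e}_m$ and the first $N$ future edges are $\vec{e}_{m+1},\dots,\vec{e}_{2m}$. Hence $f\circ\sigma^m(\mathbf{p})$ depends only on $(\vec{e}_1,\dots,\vec{e}_{2m})$, which are the first $2m$ edges of $\pi_+(\mathbf{p})$.

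We now define $f_+$ on $\mathfrak{P}^+_\Gamma$. Given $\mathbf{p}_+$, choose any $\mathbf{p}_-$ with $(\mathbf{p}_+,\mathbf{p}_-)\in\mathfrak{P}_\Gamma$ and set $f_+(\mathbf{p}_+)\coloneqq f(\sigma^m(\mathbf{p}_+,\mathbf{p}_-))$. The main point to check is that such a $\mathbf{p}_-$ exists, i.e.\ that $\pi_+$ is surjective. Starting from the basepoint $\iota(\vec{e}_1)$, we pick an incoming edge different from $\vec{e}_1^{\,\mathrm{op}}$, and then repeatedly extend backwards without backtracking. This is possible because $\mathfrak{G}_\Gamma$ is $(q+1)$-regular with $q\ge1$, so at every step at least $q$ admissible choices remain.

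By the previous paragraph the value $f_+(\mathbf{p}_+)$ does not depend on the choice of $\mathbf{p}_-$, so $f_+$ is well defined. It depends only on the first $2m$ edges, so $f_+\in F_{2m}(\mathfrak{P}^+_\Gamma)\subseteq C^{\mathrm{lc}}(\mathfrak{P}^+_\Gamma)$ by \eqref{eq:Clc_Fm}. The identity $f\circ\sigma^m=f_+\circ\pi_+$ holds by construction.

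The only delicate step is the compactness argument giving the uniform depth $N$. It requires that the two-sided cylinders really form a basis of the subspace topology and that $\mathfrak{P}_\Gamma$ is closed in the product. Both follow because the two defining conditions in \eqref{eq:space_biinfinite_chains} involve only the first edges of $\mathbf{p}_+$ and $\mathbf{p}_-$, so each condition cuts out a clopen set.
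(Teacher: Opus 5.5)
Your proof is correct and follows essentially the same route as the paper: compactness gives a uniform two-sided window of dependence, and shifting forward by the past depth $m$ moves that window entirely into the future chain, so $f\circ\sigma^m$ depends only on finitely many initial edges of $\pi_+(\mathbf{p})$. Using a symmetric depth $N$ (so $f_+\in F_{2m}$ rather than $F_{m+m'}$) is only a cosmetic difference, and your explicit check that every $\mathbf{p}_+$ has an admissible past extension, which makes $f_+$ well defined on all of $\mathfrak{P}_\Gamma^+$, makes precise a point the paper leaves implicit.
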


\begin{proof}
    Since $f$ is locally constant on the compact space $\mathfrak{P}_\Gamma$, its level sets are open and cover $\mathfrak{P}_\Gamma$.
    By compactness, finitely many of them already cover the space. 
    Each of these finitely many sets in the subcover, being open in the product topology on $\mathfrak{P}_\Gamma$, depends on only finitely many coordinates around the basepoint, and since there are only finitely many, we may take $m,m'\geq 1$ large enough to work uniformly for every element of the (finite) subcover.
    Therefore, due to compactness, it allows us to turn the pointwise dependence of $f$ on finitely many edges into a single pair $(m,m')$ that works uniformly over all of $\mathfrak{P}_\Gamma.$
    
    More precisely, there exist $m,m' \geq 1$ large enough and a function $ \tilde{f}\in F_{m+m'}(\mathfrak{P}_\Gamma)$ , depending only on $m$ most recent past and $m'$ future edges at the basepoint, such that for every $\mathbf{p}=(\mathbf{p}_+,\mathbf{p}_-) \in \mathfrak{P}_\Gamma$ and $\mathbf{p}_{m,m'} \in \mathcal{P}_{m+m'}$, we have
    $$f(\mathbf{p})
    =f(\mathbf{p}_{+}, \mathbf{p}_{-})
    =\tilde{f}(\mathbf{p}_{+,m'}, \mathbf{p}_{-,m})
    = \tilde{f}(\mathbf{p}_{m,m'}).$$
    
    Write $\mathbf{p}=((\vec{e}_{1}, \vec{e}_{2}, \ldots),(\ldots, \vec{e}_{-2}, \vec{e}_{-1}))\in \mathfrak{P}_\Gamma,$ so that $$\mathbf{p}_{m,m'}=((\vec{e}_1, \ldots, \vec{e}_{m'}),(\vec{e}_{-m}, \ldots, \vec{e}_{-1})) \in \mathcal{P}_{m+m'}.$$
    By definition of the shift, $\sigma$ shifts the basepoint one step forward, hence after $m$ iterations we have
    $
    \sigma^m(\mathbf{p}) = \big((\vec{e}_{m+1}, \vec{e}_{m+2}, \ldots), (\ldots, \vec{e}_{-1}, \vec{e}_1, \ldots, \vec{e}_m)\big).
    $
    Evaluating $\tilde{f}$ at $\sigma^m$, therefore gives
    $$
    (f \circ \sigma^m)(\mathbf{p})
    = \tilde f\big((\vec{e}_1, \ldots, \vec{e}_m), (\vec{e}_{m+1}, \ldots, \vec{e}_{m+m'})\big)
    = \tilde f(\vec{e}_1, \ldots, \vec{e}_{m+m'}),
    $$
    where in the last step we used that $\tilde f$ depends only on the chronologically ordered chain of
    $m + m'$ consecutive edges,
    i.e., the two-sided chain consisting of the past and future chain
    $\big((\vec{e}_1,\ldots,\vec{e}_m),(\vec{e}_{m+1},\ldots,\vec{e}_{m+m'})\big)$ may equivalently be read as
    the single ordered string $(\vec{e}_1, \ldots, \vec{e}_{m+m'})$.
    
    Now define $f_+ \in \mathrm{Maps}(\mathfrak{P}_\Gamma^+, \C)$ by
    $
    f_+(\vec{e}_1, \vec{e}_2, \ldots) \coloneqq \tilde f(\vec{e}_1, \ldots, \vec{e}_{m+m'}).
    $
    This depends only on the first $m+m'$ coordinates of a point of $\mathfrak{P}_\Gamma^+$, hence
    $f_+ \in C^{\mathrm{lc}}(\mathfrak{P}_\Gamma^+)$. Finally, since $\pi_+(\mathbf{p}) = (\vec{e}_1, \vec{e}_2, \ldots)$,
    $$
    (f_+ \circ \pi_+)(\mathbf{p}) = f_+(\vec{e}_1, \vec{e}_2, \ldots) = \tilde f(\vec{e}_1, \ldots, \vec{e}_{m+m'})
    = (f \circ \sigma^m)(\mathbf{p}),
    $$
    for every $\mathbf{p} \in \mathfrak{P}_\Gamma$, which is the desired identity.
\end{proof}

For $f\in C^{\mathrm{lc}}(\mathfrak{P}_\Gamma)$ and $n\ge 3$, let $\mathbf{p}\in\operatorname{Fix}(\sigma^n)$ and write $\ell_0=\ell_0(\mathbf{p})$ for its minimal period, so $\ell_0\mid n$. Define the associated \emph{periodic weight} by
\begin{equation} \label{eq:weight_f}
     \mathbf{f}_{\ell_0}(\mathbf{p})
    \coloneqq
    \sum_{k=0}^{\ell_0-1} f(\sigma^k(\mathbf{p})), \qquad \mathbf{p}\in \operatorname{Fix}(\sigma^{n}), \; n \geq 3.
\end{equation}
The same formula, with $\sigma$ replaced by $\sigma_+$, defines $\mathbf{f}_{\ell_0,+}$ on $\operatorname{Fix}(\sigma_+^{n})\subset\mathfrak{P}_\Gamma^+$ for $f_+\in C^{\mathrm{lc}}(\mathfrak{P}_\Gamma^+)$.

Due the $n$-periodicity, $\mathbf{f}_{\ell_0}$ is constant on $\sigma$-orbits within $\operatorname{Fix}(\sigma^{n})$, i.e., it is  $\sigma$-invariant.
Hence, by iteration on $m\geq 1$, we derive $ \mathbf{f}_{\ell_0}\circ \sigma^m= \mathbf{f}_{\ell_0}.$\\
Moreover, by Lemma~\ref{lem:one-sided-reduction} and using \eqref{eq:pi_shift_proj} so that $\pi_+ \circ \sigma^k = \sigma_+^k \circ \pi_+$, $0 \leq k \leq \ell_0-1$, we get for $\mathbf{p}\in \operatorname{Fix}(\sigma^{n})$
\begin{equation} \label{eq:fnweight_sigma_f+}
    \mathbf{f}_{\ell_0}(\mathbf{p})
    = \mathbf{f}_{\ell_0}(\sigma^m(\mathbf{p})) 
    = \sum_{k=0}^{\ell_0-1}(f\circ\sigma^m)(\sigma^k(\mathbf{p})) 
    = \sum_{k=0}^{\ell_0-1} f_+(\sigma_+^k(\pi_+(\mathbf{p}))) 
    = \mathbf{f}_{\ell_0,+}(\pi_+(\mathbf{p})),
\end{equation}
since  $\pi_+(\mathbf{p})\in\operatorname{Fix}(\sigma_+^n)$ has the same minimal period $\ell_0$ as $\mathbf{p}$.
Thus the periodic-orbit weight obtained from $f\in C^{\mathrm{lc}}(\mathfrak{P}_\Gamma)$ is exactly the periodic-orbit weight generated by the one-sided potential $f_+\in C^{\mathrm{lc}}(\mathfrak{P}_\Gamma^+)$.

Now let $m$ be sufficiently large so that $f\in C^{\mathrm{lc}}(\mathfrak{P}_\Gamma)$ has depth at most $m$. For $3\leq n\leq m$, we define the \emph{diagonal trace} against $f$ by
\begin{equation} \label{eq:diagonal_trace_f}
    \operatorname{Tr}\bigl[\mathcal{L}_{1,F_m}^n f\bigr] \coloneqq \sum_{\mathbf{p}_n\in\mathcal{P}_n^{\mathrm{cl}}} f(\widehat{\mathbf{p}}_n),
\end{equation}
where $\widehat{\mathbf{p}}_n \in \operatorname{Fix}(\sigma^n) \subset \mathfrak{P}_\Gamma$ is the periodic extension of the closed path $\mathbf{p}_n\in\mathcal{P}_n^{\mathrm{cl}}$ in $\mathfrak{P}_\Gamma$
and $\mathcal{L}^n_{1,F_m}$ is the unweighted Ruelle transfer operator on $F_m(\mathfrak{P}_\Gamma^\pm)$.

\begin{lem}[Diagonal trace of $\mathcal{L}^n_{1,F_m}$ against $f\in C^{\mathrm{lc}}(\mathfrak{P}_\Gamma)$] \label{lem:trace_unweighted_multiplicative}
    %Let $m$ be sufficiently large.
    Let $f\in C^{\mathrm{lc}}(\mathfrak{P}_\Gamma)$, and let $m$ be sufficiently large so that $f$ has depth at most $m$.
    Then for $3\leq n\leq m$
    $$
    \operatorname{Tr}[\mathcal{L}^n_{1,F_m} f]
    = \sum_{\substack{[\mathbf{p}_0]\in\mathscr{P}_0\\ \ell_0\mid n}} \mathbf{f}_{\ell_0}(\widehat{\mathbf{p}}_0),
    $$
    where $\widehat{\mathbf{p}}_0 \in \operatorname{Fix}(\sigma^n) \subset \mathfrak{P}_\Gamma$ is the periodization of any representative $\mathbf{p}_0$ of the primitive cycle $[\mathbf{p}_0] \in \mathscr{P}_0$ and $\mathbf{f}_{\ell_0}$ is defined in \eqref{eq:weight_f}.

    Moreover, let  $f_+\in C^{\mathrm{lc}}(\mathfrak{P}_\Gamma^+)$ of depth at most $m$ be the one-sided reduction of $f$ as in Lemma~\ref{lem:one-sided-reduction}, then
    \begin{equation} \label{eq:trace_unweighted_multiplicative}
        \operatorname{Tr}[\mathcal{L}^n_{1,F_m} f]
        = \sum_{\substack{[\mathbf{p}_0]\in\mathscr{P}_0\\ \ell_0\mid n}} \mathbf{f}_{\ell_0,+}(\pi_+(\widehat{\mathbf{p}}_0))
        = \operatorname{Tr}[M_{f_+}\mathcal{L}^n_{1,+,F_m}]
        \eqqcolon \operatorname{Tr}[\mathcal{L}^n_{1,+,F_m} f_+],
    \end{equation}
    where $\mathbf{f}_{\ell_0,+}$ is defined in \eqref{eq:weight_f} and $\mathcal{L}_{1,+,F_m}$ is the unweighted Ruelle transfer operator restricted to $F_m(\mathfrak{P}_\Gamma^+).$
\end{lem}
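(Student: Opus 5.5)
Starting from the definition \eqref{eq:diagonal_trace_f}, the first identity is a regrouping of the finite sum over $\mathcal{P}_n^{\mathrm{cl}}$ by cycle classes, exactly as in Step~4 of the proof of Proposition~\ref{prop:trace_Lnw}, now with $\omega\equiv 1$. By Lemma~\ref{lem:cycles_periodic_orbits}, extended to $(\mathfrak{P}_\Gamma,\sigma)$ as noted after it, periodic extension is a bijection $\mathcal{P}_n^{\mathrm{cl}}\to\operatorname{Fix}(\sigma^n)$ that intertwines $\sigma_c$ with $\sigma$. Every class $[\mathbf{p}_n]$ of length $n$ is $[\mathbf{p}_0]^{n/\ell_0}$ for a unique primitive $[\mathbf{p}_0]\in\mathscr{P}_0$ with $\ell_0\mid n$.

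The closed paths of length $n$ lying in this class are the $\ell_0$ distinct rotations $\sigma_c^k(\mathbf{p}_n)$, $0\le k\le \ell_0-1$. Their periodic extensions are $\sigma^k(\widehat{\mathbf{p}}_0)$, because $\widehat{\mathbf{p}}_n=\widehat{\mathbf{p}}_0$ has minimal period $\ell_0$. Summing $f(\widehat{\mathbf{p}}_n)$ over the class therefore gives $\sum_{k=0}^{\ell_0-1} f(\sigma^k(\widehat{\mathbf{p}}_0))=\mathbf{f}_{\ell_0}(\widehat{\mathbf{p}}_0)$. This value is independent of the chosen representative, since a different representative only cyclically permutes the summands. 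Summing over all primitive $[\mathbf{p}_0]$ with $\ell_0\mid n$ gives the first formula.

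For \eqref{eq:trace_unweighted_multiplicative}, I apply \eqref{eq:fnweight_sigma_f+} to each $\widehat{\mathbf{p}}_0\in\operatorname{Fix}(\sigma^n)$, which gives $\mathbf{f}_{\ell_0}(\widehat{\mathbf{p}}_0)=\mathbf{f}_{\ell_0,+}(\pi_+(\widehat{\mathbf{p}}_0))$; this is the first equality. For the second, I invoke Corollary~\ref{cor:trace_Lnw_f} with $\omega\equiv1$ (so $\omega_{\ell_0}\equiv 1$) and the one-sided function $f_+$ of depth at most $m$. It yields $\operatorname{Tr}[M_{f_+}\mathcal{L}^n_{1,+,F_m}]=\sum_{\ell_0\mid n}\sum_{k=0}^{\ell_0-1}f_+(\sigma_+^k(\widehat{\mathbf{p}}_{+,0}))$.

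It then remains to identify the one-sided periodization $\widehat{\mathbf{p}}_{+,0}$ from the corollary with $\pi_+(\widehat{\mathbf{p}}_0)$. Both are the forward $\ell_0$-periodic unrolling of the same closed path, so they agree. Since $\pi_+\circ\sigma=\sigma_+\circ\pi_+$ by \eqref{eq:pi+_sigma+}, the two orbit sums coincide, and the last expression is a definition.

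The main obstacle is bookkeeping rather than analysis. The index sets must match: primitive cycles in $\mathfrak{P}_\Gamma$ versus $\mathfrak{P}_\Gamma^+$ through $\pi_+$, and $\pi_+$ restricted to periodic points must preserve minimal period. Depths must also be compatible: Lemma~\ref{lem:one-sided-reduction} gives $f_+$ of depth $m+m'$, which may exceed $m$, so I would enlarge $m$ until both $f$ and $f_+$ have depth at most $m$ (the hypothesis as stated). Before using \eqref{eq:fnweight_sigma_f+}, I would also check that the shift exponent there matches the one from the reduction lemma; this is harmless because $\mathbf{f}_{\ell_0}$ is $\sigma$-invariant on periodic points. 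Injectivity of $\pi_+$ on $\operatorname{Fix}(\sigma^n)$ holds because a periodic bi-infinite path is determined by its future.
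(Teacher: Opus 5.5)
Your proof is correct and follows the paper's argument. You regroup the sum over $\mathcal{P}_n^{\mathrm{cl}}$ into $\sigma$-orbits via Lemma~\ref{lem:cycles_periodic_orbits}, use \eqref{eq:fnweight_sigma_f+} for the first equality in \eqref{eq:trace_unweighted_multiplicative}, and apply Corollary~\ref{cor:trace_Lnw_f} with $\omega\equiv 1$ together with the identification $\widehat{\mathbf{p}}_{0,+}=\pi_+(\widehat{\mathbf{p}}_0)$ for the second. Your extra bookkeeping (the $\ell_0$ distinct rotations per class, independence of the representative, depth compatibility, and $\pi_+$ preserving minimal periods on periodic points) makes explicit what the paper leaves implicit.
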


\begin{proof}
    By Lemma~\ref{lem:cycles_periodic_orbits} applied to the bi-infinite setting $(\mathfrak{P}_\Gamma, \sigma)$, the closed paths in $\mathcal{P}_n^{\mathrm{cl}}$ in $\mathfrak{P}_\Gamma$ are in bijection with periodic points in $\operatorname{Fix}(\sigma^n)$.
    Moreover, the $\operatorname{Fix}(\sigma^n)$ decomposes into $\sigma$-orbits of minimal period dividing $n$. These minimal periodic orbits
    are in bijection with the primitive cyclic-equivalence classes $[\mathbf{p}_0]\in\mathscr{P}_0$.
    Therefore
    $$
    \operatorname{Tr}\bigl[\mathcal{L}_{1,F_m}^n f\bigr]
    = \sum_{\widehat{\mathbf{p}}_n\in\operatorname{Fix}(\sigma^n)} f(\widehat{\mathbf{p}}_n)
    = \sum_{\substack{[\mathbf{p}_0]\in\mathscr{P}_0\\ \ell_0\mid n}}
    \sum_{k=0}^{\ell_0-1}
    f(\sigma^k(\widehat{\mathbf{p}}_0)) 
    =
    \sum_{\substack{[\mathbf{p}_0]\in\mathscr{P}_0\\\ell_0\mid n}}
    \mathbf{f}_{\ell_0}(\widehat{\mathbf{p}}_0),$$
    by the definition \eqref{eq:weight_f} of the periodic weight
    $\mathbf{f}_{\ell_0}$. Using \eqref{eq:fnweight_sigma_f+}, this implies the first equality in \eqref{eq:trace_unweighted_multiplicative} directly.
    
    On the other hand, applying Corollary~\ref{cor:trace_Lnw_f} for $f_+\in C^{\mathrm{lc}}(\mathfrak{P}_\Gamma^+)$ of depth at most $m$ and with $\omega \equiv 1$, we obtain
    $$
    \operatorname{Tr}[\mathcal{L}_{1,+,F_m}^n f_+]
    =
    \sum_{\mathbf{p}_{n,+} \in\mathcal{P}_{n,+}^{\mathrm{cl}}}
    f_+(\mathbf{p}_{n,+})
    = \sum_{\substack{[\mathbf{p}_{0,+}]\in\mathscr{P}_{0,+}\\ \ell_0\mid n}}
    \mathbf{f}_{\ell_0,+}
    (\widehat{\mathbf{p}}_{0,+}),
    $$
    where $\widehat{\mathbf{p}}_{0,+} \in \operatorname{Fix}(\sigma_\pm^n) \subset \mathfrak{P}_\Gamma^+$ is the periodization of any fixed representative $\mathbf{p}_{0,+}$ of $[\mathbf{p}_{0,+}]\in\mathscr{P}_{0,+}.$
    Here we distinguish with the index $+$ in $\mathcal{P}_{n,+}^{\mathrm{cl}}$ and $\mathscr{P}_{0,+}$ the finite closed paths respectively primitive cycles living in $\mathfrak{P}_\Gamma^+.$ 
    Now by using the canonical projection $\widehat{\mathbf{p}}_{0,+}=\pi_+(\widehat{\mathbf{p}}_{n})$ for $\widehat{\mathbf{p}}_{n} \in \operatorname{Fix}(\sigma^n) \subset \mathfrak{P}_\Gamma$, this complete the proof.
\end{proof}

% -------------------------------%--------------------------------------%
\subsection{Weighted trace formula}

We now introduce the weighted dynamical Ihara zeta function on finite $(q+1)$-regular graphs.

\begin{definition}[Weighted dynamical Ihara zeta function]
\label{def:zeta_weighted}
    Let $\mathfrak{G}_\Gamma$ be a finite $(q+1)$-regular graph with $q>1$.
    The \emph{weighted dynamical Ihara zeta function} associated to the weight $f\in C^{\mathrm{lc}}(\mathfrak{P}_\Gamma)$ is 
    \begin{equation} \label{eq:Zeta}
        \mathbf{Z}_f(s) 
        \coloneqq  
        \sum_{[\mathbf{p}_0]\in\mathscr{P}_0} \frac{q^{(\frac12+is)\ell_0}}{1-q^{(\frac12+is)\ell_0}}  \mathbf{f}_{\ell_0}(\widehat{\mathbf{p}}_0),
        \quad s\in \C,
    \end{equation}
     where the sum runs over all primitive cycles $[\mathbf{p}_0]\in\mathscr{P}_0$ of length $\ell_0$ and $\mathbf{f}_{\ell_0}$ is the associated periodic weight of $f$ as in \eqref{eq:weight_f} acting on the periodization
    $\widehat{\mathbf{p}}_0 \in \operatorname{Fix}(\sigma^{\ell_0}) \subset \mathfrak{P}_\Gamma$ of any representative $\mathbf{p}_0$ of $[\mathbf{p}_0]$.
\end{definition}
Expanding the geometric series shows that $\mathbf{Z}_f$ is in fact a sum over \emph{all} (not necessarily primitive) closed orbits of $\sigma$, each weighted by its underlying primitive period:
\begin{equation*}
        \mathbf{Z}_f(s) 
        =  
        \sum_{[\mathbf{p}_0]\in\mathscr{P}_0} 
        \sum_{k=1}^\infty q^{(\frac12+is)k\ell_0} \mathbf{f}_{\ell_0}(\widehat{\mathbf{p}}_0)
        = \sum_{[\mathbf{p}]\in\mathscr{P}} q^{(\frac12+is)\ell([\mathbf{p}])} \mathbf{f}_{\ell_0}(\widehat{\mathbf{p}}_0).
    \end{equation*}
This exhibits $\mathbf{Z}_f$ as the discrete, non-Archimedean counterpart of the weighted zeta function of Schütte--Weich~\cite[Eq.~(1.3)]{SchutteWeichBarkhofen23_weightedzeta}.

Note that by applying \eqref{eq:fnweight_sigma_f+} to each corresponding periodization occuring in the definition of  $\mathbf{Z}_f$, we obtain that
the weighted dynamical zeta function associated with $f$ agrees with the one associated with the future-dependent observable $f_+$:
$$\mathbf{Z}_f(s)=\mathbf{Z}_{f_+}(s),$$ 
where $f_+\in C^{\mathrm{lc}}(\mathfrak{P}_\Gamma^+)$ is as in Lemma~\ref{lem:one-sided-reduction}.

Using Lemma~\ref{lem:trace_unweighted_multiplicative}, we relate \eqref{eq:Zeta} to the unweighted resolvent \eqref{eq:resolvent} through the \emph{weighted trace formula}.

\begin{prop}[Weighted trace formula]
\label{prop:weighted_trace}
    Fix $\vartheta \in (0,1)$ and let $\omega \equiv 1.$
    Consider the unweighted resolvent 
    $
    \mathbf{R}(s) = (\mathrm{Id}-q^{\frac12+is} \mathcal{L}_{\Gamma, +})^{-1}$ on $\F_{\vartheta}(\mathfrak{P}^+_\Gamma).
    $
   Let $f\in C^{\mathrm{lc}}(\mathfrak{P}_\Gamma)$, and let $f_+\in C^{\mathrm{lc}}(\mathfrak{P}_\Gamma^+)\subset\mathcal{F}_\vartheta(\mathfrak{P}_\Gamma^+)$ be its one-sided reduction as in Lemma~\ref{lem:one-sided-reduction}, viewed as a bounded multiplication operator $M_{f_+}$ on $\mathcal{F}_\vartheta(\mathfrak{P}_\Gamma^+)$. 
   
   Then, for $\mathrm{Im}(s)> \frac{3}{2}$, the weighted trace formula holds
    \begin{equation} \label{eq:trace_formula}
        \operatorname{Tr}\bigl[\mathbf{R}(s)f_+\bigr] \coloneqq \operatorname{Tr}\bigl[M_{f_+}\mathbf{R}(s)\bigr] = \mathbf{Z}_f(s). %\quad \text{for }\;  \mathrm{Im}(s)>\frac{3}{2}.
    \end{equation}
\end{prop}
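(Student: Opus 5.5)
The plan is to expand the resolvent in its Neumann series, take the trace term by term, and identify each term with the periodic-orbit expansion of $\mathbf{Z}_f$ from Definition~\ref{def:zeta_weighted}.

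\emph{Convergent Neumann series.} For $\mathrm{Im}(s)>\tfrac32$ we have $|q^{\frac12+is}|=q^{\frac12-\mathrm{Im}(s)}<q^{-1}$. By Proposition~\ref{prop:L_bound}, together with the key inequality \eqref{eq:key_ineq_m} applied with $\omega\equiv1$, we get $\|\mathcal{L}_{\Gamma,+}^n\|\le C q^n$ on $\F_\vartheta(\mathfrak{P}^+_\Gamma)$, with $C$ independent of $n$. Hence
$$\mathbf{R}(s)=\sum_{n\ge0}q^{(\frac12+is)n}\mathcal{L}_{\Gamma,+}^n$$
converges absolutely in operator norm. Then $M_{f_+}\mathbf{R}(s)=\sum_n q^{(\frac12+is)n}M_{f_+}\mathcal{L}^n_{\Gamma,+}$.

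\emph{Meaning of the trace.} Since $\mathcal{L}_{\Gamma,+}$ is not trace class (see the remark after Proposition~\ref{prop:trace_Lnw}), I would define $\operatorname{Tr}[M_{f_+}\mathbf{R}(s)]$ as the flat trace
$$\sum_n q^{(\frac12+is)n}\operatorname{Tr}[\mathcal{L}^n_{\Gamma,+}f_+],$$
where each summand is the finite-dimensional trace $\operatorname{Tr}[M_{f_+}\mathcal{L}^n_{1,+,F_m}]$ on $F_m(\mathfrak{P}^+_\Gamma)$ for any $m\ge n$ with $m$ at least the depth of $f_+$. This needs a stabilization check: the value must not depend on $m$. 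I would get this from Corollary~\ref{cor:trace_Lnw_f}, whose right-hand side is visibly $m$-independent. Terms with $n=0,1,2$ require separate treatment. For $n=1,2$ the diagonal entries vanish because a simple graph has no closed non-backtracking paths of length $1$ or $2$, so these contribute $0$. The $n=0$ term $\operatorname{Tr}(M_{f_+})$ must be discarded by convention, since the sum defining $\mathbf{Z}_f$ starts at $k\ge1$. Equivalently, the trace formula should really be read for $\mathbf{R}(s)-\mathrm{Id}$, or the flat trace of $\mathrm{Id}$ is taken to be zero because the identity has no periodic points. Making this convention consistent with the paper's notation is the main obstacle I expect. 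The alternative is a genuine trace-class argument, which is impossible here because $r_{\mathrm{ess}}>0$.

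\emph{Matching with $\mathbf{Z}_f$.} With these conventions, Lemma~\ref{lem:trace_unweighted_multiplicative} gives
$$\operatorname{Tr}[\mathcal{L}^n_{1,+,F_m}f_+]=\sum_{\ell_0\mid n}\mathbf{f}_{\ell_0,+}(\pi_+(\widehat{\mathbf{p}}_0)),$$
and by \eqref{eq:fnweight_sigma_f+} this equals $\sum_{\ell_0\mid n}\mathbf{f}_{\ell_0}(\widehat{\mathbf{p}}_0)$. Summing over $n\ge3$ against $q^{(\frac12+is)n}$ and writing $n=k\ell_0$ reorganizes the double sum as
$$\sum_{[\mathbf{p}_0]}\mathbf{f}_{\ell_0}(\widehat{\mathbf{p}}_0)\sum_{k\ge1}q^{(\frac12+is)k\ell_0}.$$
The inner sum is the geometric series $q^{(\frac12+is)\ell_0}/(1-q^{(\frac12+is)\ell_0})$, which recovers \eqref{eq:Zeta}.

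\emph{Justifying the rearrangement.} The rearrangement requires absolute convergence. Since $|\mathbf{f}_{\ell_0}|\le \ell_0\|f\|_\infty$ and, by Lemma~\ref{lem:nb_Pm}, $|\mathcal{P}^{\mathrm{cl}}_n|\le|\mathcal{P}_n|\le|\mathfrak{X}_\Gamma|(q+1)q^{n-1}$, the $n$-th term is bounded by $\|f\|_\infty|\mathfrak{X}_\Gamma|(q+1)q^{n-1}q^{(\frac12-\mathrm{Im}s)n}$. This bound is summable exactly when $\mathrm{Im}(s)>\tfrac32$, which also proves Theorem~\ref{thm:mero_dynfct}(i).
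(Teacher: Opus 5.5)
Your proposal is correct and follows essentially the same route as the paper: the Neumann series for $\mathrm{Im}(s)>\tfrac32$, term-by-term finite-dimensional traces on $F_m(\mathfrak{P}_\Gamma^+)$ via Lemma~\ref{lem:trace_unweighted_multiplicative}, the bound $|\mathcal{P}^{\mathrm{cl}}_n|\le|\mathfrak{X}_\Gamma|(q+1)q^{n-1}$ to justify the reindexing $n=k\ell_0$, and the geometric series. Your handling of the $n=0$ term (and of $n=1,2$ and $n>m$, by letting $m$ grow with $n$) is more careful than the paper's, which sums from $n\ge0$ with a fixed $m$ and silently drops the identity contribution $\operatorname{Tr}(M_{f_+})$ when passing to $k\ge1$; so your convention (flat trace of $\mathrm{Id}$ set to zero, equivalently the formula for $\mathbf{R}(s)-\mathrm{Id}$) is exactly what the paper uses tacitly.
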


Note that the weighted trace formula \eqref{eq:trace_formula} is the graph-analogue of the weighted Atiyah--Bott--Guillemin trace formula for open systems in \cite[Lem.~3.1]{SchutteWeichBarkhofen23_weightedzeta}.

\begin{proof}
    Let $m\ge1$ be large enough that both $f$ %(depth on $\mathfrak{P}_\Gamma$) 
    and $f_+$ %(depth on $\mathfrak{P}_\Gamma^+$) 
    have depth at most $m$.

    By Proposition~\ref{prop:L_bound} applied with $\omega \equiv 1$ %(this is is \cite[Cor.~5.8]{BHW23}), 
    $\mathcal{L}_{\Gamma,+}$ acts as a bounded endomorphism of $\F_\vartheta(\mathfrak{P}_\Gamma^+)$ with operator norm satisfying $\|\mathcal{L}_{\Gamma,+}^n\|_{\mathrm{op}, \infty} \le q^n$ (note that it does not depend on $\vartheta \in (0,1)$).
    Therefore $\|q^{n(\frac12+is)}\mathcal{L}_{\Gamma,+}^n\|_{\mathrm{op}, \infty} \le |q^{n(\frac12+is)}|q^n =(|q^{(\frac12+is)}|q)^n$.
    Consequently, the Neumann series
    \[
    \mathbf{R}(s) 
    = (\mathrm{Id}-q^{\frac12+is} \mathcal{L}_{\Gamma, +})^{-1}
    =\sum_{n=0}^\infty \bigl(q^{\frac12+is}\mathcal{L}_{\Gamma,+}\bigr)^n
    \]
    converges in operator norm provided $|q^{(\frac12+is)}|q < 1$.
    That is, $|q^{\frac12+is}|<q^{-1}$.
    Since $|q^{\frac12 +is}|=q^{\frac12-\mathrm{Im}(s)}$,
    this is equivalent to $\mathrm{Im}(s)> \tfrac{3}{2}.$

    Since $f_+$ has finite depth $m$, the operator $M_{f_+}\mathcal{L}_{\Gamma,+}^n$ can be restricted to the finite-dimensional subspace $F_m(\mathfrak{P}_\Gamma^+)$. 
    This finite-dimensional restriction allows us to compute the trace of $M_{f_+}\mathcal{L}_{\Gamma,+}^n$ at depth $m$.
    By the above convergence criterion, we may exchange trace and summation and thus, using Lemma~\ref{lem:trace_unweighted_multiplicative}, we get
    \begin{eqnarray*}
        \operatorname{Tr}\bigl[\mathbf{R}(s)f_+\bigr] 
        = \sum_{n= 0}^\infty q^{n(\frac12+is)}\operatorname{Tr}\bigl[M_{f_+}\mathcal{L}_{\Gamma,+}^n\bigr] 
        &=& \sum_{n\ge 0} q^{n(\frac12+is)}\operatorname{Tr}[\mathcal{L}_{1,+,F_m}^n f_+] \\
        &=&\sum_{n\ge 0} q^{n(\frac12+is)}\operatorname{Tr}[\mathcal{L}^n_{1,F_m}f] \\
        &=& \sum_{n\ge 0} q^{n(\frac12+is)}\sum_{\substack{[\mathbf{p}_0]\in\mathscr{P}_0\\ \ell_0\mid n}} \mathbf{f}_{\ell_0}(\widehat{\mathbf{p}}_0).
    \end{eqnarray*}

    Since $\mathfrak{G}_\Gamma$ is finite $(q+1)$-regular graph with vertex set $\mathfrak{X}_\Gamma$, by Lemma~\ref{lem:nb_Pm}, the number of non-backtracking closed paths $\mathcal{P}_n^{\mathrm{cl}} \subseteq \mathcal{P}_n$ of length $n$ is at most $|\mathfrak{X}_\Gamma|(q+1)q^{n-1}$.
    So with $\|f_+\|_\infty < \infty$, we have
    \begin{equation} \label{eq:trace_conv}
        \sum_{n\geq 0} |q^{n(\frac12 +is)}| |\operatorname{Tr}\bigl[\mathcal{L}^n_{\Gamma,\pm} f_+\bigr]\,| 
        \leq \sum_{n\geq 0} |q^{n(\frac12 +is)}| |\mathcal{P}_n^{\mathrm{cl}}| \, \|f_+\|_\infty 
        \leq C\sum_{n \geq 0} (q|q^{\frac12 +is}|)^n
    \end{equation}
    for a constant depending on $\mathfrak{X}_\Gamma$.
    Hence this trace estimate yields absolute convergence whenever $|q^{\frac12 +is}|< q^{-1}$, which is the same domain as above.\\
    Therefore, this 
    permits exchanging the order of summation over $n$ and $[\mathbf{p}_0]$ via $n=k\ell_0$, $k\ge1$:
    \begin{eqnarray*}
        \operatorname{Tr}[\mathbf{R}(s)f_+] 
        = \sum_{n\ge 0} q^{n(\frac12+is)} \sum_{\substack{[\mathbf{p}_0]\in\mathscr{P}_0\\ \ell_0\mid n}} \mathbf{f}_{\ell_0}(\widehat{\mathbf{p}}_0) 
        &=& \sum_{[\mathbf{p}_0]\in\mathscr{P}_0} \mathbf{f}_{\ell_0}(\widehat{\mathbf{p}}_0) \sum_{k\ge1} q^{k\ell_0(\frac12+is)} \\
        &=& \sum_{[\mathbf{p}_0]\in\mathscr{P}_0} \frac{q^{\ell_0(\frac12+is)}}{1-q^{\ell_0(\frac12+is)}}\,\mathbf{f}_{\ell_0}(\widehat{\mathbf{p}}_0) \\
        &=& \mathbf{Z}_f(s),
    \end{eqnarray*}
    by Definition~\ref{def:zeta_weighted}.
    Hence \eqref{eq:trace_conv} proves absolute convergence of $\mathbf{Z}_f(s)$ for $\mathrm{Im}(s) > \tfrac32.$
\end{proof}

% -------------------------------%%--------------------------------------%
\subsection{Meromorphic continuation of $\mathbf{Z}_f$ (Theorem~\ref{thm:mero_dynfct})} \label{sect:mero_cont}

Having established all the necessary tools, we are now in a position to prove Theorem~\ref{thm:mero_dynfct}.

% -------------------------------%%--------------------------------------%
\subsubsection{Proof of Theorem~\ref{thm:mero_dynfct}} \label{sect:proof_merom}
Throughout the proof, fix $f\in C^{\mathrm{lc}}(\mathfrak{P}_\Gamma)$ and let $m\geq1$ be large enough that $f$ has depth at most $m$. 
By Lemma~\ref{lem:one-sided-reduction}, $f$ admits a one-sided reduction $f_+\in C^{\mathrm{lc}}(\mathfrak{P}_\Gamma^+)$, of depth at most $m$, satisfying $f\circ\sigma^m=f_+\circ\pi_+$.
As noted in the discussion directly Definition~\ref{def:zeta_weighted}, via \eqref{eq:fnweight_sigma_f+}, this yields
$$
    \mathbf{Z}_f(s)=\mathbf{Z}_{f_+}(s), \qquad s\in\C,
$$
so that $\mathbf{Z}_f$ and $\mathbf{Z}_{f_+}$ are literally the same function, and it suffices to prove all three assertions for $\mathbf{Z}_{f_+}(s)=\operatorname{Tr}[\mathbf{R}(s)f_+]$ using the one-sided weight $f_+$, viewed as the bounded finite-rank multiplication operator $M_{f_+}$ on $\F_\vartheta(\mathfrak{P}_\Gamma^+)$ as in Proposition~\ref{prop:weighted_trace}.
We write $f_+$ (rather than $f$) throughout the proof accordingly, and revert to $f$ only in the final statements to match the notation of the theorem.\\

For $\mathrm{(i)}$, let us first show that $\mathbf{Z}_f(s)$ is holomorphic on the half-plane $\mathcal{H}_s \coloneqq \{s\in\C\mid \mathrm{Im}(s)>\tfrac32\}$ by proving that the defining series \eqref{eq:Zeta} converges locally uniformly there.
Using Proposition~\ref{prop:weighted_trace} and \eqref{eq:trace_unweighted_multiplicative}, it suffices to show local uniform convergence of
\begin{equation} \label{eq:serie_trace}
     \sum_{n\ge0} q^{n(\frac12+is)}\operatorname{Tr}\bigl[\mathcal{L}_{\Gamma,+}^n f_+\bigr].
\end{equation}
Fix a compact set $K\subset \mathcal{H}_s$ and choose $\varepsilon>0$ with $\mathrm{Im}(s)\ge\tfrac32+\varepsilon$ for all $s\in K$.
Then
$$
    |q^{n(\frac12+is)}|=q^{n(\frac12-\mathrm{Im}(s))}
    \le q^{-n(1+\varepsilon)},\qquad s\in K.
$$
Combined with the bound $|\mathcal P_n^{\mathrm{cl}}|\le|\mathfrak X_\Gamma|(q+1)q^{n-1}$ from \eqref{eq:trace_conv}, this gives, uniformly for $s\in K$,
$$
    \bigl|q^{n(\frac12+is)}\operatorname{Tr}[\mathcal{L}_{\Gamma,+}^nf]\bigr|
    \le |\mathfrak X_\Gamma|(q+1)q^{n-1}\,q^{-n(1+\varepsilon)}\,\|f_+\|_\infty
    \le C\,q^{-n\varepsilon},
$$
for a constant $C=|\mathfrak X_\Gamma|(q+1)q^{-1}\|f\|_\infty$ independent of $s\in K$ and $n$. \\
Since $\sum_{n\geq 0} q^{-n\varepsilon}<\infty$, the series \eqref{eq:serie_trace} converges uniformly on $K$.
As each term $s\mapsto q^{n(\frac12+is)}\operatorname{Tr}[\mathcal{L}_{\Gamma,+}^nf]$ is entire in $s$, the uniform limit $\mathbf{Z}_f(s)$ is holomorphic on $\mathcal{H}_s$.\\

Next, we prove $\mathrm{(ii)}$.  
By Theorem~\ref{thm:mero_resolvent}, applied to the unweighted transfer operator $\mathcal{L}_{\Gamma,+}$ (which is non-zero since $\mathfrak{G}_\Gamma$ is connected with $q>1$), the resolvent
$
    \mathbf{R}(s)=(\mathrm{Id}-q^{\frac12+is}\mathcal{L}_{\Gamma,+})^{-1}
$
extends, as a family of bounded operators on $\F_\vartheta(\mathfrak{P}_\Gamma^+)$, meromorphically from $\mathcal{H}_s$ to the half-plane $\mathscr{H}_{\vartheta,1}$ \eqref{eq:region}.
Moreover, every pole $s_0\in\mathscr{H}_{\vartheta,1}$ of $\mathbf{R}(s)$ has finite-rank principal part. 
Hence, by Lemma~\ref{lem:resolvent_decomp_hol_finiteranks} (applied with $\omega\equiv1$), near such a pole $s_0$ we have the finite-rank Laurent expansion
\begin{equation}\label{eq:laurent_R}
    \mathbf{R}(s)
    =\mathbf{R}_H(s)+\sum_{j=1}^{J(s_0)}\frac{B_j}{(\lambda(s)-\lambda(s_0))^j},
    \qquad B_j=\mathcal{L}_{\Gamma,+}\bigl(\mathcal{L}_{\Gamma,+}-\lambda(s_0)\bigr)^{j-1}\Pi_{s_0},
\end{equation}
with $\mathbf{R}_H(s)$ holomorphic near $s_0$ and each $B_j$ finite rank, where $\lambda(s)\coloneqq q^{-(\frac12+is)}$ is the isolated Ruelle resonance parameter attached to $s_0$.
Since $f_+$ has finite depth $m$, the multiplication operator $M_{f_+}$ has finite rank: its range lies in the finite-dimensional space $F_m(\mathfrak{P}_\Gamma^+)$ for $m$ large enough.\\ 
Hence, for any bounded operator $A$ on $\F_\vartheta(\mathfrak{P}_\Gamma^+)$,
the linear functional $A\mapsto \operatorname{Tr}[Af_+]$ is bounded, hence holomorphic.
Applying this term-by-term to \ref{prop:weighted_trace} by using Proposition~\ref{prop:weighted_trace}, gives
\begin{equation} \label{eq:zeta_laurent}
    \mathbf{Z}_f(s) 
    = \operatorname{Tr}[\mathbf{R}(s)f_+]
    = \mathbf{Z}_{f,H}(s) + \sum_{j=1}^{J(s_0)}\frac{\operatorname{Tr}[B_jf_+]}{(\lambda(s)-\lambda(s_0))^j},
    \qquad \mathbf{Z}_{f,H}(s)\coloneqq\operatorname{Tr}[\mathbf{R}_H(s)f_+],
\end{equation}
which exhibits $\mathbf{Z}_f(s)$ as meromorphic near $s_0$, with pole of order at most $J(s_0)$. \\
As $s_0\in\mathscr{H}_{\vartheta,1}$ was an arbitrary pole of $\mathbf{R}(s)$, and since $\mathbf{R}(s)$ is holomorphic (as a bounded operator family) away from its poles, we obtain a meromorphic continuation of $s\mapsto\operatorname{Tr}[\mathbf{R}(s)f_+]$ to all of $\mathscr{H}_{\vartheta,1}$.

By Proposition~\ref{prop:weighted_trace}, this continuation agrees with $\mathbf{Z}_f(s)$ on the open half-plane $\mathcal{H}_s\subset\mathscr{H}_{\vartheta,1}$. 
Since $\mathscr{H}_{\vartheta,1}$ is connected and $\mathcal{H}_s$ is a nonempty open subset, the identity theorem for meromorphic functions shows that this continuation is \emph{the} meromorphic continuation of $\mathbf{Z}_f(s)$ to $\mathscr{H}_{\vartheta,1}$. 
This proves the first assertion of $\mathrm{(ii)}$.

For the second assertion, note that by construction every pole of $\mathbf{Z}_f(s)=\operatorname{Tr}[\mathbf{R}(s)f_+]$ in $\mathscr{H}_{\vartheta,1}$ must occur at a pole $s_0$ of the %operator-valued resolvent 
$\mathbf{R}(s)$ itself, since $\mathbf{R}(s)$ is holomorphic and bounded away from such points, forcing $\operatorname{Tr}[\mathbf{R}(s)f_+]$ to be holomorphic there as well. 
By Theorem~\ref{thm:mero_resolvent} together with Lemma~\ref{lem:resolvent_decomp_hol_finiteranks}, the poles of $\mathbf{R}(s)$ occur precisely at those $s_0\in\mathscr{H}_{\vartheta,1}$ for which $\lambda(s_0)=q^{-(\frac12+is_0)}$ is an isolated Ruelle resonance
of $\mathcal{L}_{\Gamma,+}$ on $\F_\vartheta(\mathfrak{P}_\Gamma^+)$.\\

To complete the proof, it remains to show $\mathrm{(iii)}$, for which we derive an explicit formula for the Laurent coefficients near $s_0 \in \mathscr{H}_{\vartheta,1}$. \\
Fix a pole $s_0\in\mathscr{H}_{\vartheta,1}$ of $\mathbf{R}(s)$ and
set $\lambda(s_0)=q^{-(\frac12+is_0)}\neq 0$.
Let $\Pi_{s_0}$ be the Riesz projector of $\mathcal{L}_{\Gamma,+}$ at $\lambda(s_0)$ as in Lemma~\ref{lem:resolvent_decomp_hol_finiteranks}.

On $\operatorname{Ran}(\Pi_{s_0})$, we write 
$$
    \mathcal{L}_{\Gamma,+}
    =
    \lambda_0\mathrm{Id}+N_{s_0},
    \qquad
    N_{s_0}\coloneqq
    (\mathcal{L}_{\Gamma,+}-\lambda(s_0))\Pi_{s_0}.
$$
The operator $N_{s_0}$ is nilpotent, and by definition of the Jordan order $J(s_0)$, one has
$
    N_{s_0}^{J(s_0)}
    =
    (\mathcal{L}_{\Gamma,+}-\lambda(s_0))^{J(s_0)}\Pi_{s_0}
    =
    0.
$
Hence multiplying the Laurent expansion \eqref{eq:zeta_laurent} by $(\lambda(s)-\lambda(s_0))^k$, for $0 \leq k \leq J(s_0)$, gives
\begin{eqnarray*}
    \mathbf{Z}_f(s)(\lambda(s)-\lambda(s_0))^k
    &=& \mathbf{Z}_{f,H}(s)(\lambda(s)-\lambda(s_0))^k \\
    && \qquad + \lambda(s) \sum_{j=0}^{J(s_0)-1}(\lambda(s)-\lambda(s_0))^{k-j-1}\operatorname{Tr}[(\mathcal{L}_{\Gamma,+}-\lambda(s_0))^j \Pi_{s_0}f_+].
\end{eqnarray*}
Since $\mathbf{Z}_{f,H}(s)$ is holomorphic near $s_0$, the first term is holomorphic near $s_0$ and hence does not contribute to the residue.

Using the local coordinate $\lambda=\lambda(s)$, by \eqref{eq:lambda_prime} and the holomorphic inverse function theorem, $\lambda$ is a valid local holomorphic coordinate near $s_0$.
Moreover,
$
    \frac{\mathrm{d}s}{\mathrm{d}\lambda}
    = \frac{i}{\log q}\lambda^{-1}.
$
Therefore, by the change-of-variables formula for residues, and since the factor $\lambda$ cancels exactly against $\lambda^{-1}$ coming from the Jacobian, we get
\begin{eqnarray*}
    &&\operatorname{Res}_{s=s_0}
    \bigl[\mathbf{Z}_f(s)(\lambda(s)-\lambda(s_0))^k\bigr] \\
    &=& \operatorname{Res}_{\lambda=\lambda(s_0)} 
    \left[\lambda \sum_{j=0}^{J(s_0)-1}(\lambda-\lambda(s_0))^{k-j-1} \operatorname{Tr}\big[(\mathcal{L}_{\Gamma,+}-\lambda(s_0))^j \Pi_{s_0}f_+\big] \, \frac{i}{\log q} \frac{\mathrm{d}\lambda}{\lambda}\right] \\
    &=& \frac{i}{\log q}
    \operatorname{Res}_{\lambda=\lambda(s_0)}
    \left[\sum_{j=0}^{J(s_0)-1}(\lambda-\lambda(s_0))^{k-j-1}\operatorname{Tr}[(\mathcal{L}_{\Gamma,+}-\lambda(s_0))^j \Pi_{s_0}f_+]\right].
\end{eqnarray*}
The expression inside the last residue is a finite Laurent polynomial in $(\lambda-\lambda(s_0))$.
Hence, the residue at $\lambda=\lambda(s_0)$ is the coefficient $(\lambda-\lambda(s_0))^{-1}.$
Therefore, its residue picks out exactly the term for which $k-j-1=-1,$ that is
$j=k$.

If $0\le k\le J(s_0)-1$, the term $j=k$ occurs in the sum, and thus we obtain
$$
    \operatorname{Res}_{s=s_0}
    \bigl[\mathbf{Z}_f(s)(\lambda(s)-\lambda_0)^k\bigr]
    =\frac{i}{\log q}\operatorname{Tr}\bigl[(\mathcal{L}_{\Gamma,+}-\lambda(s_0))^k\Pi_{s_0}f_+\bigr].
$$

It remains to consider the endpoint $k=J(s_0)$.
In this case, there is no term with $j=k$ in the sum, since the sum only runs over $0\le k\le J(s_0)-1$.
So the residue is $0$. 
On the other hand, by nilpotency, we have $(\mathcal{L}_{\Gamma,+}-\lambda_0)^{J(s_0)}\Pi_{s_0}=0,$
and therefore
$$
    \operatorname{Tr}
    \bigl[
        (\mathcal{L}_{\Gamma,+}-\lambda_0)^{J(s_0)}\Pi_{s_0}f_+
    \bigr]=0.
$$
Thus the same formula also holds for $k=J(s_0)$.

Consequently, for every $0\le k\le J(s_0)$ we obtain, since $\operatorname{Tr}[(\mathcal{L}_{\Gamma,+}-\lambda_0)^k\Pi_{s_0}f_+]=\operatorname{Tr}[(\mathcal{L}_{\Gamma,+}-\lambda_0)^k\Pi_{s_0}f]$ by the same finite-rank pairing as in $\mathrm{(ii)}$, exactly \eqref{eq:residue_R}, completing the proof of $\mathrm{(iii)}$ and of the theorem. \qed

% -------------------------------%%--------------------------------------%
\subsection{Dynamical determinants} \label{sect:dyn_determinants}
Before introducing the dynamical determinants and their relations, we first define the weighted zeta function associated with a parameter $\beta \in \C$ and a weight function $f\in C^{\mathrm{lc}}(\mathfrak{P}_\Gamma)$.

\begin{definition}[$\beta$-parametrized weighted Ihara zeta function]
\label{def:zetabeta}
    Let $\mathfrak{G}_\Gamma$ be a finite $(q+1)$-regular graph with $q>1$ and $f\in C^{\mathrm{lc}}(\mathfrak{P}_\Gamma)$ with its associated periodic weight $\mathbf{f}_{\ell_0}$ as in \eqref{eq:weight_f}.\\
    For a parameter $\beta\in\mathbb{C}$, the \emph{$\beta$-parametrized weighted Ihara zeta function} associated with $f$ is defined by
    \begin{equation} \label{eq:Ihara}
        \zeta(s,\beta) 
        \coloneqq \prod_{[\mathbf{p}_0]\in\mathscr{P}_0} \Big(1-q^{(\frac12+is)\ell_0} e^{-\beta \mathbf{f}_{\ell_0}(\widehat{\mathbf{p}}_0)} \Big)^{-1}, \quad s \in \mathbb{C},
    \end{equation}
    where product runs over all primitive cycles $[\mathbf{p}_0]\in\mathscr{P}_0$ with $\ell_0=\ell(\mathbf{p}_0)$, and $\widehat{\mathbf{p}}_0 \in \operatorname{Fix}(\sigma^{\ell_0}) \subset \mathfrak{P}_\Gamma$ denotes the periodization of any representative $\mathbf{p}_0$ of $[\mathbf{p}_0]$.
\end{definition}

Note that for $\beta=0$, this reduces to the classical \emph{Ihara zeta function} $\zeta(s)=\zeta(s,0)$, see, e.g., \cite[Def.~2]{Terras10}.

Now taking the logarithm of 
$\zeta(s, \beta)^{-1}$, and using the  the absolutely convergent expansion $\log(1-q^{\frac12+is})=-\sum_{k=1}^{\infty}\frac{q^{k(\frac12+is)}}{k},$ $ |q^{\frac12+is}|<1$, we obtain
\begin{eqnarray}     \label{eq:log_weighted_zeta}
     \log \zeta(s,\beta)^{-1}
    =
    \sum_{[\mathbf{p}_0]\in\mathscr{P}_0}
    \log\left(
        1-q^{(\frac12+is)\ell_0}
        e^{-\beta
        \mathbf{f}_{\ell_0}(\widehat{\mathbf{p}}_0)}
    \right)
    \notag
    &=&
    -\sum_{[\mathbf{p}_0]\in\mathscr{P}_0}
      \sum_{k=1}^{\infty}
      \frac{q^{(\frac12+is)k\ell_0}}{k}
      e^{-k\beta
      \mathbf{f}_{\ell_0}(\widehat{\mathbf{p}}_0)}.
    \notag\\
      &&
\end{eqnarray}
Exponentiating
\eqref{eq:log_weighted_zeta} motivates the following definition.

\begin{definition}[Dynamical determinant of weight $f$] \label{def:dynamical_det_f}
    Let $\mathfrak{G}_\Gamma$ 
    be a finite  $(q+1)$-regular graph with $q>1$.
    For every primitive cycle
    \([\mathbf{p}_0]\in\mathscr{P}_0\), let
    \(\ell_0=\ell(\mathbf{p}_0)\), and let
    \(\widehat{\mathbf{p}}_0\in
    \operatorname{Fix}(\sigma^{\ell_0})\)
    be the periodisation of a representative of
    $[\mathbf{p}_0]$.
    Consider
    $f \in C^{\mathrm{lc}}(\mathfrak{P}_\Gamma)$ with its associated periodic-cycle weight $\mathbf{f}_{\ell_0}$ as in \eqref{eq:weight_f}.\\
    On the domain of absolute convergence, we define the associated \emph{dynamical determinant} at $(s, \beta) \in \C^2$ as
    \begin{eqnarray*}
        d_f(s,\beta)
        &\coloneqq & 
        \exp\Biggl(- \sum_{k=1}^\infty \sum_{[\mathbf{p}_0]\in\mathscr{P}_0} \frac{q^{(\frac12+is)k\ell_0}}{k} e^{-k\beta \mathbf{f}_{\ell_0}(\widehat{\mathbf{p}}_0)}\Biggr), 
    \end{eqnarray*}
    where the sum stretches over all primitive cycles $[\mathbf{p}_0]\in\mathscr{P}_0$.
\end{definition}

A useful property is to relate the dynamical determinant to the determinant on $F_m(\mathfrak{P}_\Gamma^+)$.

\begin{prop}[Dynamical determinant as a finite-dimensional determinant]
\label{prop:dyn_det_Fredholm}
    Let $f \in C^{\mathrm{lc}}(\mathfrak{P}_\Gamma)$ with associated periodic-cycle weight $\mathbf{f}_{\ell(\mathbf{p})}$ as in \eqref{eq:weight_f} and let $f_+\in C^{\mathrm{lc}}(\mathfrak{P}^+_\Gamma)$ be its one-sided reduction as in Lemma~\ref{lem:one-sided-reduction}, having depth at most $m$, for $m$ sufficiently large.
    For $\beta \in \C$, define the parameter-dependent potential
    $$\omega_{\beta}(\mathbf{p}) 
    \coloneqq
    e^{-\beta \, f_+(\mathbf{p})}, \quad \mathbf{p} \in \mathfrak{P}_\Gamma^+,$$
    and let $\mathcal{L}_{\beta,F_m} \coloneqq \mathcal{L}_{\Gamma,+,\omega_{\beta}}|_{F_m(\mathfrak{P}^+_\Gamma)}$ be the corresponding weighted Ruelle transfer operator restricted to the finite-dimensional space $F_m(\mathfrak{P}^+_\Gamma)$. 
    
    Then, for $\mathrm{Im}(s)> \tfrac{3}{2} + \tfrac{|\beta| \; \|f_+\|_\infty}{\log q}$,
    one has
    $$
    d_f(s,\beta)=
    \mathrm{det}\bigl(\mathrm{Id}-q^{\frac12+is}\mathcal{L}_{\beta, F_m}\bigr),
    $$
    where the determinant is the ordinary finite-dimensional determinant.
    
    In particular, the dynamical determinant $d_f(s,\beta)$ continues to a holomorphic function in $(s,\beta) \in \C^2$.
\end{prop}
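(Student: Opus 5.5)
The plan is to compare logarithms. For a finite matrix $A$ and $|z|$ small we have the identity $\log\det(\mathrm{Id}-zA)=-\sum_{n\ge1}\frac{z^n}{n}\operatorname{Tr}(A^n)$. I will apply it with $A=\mathcal{L}_{\beta,F_m}$ and $z=q^{\frac12+is}$, and then identify the resulting series with the exponent in Definition~\ref{def:dynamical_det_f}. First note that $\omega_\beta=e^{-\beta f_+}$ has the same depth $m$ as $f_+$, so $\omega_\beta\in F_m(\mathfrak{P}^+_\Gamma)$. By Lemma~\ref{lem:invariant_F}, $F_m(\mathfrak{P}^+_\Gamma)$ is $\mathcal{L}_{\Gamma,+,\omega_\beta}$-invariant, so $\mathcal{L}_{\beta,F_m}$ is a well-defined finite matrix, given explicitly by \eqref{eq:matrix_L_Fm}.

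The key step is the trace identity
\[
\operatorname{Tr}\bigl(\mathcal{L}_{\beta,F_m}^n\bigr)=\sum_{\substack{[\mathbf{p}_0]\in\mathscr{P}_0\\ \ell_0\mid n}}\ell_0\,(\omega_\beta)_{\ell_0}(\mathbf{p}_0)^{n/\ell_0},
\]
and it must hold for \emph{all} $n\ge1$, not only for $3\le n\le m$ as stated in Proposition~\ref{prop:trace_Lnw}. I expect this extension to $n>m$ to be the main obstacle, and I will handle it by redoing Step~2 of that proof. A diagonal entry $(\mathcal{L}^n_{\beta,F_m})_{\mathbf{p},\mathbf{p}}$ is a sum over admissible edge strings $\vec e_1,\dots,\vec e_{n+m}$ whose first and last $m$-blocks both equal $\mathbf{p}$. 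For $n\le m$ this forces $n$-periodicity directly. For $n>m$ it gives $\vec e_{n+i}=\vec e_i$ only for $1\le i\le m$, but that suffices for the conclusion. Indeed, it forces $\tau(\vec e_n)=\iota(\vec e_{n+1})=\iota(\vec e_1)$, and the non-backtracking condition at position $n+1$ gives the seam condition. So $(\vec e_1,\dots,\vec e_n)\in\mathcal{P}^{\mathrm{cl}}_n$, and conversely each closed $n$-path determines exactly one such string by periodic continuation. The weight of that string is the Birkhoff product $(\omega_\beta)_n$ evaluated on the periodisation. Steps~3 and~4 then go through verbatim, and for a simple graph the cases $n=1,2$ contribute zero on both sides.

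Next I match the weights. On the periodisation we have $(\omega_\beta)_{\ell_0}(\widehat{\mathbf{p}}_{0,+})=\exp\bigl(-\beta\,\mathbf{f}_{\ell_0,+}(\widehat{\mathbf{p}}_{0,+})\bigr)$. Combining this with \eqref{eq:fnweight_sigma_f+} and the bijection between primitive cycles on $\mathfrak{P}_\Gamma$ and on $\mathfrak{P}_\Gamma^+$ (Lemma~\ref{lem:cycles_periodic_orbits} and the argument of Lemma~\ref{lem:trace_unweighted_multiplicative}) turns this into $e^{-\beta\mathbf{f}_{\ell_0}(\widehat{\mathbf{p}}_0)}$. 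Substituting $n=k\ell_0$, the factor $\ell_0/n=1/k$ appears, and
\[
-\sum_{n\ge1}\frac{z^n}{n}\operatorname{Tr}\bigl(\mathcal{L}_{\beta,F_m}^n\bigr)=-\sum_{[\mathbf{p}_0]\in\mathscr{P}_0}\sum_{k\ge1}\frac{z^{k\ell_0}}{k}e^{-k\beta\mathbf{f}_{\ell_0}(\widehat{\mathbf{p}}_0)},
\]
which is exactly the exponent defining $d_f(s,\beta)$. Exponentiating gives the claimed equality.

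The rearrangement needs absolute convergence. I bound $|(\omega_\beta)_n|\le e^{n|\beta|\|f_+\|_\infty}$ and use $|\mathcal{P}^{\mathrm{cl}}_n|\le|\mathfrak{X}_\Gamma|(q+1)q^{n-1}$ from Lemma~\ref{lem:nb_Pm}. The double series then converges absolutely when $q\,e^{|\beta|\|f_+\|_\infty}\,q^{\frac12-\operatorname{Im}(s)}<1$, which is the stated region up to the normalisation of $\log q$. The same bound makes the matrix log-series converge there, so exchanging the sums over $n$ and $[\mathbf{p}_0]$ is justified. Finally, the matrix entries of $\mathcal{L}_{\beta,F_m}$ are entire in $\beta$, so $\det(\mathrm{Id}-q^{\frac12+is}\mathcal{L}_{\beta,F_m})$ is a polynomial in the entire function $q^{\frac12+is}$ with coefficients entire in $\beta$. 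It is therefore holomorphic on $\mathbb{C}^2$, and by the identity theorem it provides the holomorphic continuation of $d_f$.
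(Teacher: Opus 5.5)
Your proposal is correct and follows essentially the same route as the paper. Both arguments use the log-determinant/trace identity for the finite matrix, evaluate $\operatorname{Tr}(\mathcal{L}^n_{\beta,F_m})$ over periodic orbits with weights matched via \eqref{eq:fnweight_sigma_f+}, substitute $n=k\ell_0$, and conclude from holomorphy of the finite determinant in $(s,\beta)$.

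Your explicit extension of the trace identity to $n>m$ is a worthwhile addition. There a diagonal entry is a sum over several edge strings, and these are still in bijection with $\mathcal{P}^{\mathrm{cl}}_n$. The paper instead applies Proposition~\ref{prop:trace_Lnw}, which is stated only for $3\le n\le m$, to all $n\ge1$ without comment.

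Your bound $\|\omega_\beta\|_\infty\le e^{|\beta|\|f_+\|_\infty}$ is also correct as an inequality, whereas the paper writes it as an equality. It gives exactly the stated half-plane, so no renormalisation of $\log q$ is needed.
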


\begin{proof}
    Fix $m$ sufficiently large such that $f_+$ has finite memory $m$.
    By Lemma~\ref{lem:invariant_F},
    $$
        \mathcal{L}_{\Gamma,+, \omega_\beta}
        \bigl(F_m(\mathfrak{P}_\Gamma^+)\bigr)
        \subseteq
        F_{m-1}(\mathfrak{P}_\Gamma^+)
        \subseteq
        F_m(\mathfrak{P}_\Gamma^+).
    $$
    Thus $F_m(\mathfrak{P}_\Gamma^+)$ is invariant and
    $\mathcal{L}_{\beta,m} \coloneqq  \mathcal{L}_{\Gamma,+, \omega_\beta}|_{F_m(\mathfrak{P}_\Gamma^+)}$
    is an endomorphism of the finite-dimensional vector space $F_m(\mathfrak{P}_\Gamma^+)$.
    It is therefore a finite-rank operator.
    
    By iterating the weighted Ruelle transfer operator
    $\mathcal{L}_{\beta, F_m}$, gives 
    $$
    (\mathcal{L}_{\beta, F_m}^n\varphi)(\mathbf{p}_+)
    = \sum_{\sigma_+^n(\mathbf{p}'_+)=\mathbf{p}_+}
    \exp\Bigl(-\beta\sum_{j=0}^{n-1}
       f_+(\sigma_+^j(\mathbf{p}'_+)) \Bigr)\varphi(\mathbf{p}'_+), \qquad \mathbf{p}_+\in \mathfrak{P}_\Gamma^+, \varphi \in F_m(\mathfrak{P}_\Gamma^+).
    $$
    Let $\widehat{\mathbf{p}}\in\operatorname{Fix} (\sigma^n) \subset \mathfrak{P}_\Gamma$. Using the one-side reduction \eqref{eq:fnweight_sigma_f+}, each such periodic paths contributes
    $
    \exp\bigl(-\beta\sum_{j=0}^{n-1} f_+(\sigma_+^j(\pi_+(\widehat{\mathbf{p}}))
    \bigr)
    =
    e^{-\beta\mathbf{f}_{n,+}(\pi_+(\widehat{\mathbf{p}}))}
    =
    e^{-\beta\mathbf{f}_n(\widehat{\mathbf{p}})}
    $
    by definition of \eqref{eq:weight_f}.
    Therefore, the periodic-path trace formula in Proposition~\ref{prop:trace_Lnw} gives
    \begin{equation} \label{eq:trace_Ln_beta}
         \operatorname{Tr}(\mathcal{L}_{\beta,F_m}^n)
        = \sum_{\substack{[\mathbf{p}_0]\in\mathscr{P}_0\\
        \ell_0 \,\mid\, n}}
        \ell_0\,
        e^{-n/\ell_0 \, \beta\mathbf{f}_n(\widehat{\mathbf{p}})}.
    \end{equation}
    Whenever $\lVert q^{\frac12+is} \mathcal{L}_{\beta,F_m}\rVert_{\mathrm{op}, \infty} <1$, the usual
    finite-dimensional determinant identity gives
    \begin{eqnarray*}
        \log\big(\det(\operatorname{Id}-q^{\frac12+is} \mathcal{L}_{\beta,F_m})\bigr)
        &=&
        -\sum_{n=1}^{\infty}
        \frac{q^{n(\frac12+is)}}{n}
        \operatorname{Tr}(\mathcal{L}_{\beta,F_m}^n) \\
        &\stackrel{\eqref{eq:trace_Ln_beta}}{=}& 
        -\sum_{n=1}^{\infty} \frac{q^{n(\frac12+is)}}{n}
        \sum_{\substack{[\mathbf{p}_0]\in\mathscr{P}_0\\
        \ell_0 \,\mid\, n}}
        \ell_0\,
        e^{-n/\ell_0 \, \beta\mathbf{f}_n(\widehat{\mathbf{p}})} \\
        &=& -\sum_{k=1}^{\infty} \frac{q^{k\ell_0(\frac12+is)}}{k}
        \sum_{[\mathbf{p}_0]\in\mathscr{P}_0}
        e^{-k \, \beta\mathbf{f}_{\ell_0}(\widehat{\mathbf{p}})},
    \end{eqnarray*}
    where in the last equalities we used \eqref{eq:trace_Ln_beta} and set $n=k\ell_0,$ $k \geq 1.$
    By Definition~\ref{def:dynamical_det_f} of the dynamical determinant, the right-hand side is
    $\log(d_f(s,\beta))$. 
    Hence
    $
        d_f(s,\beta)
        =\det(\operatorname{Id}-q^{\frac12+is}\mathcal{L}_{\beta,F_m})
    $
    whenever the defining series converges.

    Moreover, by Proposition~\ref{prop:L_bound}, $\mathcal{L}_{\beta,F_m}$ acts as a bounded endomorphism of $F_m(\mathfrak{P}_\Gamma^+)$ with operator norm satisfying $\|\mathcal{L}^n_{\beta,F_m}\|_{\mathrm{op},\infty} \leq q^n \|\omega_{\beta, m}\|^n$.
    Therefore 
    $$\lVert q^{\frac12+is} \mathcal{L}_{\beta,F_m}\rVert_{\mathrm{op}, \infty} <|q^{n(\frac12+is)}| q^n \|\omega_{\beta,m}\|_\infty^n = (|q^{\frac12+is}| q \|\omega_{\beta,m}\|_\infty)^n.$$
    Consequently, the defining series converges in operator norm provided $|q^{\frac12+is}| <(q \|\omega_{\beta,m}\|_\infty)^{-1}$.
    Since $|q^{\frac12+is}| = q^{\frac12-\mathrm{Im}(s)}$
    and $\log(\|\omega_{\beta,m}\|_\infty)= |\beta| \|f_+\|_\infty$, this equivalent to 
    $$\mathrm{Im}(s) > \frac{3}{2} +\frac{|\beta| \|f_+\|_\infty}{\log q}.$$

    As for the holomorphic continuation, since the space $F^m(\mathfrak{P}_\Gamma^+)$ is finite dimensional, the entries of $\mathcal{L}_{\beta,F_m}$ are finite sums of functions of the form $e^{-\beta c},$ $c\in\C.$
    They are entire functions of $\beta$. 
    Since $s\longmapsto q^{\frac12+is}$ is entire, it follows that
    $$
    (s,\beta)\longmapsto \det\left(\operatorname{Id} -q^{\frac12+is}\mathcal{L}_{\beta,F_m}\right)
    $$
    is holomorphic on $\C^2$.
    By uniqueness of analytic continuation, this gives the claimed holomorphic continuation of $d_f$.
\end{proof}

Next, let us prove the connection between the weighted dynamical Ihara zeta function $ \mathbf{Z}_f(s)$ in \eqref{eq:Zeta} and the dynamical determinant $d_f$.

\begin{prop}[Weighted dynamical zeta function]
\label{prop:dyn_zeta_weight_zeta}
    Given a weight function $f \in C^{\mathrm{lc}}(\mathfrak{P}_\Gamma)$, the weighted dynamical Ihara zeta function $ \mathbf{Z}_f(s)$ at $s\in \C$ coincides with the logarithmic derivative of the dynamical determinant evaluated at $\beta=0$:
    \begin{equation*}
        \mathbf{Z}_f(s) = \frac{\partial_\beta d_f(s, 0)}{d_f(s, 0)}.
    \end{equation*}
\end{prop}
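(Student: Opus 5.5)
The identity will first be established on the region where the series defining $d_f$ converges absolutely, by differentiating $\log d_f$ term by term in $\beta$. It will then be extended to all $s$ by analytic continuation, using Proposition~\ref{prop:dyn_det_Fredholm} and Theorem~\ref{thm:mero_dynfct}.

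\emph{Step 1 (convergence region).} Fix $s$ with $\mathrm{Im}(s)>\tfrac32$ and choose $\delta>0$ so small that $\mathrm{Im}(s)>\tfrac32+\delta\|f\|_\infty/\log q$; note $\|\mathbf{f}_{\ell_0}\|\le \ell_0\|f\|_\infty$. For $|\beta|<\delta$ each term of the double series in Definition~\ref{def:dynamical_det_f} satisfies
$$\Bigl|\tfrac{q^{(\frac12+is)k\ell_0}}{k}e^{-k\beta\mathbf{f}_{\ell_0}(\widehat{\mathbf{p}}_0)}\Bigr|\le \bigl(q^{\frac12-\mathrm{Im}(s)}e^{\delta\|f\|_\infty}\bigr)^{k\ell_0}.$$
Regrouping by $n=k\ell_0$, the number of pairs $([\mathbf{p}_0],k)$ with $k\ell_0=n$ is at most $|\mathcal{P}^{\mathrm{cl}}_n|\le|\mathfrak{X}_\Gamma|(q+1)q^{n-1}$ by Lemma~\ref{lem:nb_Pm}. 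The same bound also holds for the $\beta$-derivatives of the terms, up to an extra factor $k\ell_0\|f\|_\infty$, which is polynomial in $n$. Hence both the series for $\log d_f(s,\beta)$ and its termwise $\beta$-derivative converge absolutely and locally uniformly in $(s,\beta)$ on this neighbourhood. So $d_f$ is holomorphic and nonvanishing there, since it is the exponential of a convergent series, and $d_f(s,0)=\zeta(s)^{-1}\neq0$.

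\emph{Step 2 (termwise differentiation).} On this region $\partial_\beta d_f/d_f=\partial_\beta\log d_f$. Differentiating termwise gives
$$\partial_\beta\log d_f(s,\beta)=\sum_{[\mathbf{p}_0]\in\mathscr{P}_0}\sum_{k\ge1}q^{(\frac12+is)k\ell_0}\,\mathbf{f}_{\ell_0}(\widehat{\mathbf{p}}_0)\,e^{-k\beta\mathbf{f}_{\ell_0}(\widehat{\mathbf{p}}_0)}.$$
Here the factor $1/k$ cancels against the $k$ produced by the derivative, and the overall minus sign cancels against the minus sign from differentiating $e^{-k\beta\mathbf{f}_{\ell_0}}$. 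Setting $\beta=0$ and summing the geometric series in $k$ yields exactly \eqref{eq:Zeta}, equivalently the expanded form following Definition~\ref{def:zeta_weighted}. This proves the claim for $\mathrm{Im}(s)>\tfrac32$, which is consistent with the absolute convergence in Theorem~\ref{thm:mero_dynfct}~(i).

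\emph{Step 3 (continuation).} By Proposition~\ref{prop:dyn_det_Fredholm}, $d_f$ extends holomorphically to $\C^2$. Hence $s\mapsto\partial_\beta d_f(s,0)$ and $s\mapsto d_f(s,0)=\det(\mathrm{Id}-q^{\frac12+is}\mathcal{L}_{0,F_m})$ are entire. The latter is not identically zero, since it is nonzero for $\mathrm{Im}(s)>\tfrac32$. Therefore the quotient is meromorphic on $\C$ and agrees with $\mathbf{Z}_f$ on the half-plane $\mathrm{Im}(s)>\tfrac32$. By the identity theorem it coincides with the meromorphic continuation of $\mathbf{Z}_f$ from Theorem~\ref{thm:mero_dynfct}~(ii) on $\mathscr{H}_{\vartheta,1}$, and it provides a continuation to all of $\C$.

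The main obstacle is justifying the interchange of $\partial_\beta$ with the double sum in Step~2. It is handled by the locally uniform majorant of Step~1, via the Weierstrass theorem on uniform limits of holomorphic functions. The remaining steps are bookkeeping of signs and a geometric series.
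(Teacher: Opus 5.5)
Your proof is correct and follows the same route as the paper: you differentiate the defining series of $\log d_f$ termwise in $\beta$, set $\beta=0$, and sum the geometric series in $k$ on $\mathrm{Im}(s)>\tfrac32$. Your locally uniform majorant, built from the count $|\mathcal{P}_n^{\mathrm{cl}}|\le|\mathfrak{X}_\Gamma|(q+1)q^{n-1}$, justifies the interchange that the paper leaves implicit, and your Step~3 makes explicit the continuation beyond the convergence region (via Proposition~\ref{prop:dyn_det_Fredholm} and the identity theorem), which the paper only alludes to.
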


\begin{proof}
    By computing the logarithmic derivative of the dynamical determinant at $\beta=0$, we get
    \begin{align*}
        \frac{\partial_\beta d_f(s, \beta)}{d_f(s, \beta)}\Bigg|_{\beta=0}
        &= \partial_\beta \log (d_f(s, \beta))\big|_{\beta=0} \\
        &\stackrel{\eqref{eq:log_weighted_zeta}}{=} \partial_\beta \Big(-
     \sum_{[\mathbf{p}_0]\in\mathscr{P}_0} \sum_{k=1}^{\infty}
      \frac{q^{(\frac12+is)k\ell_0}}{k}
      e^{-k\beta
      \mathbf{f}_{\ell_0}(\widehat{\mathbf{p}}_0)}\Big)\Big|_{\beta=0} \\
        &=  \sum_{[\mathbf{p}_0]\in\mathscr{P}_0} \sum_{k=1}^\infty q^{(\frac12+is)k\ell_0} \mathbf{f}_{\ell_0}(\widehat{\mathbf{p}}_0)\\
        &= \sum_{[\mathbf{p}_0]\in\mathscr{P}_0} \frac{q^{(\frac12+is)\ell_0}}{1-q^{(\frac12+is)\ell_0}}  \mathbf{f}_{\ell_0}(\widehat{\mathbf{p}}_0)\\
        &=\mathbf{Z}_f(s),
    \end{align*}
    by definition \eqref{eq:Zeta} of $\mathbf{Z}_f$. 
    Finally, by Proposition~\ref{prop:weighted_trace} and Proposition~\ref{prop:dyn_det_Fredholm}, the function $\mathbf{Z}_f(s)$ coincides with the logarithmic derivative of $d_f$ evaluated at $\beta=0$ in the same domain of convergence $\mathrm{Im}(s)> \frac{3}{2}$.
\end{proof}

% -------------------------------%%--------------------------------------%
\subsection{Residue formula for Patterson-Sullivan distributions (Theorem~\ref{thm:residue_PS})} \label{sect:residue}

Adopting the notation of \cite[§3]{BHW23} (see \cite[Rem.~1.1]{AFH23} for various notions of Laplacian on graphs), we consider the \emph{(vertex) Laplace operator} $\Laplace$ on $\mathrm{Maps}(\mathfrak{X},\C)$ acting by averaging over neighbours:
$$
    (\Laplace \varphi)(x) 
    \coloneqq \frac{1}{q + 1} \sum_{\substack{y\in\mathfrak{X}\\d(x,y)=1}} \varphi(y), \qquad \forall x \in \mathfrak{X},\varphi \in \mathrm{Maps}(\mathfrak{X}, \mathbb{C}),
$$
where $d(\cdot, \cdot)$ denotes the graph distance on $\mathfrak{X}$.
For $s \in \C\backslash\{0\}$, consider the potential
$
    \chi(s) \coloneqq \frac{\sqrt{q}}{q + 1}\big(q^{is} + q^{-is}\big) \in \C
$
and define its associated (non-zero) \emph{eigenspace}
\begin{equation} \label{eq:Laplace_eigenspace}
    \mathcal{E}_{\chi(s)}(\Laplace; \mathrm{Maps}(\mathfrak{X}, \mathbb{C})) \coloneqq \left\{ \varphi \colon \mathfrak{X} \rightarrow \mathbb{C} \mid \forall \, x \in \mathfrak{X}\colon \Laplace \varphi(x) = \chi(s) \varphi(x) \right\}.
\end{equation}
Pulling back along $\pi_{\mathfrak{X}} \colon \mathfrak{X} \rightarrow \mathfrak{X}_\Gamma$ induces an isomorphism

$$
    \pi^{*}_{\mathfrak{X}} \colon 
    \mathcal{E}_{\chi(s)}(\Laplace_{\Gamma}; \mathrm{Maps}(\mathfrak{X}_\Gamma, \C)) 
    \xrightarrow{\ \sim\ } 
    \mathcal{E}_{\chi(s)}(\Laplace; \mathrm{Maps}(\mathfrak{X}, \C))^{\Gamma}, 
    \qquad \pi^{*}_{\mathfrak{X}}(\varphi)(x) = \varphi(\pi_{\mathfrak{X}}(x)),
$$
identifying the Laplacian $\Laplace_\Gamma$ on the finite quotient $\mathfrak{X}_\Gamma$ with the space of $\Gamma$-invariants in the corresponding eigenspace on the universal cover $\mathfrak{X}$, see \cite[Rem.~3.4]{AFH23}.

Applying the quantum--classical correspondence (\cite[Thms.~8.3 and 11.5]{BHW23}, for non-exceptional spectral parameters, and \cite[Thm.~4.4]{AFH23}, for all spectral parameters, via edge Laplacians),
we recast the dynamical \emph{Patterson-Sullivan distributions} of \cite[Thm.~4]{ArendsPalmirotta26} as follows.

\begin{definition}[Patterson\textendash Sullivan distributions, {\cite[Thm.~4]{ArendsPalmirotta26}}]
\mbox{}\\
Let $\phi \in \mathcal{E}_{\chi(s)}(\Laplace;\mathrm{Maps}(\mathfrak{X}, \C))^\Gamma$ be a $\Gamma$-invariant Laplace eigenfunction for a non-exceptional spectral parameter $s \in \C$. The \emph{dynamical (diagonal) Patterson\textendash Sullivan distribution} $\mathrm{PS}_{\phi,\phi} \in \mathcal{D}'(\mathfrak{P}_\Gamma)$ associated to $\phi$ is the distributional tensor product
\begin{equation} \label{eq:PS}
    \mathrm{PS}_{\phi,\phi} = u_{+,\phi} \otimes u_{-,\phi} \in \mathcal{D}'(\mathfrak{P}_\Gamma)
\end{equation}
of the \emph{resonant state} $u_{+,\phi} \in \mathcal{D}'(\mathfrak{P}^+_\Gamma)$ and the \emph{co-resonant state} $u_{-,\phi} \in \mathcal{D}'(\mathfrak{P}^-_\Gamma)$, which are the (non-zero) elements of $\mathcal{E}_s(\mathcal{L}'_{\Gamma,\pm}; \mathcal{D}'(\mathfrak{P}^\pm_\Gamma))$ associated to $\phi$ via the quantum--classical correspondence.
\end{definition}

% -------------------------------%%--------------------------------------%
\subsubsection{Invariant Ruelle distributions} \label{sect:inv_Ruelle_dist}
Consider a finite-rank operator 
$$A \colon C^{\mathrm{l c}}(\mathfrak{P}^+_\Gamma ) \longrightarrow \mathcal{D}'(\mathfrak{P}^-_\Gamma)
$$
with its corresponding kernel $k_A \in \mathcal{D}'(\mathfrak{P}^+_\Gamma \times \mathfrak{P}^-_\Gamma)$ under the identification of \cite[Lem.~4.1]{ArendsPalmirotta26}.

Since $\mathfrak{P}^\pm_\Gamma$ is compact, the algebraic tensor product $C^{\mathrm{lc}}(\mathfrak{P}_\Gamma^+) \otimes C^{\mathrm{lc}}(\mathfrak{P}_\Gamma^-)$ can be canonically identified with $C^{\mathrm{lc}}(\mathfrak{P}_\Gamma^+ \times \mathfrak{P}_\Gamma^-)$, see \cite[§1.3]{AFH23Pairing}.
This identification is what allows two distributions $u_\pm \in \mathcal{D}'(\mathfrak{P}^\pm_\Gamma)$ to be paired with a single test function on the product space via their tensor product
$u_+ \otimes u_- \in \mathcal{D}'(\mathfrak{P}^+_\Gamma \times \mathfrak{P}^-_\Gamma)$, defined as in \cite[Eq.~(3)]{AFH23Pairing} by
\begin{equation} \label{eq:tensor_product_def}
    (u_+ \otimes u_-)(\varphi) \coloneqq \langle u_+, \mathbf{p}_+ \mapsto \bigl\langle u_-, \varphi(\mathbf{p}_+, \cdot)\rangle \bigr\rangle, 
    \quad \mathbf{p}_+\in \mathfrak{P}_\Gamma^+, \varphi\in C^{\mathrm{lc}}(\mathfrak{P}_\Gamma^+\times\mathfrak{P}_\Gamma^-).
\end{equation}
As noted in \cite[§1.3.]{AFH23Pairing}, $\mathfrak{P}_\Gamma$ is both open and closed in $\mathfrak{P}_\Gamma^+ \times \mathfrak{P}_\Gamma^-$ (being cut out by district conditions on the finitely many initial edges), so its indicator function $\mathbf{1}_{\mathfrak{P}_\Gamma}$ lies in $C^{\mathrm{lc}}(\mathfrak{P}_\Gamma^+ \times \mathfrak{P}_\Gamma^-)$.
So the tensor product $u_+ \otimes u_-$ can be evaluated on indicator functions of open and closed subsets of $\mathfrak{P}_\Gamma^+ \times \mathfrak{P}_\Gamma^-$.

More generally, extension by zero identifies $C^{\mathrm{lc}}(\mathfrak{P}_\Gamma)$ with the subspace of $C^{\mathrm{lc}}(\mathfrak{P}_\Gamma^+ \times \mathfrak{P}_\Gamma^-)$ consisting of functions supported on $\mathfrak{P}_\Gamma$. Under this identification $u_+ \otimes u_-$ may equally be paired against test functions on $\mathfrak{P}_\Gamma$, i.e., regarded as an element of $\mathcal{D}'(\mathfrak{P}_\Gamma)$.

With this convention in hand, the trace of $A$ \cite[Def.~4.2]{ArendsPalmirotta26}
is defined as
  \begin{equation} \label{eq:trace_finiterank_op}
    \mathrm{Tr}(A) \coloneqq \langle k_A , \mathbbm{1}_{\mathfrak{P}_\Gamma} \rangle,
  \end{equation}
the pairing of $k_A$ against the constant function $\mathbbm{1}_{\mathfrak{P}_\Gamma}$ on $\mathfrak{P}_\Gamma$, viewed as above as an element of $C^{\mathrm{lc}}(\mathfrak{P}_\Gamma^+ \times \mathfrak{P}_\Gamma^-)$.
More generally, for $f\in C^{\mathrm{lc}}(\mathfrak{P}_\Gamma)$, we write $f\cdot A$ for the operator with kernel $f\cdot k_A$, in the notation of
\cite[Lem.~4.1]{ArendsPalmirotta26}.

By \cite[Rem.~4.1]{ArendsPalmirotta26}, suppose $k_A$ admits a finite decomposition of the form 
$$k_A= \sum_{i,j} a_{ij} u_i \otimes v_j, 
\qquad a_{ij} \in \C, 
u_i \in \mathcal{D}'(\mathfrak{P}^+_\Gamma), 
v_j \in \mathcal{D}'(\mathfrak{P}^-_\Gamma )$$ 
with $u_i,v_j$ normalized so that $\langle u_i \otimes v_j, \mathbbm{1}_{\mathfrak{P}_\Gamma} \rangle = \delta _{ij}$.
Then
$
    \mathrm{Tr}(A) = \sum_{i,j} a_{ij}\delta _{ij} = \sum_{i} a_{ii}.
$

\begin{definition}[Invariant Ruelle distributions, {\cite[Def.~4.3]{ArendsPalmirotta26}}] \label{def:invariant_Ruelle_dist}
    Let $s\in \C$ be a non-exceptional resonance (i.e., $\chi(s) \neq \pm 1$) of multiplicity $\mathfrak{m}\in \N_0$, such that $q^{\frac{1}{2} + is} \notin \{\pm 1, \pm q\}$ and assume that no Jordan block $J(s)$ occurs at $s$. 

    The \emph{invariant Ruelle distribution} is defined by
    \begin{equation} \label{eq:invariantRuelledist}
        \mathcal{T}_s \colon
        \begin{cases}
            C^{\mathrm{l c}}(\mathfrak{P}_\Gamma) &\rightarrow \mathbb{C} \\
            \qquad f &\mapsto \mathrm{Tr}(f \Pi_s),
        \end{cases}
         \end{equation}
        where $\Pi_s= \sum_{\ell ,n=1}^{\mathfrak{m}} u_\ell \otimes v_{n}$ denotes the projection onto the eigenspace
        $$
        \mathcal{E}_s(\mathcal{L}'_{\Gamma , +}; \mathcal{D}'(\mathfrak{P}^+_\Gamma)) \otimes \mathcal{E}_{-{s}}(\mathcal{L}'_{\Gamma,-};\mathcal{D}'(\mathfrak{P}^-_\Gamma)) \subseteq \mathcal{D}'(\mathfrak{P}_\Gamma^+ \times \mathfrak{P}^-_\Gamma).
        $$
\end{definition}
Note that $-s$ in the second eigenspace is precisely what gives the same one-step eigenvalue on the past side. The assumption that no Jordan block occurs ensures that we are dealing with genuine eigenvectors.

%----
\begin{lem} \label{lem:Ruelledist_sigma_invariance}
    The distribution $\mathcal{T}_s\in \mathcal{D}'(\mathfrak{P}_\Gamma)$ is
    $\sigma$-invariant, i.e.,
    $$
       \mathcal{T}_s(f\circ \sigma)=\mathcal{T}_s(f)
       \qquad
       \forall\, f\in C^{\mathrm{lc}}(\mathfrak{P}_\Gamma).
    $$
\end{lem}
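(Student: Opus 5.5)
We plan to prove the $\sigma$-invariance by showing that the two-sided shift $\sigma$ acts on each elementary tensor $u_+\otimes u_-$ in the range of $\Pi_s$ by the product of the two one-step eigenvalues, and that this product is $1$. The whole argument reduces to a computation on such tensors, because $\mathcal{T}_s(f)=\mathrm{Tr}(f\Pi_s)=\langle k_{\Pi_s}, f\cdot\mathbbm{1}_{\mathfrak{P}_\Gamma}\rangle$ with $k_{\Pi_s}=\sum_{\ell,n}u_\ell\otimes v_n$. By linearity, it then suffices to show that, for $u_+\in\mathcal{E}_s(\mathcal{L}'_{\Gamma,+};\mathcal{D}'(\mathfrak{P}^+_\Gamma))$ and $u_-\in\mathcal{E}_{-s}(\mathcal{L}'_{\Gamma,-};\mathcal{D}'(\mathfrak{P}^-_\Gamma))$,
\[
(u_+\otimes u_-)\bigl((f\circ\sigma)\mathbbm{1}_{\mathfrak{P}_\Gamma}\bigr)=(u_+\otimes u_-)\bigl(f\,\mathbbm{1}_{\mathfrak{P}_\Gamma}\bigr)
\qquad\text{for all } f\in C^{\mathrm{lc}}(\mathfrak{P}_\Gamma).
\]

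First we reduce to product test functions. Since $C^{\mathrm{lc}}(\mathfrak{P}^+_\Gamma)\otimes C^{\mathrm{lc}}(\mathfrak{P}^-_\Gamma)\cong C^{\mathrm{lc}}(\mathfrak{P}^+_\Gamma\times\mathfrak{P}^-_\Gamma)$, and since every locally constant function on $\mathfrak{P}_\Gamma$ is a finite sum of indicators of cylinder sets, it is enough to take $f$ to be the indicator of a finite bi-infinite word. Such a word has the form $\vec b_k,\dots,\vec b_1\,|\,\vec e_1,\dots,\vec e_j$, where the bar marks the basepoint. By Definition~\ref{def:space_biinfinite_chains}, $f\circ\sigma$ is then the indicator of the same word with the basepoint moved one edge to the left: the edge $\vec b_1$ moves from the past side to the future side. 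We may also enlarge the word, so that $j\ge 1$ and $k\ge 1$ both hold.

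Next we compute the two pairings. Using \eqref{eq:tensor_product_def}, write $f\mathbbm{1}_{\mathfrak{P}_\Gamma}=\varphi_+\otimes\varphi_-$ and $(f\circ\sigma)\mathbbm{1}_{\mathfrak{P}_\Gamma}=\psi_+\otimes\psi_-$ as products of cylinder indicators; the compatibility condition at the basepoint is automatic, because the word is admissible. The two tensors differ only at the edge $\vec b_1$:
\begin{itemize}
\item On the future side, the dual relation $\langle u_+,\mathcal{L}_{\Gamma,+}\varphi\rangle=\lambda(s)\langle u_+,\varphi\rangle$ with $\varphi=\psi_+$ gives $\mathcal{L}_{\Gamma,+}\psi_+=\varphi_+$. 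The only preimage of a path starting with $\vec e_1$ that begins with $\vec b_1$ contributes exactly once. Hence $\langle u_+,\varphi_+\rangle=\lambda(s)\langle u_+,\psi_+\rangle$.
\item On the past side, $\mathcal{L}_{\Gamma,-}$ applied to the cylinder ending in $\vec b_k,\dots,\vec b_1$ yields the cylinder ending in $\vec b_k,\dots,\vec b_2$. Hence $\langle u_-,\psi_-\rangle=\lambda(-s)\langle u_-,\varphi_-\rangle$.
\end{itemize}
Multiplying these, $(u_+\otimes u_-)(\psi)=\lambda(s)^{-1}\lambda(-s)\,(u_+\otimes u_-)(\varphi)$.

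The main obstacle is that the normalisation must make the total factor equal to $1$. With $\lambda(s)=q^{-(\frac12+is)}$, the product $\lambda(s)^{-1}\lambda(-s)$ equals $q^{2is}$, not $1$, so we must check the paper's convention for the co-resonant eigenspace. The remark after Definition~\ref{def:invariant_Ruelle_dist} says that $-s$ is chosen so that the past side has ``the same one-step eigenvalue''. I would therefore read $\mathcal{E}_{-s}(\mathcal{L}'_{\Gamma,-})$ as having eigenvalue $\lambda(s)$ under the convention of \cite{ArendsPalmirotta26}, where the eigenvalue is $q^{-(\frac12-i\cdot(-s))}$, or equivalently as carrying a reflected parametrisation. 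Under that reading the factor is $\lambda(s)^{-1}\lambda(s)=1$. To pin this down I would check the conventions of \cite[Def.~4.3]{ArendsPalmirotta26} directly. If the factor does not come out as $1$, I would instead derive $\sigma$-invariance from the intertwining relation $\mathcal{L}_{\Gamma,+}M_{f\circ\sigma_+}=M_f\mathcal{L}_{\Gamma,+}$ of Lemma~\ref{lem:L_covariance}, together with cyclicity of the trace of finite-rank operators. That route gives $\mathrm{Tr}(f\Pi_s)=\mathrm{Tr}(\Pi_s\mathcal{L}f\mathcal{L}^{-1}\Pi_s)$ on the range of $\Pi_s$, where $\mathcal{L}$ is invertible, and the needed invariance would follow.
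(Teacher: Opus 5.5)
Your argument is the paper's proof. Both reduce to elementary tensors $u\otimes v$ and to cylinder indicators $\mathbbm{1}_{\mathbf{p}_{m,m'}}=\mathbbm{1}_{\mathbf{p}_{m',+}}\otimes\mathbbm{1}_{\mathbf{p}_{m,-}}$, note that $\sigma$ moves $\vec b_1$ from the past word to the front of the future word, and use $\mathcal{L}_{\Gamma,+}\mathbbm{1}_{[\vec b_1,\vec e_1,\ldots]_+}=\mathbbm{1}_{[\vec e_1,\ldots]_+}$ and $\mathcal{L}_{\Gamma,-}\mathbbm{1}_{[\vec b_m,\ldots,\vec b_1]_-}=\mathbbm{1}_{[\vec b_m,\ldots,\vec b_2]_-}$ to produce a factor $\lambda(s_0)^{-1}$ on the future side and a factor equal to the past eigenvalue on the past side.

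The convention you settle on is exactly the one the paper fixes at the start of its proof, namely $\mathcal{L}'_{\Gamma,-}v=\lambda(s_0)v$ for $v\in\mathcal{E}_{-s_0}$, so the two factors cancel. Your fallback via Lemma~\ref{lem:L_covariance} is unnecessary, and it could not have rescued the claim anyway. A factor $c\neq1$ would give $\mathcal{T}_s(f\circ\sigma)=c\,\mathcal{T}_s(f)$, and testing this on $\mathbbm{1}_{\mathfrak{P}_\Gamma}=\mathbbm{1}_{\mathfrak{P}_\Gamma}\circ\sigma$ contradicts the normalisation $\langle u_\ell\otimes v_\ell,\mathbbm{1}_{\mathfrak{P}_\Gamma}\rangle=1$.
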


\begin{proof} 
    Let $\lambda(s_0)=q^{-(\frac12+is_0)}\neq 0$ denote the transfer-operator eigenvalue corresponding to the resonance parameter $s_0\in \mathscr{H}_{\vartheta,1}$. 
    With the conventions of Definition~\ref{def:invariant_Ruelle_dist}, if
    $
    u\in \mathcal{E}_{s_0}(\mathcal{L}'_{\Gamma,+};
    \mathcal{D}'(\mathfrak{P}^+_\Gamma))$ 
    and
    $v\in \mathcal{E}_{-s_0}(\mathcal{L}'_{\Gamma,-};
    \mathcal{D}'(\mathfrak{P}^-_\Gamma)),
    $
    then
    \begin{equation}\label{eq:eigenrelations}
       \mathcal{L}'_{\Gamma,+}u=\lambda(s_0) u,
       \qquad
       \mathcal{L}'_{\Gamma,-}v=\lambda(s_0) v .
    \end{equation}
    It is enough to prove the claim for elementary tensors $u\otimes v$, because
    the kernel of $\Pi_s$ is a finite linear combination of such tensors.
    
    Write $\mathbf{p} = (\mathbf{p}_+,\mathbf{p}_-)=((\vec{e}_1,\vec{e}_2,\ldots),(\ldots,\vec{b}_2,\vec{b}_1))\in \mathfrak{P}_\Gamma$ and consider its corresponding admissible and non-backtracking bi-finite chain
    $$
    \mathbf{p}_{m,m'}=(\mathbf{p}_{m',+},\mathbf{p}_{m,-}) =((\vec{e}_1,\ldots,\vec{e}_{m'}),(\vec{b}_m,\ldots,\vec{b}_1)) \in \mathcal{P}_{m+m'}
    \subset \mathfrak{P}_\Gamma
    $$
    with prescribed past segment $\mathbf{p}_{m,-}\in \mathcal{P}_{m}$ and prescribed future segment $\mathbf{p}_{m',+} \in \mathcal{P}_{m'}$ for large enough $m,m'\geq 1.$
    The characteristic function $\mathbbm{1}_{\mathbf{p}_{m,m'}}$, viewed
    as a function on $\mathcal{P}_{m'} \times \mathcal{P}_{m} \subset \mathfrak{P}^+_\Gamma\times \mathfrak{P}^-_\Gamma$ by extension
    by zero, factors as
    $$
    \mathbbm{1}_{\mathbf{p}_{m,m'}}
    = \mathbbm{1}_{\mathbf{p}_{m',+}} \otimes \mathbbm{1}_{\mathbf{p}_{m,-}}
    \coloneqq \mathbbm{1}_{[\vec{e}_1,\ldots,\vec{e}_{m'}]_+} \otimes \mathbbm{1}_{[\vec{b}_m,\ldots,\vec{b}_1]_-}.
    $$
    Hence
    $$
    \langle u\otimes v,\mathbbm{1}_{\mathbf{p}_{m,m'}} \rangle
    =
    \langle u,\mathbbm{1}_{\mathbf{p}_{m',+}}\rangle
    \langle v,\mathbbm{1}_{\mathbf{p}_{m,-}}\rangle .
    $$
    
    By Definition~\ref{def:space_biinfinite_chains} of the two-sided shift and by Remark~\ref{rem:biinfinite_chains}~(iii), $\sigma^{-1}$ removes the first past edge and appends it to the future, one has
    $
    \sigma^{-1}(\mathbf{p}_{m,m'})
    =  ((\vec{b}_1, \vec{e}_1,\ldots,\vec{e}_{m'}),(\vec{b}_m,\ldots,\vec{b}_2))
    $
    Thus
    $
    \mathbbm{1}_{\mathbf{p}_{m,m'}} \circ \sigma = \mathbbm{1}_{\sigma^{-1}(\mathbf{p}_{m,m'})}.
    $
    Consequently,
    $$
    \langle u \otimes v,\mathbbm{1}_{\mathbf{p}_{m,m'}} \circ \sigma\rangle
    = \langle u\otimes v,\mathbbm{1}_{\sigma^{-1}(\mathbf{p}_{m,m'})} \rangle 
    = \langle u,\mathbbm{1}_{[\vec{b}_1,\vec{e}_1,\ldots,\vec{e}_{m'}]_+}\rangle
    \langle v, \mathbbm{1}_{[\vec{b}_m,\ldots,\vec{b}_2]_-}\rangle.
    $$
    
    Using the transfer-operator eigenrelations \eqref{eq:eigenrelations}, we get
    $$
    \mathcal{L}_{\Gamma,+}
    \mathbbm{1}_{[\vec{b}_1,\vec{e}_1,\ldots,\vec{e}_{m'}]_+}
    =
    \mathbbm{1}_{[\vec{e}_1,\ldots,\vec{e}_{m'}]_+} = \mathbbm{1}_{\mathbf{p}_{m',+}}.
    $$
    Therefore
    $$
    \langle u, \mathbbm{1}_{\mathbf{p}_{m',+}} \rangle
    = \langle u, \mathcal{L}_{\Gamma,+}
    \mathbbm{1}_{[\vec{b}_1,\vec{e}_1,\ldots,\vec{e}_{m'}]_+} \rangle
    = \langle \mathcal{L}'_{\Gamma,+}u, \mathbbm{1}_{[\vec{b}_1,\vec{e}_1,\ldots,\vec{e}_{m'}]_+} \rangle
    = \lambda(s_0) \langle u, \mathbbm{1}_{[\vec{b}_1,\vec{e}_1,\ldots,\vec{e}_{m'}]_+} \rangle .
    $$
    Hence
    \begin{equation}\label{eq:future-cylinder}
    \langle u, \mathbbm{1}_{[\vec{b}_1,\vec{e}_1,\ldots,\vec{e}_{m'}]_+} \rangle
    = \lambda(s_0)^{-1} \langle u, \mathbbm{1}_{\mathbf{p}_{m',+}} \rangle.
    \end{equation}
    Similarly, on the past side,
    $ \mathcal{L}_{\Gamma,-} \mathbbm{1}_{\mathbf{p}_{m,-}} = \mathbbm{1}_{[\vec{b}_m,\ldots,\vec{b}_2]_-}.
    $
    Therefore
    $$
    \langle v, \mathbbm{1}_{[\vec{b}_m,\ldots,\vec{b}_2]_-} \rangle
    = \langle v, \mathcal{L}_{\Gamma,-} \mathbbm{1}_{\mathbf{p}_{m,-}} \rangle
    = \langle \mathcal{L}'_{\Gamma,-}v, \mathbbm{1}_{\mathbf{p}_{m,-}} \rangle
    = \lambda(s_0) \langle v, \mathbbm{1}_{\mathbf{p}_{m,-}} \rangle .
    $$
    Combining this with \eqref{eq:future-cylinder}, the factors $\lambda(s_0)^{-1}$ and $\lambda(s_0)$ cancel:
    $$
    \begin{aligned}
    \langle u\otimes v,\mathbbm{1}_{\mathbf{p}_{m,m'}} \circ\sigma\rangle
    &=
    \lambda(s_0)^{-1} \langle u,\mathbbm{1}_{\mathbf{p}_{m',+}} \rangle
    \cdot
    \lambda(s_0) \langle v,\mathbbm{1}_{\mathbf{p}_{m,-}} \rangle 
    &=
    \langle u\otimes v,\mathbbm{1}_{\mathbf{p}_{m,m'}} \rangle .
    \end{aligned}
    $$
    By Remark~\ref{rem:basis_delta_p} and \eqref{eq:Clc_Fm}, since $C^{\mathrm{lc}}(\mathfrak{P}_\Gamma)$ is spanned by characteristic
    functions of such paths, it follows that
    $$
    \langle u\otimes v,f\circ\sigma \rangle
    = \langle u\otimes v,f\rangle
    \qquad \forall f\in C^{\mathrm{lc}}(\mathfrak{P}_\Gamma).
    $$
    
    Now denote the kernel of the finite-rank projection $\Pi_{s_0}$ as
    $$
    k_{\Pi_{s_0}}
    =
    \sum_{\ell,n=1}^{\mathfrak{m}} u_\ell\otimes v_n, 
    $$
    with 
    $u_\ell\in \mathcal{E}_s(\mathcal{L}'_{\Gamma,+}; \mathcal{D}'(\mathfrak{P}^+_\Gamma)),$ and 
    $v_n\in\mathcal{E}_{-s}(\mathcal{L}'_{\Gamma,-}; \mathcal{D}'(\mathfrak{P}^-_\Gamma))$.
    By the previous computation, each tensor $u_\ell\otimes v_n$ is
    $\sigma$-invariant after restriction to $\mathfrak{P}_\Gamma$. 
    Hence
    $$
    \langle k_{\Pi_{s_0}},f\circ\sigma\rangle
    =
    \langle k_{\Pi_{s_0}},f\rangle .
    $$
    Finally, by the definition of $\mathcal{T}_{s_0}$, we get
    $
    \mathcal{T}_{s_0}(f)
    =
    \operatorname{Tr}(f\Pi_{s_0})
    =
    \langle k_{\Pi_{s_0}},f\rangle ,
    $
    where $f$ is viewed as a locally constant function on
    $\mathfrak{P}^+_\Gamma\times \mathfrak{P}^-_\Gamma$ by extension by zero from
    $\mathfrak{P}_\Gamma$. 
    Therefore
    $$
    \mathcal{T}_{s_0}(f\circ\sigma)
    = \langle k_{\Pi_{s_0}},f\circ \sigma \rangle
    = \langle k_{\Pi_{s_0}},f\rangle
    = \mathcal{T}_{s_0}(f).
    $$
    Thus $\mathcal{T}_s$ is $\sigma$-invariant.
\end{proof}
%----

Note that in the case $J(s_0)=1$, the Riesz spectral projector coincides with the invariant Ruelle projector in \eqref{eq:invariantRuelledist}.
To establish this rigorously, we introduce a \emph{past-to-future reconstruction map} that identifies the range of the one-sided Riesz projector \eqref{eq:rangPis0_Es0}  with the past eigenspace of $\mathcal{L}'_{\Gamma,-}$.

\begin{lem}[Past-to-future reconstruction]
\label{lem:past_future_reconstruction}
    Let $\lambda(s_0)=q^{-(\frac12+is_0)}\neq 0$ be non-exceptional transfer-operator eigenvalue corresponding to the resonance parameter $s_0\in \mathscr{H}_{\vartheta,1}$ with $J(s_0)=1$.
    Consider the past-to-future reconstruction map
    $$
    \mathcal{J}_- \colon \mathcal{E}_{-s_0}(\mathcal{L}'_{\Gamma,-};
    \mathcal{D}'(\mathfrak{P}_\Gamma^-)) \longrightarrow \operatorname{Ran}(\Pi_{s_0})=\mathcal{E}_{s_0}(\mathcal{L}'_{\Gamma,+};
    \mathcal{D}'(\mathfrak{P}_\Gamma^+))
    $$
    defined by
    $
    (\mathcal{J}_- v)(\mathbf{p}_+)
    \coloneqq
    \langle v,\mathbbm{1}_{\mathbb{F}(\mathbf{p}_+)}\rangle,$
    where
    $\mathbb{F}(\mathbf{p}_+)
    \coloneqq
    \{\mathbf{p}_-\in\mathfrak{P}_\Gamma^-:
    (\mathbf{p}_+,\mathbf{p}_-)\in\mathfrak{P}_\Gamma\}
    $ is the fibre.
    Then
    $
    \mathcal{J}_- v \in \mathcal{E}_{s_0}(\mathcal{L}'_{\Gamma,+};     \mathcal{D}'(\mathfrak{P}_\Gamma^+))
    $ and the map $\mathcal{J}_-$ is an isomorphism.
\end{lem}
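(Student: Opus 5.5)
The proof has three parts: well-definedness, the eigen-relation, and bijectivity. First I will make the map concrete. The fibre $\mathbb{F}(\mathbf{p}_+)$ depends only on the first edge $\vec{e}_1=\pi^{\mathfrak{E}}_+(\mathbf{p}_+)$. By Definition~\ref{def:space_biinfinite_chains} it is the set of past chains $\mathbf{p}_-$ with $\tau(\pi^{\mathfrak{E}}_-(\mathbf{p}_-))=\iota(\vec{e}_1)$ and $\pi^{\mathfrak{E}}_-(\mathbf{p}_-)\neq\vec{e}_1^{\,\mathrm{op}}$. This is a finite union of level-one districts, so $\mathbbm{1}_{\mathbb{F}(\mathbf{p}_+)}\in F_1(\mathfrak{P}_\Gamma^-)\subset C^{\mathrm{lc}}(\mathfrak{P}_\Gamma^-)$ and the pairing with $v\in\mathcal{D}'(\mathfrak{P}_\Gamma^-)$ makes sense. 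It follows that $\mathcal{J}_-v$ is a function in $F_1(\mathfrak{P}_\Gamma^+)\cong\mathrm{Maps}(\mathfrak{E}_\Gamma,\C)$. Concretely, $(\mathcal{J}_-v)(\vec{e})=\sum_{\vec{b}}\langle v,\mathbbm{1}_{[\vec{b}]_-}\rangle$, where the sum runs over the $q$ edges $\vec{b}$ with $\tau(\vec{b})=\iota(\vec{e})$ and $\vec{b}\neq\vec{e}^{\,\mathrm{op}}$. I will regard it inside $\mathcal{D}'(\mathfrak{P}_\Gamma^+)$ via the identification in Remark~\ref{rem:unweighted_res_states} and \eqref{eq:rangPis0_Es0}.

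Next I check the eigen-relation at the level of edge functions. By Remark~\ref{rem:unweighted_res_states}, $v$ is determined by its edge values $\tilde v(\vec{b})\coloneqq\langle v,\mathbbm{1}_{[\vec{b}]_-}\rangle$. The relation $\mathcal{L}'_{\Gamma,-}v=\lambda(s_0)v$ tested on level-one districts gives a linear recursion on $\tilde v$: $\lambda(s_0)\tilde v$ equals $\tilde v$ summed over admissible successors, which is the transpose (non-backtracking/Hashimoto type) matrix on $\mathrm{Maps}(\mathfrak{E}_\Gamma,\C)$. Applying $\mathcal{L}_{\Gamma,+}$ to $\mathcal{J}_-v$ then gives $\mathcal{L}_{\Gamma,+}\mathcal{J}_-v(\vec{e}_2)=\sum_{\vec{e}_1\to\vec{e}_2}\sum_{\vec{b}\to\vec{e}_1}\tilde v(\vec{b})$. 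I will reorganise this double sum with the past recursion, in the same way as the cylinder computation in the proof of Lemma~\ref{lem:Ruelledist_sigma_invariance}, and conclude $\mathcal{L}_{\Gamma,+}\mathcal{J}_-v=\lambda(s_0)\mathcal{J}_-v$. The reorganisation uses edge reversal $\vec{e}\mapsto\vec{e}^{\,\mathrm{op}}$, which conjugates the past transfer matrix to the future one. Linearity of $\mathcal{J}_-$ is clear, so this gives $\mathcal{J}_-v\in\mathcal{E}_{s_0}(\mathcal{L}'_{\Gamma,+};\mathcal{D}'(\mathfrak{P}^+_\Gamma))$, which is $\operatorname{Ran}(\Pi_{s_0})$ by \eqref{eq:rangPis0_Es0} since $J(s_0)=1$.

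For bijectivity I will argue through dimensions. By \cite[Prop.~3.1]{AFH23Pairing} and the quantum--classical correspondence \cite[Thm.~4.4]{AFH23}, for non-exceptional $s_0$ both eigenspaces are isomorphic to $\mathcal{E}_{\chi(s_0)}(\Laplace;\mathrm{Maps}(\mathfrak{X},\C))^\Gamma$. They are therefore finite-dimensional of the same dimension $\mathfrak{m}$, and injectivity suffices. Suppose $\mathcal{J}_-v=0$. Then for every vertex $x$ and every outgoing edge $\vec{e}$ at $x$, the sum of $\tilde v$ over incoming edges at $x$ minus $\tilde v(\vec{e}^{\,\mathrm{op}})$ vanishes. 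Summing over the $q+1$ outgoing edges at $x$ gives $q\,S(x)=0$ with $S(x)=\sum_{\tau(\vec{b})=x}\tilde v(\vec{b})$. Hence $S\equiv0$ and then $\tilde v(\vec{e}^{\,\mathrm{op}})=0$ for all $\vec{e}$, so $v=0$. This step uses $q>1$.

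The main obstacle is the eigen-relation. The future and past eigenvalue conventions ($s_0$ versus $-s_0$ in Definition~\ref{def:invariant_Ruelle_dist}) must produce exactly $\lambda(s_0)$ rather than its reciprocal or $q\lambda(s_0)^{-1}$, and the non-backtracking exclusion $\vec{b}\neq\vec{e}^{\,\mathrm{op}}$ generates a correction term. If the direct computation leaves a residual term proportional to $\tilde v(\vec{e}^{\,\mathrm{op}})$, I will instead pass through the vertex function $S$. By \cite[Thm.~4.4]{AFH23}, $S$ is a Laplace eigenfunction, and the correction term can be expressed via $\chi(s_0)$. The hypothesis $q^{\frac12+is_0}\notin\{\pm1,\pm q\}$ guarantees that the resulting scalar factor is nonzero, so the eigen-relation holds after this rewriting.
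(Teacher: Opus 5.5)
Your outline has the same three parts as the paper's proof: the fibre $\mathbb{F}(\mathbf{p}_+)$ depends only on $\vec{e}_1$; the eigen-relation; and injectivity plus equal dimensions via the quantum--classical correspondence. Your injectivity step is a correct and more elementary variant. Summing $S(x)-\tilde v(\vec{e}^{\,\mathrm{op}})=0$ over the $q+1$ edges leaving $x$ gives $qS(x)=0$, hence $\tilde v=0$, hence $v=0$, since a $\lambda(s_0)$-eigendistribution with $\lambda(s_0)\neq 0$ is determined by its level-one values. The paper instead first proves $\mathcal{J}_-v=\lambda(s_0)\alpha_v$ and then uses the pushforward isomorphism. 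The genuine gap is the eigen-relation. You correctly flag it as the crux but do not establish it: the mechanism you name (edge reversal conjugating the past transfer matrix to the future one) is not what makes it work, and your fallback is not an argument.

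In the paper's convention (cf.\ \eqref{eq:eigenrelations}), $v\in\mathcal{E}_{-s_0}(\mathcal{L}'_{\Gamma,-};\mathcal{D}'(\mathfrak{P}^-_\Gamma))$ simply means $\mathcal{L}'_{\Gamma,-}v=\lambda(s_0)v$, so no $s_0$ versus $-s_0$ ambiguity is left.

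The key identity is $\mathcal{L}_{\Gamma,-}\mathbbm{1}_{(\pi^{\mathfrak{E}}_-)^{-1}(\vec{e}_1)}=\mathbbm{1}_{\mathbb{F}(\mathbf{p}_+)}$ for $\mathbf{p}_+=(\vec{e}_1,\vec{e}_2,\ldots)$. It holds because $\mathcal{L}_{\Gamma,-}$ appends an edge at the basepoint end of a past chain, so both sides are the indicator of past chains whose last edge $\vec{b}$ satisfies $\tau(\vec{b})=\iota(\vec{e}_1)$ and $\vec{b}\neq\vec{e}_1^{\,\mathrm{op}}$. Pairing with $v$ gives $\mathcal{J}_-v=\lambda(s_0)\tilde v$.

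The recursion $\lambda(s_0)\tilde v(\vec{e})=\sum_{\tau(\vec{b})=\iota(\vec{e}),\,\vec{b}\neq\vec{e}^{\,\mathrm{op}}}\tilde v(\vec{b})$ runs over exactly the same index set as the $\sigma_+$-preimages in $\mathcal{L}_{\Gamma,+}$. So it is the same matrix as $\mathcal{L}_{\Gamma,+}|_{F_1(\mathfrak{P}^+_\Gamma)}$, not its transpose. In your double sum, the inner sum is therefore $\lambda(s_0)\tilde v(\vec{e}_1)$ and the outer sum is $\lambda(s_0)(\mathcal{J}_-v)(\vec{e}_2)$, with no reversal involved. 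Had you set the recursion up over forward successors (the transposed matrix, which is what would call for a reversal), $\mathcal{L}_{\Gamma,+}\mathcal{J}_-v=\lambda(s_0)\mathcal{J}_-v$ would not follow in general. This orientation check is the actual content of the step.

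The ``residual term'' $\tilde v(\vec{e}^{\,\mathrm{op}})$ you anticipate is exactly the paper's $\mathcal{J}_-v(\mathbf{p}_+)=\beta_v(\iota(\vec{e}_1))-\alpha_v(\vec{e}_1^{\,\mathrm{op}})$. It is absorbed identically by the same recursion written as $\lambda(s_0)\alpha_v(\vec{e})=\beta_v(\iota(\vec{e}))-\alpha_v(\vec{e}^{\,\mathrm{op}})$.

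Your fallback through the Laplace eigenfunction $S$ cannot work as stated. A nonvanishing scalar, such as one guaranteed by $q^{\frac12+is_0}\notin\{\pm1,\pm q\}$, cannot turn a non-identity into an identity. That hypothesis plays no role in the eigen-relation; it enters only through the dimension count. Once you have $\mathcal{J}_-v=\lambda(s_0)\tilde v$, injectivity is immediate as well.
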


\begin{proof} 
    Let $v\in \mathcal{E}_{-s_0}(\mathcal{L}'_{\Gamma,-}; \mathcal{D}'(\mathfrak{P}_\Gamma^-))$. 
    Recall the edge and vertex projections $$\pi_\pm^{\mathfrak{E}}:\mathfrak{P}_\Gamma^\pm \longrightarrow  \mathfrak{E}_\Gamma \qquad \text{and} \qquad \pi_\pm^{\mathfrak{X}}:\mathfrak{P}_\Gamma^\pm \longrightarrow  \mathfrak{X}_\Gamma, $$ 
    which map a chain to its first oriented edge and its initial vertex, respectively, as defined in Definition~\ref{def:space_biinfinite_chains}.
    Write the edge pushforward of $v$ by
    $$
    \alpha_v(\vec{e})
    \coloneqq \langle v, \mathbbm{1}_{(\pi_-^{\mathfrak{E}})^{-1}(\vec{e})} \rangle
    = (\pi_-^{\mathfrak{E}})_*v(\vec{e}), \qquad \vec{e}\in \mathfrak{E}_\Gamma,
    $$
    and by 
    $$
    \beta_v(x)
    \coloneqq \langle v, \mathbbm{1}_{(\pi^{\mathfrak{X}}_-)^{-1}(x)} \rangle
    = (\pi^{\mathfrak{X}}_-)_*v(x), \qquad x\in \mathfrak{X}_\Gamma
    $$
    its vertex pushforward.

    Let $\mathbf{p}_+=(\vec{e}_1,\vec{e}_2,\ldots) \in \mathfrak{P}_\Gamma^+$. 
    By definition of $\mathfrak{P}_\Gamma$, the fiber
    $$
    \mathbb{F}(\mathbf{p}_+)
    = \{\mathbf{p}_- \in \mathfrak{P}_\Gamma^- \colon (\mathbf{p}_+,\mathbf{p}_-)\in\mathfrak{P}_\Gamma\}
    =  \{\mathbf{p}_-\in \mathfrak{P}_\Gamma^- \colon
    \tau(\mathbf{p}_-)= \iota(\vec{e}_1), 
    \tau(\mathbf{p}_-) \neq \vec{e}_1^{\, \mathrm{op}}\}
    $$
    is the set of past chains whose last edge ends at $\iota(\vec{e}_1)$, but whose
    last edge is not $\vec{e}_1^{\,\mathrm{op}}$. 
    Hence
    $$
    \mathbbm{1}_{\mathbb{F}(\mathbf{p}_+)}
    =
    \mathbbm{1}_{(\pi^{\mathfrak{X}}_-)^{-1}(\iota(\vec{e}_1))}
    -
    \mathbbm{1}_{(\pi_-^{\mathfrak{E}})^{-1}(\vec{e}_1^{\,\mathrm{op}})},
    $$
    where the first term counts all past paths ending at the correct vertex and the second removes the one forbidden last edge.
    Therefore
    \begin{equation}\label{eq:Jminus-edge-formula}
    (\mathcal{J}_-v)(\mathbf{p}_+)
    = \beta_v(\iota(\vec{e}_1)) - \alpha_v(\vec{e}_1^{\,\mathrm{op}}).
    \end{equation}
    Note that at this point, $\mathcal{J}_-v$ appears to depend on the first edge of the future path, which is exactly what we want.

    For the transfer identity for $\alpha_v$, we use the eigenrelation
    $ \mathcal{L}'_{\Gamma,-}v=\lambda(s_0)v.$
    Testing it against the edge-indicator
    $\mathbbm{1}_{(\pi_-^{\mathfrak{E}})^{-1}(\vec{e})}$, we get
    $$
    \lambda(s_0) \alpha_v(\vec{e})
    = \langle \mathcal{L}'_{\Gamma,-}v, \mathbbm{1}_{(\pi_-^{\mathfrak{E}})^{-1}(\vec{e})} \rangle
    = \langle v, \mathcal{L}_{\Gamma,-} \mathbbm{1}_{(\pi_-^{\mathfrak{E}})^{-1}(\vec{e})} \rangle .
    $$
    But
    $
    \mathcal{L}_{\Gamma,-}
    \mathbbm{1}_{(\pi_-^{\mathfrak{E}})^{-1}(\vec{e})}
    $
    is the characteristic function of those past chains whose last edge may be followed by $\vec{e}$, i.e.,
    $\mathcal{L}_{\Gamma,-}
    \mathbbm{1}_{(\pi_-^{\mathfrak{E}})^{-1}(\vec{e})}
    = \sum_{\substack{\vec{b}: \tau(\vec{b})=\iota(\vec{e})\\ \vec{b} \neq \vec{e}^{\, \mathrm{op}}}} \mathbbm{1}_{(\pi_-^{\mathfrak{E}})^{-1}(\vec{b})}.$
    Since $\beta_v(\iota(\vec{e}))=\sum_{\vec{b}:\,\tau(\vec{b})=\iota(\vec{e})} \alpha_v(\vec{b})$, we obtain the relation
    $$
    \lambda(s_0)\,\alpha_v(\vec{e})
    = \sum_{\substack{\vec{b}:\,\tau(\vec{b})=\iota(\vec{e})\\
                     \vec{b}\neq \vec{e}^{\,\mathrm{op}}}}
    \alpha_v(\vec{b})
    = \beta_v(\iota(\vec{e}))-\alpha_v(\vec{e}^{\,\mathrm{op}}).
    $$
    Combining this with \eqref{eq:Jminus-edge-formula}, we obtain the simple formula
    \begin{equation}\label{eq:Jminus-is-lambda-alpha}
    (\mathcal{J}_-v)(\mathbf{p}_+)
    =
    \lambda(s_0)\,\alpha_v(\vec{e}_1),
    \qquad
    \mathbf{p}_+=(\vec{e}_1,\vec{e}_2,\ldots)\in \mathfrak{P}_\Gamma^+.
    \end{equation}
    In particular, \(\mathcal{J}_-v\) depends only on the first future edge.

    Next, we show that $\mathcal{J}_-v$ is a $\lambda(s_0)$-eigenfunction of $\mathcal{L}_{\Gamma,+}$. 
    For $\mathbf{p}_+=(\vec{e}_1,\vec{e}_2,\ldots) \in \mathfrak{P}_\Gamma^+$, using \eqref{eq:Jminus-is-lambda-alpha}, we then have
    $$
    \begin{aligned}
    (\mathcal{L}_{\Gamma,+}\mathcal{J}_-v)(\mathbf{p}_+)
    =
    \sum_{\substack{\vec{b}:\,\tau(\vec{b})=\iota(\vec{e}_1)\\
                     \vec{b}\neq \vec{e}_1^{\,\mathrm{op}}}}
    (\mathcal{J}_-v)(\vec{b},\vec{e}_1,\vec{e}_2,\ldots) 
    &=
    \sum_{\substack{\vec{b}:\,\tau(\vec{b})=\iota(\vec{e}_1)\\
                     \vec{b}\neq \vec{e}_1^{\,\mathrm{op}}}}
    \lambda(s_0)\,\alpha_v(\vec{b}) \\
    &=
    \lambda(s_0)\bigl(\beta_v(\iota(\vec{e}_1))-\alpha_v(\vec{e}_1^{\,\mathrm{op}})\bigr) \\
    &=
    \lambda(s_0)\,(\mathcal{J}_-v)(\mathbf{p}_+).
    \end{aligned}
    $$
    Thus
    $
    \mathcal{J}_-v\in\ker(\mathcal{L}_{\Gamma,+}-\lambda(s_0)\operatorname{Id})
    =
    \mathcal{E}_{s_0}(\mathcal{L}'_{\Gamma,+};     \mathcal{D}'(\mathfrak{P}_\Gamma^+)).
    $
    
    It remains to prove that $\mathcal{J}_-$ is an isomorphism. Suppose $\mathcal{J}_-v=0$. 
    By \eqref{eq:Jminus-is-lambda-alpha} and $\lambda(s_0) \neq0$, we get
    $ \alpha_v(\vec{e})=0$, for all $\vec{e}\in\mathfrak{E}_\Gamma.$
    Therefore also
    $$
    \beta_v(x)
    = \sum_{\substack{\vec{e}:\,\tau(\vec{e})=x}} \alpha_v(\vec{e})
    = 0,
    \qquad \forall\,x\in\mathfrak{X}_\Gamma.
    $$
    Hence $(\pi_-)_*v=0$.
    By the non-exceptional pushforward isomorphism from \cite[Thms.~8.3 and 11.5]{BHW23}
    $$
    \pi_{-,*}:\mathcal{E}_{-s_0}(\mathcal{L}'_{\Gamma,-}; \mathcal{D}'(\mathfrak{P}_\Gamma^-))
    \xrightarrow{\sim}
    \mathcal{E}_{\chi(s)}(\Laplace_{\Gamma}; \mathrm{Maps}(\mathfrak{X}_\Gamma, \mathbb{C})),
    $$
    we conclude that $v=0$. 
    Thus $\mathcal{J}_-$ is injective.

    Finally, by the same pushforward isomorphism on the future side \cite[Thms.~8.3 and 11.5]{BHW23} and since $\chi(s_0)=\chi(-s_0)$, we have
    $$
    \dim \mathcal{E}_{-s_0}(\mathcal{L}'_{\Gamma,-}; \mathcal{D}'(\mathfrak{P}_\Gamma^-)) =
    \dim \mathcal{E}_{\chi(s)}(\Laplace_{\Gamma}; \mathrm{Maps}(\mathfrak{X}_\Gamma, \mathbb{C}))
    =
    \dim \mathcal{E}_{s_0}(\mathcal{L}'_{\Gamma,+};     \mathcal{D}'(\mathfrak{P}_\Gamma^+)).
    $$
    Hence, since \(\mathcal{J}_-:\mathcal{E}_{-s_0}(\mathcal{L}'_{\Gamma,-}; \mathcal{D}'(\mathfrak{P}_\Gamma^-))\to \mathcal{E}_{s_0}(\mathcal{L}'_{\Gamma,+};     \mathcal{D}'(\mathfrak{P}_\Gamma^+))\) is injective between
    finite-dimensional vector spaces of the same dimension, it is an isomorphism.
\end{proof}

\begin{prop}[Riesz--Ruelle identification]
\label{prop:Riesz_Ruelle_identification}
    Fix $\vartheta\in (0,1)$ and set $\omega \equiv 1$.
    Let $\lambda(s_0)=q^{-(\frac12+is_0)}$ be the transfer-operator eigenvalue corresponding to the resonance parameter $s_0\in \mathscr{H}_{\vartheta,1}$ of  multiplicity $\mathfrak{m} \coloneqq m_{\pm,1}(s_0) \in\mathbb N_0$ with no Jordan block, i.e., $J(s_0)=1$, such that
    $\chi(s_0)\neq\pm1$ and $ q^{1+2is_0}\notin\{\pm1,\pm q\}.$
    Suppose that the past-to-future reconstruction map
    \[
        \mathcal{J}_-:
        \mathcal{E}_{-s_0}(\mathcal{L}'_{\Gamma,-};
        \mathcal{D}'(\mathfrak{P}_\Gamma^-))
        \longrightarrow
        \operatorname{Ran}(\Pi_{s_0}) \subset \F_\vartheta(\mathfrak{P}_\Gamma^+)
    \]
    defined in Lemma~\ref{lem:past_future_reconstruction}
    is an isomorphism. 

    Then the kernel associated with the one-sided Riesz
    projector $\Pi_{s_0}$ of Lemma~\ref{lem:Pi_tensor_decomp} coincides with the 
    invariant Ruelle projector $k_{\Pi_{s_0}}$ of Definition~\ref{def:invariant_Ruelle_dist}.

    In particular, for every
    $f_+\in C^{\mathrm{lc}}(\mathfrak{P}_\Gamma^+)$
    $$
       \langle k_{\Pi_{s_0}}, f_+\circ \pi_+ \rangle_{\mathfrak{P}_\Gamma}
       =
       \operatorname{Tr}\!\left(\Pi_{s_0} M_{f_+} \right)
       = \operatorname{Tr}\!\left(M_{f_+}\Pi_{s_0}\right)
       = \operatorname{Tr}\!\left(f_+\Pi_{s_0}\right),
    $$
    where $M_{f_+}$ denotes the multiplication by $f_+$ on $\F_\vartheta(\mathfrak{P}_\Gamma^+).$
\end{prop}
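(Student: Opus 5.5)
We will compare two finite-rank objects built on the same finite-dimensional spaces, and we do it by matching them on a basis. By Remark~\ref{rem:Riesz} and \eqref{eq:rangPis0_Es0}, when $J(s_0)=1$ the range of $\Pi_{s_0}$ is the eigenspace $\mathcal{E}_{s_0}(\mathcal{L}_{\Gamma,+};C^{\mathrm{lc}}(\mathfrak{P}^+_\Gamma))$. By the regularity statement in Remark~\ref{rem:unweighted_res_states}, this eigenspace consists of functions depending only on the first edge, and it has dimension $\mathfrak{m}$.

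\emph{Step 1: basis and dual functionals.} Pick a basis $v_1,\dots,v_{\mathfrak{m}}$ of the past eigenspace $\mathcal{E}_{-s_0}(\mathcal{L}'_{\Gamma,-};\mathcal{D}'(\mathfrak{P}_\Gamma^-))$ and set $u_\ell\coloneqq\mathcal{J}_-v_\ell$. Since $\mathcal{J}_-$ is assumed to be an isomorphism, the $u_\ell$ form a basis of $\operatorname{Ran}(\Pi_{s_0})$. Lemma~\ref{lem:Pi_tensor_decomp} then gives $\Pi_{s_0}=\sum_\ell u_\ell\otimes u_\ell^*$, where the dual functionals $u_\ell^*\in\F'_\vartheta$ satisfy $\langle u_\ell^*,u_n\rangle=\delta_{\ell n}$ and vanish on $\ker\Pi_{s_0}$. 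We will show that $u_\ell^*$ is represented by the past state $v_\ell$ through the fibre pairing. Concretely, for $\varphi\in C^{\mathrm{lc}}(\mathfrak{P}^+_\Gamma)$ we define
\[
\Phi_\ell(\varphi)\coloneqq\langle (\varphi\otimes 1)\mathbbm 1_{\mathfrak{P}_\Gamma},\,\cdot\rangle \text{ paired against } v_\ell,
\]
that is, we integrate $\varphi$ against $v_\ell$ over the fibres $\mathbb{F}(\mathbf{p}_+)$.

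\emph{Step 2: the functionals $\Phi_\ell$ are the dual functionals.} Apply the covariance Lemma~\ref{lem:L_covariance} together with the eigen-relation of $v_\ell$, exactly as in the computation proving Lemma~\ref{lem:Ruelledist_sigma_invariance}. This shows $\Phi_\ell(\mathcal{L}_{\Gamma,+}\varphi)=\lambda(s_0)\Phi_\ell(\varphi)$. Consequently $\Phi_\ell$ annihilates $\operatorname{Ran}(\mathcal{L}_{\Gamma,+}-\lambda(s_0))$, which equals $\ker\Pi_{s_0}$ because $J(s_0)=1$ (restricted to the dense subspace $C^{\mathrm{lc}}$, then extended by continuity).

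On the range, each $u_n=\lambda(s_0)\alpha_{v_n}\circ\pi^{\mathfrak E}_+$ by \eqref{eq:Jminus-is-lambda-alpha}. Hence $\Phi_\ell(u_n)$ reduces to a finite edge-sum pairing between $\alpha_{v_\ell}$ and $\alpha_{v_n}$ (weighted by $\beta$). This is the pairing of \cite{AFH23Pairing,ArendsPalmirotta26}, whose Gram matrix is nondegenerate precisely under the hypotheses $\chi(s_0)\neq\pm1$ and $q^{1+2is_0}\notin\{\pm1,\pm q\}$.

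After a linear change of the basis $(v_\ell)$, which we are free to make by dualising the Gram matrix, we obtain $\Phi_\ell(u_n)=\delta_{\ell n}$. By the uniqueness clause of Lemma~\ref{lem:Pi_tensor_decomp}, it follows that $\Phi_\ell=u_\ell^*$.

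\emph{Step 3: the kernels agree.} Under the identification \cite[Lem.~4.1]{ArendsPalmirotta26}, the kernel of $\Pi_{s_0}$ is therefore $\sum_\ell u_\ell\otimes v_\ell$ restricted to $\mathfrak{P}_\Gamma$. This is of the form required in Definition~\ref{def:invariant_Ruelle_dist}, with the normalisation $\langle u_\ell\otimes v_n,\mathbbm 1_{\mathfrak{P}_\Gamma}\rangle=\delta_{\ell n}$; the off-diagonal coefficients vanish in this biorthogonal basis.

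For the trace identity, pair against $f_+\circ\pi_+$:
\[
\langle k_{\Pi_{s_0}},f_+\circ\pi_+\rangle=\sum_\ell \Phi_\ell(f_+u_\ell)=\sum_\ell\langle u_\ell^*,f_+u_\ell\rangle=\operatorname{Tr}(\Pi_{s_0}M_{f_+}\Pi_{s_0}).
\]
This equals $\operatorname{Tr}(M_{f_+}\Pi_{s_0})$ by cyclicity of the trace for finite-rank operators and idempotency of $\Pi_{s_0}$.

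\emph{Main obstacle.} The hardest part is Step 2, and within it two points. First, we need the dual characterisation $\ker\Pi_{s_0}=\overline{\operatorname{Ran}(\mathcal{L}-\lambda(s_0))}$ on $\F_\vartheta$, which has to be justified. Second, we need the nondegeneracy of the fibre pairing between past and future eigenspaces. For the latter, I would first try reducing it, via the pushforwards $\alpha_v$ and $\beta_v$, to the Laplace eigenspace using \cite[Thms.~8.3, 11.5]{BHW23}. That reduction should express the pairing as a nonzero multiple of the $\ell^2(\mathfrak{X}_\Gamma)$ inner product, with the multiple being the factor whose vanishing is excluded by $q^{1+2is_0}\notin\{\pm1,\pm q\}$.
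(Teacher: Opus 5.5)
There is a genuine gap in Step~2, and it comes from a type mismatch.

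\textbf{The functional $\Phi_\ell$ is not well defined.} Pairing $(\varphi\otimes 1)\mathbbm{1}_{\mathfrak{P}_\Gamma}$ with $v_\ell$ in the past variable only produces, by \eqref{eq:tensor_product_def}, the \emph{future function}
\[
\mathbf{p}_+\longmapsto\varphi(\mathbf{p}_+)\,(\mathcal{J}_-v_\ell)(\mathbf{p}_+).
\]
To turn this into a number you need a distribution on $\mathfrak{P}_\Gamma^+$, and $v_\ell$ alone does not supply one. So $\Phi_\ell$ is not a linear functional on $C^{\mathrm{lc}}(\mathfrak{P}_\Gamma^+)$.

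Your own computation shows this. $\Phi_\ell(u_n)$ comes out as $\lambda(s_0)^2\alpha_{v_\ell}(\vec{e}_1)\alpha_{v_n}(\vec{e}_1)$, a function of the first edge. You then sum over edges with counting measure, which is an ad hoc choice that does not extend to general $\varphi$.

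\textbf{Consequences for the rest of Step~2 and Step~3.}
\begin{itemize}
\item The eigen-relation $\Phi_\ell\circ\mathcal{L}_{\Gamma,+}=\lambda(s_0)\Phi_\ell$ does not follow. The computation in Lemma~\ref{lem:Ruelledist_sigma_invariance} uses a future and a past eigendistribution together, not $v_\ell$ alone.
\item The nondegeneracy of the Gram matrix is asserted, not proved.
\item Changing $(v_\ell)$ also changes $u_\ell=\mathcal{J}_-v_\ell$, so the Gram matrix transforms by congruence, not by a one-sided dualisation.
\item $C^{\mathrm{lc}}(\mathfrak{P}_\Gamma^+)$ is not dense in $\F_\vartheta(\mathfrak{P}_\Gamma^+)$ for the Lipschitz norm. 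For example, $\mathbf{p}_+\mapsto d_\vartheta(\mathbf{p}_+,\mathbf{x}_+)$ stays at seminorm distance $\geq 1$ from every locally constant function.
\item In Step~3, $\sum_\ell u_\ell\otimes v_\ell$ with $u_\ell\in\operatorname{Ran}(\Pi_{s_0})$ a \emph{function} is not of the form in Definition~\ref{def:invariant_Ruelle_dist}. There the future factors are eigendistributions of $\mathcal{L}'_{\Gamma,+}$.
\end{itemize}
The map $\mathcal{J}_-$ lets $v_\ell$ represent the basis vector $u_\ell$, not the dual functional $u_\ell^*$.

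\textbf{The missing idea: use $u_\ell^*$ itself as the future factor.} Three facts make $u_\ell^*$ a resonant state: $u_\ell^*=u_\ell^*\circ\Pi_{s_0}$, $\Pi_{s_0}$ commutes with $\mathcal{L}_{\Gamma,+}$, and $\mathcal{L}_{\Gamma,+}=\lambda(s_0)$ on $\operatorname{Ran}(\Pi_{s_0})$ when $J(s_0)=1$. Together they give $\mathcal{L}'_{\Gamma,+}u_\ell^*=\lambda(s_0)u_\ell^*$. With your choice $u_\ell=\mathcal{J}_-v_\ell$, the definition of the tensor product then gives directly
\[
\langle u_\ell^*\otimes v_n,\mathbbm{1}_{\mathfrak{P}_\Gamma}\rangle=\langle u_\ell^*,\mathcal{J}_-v_n\rangle=\delta_{\ell n},\qquad \langle u_\ell^*\otimes v_\ell,f_+\circ\pi_+\rangle=\langle u_\ell^*,f_+u_\ell\rangle .
\]
Summing over $\ell$ yields $\operatorname{Tr}(\Pi_{s_0}M_{f_+})$.

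This is the paper's argument. The normalisation is automatic, and neither the Gram-matrix nondegeneracy nor the characterisation of $\ker\Pi_{s_0}$ (your two ``main obstacles'') is needed. Note that your final display $\sum_\ell\Phi_\ell(f_+u_\ell)$, read with $\Phi_\ell=u_\ell^*$, is exactly this formula. So the kernel you actually use there is $\sum_\ell u_\ell^*\otimes v_\ell$, not $\sum_\ell u_\ell\otimes v_\ell$.
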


\begin{proof}
    Since $J(s_0)=1$, the Riesz projector $\Pi_{s_0}$ is a genuine spectral projector. 
    Hence by \eqref{eq:rangPis0_Es0}
    $$
    \operatorname{Ran}(\Pi_{s_0})
    = \ker(\mathcal{L}_{\Gamma,+}-\lambda(s_0)\operatorname{Id})
    =\mathcal{E}_{s_0}(\mathcal{L}'_{\Gamma,+}; \mathcal{D}'(\mathfrak{P}_\Gamma^+)).
    $$
    Choose a basis $(\varphi_\ell)_{\ell=1}^{\mathfrak{m}}$
    of $\operatorname{Ran}(\Pi_{s_0}) \subset \F_\vartheta(\mathfrak{P}_\Gamma^+)$. 
    By Lemma~\ref{lem:Pi_tensor_decomp}, there is a unique dual family
    $
    (\varphi_\ell^*)_{\ell=1}^{\mathfrak{m}}
    \subset \F_\vartheta'(\mathfrak{P}_\Gamma^+)
    $
    characterized by 
    $\langle \varphi_\ell^*,\varphi_n\rangle=\delta_{\ell n}$ and $\varphi_\ell^*|_{\ker(\Pi_{s_0})}=0,$
    such that 
    $
    \Pi_{s_0}
    = \sum_{\ell=1}^{\mathfrak{m}}
    \varphi_\ell\otimes\varphi_\ell^*.
    $

    Since $\Pi_{s_0}$ commutes with $\mathcal{L}_{\Gamma,+}$, and $\mathcal{L}_{\Gamma,+}$ acts on $\operatorname{Ran}(\Pi_{s_0})$ by the scalar $\lambda(s_0)$, each $\varphi_\ell^*$ is a dual eigenvector:
    $$
    \mathcal{L}'_{\Gamma,+}\varphi_\ell^*
    = \lambda(s_0)\varphi_\ell^*, \qquad \ell=1, \ldots, \mathfrak{m}.
    $$
    Indeed, for $f_+\in \F_\vartheta(\mathfrak{P}_\Gamma^+)$, we have
    $$
    \begin{aligned}
    \langle \mathcal{L}'_{\Gamma,+}\varphi_\ell^*, f_+\rangle
    =
    \langle \varphi_\ell^*,\mathcal{L}_{\Gamma,+} f_+\rangle 
    =
    \langle \varphi_\ell^*,\Pi_{s_0}\mathcal{L}_{\Gamma,+} f_+\rangle 
    =
    \langle \varphi_\ell^*,\mathcal{L}_{\Gamma,+}\Pi_{s_0}f_+\rangle 
    &=
    \lambda(s_0)\langle \varphi_\ell^*,\Pi_{s_0}f_+\rangle \\
    &=
    \lambda(s_0)\langle \varphi_\ell^*,f_+\rangle.
    \end{aligned}
    $$
    Thus, after restriction to $C^{\mathrm{lc}}(\mathfrak{P}_\Gamma^+)$, by Remark~\ref{rem:unweighted_res_states}, we may regard $u_\ell\coloneqq\varphi_\ell^*$ as an element of
    $
    \mathcal{E}_{s_0}(\mathcal{L}'_{\Gamma,+};
    \mathcal{D}'(\mathfrak{P}_\Gamma^+)).
    $
    
    Moreover, since $\mathcal{J}_-$ is an isomorphism by Lemma~\ref{lem:past_future_reconstruction}, for each $\ell$ there is a unique $v_\ell \in \mathcal{E}_{-s_0}(\mathcal{L}'_{\Gamma,-};\mathcal{D}'(\mathfrak{P}_\Gamma^-))
    $ such that $\mathcal{J}_-v_\ell=\varphi_\ell.$
    Equivalently,
    \begin{equation}\label{eq:fiber_reconstruction_vell}
    \langle v_\ell,\mathbbm{1}_{\mathbb{F}(\mathbf{p}_+)}\rangle
    = \varphi_\ell(\mathbf{p}_+)
    \qquad
    \forall\,\mathbf{p}_+\in\mathfrak{P}_\Gamma^+.
    \end{equation}

    Now consider the two-sided distribution 
    $$
    k_{\Pi_{s_0}} \coloneqq \sum_{\ell=1}^{\mathfrak{m}} u_\ell\otimes v_\ell
    = \sum_{\ell=1}^{\mathfrak{m}} \varphi_\ell^*\otimes v_\ell
    \in \mathcal{D}'(\mathfrak{P}_\Gamma^+\times\mathfrak{P}_\Gamma^-).
    $$
    We claim that this is the invariant Ruelle kernel. 
    It suffices to verify the normalisation condition
    $
    \left\langle u_\ell\otimes v_n,
    \mathbbm{1}_{\mathfrak{P}_\Gamma}\right\rangle
    = \delta_{\ell n}.
    $
    Indeed, using the definition of the tensor product \eqref{eq:tensor_product_def} and the reconstruction identity \eqref{eq:fiber_reconstruction_vell}, we get
    $$
    \begin{aligned}
    \langle u_\ell\otimes v_n, \mathbbm{1}_{\mathfrak{P}_\Gamma}\rangle
    = \langle u_\ell,\, \mathbf{p}_+\mapsto \langle v_n, \mathbbm{1}_{\mathbb{F}(\mathbf{p}_+)} \rangle \rangle 
    = \langle \varphi_\ell^*,\, \mathbf{p}_+\mapsto \varphi_n(\mathbf{p}_+) \rangle 
    = \langle \varphi_\ell^*,\varphi_n\rangle 
    &= \delta_{\ell n}.
    \end{aligned}
    $$
    Therefore, the tensors $u_\ell\otimes v_n$ are normalised exactly as in the definition of the invariant Ruelle projector. 
    Hence summing over all $\ell, n$, claims that $k_{\Pi_{s_0}}$ is the invariant Ruelle kernel in Definition~\ref{def:Ruelle_transfer_op}.

    It remains to check that $k_{\Pi_{s_0}}$ is the kernel associated with the one-sided Riesz projector in Lemma~\ref{lem:Pi_tensor_decomp}. 
    Let $f_+\in C^{\mathrm{lc}}(\mathfrak{P}_\Gamma^+)$.
    As in the discussion preceding \eqref{eq:tensor_product_def}, we extend  $f_+\circ\pi_+\in C^{\mathrm{lc}}(\mathfrak{P}_\Gamma)$
    by zero to a function on $\mathfrak{P}_\Gamma^+\times\mathfrak{P}_\Gamma^-$.
    Concretely,
    $$
        (f_+\circ\pi_+)(\mathbf{p}_+,\mathbf{p}_-) =
        \begin{cases}
            f_+(\mathbf{p}_+), & (\mathbf{p}_+,\mathbf{p}_-)\in\mathfrak{P}_\Gamma,\\
            0, & \text{otherwise.}
        \end{cases}
    $$
    Then using \eqref{eq:tensor_product_def} and \eqref{eq:fiber_reconstruction_vell}, we obtain
    $$
    \begin{aligned}
    \langle k_{\Pi_{s_0}},f_+ \circ \pi_+ \rangle_{\mathfrak{P}_\Gamma}
    = \sum_{\ell=1}^{\mathfrak{m}}
    \langle u_\ell\otimes v_\ell,f_+ \circ\pi_+\rangle 
    &=
    \sum_{\ell=1}^{\mathfrak{m}} \langle
    u_\ell,\, \mathbf{p}_+\mapsto \langle v_\ell, (f_+ \circ\pi_+)(\mathbf{p}_+, \cdot) \rangle \rangle \\
    &=
    \sum_{\ell=1}^{\mathfrak{m}} \langle
    u_\ell,\, \mathbf{p}_+\mapsto f_+(\mathbf{p}_+) \langle v_\ell,
    \mathbbm{1}_{\mathbb{F}(\mathbf{p}_+)} \rangle \rangle \\
    &=
    \sum_{\ell=1}^{\mathfrak{m}} \langle \varphi_\ell^*,\,
    f_+\,\varphi_\ell \rangle.
    \end{aligned}
    $$
    On the other hand, since
    $
    \Pi_{s_0} = \sum_{\ell=1}^{\mathfrak{m}} \varphi_\ell\otimes\varphi_\ell^*,
    $
    we have
    $
    \operatorname{Tr}(\Pi_{s_0}M_{f_+})
    =
    \sum_{\ell=1}^{\mathfrak{m}} \langle
    \varphi_\ell^*,f_+\,\varphi_\ell \rangle,
    $
    where $M_{f_+}$ denotes the multiplication by $f_+$ on $\F_\vartheta(\mathfrak{P}_\Gamma^+).$ 
    Thus
    $$
    \langle k_{\Pi_{s_0}},f_+ \circ\pi_+\rangle_{\mathfrak{P}_\Gamma}
    =\operatorname{Tr}(\Pi_{s_0}M_{f_+}) 
    = \operatorname{Tr}(\Pi_{s_0} f_+)
    = \operatorname{Tr}(f_+\Pi_{s_0}),
    $$
    where the last equality follows from the cyclicity of the trace for finite-rank operators.
    This is precisely the compatibility between the one-sided Riesz projector and the two-sided invariant Ruelle kernel. 
    Hence the kernel associated with $\Pi_{s_0}$ coincides with $k_{\Pi_{s_0}}$.
\end{proof}

Consequently, by the Riesz-Ruelle identification, we can show that the trace of the invariant Ruelle projector against a two-sided observable can be reduced to the operator trace of the corresponding one-sided Riesz projector.

\begin{prop}[Trace reduction to the one-sided setting]
\label{prop:trace_reduction}
    Fix $\vartheta \in(0,1)$ and $\omega\equiv 1$. 
    Let $s_0\in\mathscr H_{\vartheta,1}$ satisfy the hypotheses of
    Proposition~\ref{prop:Riesz_Ruelle_identification}, and let $\Pi_{s_0}$ be the corresponding
    Riesz projector, identified with the invariant Ruelle projector.
    Let $f\in C^{\mathrm{lc}}(\mathfrak{P}_\Gamma)$ with its one-sided reduction
    $f_+\in C^{\mathrm{lc}}(\mathfrak{P}_\Gamma^+)$ as in Lemma~\ref{lem:one-sided-reduction}.
    Then
    $$
       \operatorname{Tr}(f\Pi_{s_0})
       =  \operatorname{Tr}(\Pi_{s_0}f_+).
    $$
\end{prop}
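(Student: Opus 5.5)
The plan is to combine three earlier results: the $\sigma$-invariance of the invariant Ruelle distribution (Lemma~\ref{lem:Ruelledist_sigma_invariance}), the one-sided reduction $f\circ\sigma^m=f_+\circ\pi_+$ (Lemma~\ref{lem:one-sided-reduction}), and the Riesz--Ruelle identification (Proposition~\ref{prop:Riesz_Ruelle_identification}).

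First, by Definition~\ref{def:invariant_Ruelle_dist} and the identification of $\Pi_{s_0}$ with the invariant Ruelle projector, I write
\[
\operatorname{Tr}(f\Pi_{s_0})=\mathcal{T}_{s_0}(f)=\langle k_{\Pi_{s_0}},f\rangle_{\mathfrak{P}_\Gamma},
\]
where $f$ is extended by zero from $\mathfrak{P}_\Gamma$ to $\mathfrak{P}^+_\Gamma\times\mathfrak{P}^-_\Gamma$, as in \eqref{eq:trace_finiterank_op}. Next, I apply Lemma~\ref{lem:Ruelledist_sigma_invariance} $m$ times, where $m$ is the integer from Lemma~\ref{lem:one-sided-reduction}. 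This gives
\[
\mathcal{T}_{s_0}(f)=\mathcal{T}_{s_0}(f\circ\sigma^m)=\langle k_{\Pi_{s_0}},f_+\circ\pi_+\rangle_{\mathfrak{P}_\Gamma}.
\]
Iterating the invariance is legitimate because $f\circ\sigma^k\in C^{\mathrm{lc}}(\mathfrak{P}_\Gamma)$ for every $k$, since $\sigma$ is a homeomorphism of $\mathfrak{P}_\Gamma$. Finally, the ``in particular'' statement of Proposition~\ref{prop:Riesz_Ruelle_identification} gives
\[
\langle k_{\Pi_{s_0}},f_+\circ\pi_+\rangle_{\mathfrak{P}_\Gamma}=\operatorname{Tr}(\Pi_{s_0}M_{f_+})=\operatorname{Tr}(\Pi_{s_0}f_+).
\]

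The main obstacle is a bookkeeping issue: making sure the objects agree in the first step. I must check that the operator written $f\Pi_{s_0}$, with kernel $f\cdot k_{\Pi_{s_0}}$, has trace $\langle k_{\Pi_{s_0}},f\cdot\mathbbm{1}_{\mathfrak{P}_\Gamma}\rangle=\langle k_{\Pi_{s_0}},f\rangle$. This follows from the definition of the trace as pairing against $\mathbbm{1}_{\mathfrak{P}_\Gamma}$.

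I also have to check that the kernel $k_{\Pi_{s_0}}$ used in Lemma~\ref{lem:Ruelledist_sigma_invariance} is the same one produced in Proposition~\ref{prop:Riesz_Ruelle_identification}, namely $\sum_\ell \varphi_\ell^*\otimes v_\ell$. This requires that $\varphi_\ell^*$ and $v_\ell$ are eigendistributions with eigenvalue $\lambda(s_0)$ on the respective sides, which the proof of that proposition establishes. With these checks in place, the chain of equalities closes.
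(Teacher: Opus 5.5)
Your proposal is correct and is essentially the paper's own proof. You write $\operatorname{Tr}(f\Pi_{s_0})=\langle k_{\Pi_{s_0}},f\rangle$, apply Lemma~\ref{lem:Ruelledist_sigma_invariance} $m$ times together with $f\circ\sigma^m=f_+\circ\pi_+$, and close with Proposition~\ref{prop:Riesz_Ruelle_identification}. Your two extra checks are left implicit in the paper: that $f\circ\sigma^k$ stays locally constant, and that $\sum_\ell\varphi_\ell^*\otimes v_\ell$ is built from $\lambda(s_0)$-eigendistributions on both sides so the $\sigma$-invariance lemma applies to it.
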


\begin{proof}
    By Definition~\ref{def:invariant_Ruelle_dist} of the invariant Ruelle distribution,
    $
       \operatorname{Tr}(f\Pi_{s_0})
       =
       \langle k_{\Pi_{s_0}},f \rangle_{\mathfrak{P}_\Gamma},
    $
    where $ k_{\Pi_{s_0}}$ denotes the invariant Ruelle kernel.
    By Lemma~\ref{lem:Ruelledist_sigma_invariance}, the distribution
    $k_{\Pi_{s_0}}$ is $\sigma$-invariant. 
    Hence iterating $m$ times and using the one-sided reduction in Lemma~\ref{lem:one-sided-reduction} for some $m\geq 1$, gives
    $$
       \langle k_{\Pi_{s_0}},f\bigr\rangle_{\mathfrak{P}_\Gamma}
       = \langle k_{\Pi_{s_0}},f\circ\sigma^m\bigr\rangle_{\mathfrak{P}_\Gamma}
       = \langle k_{\Pi_{s_0}},f_+\circ\pi_+\rangle_{\mathfrak{P}_\Gamma}.
    $$
    Finally, using Proposition~\ref{prop:Riesz_Ruelle_identification}, gives
    $$
       \operatorname{Tr}(f\Pi_{s_0})
       = \langle k_{\Pi_{s_0}},f\bigr\rangle_{\mathfrak{P}_\Gamma}
       = \langle k_{\Pi_{s_0}},f_+\circ\pi_+ \rangle_{\mathfrak{P}_\Gamma}
       =
       \operatorname{Tr}(\Pi_{s_0}M_{f_+})
       =\operatorname{Tr}(\Pi_{s_0}f_+).
    $$ 
\end{proof}

% -------------------------------%%--------------------------------------%
\subsubsection{Proof of Theorem~\ref{thm:residue_PS}} \label{sect:proof_Thm_residuePS}
Let $s_0\in \C$ be a non-exceptional resonance (i.e., $\chi(s) \neq \pm 1$) of multiplicity $\mathfrak{m}\in \N_0$ such that $q^{\frac12+is_0} \notin \{\pm 1, \pm q\}$ and such that we do not have a Jordan block for $s_0$.
%Applying %%Theorem~\ref{thm:mero_dynfct} (iii) with $J(s_0)=1$ and
Applying Theorem~\ref{thm:mero_dynfct}~(iii) with $0=k<J(s_0)=1$ and using Proposition~\ref{prop:trace_reduction} gives directly
$$
    \operatorname{Res}_{s=s_0}\bigl[\mathbf{Z}_f(s)\bigr]
    =
    \frac{i}{\log q}\operatorname{Tr}\bigl[(\mathcal{L}_{\Gamma,+}-\lambda(s_0))^{0}\Pi_{s_0}f\bigr]
    =
    \frac{i}{\log q}\operatorname{Tr}[\Pi_{s_0}f]
    =\frac{i}{\log q} \mathcal{T}_{s_0}(f).
$$
Here $f\in C^{\mathrm{lc}}(\mathfrak{P}_\Gamma)$ enters the trace in Theorem~\ref{thm:mero_dynfct}~(iii) through its multiplication action on $\F_\vartheta(\mathfrak{P}_\Gamma^+)$, i.e., via the one-sided reduction $f_+\in C^{\mathrm{lc}}(\mathfrak{P}_\Gamma^+)$ of
Lemma~\ref{lem:one-sided-reduction}, consistently with
$\mathbf{Z}_f(s)=\operatorname{Tr}[\mathbf{R}(s)f_+]$ in
Proposition~\ref{prop:weighted_trace}.

Now, using the relation in \cite[Thm.~5]{ArendsPalmirotta26}, gives the desired residue formula for the Patterson–Sullivan distributions
\begin{eqnarray*}
    \mathrm{Res}_{s=s_0} \big[\mathbf{Z}_{f}(s)\big] 
    = \frac{i}{\log q}\mathcal{T}_{s_0}(f) 
    %&=& \langle f\Pi_s , \mathbbm{1}_{\mathfrak{P}_\Gamma } \rangle \\
   % &=& \sum_{\ell = 1}^m(u_{\ell} \otimes v_{\ell})(f) \\
    &=&  \frac{i}{\log q} \frac{q^{1 + 2 i s_0} - 1}{q^{1 + 2 i s_0} - q} \sum_{\ell=1}^{\mathfrak{m}} \mathrm{PS}_{\phi_\ell, \phi_\ell}(f).
\end{eqnarray*}
for any $\ell^2(\mathfrak{X}_\Gamma)$-orthonormal basis $(\phi_\ell)^{\mathfrak{m}}_{\ell=1}$ of $\mathcal{E}_{\chi(s_0)}(\Laplace; \mathrm{Maps}(\mathfrak{X}, \mathbb{C}))^\Gamma$.
\hfill $\square$

% -------------------------------%%--------------------------------------%
\subsection{Semiclassical formula for Wigner distributions} \label{sect:Wigner}
In this subsection, we relate the residues of the dynamical zeta function $\mathbf{Z}_{f}$ defined in \eqref{eq:Zeta}, %defined purely in terms of primitive closed finite paths (i.e., primitive cycles), 
to quantum phase-space distributions, namely the \emph{Wigner distributions}, arising from pseudo-differential operators on finite $(q+1)$-regular graphs $\mathfrak{G}_\Gamma$.\\

Let $\Omega$ denote the \emph{boundary at infinity of the tree}, i.e.,  the space of equivalence classes $[\vec{e}_1, \vec{e}_2, \dots]$ of infinite non-backtracking chains.
We endow $\Omega$ with the  usual topology generated by the sets
$$
    \partial_+\vec{e}
    \coloneqq
    \bigl\{
        w \in\Omega \;:\;
        w =[\vec{e},\vec{e}_1,\vec{e}_2,\ldots]
        \text{ for some non-backtracking chain }
        (\vec{e},\vec{e}_1,\vec{e}_2,\ldots)
    \bigr\}
$$
and by reversing the orientation, $\partial_- \vec{e}\coloneqq \partial_+ \vec{e}^{\;\mathrm{op}}$.\\
Since non-backtracking chains on $\mathfrak{X}$ may be viewed as geodesic rays, one has
$$\mathfrak{P}^\pm \cong \mathfrak{X} \times \Omega = S\mathfrak{X},$$
see \cite[§2.4]{ArendsPalmirotta26} for details. 
Passing to the quotient by the $\Gamma$-action, we set
$S\mathfrak{X}_\Gamma \coloneqq \Gamma \backslash (\mathfrak{X} \times \Omega),$ which we call the \emph{phase space}.

\begin{definition}[Wigner distributions, {\cite[Def.~5.2]{ArendsPalmirotta26}}] \label{def:Wigner}
    For every eigenfunction $\phi \in \mathcal{E}_{\chi(s_0)}(\Delta_\Gamma; \mathrm{Maps}(\mathfrak{X}_\Gamma, \C))$ with spectral parameter $s_0\in \C$, we define the associated (diagonal) \emph{Wigner distribution} $\mathrm{W}_{\phi,\phi}\in \mathcal{D}'(S\mathfrak{X}_\Gamma)$ by means of the $\ell^2$-pairing
    \begin{equation*}
        W_{\phi,\phi}(a)= \langle \mathrm{Op}(a) \phi, \phi \rangle_{\ell^2(\mathfrak{X}_\Gamma)}, \quad a\in C^{\mathrm{lc}}(S\mathfrak{X}_\Gamma),
    \end{equation*}
    where $\mathrm{Op}(a)$ is the pseudo-differential operator on $\mathfrak{X}_\Gamma$ defined in \cite[Def.~5.1]{ArendsPalmirotta26}.
\end{definition}

For a fixed resonance $s_0$, \cite[Thm.~6]{ArendsPalmirotta26} establishes an exact correspondence between the Patterson–Sullivan distributions $\mathrm{PS}_{\phi,\phi'} \in \mathcal{D}'(\mathfrak{P}_\Gamma)$ defined in \eqref{eq:PS}, viewed as distributions on $S\mathfrak{X}_\Gamma$, and the Wigner distributions. More precisely, for every $a\in C^{\mathrm{lc}}(S\mathfrak{X}_\Gamma)$ and $n \in \N_0$  one has 
\begin{equation} \label{eq:W_PS_relation}
    W_{\phi , \phi}(a - q^{- n}\mathcal{L}_e^n a) 
    = \mathrm{P S}_{\phi , \phi }\left(a - q^{- n}\mathcal{L}_e^n a + \sum_{k = 1}^n q^{-2ki s_0}q^{- k}\mathcal{H}_k(a)\right),
\end{equation}
where $\mathcal{L}_e^n$ denotes the $n$-fold iteration of the vertex transfer operator as in \cite[§5.1.3]{ArendsPalmirotta26}, and $\mathcal{H}_k$ is defined as the sum of the values $a(y_1 , \ldots, y_k , x_{k + 1}, x_{k + 2}, \ldots )$ over all non-backtracking paths satisfying $y_k \neq x_k$, for $1 \leq k \leq n$.

For every $n \in \N$, define the test function
\begin{equation} \label{eq:fn}
    f_{n, s_0,a} \coloneqq f_{n,a}^{\mathrm{prin}}+  \sum_{k = 1}^n q^{-2ki s_0}q^{- k}\mathcal{H}_k(a), \qquad 
    \text{ with } f_{n,a}^{\mathrm{prin}}
    \coloneqq
    a-q^{-n}\mathcal{L}_e^n a.
\end{equation}
Since $a$ is locally constant and the operators $\mathcal{L}_e$ and $\mathcal{H}_k$ preserve local constancy, we have
$f_{n,s_0,a}\in C^{\mathrm{lc}}(S\mathfrak{X}_\Gamma)$.
The preceding identity \eqref{eq:W_PS_relation} may therefore be written compactly as
$$
\mathrm{W}_{\phi,\phi}
    \bigl(f_n^{\mathrm{prin}}(a)\bigr)
    =
    \mathrm{PS}_{\phi,\phi}
    \bigl(f_{n,s_0}(a)\bigr), \qquad a \in C^{\mathrm{lc}}(S\mathfrak{X}_\Gamma).
$$
Thus the Wigner term is precisely the principal part
$f_{n,a}^{\mathrm{prin}} \in C^{\mathrm{lc}}(S\mathfrak{X}_\Gamma)$, while the correction terms
are absorbed into the corresponding Patterson--Sullivan distribution.

Combining this identity with Theorem~\ref{thm:residue_PS}, we obtain the following consequence directly.

\begin{cor} \label{cor:Wigner}
    Let $\mathfrak{G}_\Gamma$ be a finite $(q+1)$-regular graph with $q > 1$.
    Fix a resonance $s_0 \in \mathbb{C}$ of finite algebraic multiplicity $\mathfrak{m}\coloneqq m_{\pm,1}(s_0) \in \mathbb{N}_0$ such that
    $q^{\frac{1}{2}+is_0} \notin \{\pm 1, \pm q\},$
    and assume that no Jordan block is associated with $s_0$.

    Then, for any $\ell^2(\mathfrak{X}_\Gamma)$-orthonormal basis $(\phi_\ell)^{\mathfrak{m}}_{\ell=1}$ of
    $\mathcal{E}_{\chi(s_0)}(\Delta_\Gamma;\mathrm{Maps}(\mathfrak{X}_\Gamma,\mathbb{C}))$, any $a \in C^{\mathrm{lc}}(S\mathfrak{X}_\Gamma)$ and every $n \in \mathbb{N}_0$ with $f_{n, s_0,a} \in C^{\mathrm{lc}}(S\mathfrak{X}_\Gamma)$ as in \eqref{eq:fn}, the following residue formula holds:
    \begin{equation*}
        \mathrm{Res}_{s=s_0}\big[\mathbf{Z}_{f_{n, s_0,a}}(s)\big]
        =
        c_{q,s_0}
        \sum_{\ell=1}^{\mathfrak{m}}
        \mathrm{W}_{\phi_\ell,\phi_\ell}\!\bigl(f_{n,a}^{\mathrm{prin}}\bigr),
    \end{equation*}
    where $c_{q,s_0}\in \C$ is the same constant as in Theorem~\ref{thm:residue_PS}.
\end{cor}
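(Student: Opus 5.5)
The plan is to apply Theorem~\ref{thm:residue_PS} to the test function $f_{n,s_0,a}$ and then convert each Patterson--Sullivan term into a Wigner term using the Arends--Palmirotta identity \eqref{eq:W_PS_relation}.

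First, I would check that $f_{n,s_0,a}$ is an admissible weight. Under the identification $S\mathfrak{X}_\Gamma \cong \mathfrak{P}_\Gamma$ (via $\mathfrak{P}^\pm\cong \mathfrak{X}\times\Omega$ and the $\Gamma$-quotient), locally constant functions on $S\mathfrak{X}_\Gamma$ correspond to elements of $C^{\mathrm{lc}}(\mathfrak{P}_\Gamma)$. This is the same identification through which \cite[Thm.~6]{ArendsPalmirotta26} regards $\mathrm{PS}_{\phi,\phi}$ as a distribution on $S\mathfrak{X}_\Gamma$. Since $\mathcal{L}_e$ and $\mathcal{H}_k$ preserve local constancy, $f_{n,s_0,a}$ is a finite sum of locally constant functions. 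Hence $\mathbf{Z}_{f_{n,s_0,a}}$ is defined, and Theorem~\ref{thm:mero_dynfct} applies to it.

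Second, I would check the hypotheses of Theorem~\ref{thm:residue_PS}. These are: $q^{\frac12+is_0}\notin\{\pm1,\pm q\}$, no Jordan block, and multiplicity $\mathfrak{m}$. The basis also has to be matched. An $\ell^2(\mathfrak{X}_\Gamma)$-orthonormal basis of $\mathcal{E}_{\chi(s_0)}(\Delta_\Gamma;\mathrm{Maps}(\mathfrak{X}_\Gamma,\C))$ pulls back under $\pi_{\mathfrak X}^*$ to an orthonormal basis of the $\Gamma$-invariant eigenspace used in that theorem. The theorem then gives
\[
\mathrm{Res}_{s=s_0}\bigl[\mathbf{Z}_{f_{n,s_0,a}}(s)\bigr]=c_{q,s_0}\sum_{\ell=1}^{\mathfrak m}\mathrm{PS}_{\phi_\ell,\phi_\ell}(f_{n,s_0,a}).
\]

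Third, for each $\ell$ I would apply \eqref{eq:W_PS_relation} with $\phi=\phi_\ell$. In compact form this reads $\mathrm{PS}_{\phi_\ell,\phi_\ell}(f_{n,s_0,a})=W_{\phi_\ell,\phi_\ell}(f^{\mathrm{prin}}_{n,a})$. Summing over $\ell$ then gives the claimed formula.

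I expect the only delicate step to be this consistency check. The spectral parameter $s_0$ appearing in the correction terms $q^{-2kis_0}$ of \eqref{eq:W_PS_relation} must be the same $s_0$ as the resonance, under the same normalisation of $\mathrm{PS}$ as in \eqref{eq:PS}. The identification of $S\mathfrak{X}_\Gamma$ with $\mathfrak{P}_\Gamma$ must also be compatible with the one used in \cite{ArendsPalmirotta26}. I would settle both points by citing the conventions fixed in Subsection~\ref{sect:residue}. The remaining steps are direct substitutions.
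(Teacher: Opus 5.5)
Your proposal is correct and follows the paper's own argument: apply Theorem~\ref{thm:residue_PS} to the test function $f_{n,s_0,a}$, then rewrite each $\mathrm{PS}_{\phi_\ell,\phi_\ell}(f_{n,s_0,a})$ as $\mathrm{W}_{\phi_\ell,\phi_\ell}(f^{\mathrm{prin}}_{n,a})$ using \eqref{eq:W_PS_relation}. One minor correction, which does not affect the argument: $S\mathfrak{X}_\Gamma=\Gamma\backslash(\mathfrak{X}\times\Omega)$ is identified with the one-sided space $\mathfrak{P}^+_\Gamma$, not with $\mathfrak{P}_\Gamma$, so $f_{n,s_0,a}$ enters $\mathbf{Z}$ as the pullback $f_{n,s_0,a}\circ\pi_+\in C^{\mathrm{lc}}(\mathfrak{P}_\Gamma)$ along the future projection.
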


% -------------------------------%%--------------------------------------%
\bibliographystyle{amsalpha}
\bibliography{Literatur.bib}
\end{document}